\theoremstyle{plain}
\newtheorem{thm}{{\bf Theorem}}[section]
\newtheorem{cor}[thm]{{\bf  Corollary}}
\newtheorem{prop}[thm]{{\bf Proposition}}
\newtheorem{lemma}[thm]{{\bf Lemma}}
\newtheorem{claim}[thm]{{\bf Claim}}
\theoremstyle{definition}
\newtheorem{define}[thm]{{\bf Definition}}
\newtheorem{note}[thm]{{\bf Note}}
\newtheorem{question}[thm]{{\bf Question}}
\newcommand{\cf}{\mathord{\mathrm{cf}}}
\newcommand{\size}[1]{\left\vert {#1} \right\vert}
\newcommand{\seq}[1]{\langle {#1} \rangle}
\newcommand{\ka}{\kappa}
\newcommand{\la}{\lambda}
\newcommand{\om}{\omega}
\newcommand{\bbP}{\mathbb{P}}
\newcommand{\bbQ}{\mathbb{Q}}
\newcommand{\calB}{\mathcal{B}}
\newcommand{\calF}{\mathcal{F}}
\newcommand{\calG}{\mathcal{G}}
\newcommand{\calN}{\mathcal{N}}
\newcommand{\calU}{\mathcal{U}}
\newcommand{\calV}{\mathcal{V}}
\newcommand{\Add}{\mathrm{Add}}
\newcommand{\fraku}{\mathfrak{u}}
\newcommand{\ZFC}{\mathsf{ZFC}}
\title[Monotonicity of the ultrafilter number function]{Monotonicity of the ultrafilter number function}
\author[T. Usuba]{Toshimichi Usuba}
\address[T. Usuba]
{Faculty of Fundamental Science and Engineering,
Waseda University, 
Okubo 3-4-1, Shinjyuku, Tokyo, 169-8555 Japan}
\email{usuba@waseda.jp}
\subjclass[2020]{Primary 03E10, 03E35, 03E55}
\keywords{Indecomposable ultrafilter, Ultrafilter number}
\begin{document}
\begin{abstract}
We investigate whether the ultrafilter number function $\ka \mapsto \mathfrak{u}(\ka)$ on the cardinals is monotone,
that is, whether $\fraku(\la) \le \fraku(\ka)$  holds for all cardinals $\la < \ka$ or not.
We show that monotonicity can fail, but the failure has large cardinal strength.
On the other hand, we prove that there are many restrictions of the failure of monotonicity.
For instance, if $\ka$ is a  singular cardinal with
countable cofinality or a strong limit singular cardinal, then $\fraku(\ka) \le \fraku(\ka^+)$ holds.
 
\end{abstract}

\maketitle
\section{Introduction}
For a cardinal $\ka$,
let $\fraku(\ka)$ denote the ultrafilter number at $\ka$,
that is, the minimal cardinality of a base of a uniform ultrafilter over $\ka$.
A central research on the ultrafilter number 
is the comparison 
between $\mathfrak{u}(\ka)$ and other cardinal invariants at $\ka$:
A typical one is, whether $\fraku(\ka)<2^\ka$ is consistent or not.
In this context,
the ultrafilter number at $\om$ is studied extensively, 
and recently the ultrafilter number at
singular cardinals  
is actively researched (e.g., \cite{BJ1},  \cite{GS}, \cite{GS2}, \cite{G1}, \cite{G2}, \cite{HS2}).
In contrast, little is  known about the ultrafilter number at successor cardinals.
While it is still open if $\fraku (\om_1)<2^{\om_1}$ is consistent or not,
Raghavan and Shelah \cite{RS} proved the consistency
of $\fraku(\om_{\om+1})<2^{\om_{\om+1}}$ 
under some large cardinal assumption.
However it is unknown if this large cardinal assumption is necessary or not.

This paper also investigates the ultrafilter number, but
does not aim to study the comparison between $\fraku (\ka)$ and other cardinal invariants at $\ka$.
Instead, we identify the ultrafilter numbers as the 
function $\ka \mapsto \fraku(\ka)$ on the cardinals, 
and we shall consider the behavior of the ultrafilter number function.
Under GCH, the ultrafilter number $\fraku(\ka)$ is just $2^\ka=\ka^+$,
hence the ultrafilter number function is
just the continuum function $\ka \mapsto 2^\ka$, 
in particular it is strictly increasing.
On the other hand, it is easy to construct a model 
in which the ultrafilter number function is not strictly increasing:
Assuming GCH, just add $\om_2$ many Cohen real. Then $\fraku (\om)=\fraku(\om_1)=\om_2$
holds in the generic extension
(e.g., see Lemma \ref{10}).
Under this viewpoint, a natural question arises.
This is a main topic of this paper. 
\begin{question}
Is the ultrafilter number function \emph{monotone}?
That is, for every cardinal $\la$ and $\ka$ with
$\la \le \ka$, does $\fraku(\la) \le \fraku(\ka)$ always hold?
\end{question}
Hart and van Mill also asked a similar question (Question 63 in \cite{HM}) in
the context of set-theoretic topology.

For some cardinal invariant, it is easy to show that
monotonicity can fail:
For the dominating number $\mathfrak{d}(\ka)$, 
suppose GCH and add $\om_3$ many Cohen reals.
Then $\mathfrak{d}(\om)=\om_3>\om_2=\mathfrak{d}(\om_1)$ holds in the generic extension.
However, this naive approach does not work for the ultrafilter numbers;
In that generic extension, we have $\mathfrak{u}(\om)=\mathfrak{u}(\om_1)=\om_3$
(see Lemma \ref{10} again).

In this paper, we prove that the failure of monotonicity is actually consistent.
On the other hand, we also prove that the failure of it has  large cardinal strength.
Moreover there are many $\ZFC$-restrictions of the failure of monotonicity.
First, 
by using Raghavan and Shelah's argument, 
we show that monotonicity of the ultrafilter number function can fail.
\begin{thm}\label{main1}
Relative to a certain large cardinals, the following are consistent\footnote{For the exact large cardinal
assumption in each item, see the referenced number next to it.}:
\begin{enumerate}
\item There are cardinals $\ka, \la$
such that $\la<\ka$ but $\fraku(\ka)<\fraku(\la)$ holds (Theorem \ref{4.4})
\item $\fraku(\om_{\om+1})<\fraku(\om_1)$ holds (Theorem \ref{3.10}).
\item There is a singular cardinal $\ka$ with cofinality $\om_1$
such that $\mathfrak{u}(\ka^+)<\mathfrak{u}(\ka)$ holds (Theorem \ref{3.11}).
\item There is a regular cardinal $\ka$ 
such that $\mathfrak{u}(\ka^+)<\mathfrak{u}(\om_1)$ holds (Theorem \ref{3.11}).
\item There is a cardinal $\ka$
such that $\om_\om<\ka$ but $\fraku(\ka)<\fraku(\om_\om)$ holds (Theorem \ref{7.5}).
\item There is a cardinal $\ka$ such that
$\mathfrak{u}(\ka^{+\om_1})<\mathfrak{u}(\om_\om)$ holds (Proposition \ref{7.19}).
\end{enumerate}

\end{thm}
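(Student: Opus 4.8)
The plan is to combine two forcing ``modules''. The \emph{lowering module} is the Raghavan--Shelah technique for producing, from a measurable cardinal (in fact from whatever slightly stronger hypothesis is needed to secure the relevant indecomposable ultrafilter), a model in which a prescribed successor of a singular cardinal $\la$ --- or a singular cardinal of cofinality $\om_1$, or the $\om_1$-st successor of one --- carries a uniform ultrafilter $U$ that is $\mu$-indecomposable for every regular $\mu$ up to $\la$ and is generated by a base of size $\theta$ far below $2^\la$. The \emph{raising module} is Lemma \ref{10} and its higher analogues: forcing with $\Add(\om_1,\theta')$ (or $\Add(\om,\theta')$, or an Easton product of such) over a suitable ground model makes $\fraku(\om_1)$ (resp.\ $\fraku(\om)$) equal to $\theta'$. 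The essential tension is that ``$\fraku(\text{big})<\fraku(\text{small})$'' forces $2^{\text{small}}\ge\fraku(\text{small})>\fraku(\text{big})\ge\text{big}^{+}$, so the raising must be performed either before the lowering construction (as forcing small relative to the large cardinal, hence preserved by Levy--Solovay) or carefully after the collapses inside it --- and, either way, the two modules must not spoil each other.

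For items (1)--(4) I would run the lowering module to obtain, from a measurable cardinal $\delta$ (or stronger, as the construction requires), a model $V_1$ in which $\delta$ (or $\delta^+$, or a cardinal built from $\delta$) has become the desired $\la$ --- e.g.\ $\la=\om_{\om+1}$ for (2), a cofinality-$\om_1$ singular $\ka$ with $\fraku(\ka^+)$ small for (3), a regular cardinal whose successor has small $\fraku$ for (4) --- carrying the indecomposable $U$ with base of size $\theta$. The L\'evy collapses inside this construction reset the power set of $\om_1$, so the raising is done \emph{afterwards}: force $\Add(\om_1,\theta')$ over $V_1$ with $\theta'>\theta$. By (an $\om_1$-version of) Lemma \ref{10} this gives $\fraku(\om_1)=\theta'>\theta\ge\fraku(\la)$, which yields (1) (with any $\om_1<\la$) as well as (2)--(4). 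The point requiring proof is that $\Add(\om_1,\theta')$, which is $\sigma$-closed and adds only subsets of $\om_1$, does not manufacture a new \emph{unbounded} subset of $\la$ that $U$ fails to decide: here one uses that $U$ is $\mu$-indecomposable for all relevant $\mu$, so that every such set is absorbed by a sufficiently concentrated member of $U$, and $U$ therefore generates an ultrafilter on $\la$ with the same base $\theta$ in the extension.

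For items (5)--(6) the cardinal $\ka$ with small $\fraku$ sits above $\om_\om$, so we need $\fraku(\om_\om)>\ka^{+}\ge\delta^{+}\gg\om_\om$; this cannot come from forcing small relative to $\delta$, so the raising at $\om_\om$ must happen after the Prikry/Magidor singularization (when $\delta$ has been made singular and there is no large cardinal left to protect). The naive choice --- adding many Cohen reals, so that $\fraku(\om_\om)\ge\fraku(\om)=\theta'$ by the elementary bound $\fraku(\cf\la)\le\fraku(\la)$ --- fails, because $\ka$-many mutually generic reals give a new unbounded subset of $\ka$ that no member of the indecomposable ultrafilter decides and no indecomposability can track. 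One must instead blow up $2^{\om_\om}$ by a forcing whose generic is carried by fewer than $\ka$ pieces of information, arranged so that the $\mu$-indecomposability of $U$ still lets $U$ decide the new subsets of $\ka$; granting such a forcing, $\fraku(\om_\om)=\theta'>\theta\ge\fraku(\ka)$ (resp.\ $\fraku(\ka^{+\om_1})$) follows as in the earlier items and via Lemma \ref{10}.

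The main obstacle, in every item, is exactly this preservation statement: that the indecomposable ultrafilter $U$ and its small base survive the raising forcing, i.e.\ that $\mu$-indecomposability genuinely compensates for the subsets freshly introduced below $\la$. Establishing it --- together with the bookkeeping that the collapses inside the lowering module and the raising forcing neither reset nor destroy one another, and that the large-cardinal hypothesis is untouched by whatever preliminary padding is done over the ground model --- is the heart of Theorems \ref{4.4}, \ref{3.10}, \ref{3.11}, \ref{7.5} and Proposition \ref{7.19}; and it is sharpest in (5)--(6), where the raising forcing must simultaneously push $\fraku(\om_\om)$ above a large cardinal and remain mild enough, in this ``concentration'' sense, to leave the indecomposable ultrafilter far above intact.
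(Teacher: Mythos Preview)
Your two-module picture misreads the structure. In the paper a \emph{single} forcing $\Add(\om,\mu)$---with $\mu>\ka$ strong limit singular of cofinality $\om_1$---does both jobs at once. The indecomposable ultrafilter $\calU$ is not the \emph{output} of a lowering step already equipped with a small base; it sits in the ground model (a normal measure on a measurable $\ka$ for (1), a Ben-David--Magidor ultrafilter on $\om_{\om+1}$ for (2), a $\ka$-complete ultrafilter coming from strong compactness for (3)--(4)), where GCH may well hold and $\chi(\calU)=2^\ka$. Over that ground model one just adds $\mu$ Cohen reals. Theorem~\ref{3} then yields $\fraku(\ka)\le\mu$: the $\cf(\mu)$-indecomposability of $\calU$ is used only to bound how many nice names suffice for members of any ultrafilter extending $\calU$, not to make $\calU$ survive as an ultrafilter. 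The very same $\Add(\om,\mu)$ forces $\fraku(\om_1)\ge\mu$ by Lemma~\ref{10} and $\fraku(\om_1)\neq\mu$ by Lemma~\ref{11} (since $\cf(\mu)=\cf(\om_1)$), so $\fraku(\ka)\le\mu<\fraku(\om_1)$. There is no second forcing step, and the preservation problem you flag---that a later $\Add(\om_1,\theta')$ might add subsets of $\la$ undecided by $\calU$---simply does not arise. Your proposed cure for it (``every such set is absorbed by a sufficiently concentrated member of $U$'') is not a theorem: $\mu$-indecomposability of a ground-model ultrafilter does not imply that it generates an ultrafilter after $\sigma$-closed forcing.

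For (5)--(6) the route is again different from yours. There is no bespoke forcing to blow up $2^{\om_\om}$ while protecting an ultrafilter far above. Instead a preparatory forcing of size $\om_{\om+1}$ (Proposition~\ref{6.8}) adds a diagonal system of non-reflecting stationary sets so that, by Proposition~\ref{7.2}, $\om_\om$ carries \emph{no} $\om_1$-indecomposable uniform ultrafilter; Levy--Solovay keeps the large cardinal above intact. Over this model one forces $\Add(\om,\mu)$ exactly as in (1)--(4), getting $\fraku(\ka)<\fraku(\om_1)$. The c.c.c.\ of $\Add(\om,\mu)$ preserves the absence of $\om_1$-indecomposable ultrafilters on $\om_\om$ (pull a putative one back through the c.c.c.\ extension to an $\om_1$-indecomposable filter in the ground model and extend it there), so in the final model every uniform ultrafilter on $\om_\om$ is $\om_1$-decomposable; hence $\fraku(\om_1)\le\fraku(\om_\om)$ by Lemma~\ref{4.1} and $\fraku(\ka)<\fraku(\om_\om)$ follows. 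Item (6) is the analogous argument with $\om_3$ in place of $\om_1$ and $\Add(\om_2,\mu)$ in place of $\Add(\om,\mu)$, together with Lemma~\ref{7.18} to bound $\fraku(\ka^{+\om_1})$ by $\fraku(\ka^{+\om_1+1})^{\om_1}=\mu$.
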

Raghavan and Shelah's argument involves indecomposable ultrafilters (see below for indecomposable ultrafilters),
but it is not clear if the use of indecomposable ultrafilters is necessary 
for  their theorem. 
However, as we will see the failure of monotonicity is directly connected with
indecomposability. Using this connection, 
we also prove that there are many restrictions of the failure of monotonicity.
\begin{thm}
\begin{enumerate}
\item $\fraku(\om) \le \fraku(\ka)$ holds for every cardinal $\ka$ (Proposition \ref{5.3+})
\item For cardinals $\ka,\la$, if $\la$ is regular, $\la<\ka$, but $\fraku(\ka)<\fraku(\la)$,
then $\la^{+\om} \le \ka$ (Proposition \ref{21}).
\item If $\ka$ is a singular cardinal with countable cofinality, or
a strong limit singular cardinal, then $\mathfrak{u}(\ka) \le \mathfrak{u}(\ka^+)$ (Corollary \ref{29},
Theorem \ref{7.9}). 
\item For a singular cardinal $\ka$, if $\mathfrak{u}(\ka^+) <\fraku(\ka)$,
then the set $\{\la<\ka \mid$ $\la$ is regular, $\fraku(\la)>\fraku(\ka^+)\}$ is bounded in $\ka$
(Theorem \ref{5.15}).
\item If $\square(\ka)$ holds for a regular $\ka$,
then $\mathfrak{u}(\la) \le \mathfrak{u}(\ka)$ holds for every $\la<\ka$ (Corollary \ref{8.13}).
\end{enumerate}
\end{thm}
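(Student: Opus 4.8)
The whole theorem is organized around one elementary observation. If $U$ is a uniform ultrafilter on $\ka$ that is $\la$-decomposable, witnessed by a map $f\colon\ka\to\la$ (so that $f_*U=\{Y\subseteq\la : f^{-1}[Y]\in U\}$ is a uniform ultrafilter on $\la$), then for any base $\calB$ of $U$ the family $\{f[B] : B\in\calB\}$ is a base of $f_*U$, whence $\fraku(\la)\le|\calB|$; taking $\calB$ minimal gives $\fraku(\la)\le\fraku(\ka)$. Reading this in reverse: if $\la\le\ka$ and $\fraku(\ka)<\fraku(\la)$, then every uniform ultrafilter on $\ka$ with a base of the minimal size $\fraku(\ka)$ is $\la$-indecomposable. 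So each clause is, after contraposition, a statement that a uniform $\la$-indecomposable ultrafilter on $\ka$ is heavily constrained -- either it cannot exist, or it must already decompose at many other cardinals -- and the plan is to feed this into the classical structure theory of indecomposable ultrafilters (Prikry, Silver, Ketonen), supplemented by pcf theory and by the interaction of $\square$-sequences with indecomposability.

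Clause (1): fix $\ka>\om$ and a uniform ultrafilter $U$ on $\ka$ with a base of size $\fraku(\ka)$. If $U$ is $\om$-decomposable, the observation gives $\fraku(\om)\le\fraku(\ka)$ at once. Otherwise $U$ is $\om$-indecomposable, i.e.\ countably complete, so by Ulam's theorem there is a measurable cardinal $\mu\le\ka$; since measurables are inaccessible and $\fraku(\ka)\ge\ka^+$ in $\ZFC$, $\fraku(\om)\le 2^{\om}<\mu\le\ka<\ka^+\le\fraku(\ka)$. Either way $\fraku(\om)\le\fraku(\ka)$. Clause (2): by contraposition it is enough to show $\fraku(\la)\le\fraku(\la^{+n})$ for regular $\la$ and every $n<\om$. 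Supposing otherwise, the observation produces a uniform $\la$-indecomposable ultrafilter $U$ on $\la^{+n}$; the strategy is to descend, replacing $U$ by a uniform $\la$-indecomposable ultrafilter on each of $\la^{+(n-1)},\dots,\la^{+1},\la$ in turn, the final one being impossible because a uniform ultrafilter on $\la$ is $\la$-decomposable via its partition into singletons. Each descent step uses that a uniform ultrafilter on a successor cardinal $\mu^+$ is not $\mu^+$-complete (as $\mu^+$ is not measurable), hence decomposes at some regular $\mu'<\mu^+$, together with the key structural input that this $\mu'$ can be taken $\ge\la$ when $U$ is $\la$-indecomposable -- this last point, where the regularity of $\la$ is really used, is the heart of the matter. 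Finiteness of $n$ is essential: there is no analogous step at $\la^{+\om}$, a limit cardinal of cofinality $\om$, consistently with Theorem~\ref{main1}.

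The remaining clauses add ingredients about singular and $\square$-type configurations. Suppose $\ka$ is singular with $\cf\ka=\om$, say $\ka=\sup_n\ka_n$ with $\ka_n$ regular, and that $\fraku(\ka^+)<\fraku(\ka)$; the observation gives a uniform $\ka$-indecomposable ultrafilter $U$ on $\ka^+$ with a base of size $\fraku(\ka^+)$. By the behavior of indecomposability at suprema of countable cofinality, $U$ is $\ka_n$-indecomposable for all large $n$, i.e.\ indecomposable at every regular cardinal in a final segment of $\ka$; but $U$ is a uniform ultrafilter on $\ka^+$, not on a measurable cardinal, so it must decompose at some regular cardinal below $\ka$, and when that cardinal is forced to be unbounded in $\ka$ we reach a contradiction -- this gives (3) in the countable-cofinality case. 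For $\ka$ a strong limit singular the ``must decompose below $\ka$'' step is supplied instead by pcf theory, a $\ka$-indecomposable uniform ultrafilter on $\ka^+$ with a small base being incompatible with the structure of a scale for a strong limit $\ka$; this gives (3) for strong limit singulars. Clause (4) is the quantitative refinement: the pushforward places $\{\la<\ka : \la$ regular, $\fraku(\la)>\fraku(\ka^+)\}$ inside the set of regular $\la<\ka$ at which the witnessing ultrafilter $U$ on $\ka^+$ is indecomposable, and the interval structure of the decomposability spectrum forces that set to be bounded in $\ka$. Finally, for (5): a $\square(\ka)$-sequence refutes the existence of a uniform ultrafilter on $\ka$ that is indecomposable at unboundedly many regular cardinals below $\ka$, because the coherent threading it supplies clashes with the coherent family of $U$-large sets that such an ultrafilter carries; hence every uniform ultrafilter on $\ka$ is $\la$-decomposable for cofinally many, and then for every, $\la<\ka$, so $\fraku(\la)\le\fraku(\ka)$.

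The pushforward observation is trivial, and everything hard lies in the structure theory it is fed into. The main obstacle, shared by clauses (2)--(5), is controlling the decomposability spectrum $\{\la\le\ka : U$ is $\la$-decomposable$\}$ of a uniform ultrafilter $U$: its closure behavior at successor cardinals (for (2)), its behavior at singular cardinals of countable cofinality (for (3) and (4)), the pcf input at strong limit singulars, and the precise way in which a $\square(\ka)$-sequence obstructs it (for (5)). It is only through such structural results that ``$U$ is $\la$-indecomposable on $\ka$'' can be turned into either an outright contradiction or a genuine bound on $\fraku$.
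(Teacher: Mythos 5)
Your pushforward observation and your proof of clause (1) match the paper exactly, and your outline of clause (2) is essentially the paper's (iterate $\fraku(\mu)\le\fraku(\mu^+)$ for regular $\mu$, which comes from the fact that every uniform ultrafilter on $\mu^+$ is $\mu$-decomposable because $\{\alpha<\mu^+:\cf(\alpha)=\mu\}$ is a non-reflecting stationary set); but note that the precise fact needed there is ``decomposable at exactly the predecessor $\mu$,'' not merely ``at some regular $\mu'\ge\la$,'' and you leave that key step unproved. The real gaps are in clauses (3) and (5). For (3) with $\cf(\ka)=\om$ you assert that the $\ka$-indecomposable witness $U$ on $\ka^+$ is $\ka_n$-indecomposable for all large $n$; this is backwards. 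The correct structure theory (Kunen--Prikry, and the paper's Proposition \ref{37}) says such a $U$ is $\mu$-\emph{decomposable} for all regular $\mu$ in a final segment below $\ka$. The paper's actual argument is entirely different: Theorem \ref{27} (every $\ka^+$-decomposable, $\cf(\ka)$-decomposable ultrafilter is $\ka$-decomposable) forces the witness to be $\om$-indecomposable, i.e.\ $\sigma$-complete; and a separate trick (Lemma \ref{5.6}: product the witness with a uniform ultrafilter on $\om$ without increasing the character) shows the witness can always be chosen non-$\sigma$-complete. Without that second ingredient, $\sigma$-completeness of the witness is not by itself a contradiction, so your sketch does not close. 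For the strong limit case the mechanism is also not ``incompatibility with a scale'': the indecomposable ultrafilter is allowed to exist, but (Proposition \ref{4.15}) its existence forces $2^\ka=\ka^+$, which makes $\fraku(\ka)\le 2^\ka=\ka^+\le\fraku(\ka^+)$ outright.

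For clause (5), your inference ``$\la$-decomposable for cofinally many $\la<\ka$, and then for every $\la<\ka$'' is unjustified: the decomposability spectrum is not downward closed, and the theorem needs decomposability at \emph{every} $\la<\ka$. The paper's Proposition \ref{48} instead shows that a uniform ultrafilter on a regular $\ka$ that is indecomposable at even one regular $\la<\ka$ already kills $\square(\ka)$; the mechanism is Kanamori's least-function theorem (available because $\la$-indecomposability implies non-$(\om,\la)$-regularity), which lets one assume $U$ is weakly normal and then thread the square sequence along the club of $\gamma$ with $\{\alpha:\gamma\in\lim(c_\alpha)\}\in U$. Singular $\la$ is then reduced to the regular case via Theorem \ref{27}. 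Your clause (4) is the closest to the paper, but it too silently needs Theorem \ref{27}: Kunen--Prikry requires $\cf(\ka)$-indecomposability, whereas Corollary \ref{4.2} only hands you $\ka$-indecomposability, and upgrading one to the other is precisely the paper's new decomposability theorem. In short, you have the right framing lemma, but the ``structure theory'' you appeal to is in several places misstated or reversed, and the two genuinely new technical inputs of the paper (Theorem \ref{27} and Lemma \ref{5.6}) are absent from your argument.
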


To prove (3) and (4) in this theorem, we  will show some new results about
(in)decomposable ultrafilters, which are interesting in its own right.
Recall that, for a cardinal $\ka$ and an ultrafilter $\calU$ over a set $S$,
$\calU$ is \emph{$\ka$-decomposable} if
there is a function $f:S \to \ka$ such that
$f^{-1}(X) \notin \calU$ for every $X \in [S]^{<\ka}$,
and it is \emph{$\ka$-indecomposable} if it is not $\ka$-decomposable. 
\begin{thm}[Theorem \ref{27}]\label{thm1.4}
Let $\ka$ be a singular cardinal,
and $\calU$ an ultrafilter over a set $S$.
If $\calU$ is $\ka^+$-decomposable and $\cf(\ka)$-decomposable,
then $\calU$ is $\ka$-decomposable as well.
\end{thm}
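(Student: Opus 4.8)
The plan is to first reduce to the situation of a uniform ultrafilter on $\ka^+$ carrying a $\mu$-decomposition (where $\mu=\cf(\ka)$), then to produce the required function $\ka^+\to\ka$ by playing the two decompositions against each other.

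\emph{Reduction.} Fix $g\colon S\to\ka^+$ witnessing $\ka^+$-decomposability, so $g_*\calU$ is uniform on $\ka^+$, and $h\colon S\to\mu$ witnessing $\mu$-decomposability, so $h_*\calU$ is uniform on $\mu$. Fixing a bijection $\ka^+\times\mu\cong\ka^+$ and pushing $\calU$ forward along $s\mapsto(g(s),h(s))$ yields an ultrafilter $\calU^*$ on $\ka^+$ that is again uniform — any $Y\in[\ka^+\times\mu]^{\le\ka}$ has first projection $Y_1\in[\ka^+]^{\le\ka}$, and its preimage is contained in $g^{-1}(Y_1)\notin\calU$ — and again $\mu$-decomposable, via the second-coordinate projection, which composed with $(g,h)$ is just $h$. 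Since $\calU^*=F_*\calU$ for $F=(g,h)$, any $f^*\colon\ka^+\to\ka$ witnessing $\ka$-decomposability of $\calU^*$ gives $f^*\circ F$ witnessing it for $\calU$. So we may assume $S=\ka^+$, $\calU$ is uniform on $\ka^+$, and we are handed a partition $\seq{B_i:i<\mu}$ of $\ka^+$ with $\bigcup_{i\in I}B_i\notin\calU$ whenever $\size{I}<\mu$. A first remark: every $B_i$ has size $\ka^+$, since the union of the $B_i$ of size $\le\ka$ has size $\le\mu\cdot\ka=\ka$, hence lies outside $\calU$, so the union of the large $B_i$ lies in $\calU$, and there must be $\ge\mu$ (hence all) of them or $\mu$-decomposability is violated.

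\emph{The goal.} Fix an increasing continuous sequence $\seq{\ka_i:i<\mu}$ of cardinals cofinal in $\ka$ with $\ka_0>\mu$. We want $f\colon\ka^+\to\ka$ with $f^{-1}(X)\notin\calU$ for all $X\in[\ka]^{<\ka}$. Equivalently, passing to the ultrapower $j\colon V\to M=\ult(V,\calU)$ (or the linear order of its ordinals, if $\calU$ is not countably complete), we must find an ordinal $F<j(\ka)$ with $F\notin j(X)$ for every $X\in[\ka]^{<\ka}$; and we have two ``gap'' facts at our disposal — $\mu$-decomposability gives $\sup j[\ka]=\sup_{i<\mu}j(\ka_i)<j(\ka)$, while $\ka^+$-decomposability gives $\sup j[\ka^+]<j(\ka^+)=(j(\ka)^+)^M$. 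The theorem asserts that these two gaps are enough to keep the sets $j(X)$ (for $X\in[\ka]^{<\ka}$) from exhausting $j(\ka)$.

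\emph{The attempt, and the obstacle.} The natural direct construction builds $f$ respecting the blocks, $f[B_i]\subseteq[\ka_i,\ka_{i+1})$. Then for $X\in[\ka]^{<\ka}$ of size $\nu$, with $i_0$ least such that $\nu<\ka_{i_0}$, the part of $X$ below $\ka_{i_0}$ is automatically harmless ($f^{-1}(X\cap\ka_{i_0})\subseteq\bigcup_{i<i_0}B_i\notin\calU$), so everything reduces to the thin pieces $X_i=X\cap[\ka_i,\ka_{i+1})$ (of size $\le\nu<\ka_i$) for $i\ge i_0$: one must arrange that $\bigcup_{i\ge i_0}(f\restriction B_i)^{-1}(X_i)\notin\calU$ for every choice of the $X_i$, getting this either because the union lives inside a $<\mu$-indexed subunion of the $B_i$ (killed by $\mu$-decomposability) or because it has size $\le\ka$ (killed by uniformity on $\ka^+$). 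The obstacle — and the point where I expect the real work to lie — is that $\size{B_i}=\ka^+>\ka$ forces every block map $f\restriction B_i$ to have a fibre of size $\ka^+$; if one is not careful, a single $X\in[\ka]^{<\ka}$ of size $\mu$ can pick up one large-fibre value from each of the $\mu$ blocks simultaneously, which throws $f^{-1}(X)$ into $\calU$. Circumventing this seems to require a more global construction than a naive block-by-block one — or, in ultrapower language, a genuinely structural argument (rather than a cardinality count) showing that the $j(X)$'s cannot cover the interval between $\sup j[\ka]$ and $j(\ka)$.
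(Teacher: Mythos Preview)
Your reduction to a uniform, $\cf(\ka)$-decomposable ultrafilter on $\ka^+$ is fine (the paper does not bother with it, working over the original $S$ throughout, but it does no harm). The problem is that the proof then stops: you correctly diagnose that a block-by-block map $f\restriction B_i\colon B_i\to[\ka_i,\ka_{i+1})$ cannot work, but you do not supply the ``more global construction'' you say is needed. As written this is an outline with an explicitly acknowledged gap, not a proof.

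The two missing ingredients are exactly the ones the paper uses. First, $\ka^+$-decomposability is upgraded to $(\ka,\ka^+)$-\emph{regularity}: by Kanamori's theorem (Theorem~\ref{25}), every uniform ultrafilter on $\ka^+$ is $(\ka,\ka^+)$-regular when $\ka$ is singular, and this pulls back along your map $g$. Regularity hands you, for each $s$, a set $x_s\in[\ka^+]^{<\ka}$ with $\{s:i\in x_s\}\in\calU$ for every $i<\ka^+$; this is vastly more information than the single block index your construction records. Second, one fixes a PCF \emph{scale} $\seq{f_i:i<\ka^+}$ in some $\prod_{\xi<\cf(\ka)}\ka_\xi$. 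The decomposition map is then $h(s)=\sup\{f_i(\xi_s):i\in x_s\}$, where $\xi_s$ is chosen (via the $\cf(\ka)$-decomposition) large enough that $\size{x_s}<\ka_{\xi_s}$. The point is that if $X\in[\ka]^{<\ka}$ had $h^{-1}(X)\in\calU$, then the scale would dominate $\xi\mapsto\sup(X\cap\ka_\xi)$ at some $f_j$; but $\{s:j\in x_s\}\in\calU$ forces $h(s)\ge f_j(\xi_s)>\sup(X\cap\ka_{\xi_s})$ on a large set, contradicting $h(s)\in X$. In ultrapower language: the scale is precisely the structural device that lets you locate an $F$ in the gap $[\sup j[\ka],j(\ka))$ avoiding every $j(X)$, because $j$ of the scale is cofinal in $j(\ka^+)$-many coordinates of $j(\prod\ka_\xi)$, and any small $X$ is dominated by one of them.
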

Since every $\ka$-decomposable ultrafilter is $\cf(\ka)$-decomposable,
this theorem tell us that
$\cf(\ka)$-decomposability and $\ka$-decomposability are equivalent on
$\ka^+$-decomposable ultrafilters.

For a limit cardinal $\ka$, let us say that an ultrafilter $\calU$ is \emph{almost $\mathop{<}\ka$-decomposable}
if there is $\la<\ka$ such that $\calU$ is $\mu$-decomposable for every regular $\mu$ with $\la<\mu<\ka$.
Kunen and Prikry (\cite{KP}) showed
that, if $\ka$ is singular, $\calU$ is a $\ka^+$-decomposable but $\cf(\ka)$-indecomposable ultrafilter,
then $\calU$ is almost $\mathop{<}\ka$-decomposable.
Lipparini (\cite{L}) showed the converse for $\cf(\ka)$-indecomposable ultrafilters:
If $\calU$ is $\cf(\ka)$-indecomposable,
then $\calU$ is $\ka^+$-decomposable if and only if
$\calU$ is almost $\mathop{<}\ka$-decomposable
(see Theorem \ref{KPth} and \ref{Li} below for details).
By using Theorem \ref{thm1.4}, we shall prove that $\cf(\ka)$-indecomposable assumption
in their theorem can be replaced by $\ka$-indecomposability
(Proposition \ref{37}, Corollary \ref{7.13}).

Finally we show that the failure of monotonicity has large cardinal strength,
so the large cardinal assumption in Theorem \ref{main1} cannot be eliminated.
\begin{thm}[Theorem \ref{8.4}]
The following theories are equiconsistent:
\begin{enumerate}
\item $\ZFC$+``there is a measurable cardinal''.
\item $\ZFC$+``there are cardinals $\ka, \la$
such that $\la<\ka$ but $\fraku(\ka)<\fraku(\la)$''.
\end{enumerate}
\end{thm}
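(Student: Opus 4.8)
The plan is to prove the two relative-consistency implications. For $(1)\Rightarrow(2)$ I would simply invoke Theorem \ref{4.4}: statement $(2)$ is precisely item $(1)$ of Theorem \ref{main1}, and the streamlined Raghavan--Shelah forcing used there needs no more than a measurable cardinal. The one thing worth keeping in mind is that a witness to $\fraku(\ka)<\fraku(\la)$ with $\la<\ka$ automatically has $\fraku(\ka)<\fraku(\la)\le 2^{\la}\le 2^{\ka}$, so the forcing must enlarge $2^{\ka}$ (beyond its GCH value) while preserving a uniform ultrafilter on $\ka$ whose base has size $<2^{\ka}$; a Prikry-type forcing singularizing a measurable $\ka$, arranged so that the ground-model normal measure still generates a uniform ultrafilter on $\ka$, together with making $\fraku(\la)=2^{\la}$ large for a suitable $\la<\ka$ (as in Lemma \ref{10}), does exactly this.

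For $(2)\Rightarrow(1)$, I would work in $\ZFC$ together with ``$\fraku(\ka)<\fraku(\la)$ for some $\la<\ka$'' and produce an inner model with a measurable cardinal; the existence of such an inner model yields $\mathrm{Con}(\ZFC+\text{``there is a measurable cardinal''})$ by the usual reflection argument. So suppose, toward a contradiction, that there is no inner model with a measurable; then the Dodd--Jensen core model $K$ exists and the covering lemma holds over it. Fix $\la<\ka$ with $\fraku(\la)>\fraku(\ka)$ and a uniform ultrafilter $\calU$ on $\ka$ with a base $\calB$ of size $\fraku(\ka)$. The basic link with indecomposability is that if $\calU$ were $\mu$-decomposable via $f\colon\ka\to\mu$, then $f_{*}\calU=\{Y\subseteq\mu:f^{-1}(Y)\in\calU\}$ would be a uniform ultrafilter on $\mu$ with base $\{f[B]:B\in\calB\}$ of size $\le\fraku(\ka)$, forcing $\fraku(\mu)\le\fraku(\ka)$. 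Hence $\calU$ is $\mu$-indecomposable whenever $\fraku(\mu)>\fraku(\ka)$; in particular $\calU$ is $\la$-indecomposable.

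I would then reduce to a regular witness. If $\la$ is regular, set $\nu=\la$. If $\la$ is singular, Theorem \ref{27} in contrapositive gives that $\calU$ is $\la^{+}$-indecomposable or $\cf(\la)$-indecomposable. In the first case set $\nu=\la^{+}$, noting $\la^{+}<\ka$ (otherwise $\calU$, being uniform on $\ka=\la^{+}$, would be $\la^{+}$-decomposable). In the second case, if $\cf(\la)=\om$ then $\calU$ is countably complete, so its ultrapower embedding is well-founded with critical point $\le\ka$, and that critical point is a measurable cardinal --- contradicting our assumption; and if $\cf(\la)>\om$, set $\nu=\cf(\la)$. In every case we now have a uniform ultrafilter $\calU$ on $\ka$, with a base of size $\fraku(\ka)$, that is $\nu$-indecomposable for some regular uncountable $\nu<\ka$.

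The remaining step --- and the main obstacle --- is to contradict this using the covering lemma. Here one invokes the core-model theory of indecomposable ultrafilters: covering over $K$ sharply limits the decomposition spectrum of uniform ultrafilters, and --- combined with the Kunen--Prikry and Lipparini propagation results (Theorems \ref{KPth} and \ref{Li}) and with Theorem \ref{27} --- it shows that, in the absence of an inner model with a measurable, every uniform ultrafilter on $\ka$ carrying a base of size $\fraku(\ka)$ is $\rho$-decomposable for every regular $\rho$ with $\aleph_{1}\le\rho<\ka$. This contradicts the $\nu$-indecomposability of $\calU$, so there is an inner model with a measurable cardinal, completing $(2)\Rightarrow(1)$. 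Isolating this precise consequence of the covering lemma is the technical heart of the argument; by contrast the pushforward computation and the reduction to inner-model existence are routine.
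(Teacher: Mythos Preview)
Your direction $(1)\Rightarrow(2)$ is fine once you simply invoke Theorem~\ref{4.4}: starting from a measurable $\ka$ and a strong limit singular $\mu>\ka$ of cofinality $\om_1$, the poset $\Add(\om,\mu)$ forces $\fraku(\ka)<\fraku(\om_1)$. Your added commentary about a ``Prikry-type forcing singularizing a measurable $\ka$'' is not what Theorem~\ref{4.4} does and should be dropped; in that model $\ka$ stays regular, and it is the $\cf(\mu)$-indecomposability of the normal measure (not any singularization) that drives the argument.

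For $(2)\Rightarrow(1)$ you correctly extract a $\la$-indecomposable uniform ultrafilter $\calU$ on $\ka$ via the pushforward bound $\fraku(\la)\le\chi(\calU)$ (this is exactly Corollary~\ref{4.2}). However, your reduction to a \emph{regular} witness $\nu$ via Theorem~\ref{27} is an unnecessary detour, and your final paragraph is a genuine gap. You claim that covering over $K$, together with Theorems~\ref{KPth}, \ref{Li} and~\ref{27}, yields that in the absence of an inner model with a measurable every uniform ultrafilter on $\ka$ is $\rho$-decomposable for all regular uncountable $\rho<\ka$; but Kunen--Prikry and Lipparini only \emph{propagate} decomposability between cardinals and cannot manufacture it from covering alone, and you yourself flag this step as ``the main obstacle'' without supplying an argument.

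The paper closes this gap by citing a single black-box result of Donder (Theorem~\ref{43}): if there is no inner model with a measurable cardinal, then every uniform ultrafilter over $\ka$ is $(\om,\la)$-regular for every $\la<\ka$ when $\ka$ is regular, and $(\om,\ka)$-regular when $\ka$ is singular. Combined with Lemma~\ref{reg-indecomp} (any $\la$-indecomposable ultrafilter fails to be $(\om,\la)$-regular, for \emph{arbitrary} $\la$), this immediately contradicts the $\la$-indecomposability of $\calU$; no case split on the regularity of $\la$ is needed. So the missing ingredient in your proposal is precisely Donder's regularity theorem, and once it is named the proof is two lines (this is the paper's Proposition~\ref{6.6} and Theorem~\ref{6.7}).
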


\begin{thm}[Theorem \ref{8.16}]
Suppose there are cardinals $\ka, \la$ such that
$\la \le \ka$ but $\fraku(\ka^+)<\fraku(\la)$.
Then there is an inner model with a proper class of strong cardinals.
\end{thm}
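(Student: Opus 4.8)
The plan is to turn the failure of monotonicity at $\ka^+$ into a uniform indecomposable ultrafilter living on the \emph{successor} cardinal $\ka^+$, and then to run a core-model lower-bound argument against it.

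\emph{Extracting the ultrafilter.} By Proposition~\ref{5.3+} we have $\fraku(\om)\le\fraku(\ka^+)<\fraku(\la)$, so $\la>\om$; replacing $\la$ if necessary by a regular cardinal $\le\ka$ whose ultrafilter number still exceeds $\fraku(\ka^+)$ (automatic when $\la$ is a successor, and obtained by passing below $\la$ when $\la$ is a limit cardinal), we may assume $\la$ is regular. Fix a uniform ultrafilter $\calU$ on $\ka^+$ with a base $\calB$ of size $\fraku(\ka^+)$. Since $\calU$ is uniform on the successor cardinal $\ka^+$, the identity map witnesses that $\calU$ is $\ka^+$-decomposable. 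On the other hand, $\calU$ must be $\la$-indecomposable: a decomposition $f\colon\ka^+\to\la$ would push $\calU$ forward to a uniform ultrafilter on $\la$ with base $\{\,f[A]\mid A\in\calB\,\}$ of size $\le\fraku(\ka^+)<\fraku(\la)$, which is absurd. The same pushforward argument gives that $\calU$ is $\mu$-indecomposable for every regular $\mu\le\ka$ with $\fraku(\mu)>\fraku(\ka^+)$.

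\emph{Normalizing.} Next I would bring $\calU$ into a convenient form using the results of this paper on (in)decomposable ultrafilters. Pushing an ultrafilter forward along a decomposition never enlarges its base and never destroys $\la$-indecomposability, so, recursing on the home cardinal, we may pass to a uniform, $\la$-indecomposable ultrafilter $\calV$ with a base of size $<\fraku(\la)$ that cannot be pushed down any further, hence lives on a successor cardinal $\nu^+$ (with $\la\le\nu$) and is $\nu$-indecomposable; the possibility that it lives on a limit cardinal is handled along the same lines. When $\nu$ is singular, $\nu^+$-decomposability together with $\nu$-indecomposability forces, via Theorem~\ref{thm1.4}, that $\calV$ is $\cf(\nu)$-indecomposable; the Kunen--Prikry theorem (Theorem~\ref{KPth}) then yields that $\calV$ is almost $\mathop{<}\nu$-decomposable, and Lipparini's theorem (Theorem~\ref{Li}), equivalently Propositions~\ref{37} and~\ref{7.13}, pins down the remainder of its decomposition pattern below $\nu$. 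Thus $\calV$ realizes a Kunen--Prikry-type configuration on a successor cardinal: decomposable cofinally often in an interval, indecomposable at $\la$ and at $\cf(\nu)$.

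\emph{The core-model argument.} Suppose, toward a contradiction, that there is no inner model with a proper class of strong cardinals. Then the core model $K$ below ``a proper class of strong cardinals'' exists as a definable inner model and enjoys the usual weak-covering, maximality and iterability properties. The ultrafilter $\calV$ induces an ultrapower embedding $j\colon V\to\ult(V,\calV)$, which is in general ill-founded since $\calV$ need not be countably complete, so one studies the action of $j$ on $K$ rather than $j$ itself. Reading the Kunen--Prikry pattern of $\calV$ against weak covering for $K$ --- continuity of $j$ where $\calV$ is decomposable, discontinuity at $\la$ and at $\cf(\nu)$ --- one uses the decomposition functions to extract a coherent sequence of $K$-extenders witnessing that arbitrarily large $K$-cardinals in the relevant interval are strong in $K$, contradicting the choice of $K$. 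Hence an inner model with a proper class of strong cardinals exists. I expect this last step to be the main obstacle: transferring the indecomposability calculus of $\calV$ onto the extender sequence of $K$ --- keeping careful track of which cardinals are measurable or singular in $K$ and which carry partial extenders --- and then harvesting the strong cardinals requires the full apparatus of the core model at this level (its covering lemma, comparison and iterability), in the spirit of the indecomposable-ultrafilter lower bounds of Prikry, Silver, Kunen, Gitik and Mitchell.
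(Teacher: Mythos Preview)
Your proposal misses the paper's route entirely and, in its place, sketches a direct core-model extraction that is not actually carried out. The paper does \emph{not} try to read strong cardinals off of the ultrafilter directly. Instead it goes through the square principles: from a $\la$-indecomposable uniform ultrafilter on $\ka^+$ (obtained exactly as in your first paragraph, via Corollary~\ref{4.2}), Proposition~\ref{48} shows that $\square(\ka^+)$ fails, and hence $\square_\ka$ fails. When $\ka$ is regular, one also pushes the ultrafilter down to $\ka$ (via $\ka$-decomposability, Lemma~\ref{2.4}) to kill $\square(\ka)$ as well; when $\ka$ is singular one has the failure of $\square_\ka$ at a singular cardinal. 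Either alternative feeds directly into Schimmerling's Theorem~\ref{fail square}, which is the black box that produces the inner model with a proper class of strong cardinals. No bespoke core-model analysis is needed.

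By contrast, your ``core-model argument'' paragraph is not a proof: you assert that the decomposition pattern of $\calV$ against weak covering lets one ``extract a coherent sequence of $K$-extenders witnessing that arbitrarily large $K$-cardinals \ldots are strong in $K$'', but you give no mechanism for this, and there is no such off-the-shelf theorem to cite. The known lower-bound results for indecomposable ultrafilters (Donder, Mitchell, Gitik) do not reach a proper class of strong cardinals; getting there is exactly what Schimmerling's square-failure theorem buys. Two smaller issues: your reduction to regular $\la$ (``passing below $\la$ when $\la$ is a limit cardinal'') is unjustified --- there is no reason some regular $\mu<\la$ must still satisfy $\fraku(\mu)>\fraku(\ka^+)$; the paper instead uses Theorem~\ref{27} to pass from $\la$-indecomposability to $\cf(\la)$- or $\la^+$-indecomposability. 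And your ``normalizing'' step, pushing the ultrafilter down until it ``cannot be pushed down any further, hence lives on a successor cardinal'', is neither correct nor needed.
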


\section{Preliminaries}
Throughout  this paper,
a \emph{filter} means a proper filter over a non-empty set.
A filter $\calF$ over a set $S$ is \emph{uniform}
if $\size{X}=\size{S}$ for every $X \in \calF$.
For an ultrafilter $\calU$ over a set $S$,
a subfamily $\calG \subseteq \calU$ is a \emph{base} for $\calU$
if for every $Y \in \calU$ there is $X \in \calG$ with $X \subseteq Y$.
Let $\chi(\calU)=\min \{\size{\calG} \mid \calG \subseteq \calU$ is a base$\}$.
$\chi(\calU)$ is called the \emph{character} of $\calU$.
Let $\mathfrak{u}(\ka)$ be the cardinal $\min\{\chi(\calU) \mid \calU$ is a uniform ultrafilter over $\ka\}$,
it is called the \emph{ultrafilter number at $\ka$}.
It is clear that $\mathfrak{u}(\ka) \le 2^\ka$.
\begin{thm}[Brendle and Shelah, Proposition 1.4 in \cite{BS}]
$\mathfrak{u}(\ka) \ge \ka^+$ and $\cf(\mathfrak{u}(\ka))>\om$.
\end{thm}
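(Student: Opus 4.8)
The plan is to prove the two assertions separately by diagonalization. For $\fraku(\ka)\ge\ka^+$, suppose toward a contradiction that a uniform ultrafilter $\calU$ on $\ka$ has a base $\calB=\{A_\xi:\xi<\ka\}$ of size $\le\ka$; since $\calU$ is uniform, $\size{A_\xi}=\ka$ for every $\xi$. Recursively on $\xi<\ka$, choose $b_\xi\ne c_\xi$ in $A_\xi$ avoiding $\{b_\eta:\eta<\xi\}\cup\{c_\eta:\eta<\xi\}$; this is possible because that set has size at most $2\cdot\size{\xi}<\ka=\size{A_\xi}$. Put $B=\{b_\xi:\xi<\ka\}$. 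Then $b_\xi\in A_\xi\cap B$ and $c_\xi\in A_\xi\setminus B$ for every $\xi$, so neither $B$ nor $\ka\setminus B$ contains a member of $\calB$; as $\calB$ is a base, this forces $B\notin\calU$ and $\ka\setminus B\notin\calU$, contradicting that $\calU$ is an ultrafilter.

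For $\cf(\fraku(\ka))>\om$ I would first record the following observation: if $\calV$ is a uniform ultrafilter on a set $S$ of size $\ka$ and $\mathcal{D}\subseteq\calV$ with $\size{\mathcal{D}}<\fraku(\ka)$, then there are disjoint sets $C^0,C^1\subseteq S$, neither belonging to $\calV$, such that every $D\in\mathcal{D}$ meets both $C^0$ and $C^1$. Indeed, $\mathcal{D}$ is not a base for $\calV$, since otherwise $\calV$ — a uniform ultrafilter on a set of size $\ka$ — would have a base of size $<\fraku(\ka)$; hence there is $Y_0\in\calV$ that contains no member of $\mathcal{D}$, and then $C^1:=S\setminus Y_0\notin\calV$ meets every $D\in\mathcal{D}$. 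Applying the same reasoning to the uniform ultrafilter $\calV\restriction Y_0$ and the family $\{D\cap Y_0:D\in\mathcal{D}\}$ produces $Y_1\in\calV\restriction Y_0$ containing no member of that family; then $C^0:=Y_0\setminus Y_1$ is disjoint from $C^1$, does not lie in $\calV$, and still meets every $D\in\mathcal{D}$.

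Now suppose toward a contradiction that $\fraku(\ka)=\mu$ with $\cf\mu=\om$; fix a uniform ultrafilter $\calU$ on $\ka$ with $\chi(\calU)=\mu$ and write a base of $\calU$ as $\bigcup_{n<\om}\calB_n$ with $\calB_n$ increasing and $\size{\calB_n}=\mu_n<\mu$. I would recursively choose pairwise disjoint sets $C^0_n,C^1_n$ (for $n<\om$), none of which lies in $\calU$, so that every $B\in\calB_n$ meets both $C^0_n$ and $C^1_n$: given the $C^i_k$ for $k<n$, the set $R_n=\bigcup_{k<n}(C^0_k\cup C^1_k)$ is a finite union of non-members of $\calU$, so $W_n:=\ka\setminus R_n$ lies in $\calU$ and has size $\ka$, and the observation applied to $\calU\restriction W_n$ and $\{B\cap W_n:B\in\calB_n\}$ yields the required $C^0_n,C^1_n\subseteq W_n$ (which, being inside $W_n$, avoid all earlier $C^i_k$). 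Finally set $A=\bigcup_{n<\om}C^0_n$. Given $B\in\calB$, let $n$ be least with $B\in\calB_n$; then $B$ meets $C^0_n\subseteq A$, so $B\not\subseteq\ka\setminus A$, forcing $\ka\setminus A\notin\calU$ and hence $A\in\calU$; but $B$ also meets $C^1_n$, which is disjoint from $A$, so $B\not\subseteq A$, forcing $A\notin\calU$ — a contradiction.

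The main obstacle is the cofinality statement. The crucial move is to split each $\calB_n$ by \emph{two disjoint} $\calU$-small sets rather than by a single set: any single splitter of $\calB_n$ necessarily has one side in $\calU$, which would break the recursion. The complementary point that makes the final amalgamation work is that, although one learns nothing about whether $\bigcup_n C^0_n$ belongs to $\calU$, the two halves of the construction together force $A$ to be simultaneously a member and a non-member of $\calU$.
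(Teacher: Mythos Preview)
The paper does not supply its own proof of this theorem; it is merely quoted with a citation to Brendle and Shelah and then used as a black box throughout. So there is nothing in the paper to compare your argument against.

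That said, your proof is correct. The first assertion is the expected diagonalization, and your care in choosing \emph{two} fresh points $b_\xi\neq c_\xi$ at each step (and having later choices avoid both) is exactly what guarantees $c_\xi\notin B$ and hence $A_\xi\nsubseteq B$. For the cofinality claim, the observation you isolate---that any subfamily of a uniform ultrafilter of size below $\fraku(\ka)$ can be refined by two \emph{disjoint} $\calU$-null sets each meeting every member of the subfamily---is the right engine; it makes the recursion go through (since finite unions of null sets stay null) and ensures at the end that $A=\bigcup_n C^0_n$ and its complement each meet every base element. This is essentially the argument of the cited Brendle--Shelah paper.
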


Let  $\la$ be a cardinal.
A filter $\calF$ over a set $S$ is said to be \emph{$\la$-decomposable}
if there is a family $\{A_\alpha \mid \alpha<\la\}$
such that $\bigcup_{\alpha<\la} A_\alpha =S$
and $\bigcup_{\alpha \in X} A_\alpha \notin \calF$
for every $X \in [\la]^{<\la}$.
$\calF$ is \emph{$\la$-indecomposable} if
it is not $\la$-decomposable.
The following lemmas can be verified easily:
\begin{lemma}
For a filter $\calF$ over a set $S$ and a cardinal $\la$,
the following are equivalent:
\begin{enumerate}
\item $\calF$ is $\la$-decomposable.
\item There is $f:S \to \la$ such that
for every $X \subseteq \la$ with size $<\la$,
we have $f^{-1}(X) \notin \calF$.
\end{enumerate}
\end{lemma}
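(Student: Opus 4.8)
The plan is to establish the two implications separately, each by an explicit and routine translation between a $\la$-indexed cover of $S$ and a function $S \to \la$, using nothing about $\la$ beyond the fact that it is an ordinal (so in particular the argument is uniform whether $\la$ is regular, singular, a limit or a successor).

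First I would prove $(1) \Rightarrow (2)$. Suppose $\{A_\alpha \mid \alpha < \la\}$ witnesses $\la$-decomposability of $\calF$. Define $f : S \to \la$ by setting $f(s)$ to be the least $\alpha < \la$ with $s \in A_\alpha$; this is well-defined exactly because $\bigcup_{\alpha < \la} A_\alpha = S$, so every $s \in S$ lies in some $A_\alpha$ and the class of such $\alpha$ has a least element. Given any $X \in [\la]^{<\la}$, note that $f^{-1}(X) \subseteq \bigcup_{\alpha \in X} A_\alpha$: if $f(s) \in X$ then $s \in A_{f(s)}$. Since $\calF$ is a filter and hence upward closed, $f^{-1}(X) \in \calF$ would force $\bigcup_{\alpha \in X} A_\alpha \in \calF$, contradicting the choice of the $A_\alpha$. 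Therefore $f^{-1}(X) \notin \calF$, which is exactly clause (2).

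For $(2) \Rightarrow (1)$, take $f : S \to \la$ as in clause (2) and put $A_\alpha = f^{-1}(\{\alpha\})$ for each $\alpha < \la$. Then $\bigcup_{\alpha < \la} A_\alpha = f^{-1}(\la) = S$, and for every $X \in [\la]^{<\la}$ we have $\bigcup_{\alpha \in X} A_\alpha = f^{-1}(X) \notin \calF$ directly by the hypothesis on $f$. Thus the family $\{A_\alpha \mid \alpha < \la\}$ witnesses $\la$-decomposability.

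There is no genuine obstacle in this lemma; the only two points that deserve a word are the well-definedness of $f$ in the first direction (which relies solely on the $A_\alpha$ covering $S$ and on $\la$ being well-ordered) and the use of upward closure of $\calF$ to pass from non-membership of a union of pieces to non-membership of the pullback $f^{-1}(X)$. Everything else is a direct unwinding of the two definitions.
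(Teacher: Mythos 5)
Your proof is correct and is exactly the routine verification the paper has in mind (the paper states this lemma without proof, remarking only that it "can be verified easily"). Both directions are handled properly, including the two points worth flagging: the least-index definition of $f$ and the use of upward closure of $\calF$.
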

\begin{lemma}\label{2.3}
If $\calF$ is a $\la$-indecomposable filter, then every filter extending $\calF$ is
$\la$-indecomposable.
\end{lemma}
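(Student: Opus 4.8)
The plan is to prove the contrapositive. Suppose that some filter $\calG$ over $S$ with $\calF \subseteq \calG$ is $\la$-decomposable; I will show that $\calF$ itself is already $\la$-decomposable, which contradicts the hypothesis. Fix a family $\{A_\alpha \mid \alpha<\la\}$ witnessing the $\la$-decomposability of $\calG$, so that $\bigcup_{\alpha<\la} A_\alpha = S$ and $\bigcup_{\alpha \in X} A_\alpha \notin \calG$ for every $X \in [\la]^{<\la}$.

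The key observation is that this same family witnesses the $\la$-decomposability of $\calF$. Indeed, the covering condition $\bigcup_{\alpha<\la} A_\alpha = S$ makes no reference to the filter at all, so it holds verbatim. For the smallness condition, since $\calF \subseteq \calG$, every set that fails to belong to $\calG$ also fails to belong to $\calF$; hence $\bigcup_{\alpha \in X} A_\alpha \notin \calG$ implies $\bigcup_{\alpha \in X} A_\alpha \notin \calF$ for each $X \in [\la]^{<\la}$. Thus $\{A_\alpha \mid \alpha<\la\}$ is a decomposition of $\calF$, so $\calF$ is $\la$-decomposable, a contradiction.

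Since the argument is essentially a one-line monotonicity remark, there is no real obstacle; the only point worth flagging is the reading of ``extending'': here a filter extending $\calF$ is a filter over the same underlying set $S$ that contains $\calF$ as a subfamily, and it is precisely because the underlying set (and hence the candidate partition $\{A_\alpha \mid \alpha<\la\}$) is unchanged that the witness transfers directly. One could equally run the proof through the function characterization: a single $f \colon S \to \la$ witnessing decomposability of $\calG$ witnesses it for $\calF$ for the same reason, since $f^{-1}(X)\notin\calG$ forces $f^{-1}(X)\notin\calF$ for all $X \in [\la]^{<\la}$.
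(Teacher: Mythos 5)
Your proof is correct, and it is exactly the intended one-line monotonicity argument: the paper omits the proof entirely (``can be verified easily''), and your observation that a witnessing family $\{A_\alpha \mid \alpha<\la\}$ for a larger filter $\calG \supseteq \calF$ already witnesses decomposability of $\calF$ --- because $B \notin \calG$ implies $B \notin \calF$ --- is precisely that easy verification. Nothing is missing.
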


It is known that the existence of an indecomposable uniform ultrafilter is 
a large cardinal property.

\begin{thm}[Prikry and Silver \cite{PS}]\label{PS theorem}
Let $\la<\ka$ be regular cardinals.
If $\ka$ has a $\la$-indecomposable uniform ultrafilter,
then every stationary subset of $\{\alpha<\ka \mid \cf(\alpha)=\la\}$ reflects,
that is, for every stationary $S \subseteq \{\alpha<\ka \mid \cf(\alpha)=\la\}$,
there is $\beta<\ka$ such that $\cf(\beta)>\om$ and $S \cap \beta$ is stationary in $\beta$.
\end{thm}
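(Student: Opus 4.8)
The plan is to argue by contradiction via the ultrapower by $\calU$. Suppose $\calU$ is a $\la$-indecomposable uniform ultrafilter over $\ka$, let $S\subseteq\{\alpha<\ka\mid\cf(\alpha)=\la\}$ be stationary, and assume toward a contradiction that $S$ does not reflect, so that $S\cap\beta$ is non-stationary in $\beta$ for every $\beta<\ka$ with $\cf(\beta)>\om$. Form the ultrapower embedding $j\colon V\to M\cong\ult(V,\calU)$ and put $\delta=\sup j[\ka]$. Uniformity of $\calU$ gives $j(\alpha)<[\mathrm{id}]_\calU$ for each $\alpha<\ka$ (since $\{\xi<\ka\mid\xi>\alpha\}\in\calU$), hence $\delta\le[\mathrm{id}]_\calU<j(\ka)$; in particular $\delta$ is a limit ordinal below $j(\ka)$. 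Next, $\la$-indecomposability translates into $j(\la)=\sup j[\la]$: for any $f\colon\ka\to\la$ there is, by $\la$-indecomposability together with regularity of $\la$, some $\zeta<\la$ with $\{\xi\mid f(\xi)\le\zeta\}\in\calU$, i.e.\ $[f]_\calU<j(\zeta+1)$, and every ordinal below $j(\la)$ is of the form $[f]_\calU$. Applying $j$ to a continuous cofinal $\la$-sequence through any $\alpha<\ka$ with $\cf(\alpha)=\la$ and using $j(\la)=\sup j[\la]$ then yields $\sup j[\alpha]=j(\alpha)$ for all such $\alpha$.

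The core of the proof is to show that, in $M$, the set $j(S)\cap\delta$ is stationary in $\delta$ while $\cf^M(\delta)>\om$; since $j$ is elementary, $M$ believes that ``$j(S)$ does not reflect below $j(\ka)$'', and as $\delta<j(\ka)$ this is a contradiction. For the cofinality: were $\seq{\delta_n\mid n<\om}\in M$ cofinal in $\delta$, write it as $[F]_\calU$ and choose for each $n$ some $\beta_n<\ka$ with $\{\xi\mid F(\xi)(n)<\beta_n\}\in\calU$, which is possible because $\delta_n<\delta=\sup j[\ka]$; then $\beta:=\sup_n\beta_n<\ka$ by regularity of $\ka$, and $\delta_n<j(\beta)$ for every $n$, so $\delta=\sup_n\delta_n\le j(\beta)<\delta$, absurd. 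For the stationarity: let $E\in M$ with $M\models$``$E$ is a club of $\delta$''. Since $\delta=\sup j[\ka]$ and $E$ is $M$-unbounded in $\delta$, for each $\alpha<\ka$ there is a least $\beta>\alpha$ with $E\cap(j(\alpha),j(\beta))\neq\emptyset$; call it $e(\alpha)$, so $e\colon\ka\to\ka$ and $e(\alpha)>\alpha$. Let $C_E$ be the club of limit ordinals $\gamma<\ka$ closed under $e$. For $\gamma\in C_E$ one has $\sup(E\cap j(\gamma))\ge\sup j[\gamma]$, and if in addition $\cf(\gamma)=\la$ then $\sup j[\gamma]=j(\gamma)$ by the previous paragraph, so $j(\gamma)$ is a limit point of $E$ and hence $j(\gamma)\in E$ (as $E$ is closed in $\delta$ and $j(\gamma)<\delta$). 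Picking $\gamma\in C_E\cap S$, which is nonempty because $S$ is stationary, we get $j(\gamma)\in E\cap j(S)$. Since $E$ was arbitrary, $j(S)\cap\delta$ is stationary in $\delta$ in $M$, completing the contradiction.

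The main obstacle is that all of this presupposes $M$ is well-founded, which holds exactly when $\calU$ is $\sigma$-complete, equivalently $\om$-indecomposable. When $\la=\om$ this is automatic, and the argument above already proves the theorem. When $\la>\om$, however, $\calU$ may be $\om$-decomposable, in which case $M$ is ill-founded already below $j(\om)$ and every manipulation with ordinals below $\delta$ becomes meaningless. Handling this case is the real difficulty: one must analyse a countably incomplete $\la$-indecomposable ultrafilter through the non-principal ultrafilter $\calV$ it induces on $\om$, and more generally through its decomposition structure, so as to reduce to the $\sigma$-complete situation or to run an analogous argument directly. This is precisely the kind of fine analysis of decomposable ultrafilters carried out by Kunen and Prikry, and it is where I would expect to spend most of the effort.
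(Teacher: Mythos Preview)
The paper does not give its own proof of this theorem; it is quoted from Prikry--Silver without argument, so there is nothing in the paper to compare your attempt against directly. I will therefore assess the proposal on its merits and point you toward the relevant techniques the paper \emph{does} develop.

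Your ultrapower argument is correct when $\calU$ is $\sigma$-complete (in particular when $\la=\om$): the identities $\sup j[\la]=j(\la)$ and $\sup j[\alpha]=j(\alpha)$ for $\cf(\alpha)=\la$, the verification that $\cf^M(\delta)>\om$, and the stationarity of $j(S)\cap\delta$ in $M$ are all sound, and the contradiction with elementarity follows cleanly.

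The genuine gap is exactly where you locate it, and you do not close it. For uncountable $\la$ with $\calU$ countably incomplete the ultrapower is ill-founded, $\delta=\sup j[\ka]$ need not be an $M$-ordinal, and every step involving ``clubs in $\delta$'' or ``$\cf^M(\delta)$'' loses meaning. Saying that one should perform a ``Kunen--Prikry style analysis'' through the induced ultrafilter on $\om$ is not a proof, and in fact that decomposition does not hand you a $\sigma$-complete ultrafilter on $\ka$ to which your argument would apply. The classical route avoids ultrapowers entirely. From a non-reflecting stationary $S\subseteq\{\alpha<\ka\mid\cf(\alpha)=\la\}$ one fixes clubs $c_\alpha\subseteq\alpha$ of order type $\la$ for $\alpha\in S$; non-reflection then yields the disjointifying functions $F_\beta$ exactly as in Theorem~\ref{5.3} of the paper, and $\la$-indecomposability of $\calU$ (together with a reduction to weak normality via Lemma~\ref{reg-indecomp} and Theorem~\ref{47}) produces the contradiction directly, with no case split on $\sigma$-completeness. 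The paper's proof of Proposition~\ref{7.2} is a diagonal elaboration of precisely this scheme; stripping out the diagonalization over the $\ka_n$'s gives a template for the single-cardinal Prikry--Silver argument you are after.
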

It is known that such a stationary reflection property has large cardinal strength,
at least a Mahlo cardinal. See also Inamdar and Rinot \cite{IR}, and
Lambie-Hanson and Rinot \cite{LHR} for recent developments 
in this direction.

If $\ka$ is regular, then the set $\{\alpha<\ka^+ \mid \cf(\alpha)=\ka\}$
is a non-reflecting stationary set.
By this observation, we have:
\begin{lemma}\label{2.4}
If $\ka$ is regular,
then every uniform ultrafilter over $\ka^+$ is $\ka$-decomposable.
\end{lemma}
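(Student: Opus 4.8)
The plan is a short proof by contradiction via the Prikry--Silver reflection theorem (Theorem \ref{PS theorem}). Suppose $\ka$ is regular but some uniform ultrafilter $\calU$ over $\ka^+$ is $\ka$-indecomposable. Since $\ka<\ka^+$ are both regular and $\ka^+$ carries a $\ka$-indecomposable uniform ultrafilter, Theorem \ref{PS theorem}, applied with the roles of $\la$ and $\ka$ there played by $\ka$ and $\ka^+$, gives that every stationary subset of $S:=\{\alpha<\ka^+\mid \cf(\alpha)=\ka\}$ reflects. In particular, taking $S$ itself, there is $\beta<\ka^+$ with $\cf(\beta)>\om$ such that $S\cap\beta$ is stationary in $\beta$.

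I will contradict this by checking that $S$ is a \emph{non-reflecting} stationary set, which is the observation recorded just before the statement. First, $S$ is stationary in $\ka^+$: given a club $D\subseteq\ka^+$, build an increasing continuous sequence $\langle d_i\mid i<\ka\rangle$ of points of $D$ (possible since $\cf(\ka^+)=\ka^+>\ka$); then $\beta^{*}:=\sup_{i<\ka}d_i$ lies in $D$ and has $\cf(\beta^{*})=\ka$, so $\beta^{*}\in D\cap S$. Second, $S$ reflects nowhere: let $\beta<\ka^+$ with $\cf(\beta)>\om$. Since $\beta<\ka^+$ we have $\cf(\beta)\le\ka$, and $\cf(\beta)$ is a regular uncountable cardinal, so fixing a closed cofinal $C\subseteq\beta$ of order type $\cf(\beta)$, the set $C'$ of limit points of $C$ below $\beta$ is a club in $\beta$. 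For $\gamma\in C'$ we have $\cf(\gamma)=\cf(\ot(C\cap\gamma))\le\ot(C\cap\gamma)<\ot(C)=\cf(\beta)\le\ka$, hence $\cf(\gamma)<\ka$, and in particular $\gamma\notin S$; thus $C'$ is a club in $\beta$ disjoint from $S$, so $S\cap\beta$ is not stationary in $\beta$. Applying this to the $\beta$ produced in the first paragraph yields the desired contradiction, so $\calU$ cannot be $\ka$-indecomposable; that is, every uniform ultrafilter over $\ka^+$ is $\ka$-decomposable.

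There is essentially no obstacle here; the only points to be careful about are the bookkeeping in the hypotheses of Theorem \ref{PS theorem} (one really does need $\ka<\ka^+$ regular, which is automatic), and, in the non-reflection argument, the case $\cf(\beta)=\ka$, where the order-type computation still forces every limit point of $C$ below $\beta$ to have cofinality strictly less than $\ka$.
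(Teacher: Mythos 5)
Your proof is correct and is exactly the argument the paper intends: it applies the Prikry--Silver theorem (Theorem \ref{PS theorem}) with $(\la,\ka)$ there instantiated as $(\ka,\ka^+)$ and contradicts it with the standard fact that $\{\alpha<\ka^+\mid\cf(\alpha)=\ka\}$ is a non-reflecting stationary set, which is precisely the observation the paper records just before the lemma. The details you supply (stationarity via a continuous $\ka$-chain through a club, and non-reflection via the limit points of a club of order type $\cf(\beta)\le\ka$) are accurate, including the boundary case $\cf(\beta)=\ka$.
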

We note that this observation is immediate  by
Kunen and Prikry's result (\cite{KP}) that if $\ka$ is regular,
then every $\ka^+$-descendingly incomplete ultrafilter is
$\ka$-descendingly incomplete.

If $\delta$ is a cardinal,
then every $\delta^+$-complete ultrafilter is $\delta$-indecomposable,
and an ultrafilter is $\sigma$-complete if and only if it is $\omega$-indecomposable.

Non-large cardinal can have an indecomposable ultrafilter.
\begin{thm}[Ben-David and Magidor \cite{BM}]\label{BM theorem}
Suppose GCH and there is a supercompact cardinal.
Then there is a forcing extension in which
GCH holds and $\om_{\om+1}$ has a uniform ultrafilter $\calU$
such that $\calU$ is $\om_n$-indecomposable for every $0<n<\om$.
\end{thm}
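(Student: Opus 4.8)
The plan is to produce, already in the ground model, a uniform ultrafilter on $\ka^{+}$ with the correct indecomposability profile --- where $\ka$ is the supercompact --- and then to preserve it through a Prikry-type forcing that turns $\ka$ into $\aleph_\om$. First note what that profile must be: in the target model $\aleph_{\om+1}$ is the successor $\ka^{+}$ of the cardinal $\ka$ (regular in $V$) that has been collapsed to $\aleph_\om$, so by Lemma~\ref{2.4} no uniform ultrafilter on it can be $\ka$-indecomposable; and $\ka=\aleph_\om$ is singular of cofinality $\om$, so by Theorem~\ref{27} such an ultrafilter also cannot be simultaneously $\om$-indecomposable and $\aleph_\om$-indecomposable. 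So one aims at an ultrafilter decomposable at $\om$ and at $\aleph_\om$ but indecomposable at every $\aleph_n$ with $0<n<\om$; in particular it will fail to be $\sigma$-complete.

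Assume GCH and, after a Laver preparation, that the $\ka^{+}$-supercompactness of $\ka$ is indestructible under $\ka$-directed-closed forcing. Fix $j\colon V\to M$ witnessing this, so $\crit(j)=\ka$, ${}^{\ka^{+}}M\subseteq M$, and $j(\ka)>\ka^{+}$, and set
\[
\calU_0=\{X\subseteq\ka^{+} : \textstyle\sup j[\ka^{+}]\in j(X)\}.
\]
Then $\calU_0$ is a uniform ultrafilter on $\ka^{+}$ --- uniformity because a subset of $\ka^{+}$ bounded below $\ga<\ka^{+}$ has $j$-image inside $j(\ga)<\sup j[\ka^{+}]$ --- and it is $\delta^{+}$-complete for every $\delta<\ka$. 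Hence, fixing any increasing sequence $\seq{\ka_n : n<\om}$ cofinal in $\ka$ and using $\ka_n^{+}<\ka$, every function $\ka^{+}\to\ka_n$ is constant on a set in $\calU_0$; in particular $\calU_0$ is $\ka_n$-indecomposable for every $n$. (By Lemma~\ref{2.4}, $\calU_0$ is itself $\ka$-decomposable, as the profile demands.)

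Now force with a Prikry-type forcing $\bbP$ of size $\ka$ --- of the Magidor/Gitik ``Prikry forcing with interleaved Levy collapses'' type, built from a coherent system of measures on $\ka$ --- arranged so that in $V[G]$ the Prikry points realise $\ka_n=\aleph_n^{V[G]}$; then $\ka=\aleph_\om$ and $\ka^{+V}=\aleph_{\om+1}$, $\bbP$ preserves all cardinals $\ge\ka^{+}$ and their cofinalities, and GCH is preserved --- the latter being automatic from $|\bbP|=\ka$, the $\ka^{+}$-c.c., and the closure of the collapses, while the Prikry property keeps $\ka^{+}$ uncollapsed. In $V[G]$ let $\calU$ be an ultrafilter on $\aleph_{\om+1}=\ka^{+V}$ extending the filter generated by $\calU_0$. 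By a $\ka^{+}$-c.c.\ covering argument every subset of $\ka^{+}$ of size $\le\ka$ in $V[G]$ lies inside one of size $\le\ka$ in $V$, whose complement is in $\calU_0$; so $\calU$ is uniform. By Lemma~\ref{2.3} it then suffices to show that the filter generated by $\calU_0$ remains $\ka_n$-indecomposable in $V[G]$ for every $0<n<\om$; i.e.\ that $\bbP$ adds no function decomposing it into $\ka_n$ pieces.

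This last preservation claim is the heart of the matter and the step I expect to be genuinely hard. The tool is the Prikry property of $\bbP$ together with a factorisation $\bbP\cong\bbP_{\le n}\ast\bbP_{>n}$ in which $\bbP_{\le n}$ has size $\le\ka_n$ (and makes $\ka_n=\aleph_n^{V[G]}$) while $\bbP_{>n}$ is, over $V[\bbP_{\le n}]$, highly closed: from a name for a candidate decomposition $f\colon\ka^{+}\to\ka_n$ one passes by direct extensions, using the Prikry property, to a condition deciding $f$ up to the closed tail, and then --- using the Levy--Solovay stability of $\calU_0$'s completeness under the small forcing $\bbP_{\le n}$, the closure of the tail, and a density/fusion argument against the measures defining $\bbP$ guided by the ground-model embedding $j$ --- one extracts $g\colon\ka^{+}\to\ka_n$ in $V$ with $f=g$ on a set in $\calU_0$, contradicting the $\ka_n$-indecomposability of $\calU_0$. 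It is in reflecting these new functions into $M$ that the full strength of supercompactness enters. Granting the claim, $\calU$ is a uniform ultrafilter on $\aleph_{\om+1}$ that is $\aleph_n$-indecomposable for every $0<n<\om$ while GCH holds in $V[G]$, which is exactly the assertion.
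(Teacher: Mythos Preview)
The paper does not prove this theorem --- it is stated as a cited result of Ben-David and Magidor with no argument given --- so there is no in-paper proof to compare your attempt against.

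Your outline follows a plausible strategy, but the preservation step you yourself flag as ``the heart of the matter'' is a genuine gap, and the sketch you offer does not close it. The factorisation $\bbP\cong\bbP_{\le n}\ast\bbP_{>n}$ holds only below a condition with a fixed stem, and the tail $\bbP_{>n}$ is not closed in the ordinary sense --- only its direct-extension order is. The Prikry property decides a single statement by a direct extension; to reduce an arbitrary new $f\colon\ka^{+}\to\ka_n$ to a ground-model function on a $\calU_0$-large set one needs a diagonal argument across $\ka^{+}$ many coordinates, and nothing in your sketch supplies the normality-type ingredient that would make such a diagonalisation succeed. Your appeal to ``reflecting into $M$'' is likewise unexplained: a naive lift of $j$ to $j^{*}\colon V[G]\to M[H]$ with $\crit(j^{*})=\ka$ would make every $f\colon\ka^{+}\to\ka_n$ in $V[G]$ constant on a set in the ultrafilter derived from $j^{*}$ at $\sup j[\ka^{+}]$, giving a $\sigma$-complete uniform ultrafilter on $\aleph_{\om+1}$ and hence a measurable cardinal below $\aleph_{\om+1}$ --- absurd. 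So whatever use of $j$ you intend must be considerably more delicate than a lift, and you have not said what it is. (Incidentally, the Laver preparation you perform is idle: Prikry-type forcings with interleaved collapses are not $\ka$-directed-closed, so the indestructibility you arrange is never invoked.) As written, this is a reasonable plan rather than a proof.
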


Let $\mu$, $\nu$ be cardinals.
An ultrafilter $\calU$ over a set $S$ is \emph{$(\mu, \nu)$-regular}
if there is a family $\calF \subseteq \calU$ of size $\nu$
such that  for every $\calF' \in [\calF]^\mu$,
we have $\bigcap \calF'=\emptyset$.
It is easy to check that an ultrafilter $\calU$ over $S$ is $(\mu, \nu)$-regular if
and only if there is a family $\{x_s \mid s \in S\}$ such that
$x_s \in [\nu]^{<\mu}$, and for every $i<\nu$, the set $\{s \in S \mid i \in x_s\}$ is in $\calU$.

The following lemma is folklore,
and we give a proof of it for the completeness.
\begin{lemma}\label{reg-indecomp}
Every $\nu$-indecomposable ultrafilter is not
$(\om, \nu)$-regular.
\end{lemma}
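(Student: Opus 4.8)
The plan is to establish the contrapositive directly: assuming $\calU$ is an $(\om,\nu)$-regular ultrafilter over a set $S$, I will exhibit an explicit $\nu$-decomposition of $\calU$, so that $\calU$ is not $\nu$-indecomposable. (We may assume $\nu \ge \om$; for finite $\nu$ the statement is vacuous.) First I invoke the second characterization of regularity recorded above to fix a family $\{x_s \mid s \in S\}$ with $x_s \in [\nu]^{<\om}$ such that $A_i := \{s \in S \mid i \in x_s\} \in \calU$ for every $i<\nu$. The key idea is to index the decomposition not by $\nu$ itself but by the collection $[\nu]^{<\om}$ of finite subsets of $\nu$; since $\nu$ is infinite, $\size{[\nu]^{<\om}} = \nu$, so composing with a bijection $[\nu]^{<\om} \cong \nu$ will turn the construction into a bona fide $\nu$-decomposition in the sense of the definition.

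Concretely, for $y \in [\nu]^{<\om}$ I set $B_y := \{s \in S \mid x_s = y\}$. The family $\{B_y \mid y \in [\nu]^{<\om}\}$ clearly covers $S$, as each $s$ lies in $B_{x_s}$. The remaining task is to verify that $\bigcup_{y \in Z} B_y \notin \calU$ whenever $Z \in [\,[\nu]^{<\om}\,]^{<\nu}$. Given such a $Z$, the set $W := \bigcup Z = \bigcup_{y \in Z} y$ is a union of fewer than $\nu$ finite sets, so $\size{W} < \nu$ (here is where $\nu \ge \om$ is used), and hence I may pick $i \in \nu \setminus W$. If $s \in B_y$ for some $y \in Z$, then $i \notin y = x_s$, so $s \notin A_i$; therefore $\bigcup_{y \in Z} B_y \subseteq S \setminus A_i$, and since $A_i \in \calU$ this shows $\bigcup_{y \in Z} B_y \notin \calU$. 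This completes the verification that $\calU$ is $\nu$-decomposable, proving the lemma.

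There is really no hard step here — the argument needs no case analysis, no appeal to uniformity, and no completeness hypothesis on $\calU$ — so the only point worth flagging is the choice of index set, which is the one genuinely conceptual move. The naive attempt to index a $\nu$-decomposition directly by $\nu$ (for instance, by recording some distinguished ordinal of $x_s$) fails precisely because every $A_i$ already lies in $\calU$: pieces of the form ``$i$ is the distinguished ordinal of $x_s$'' tend to aggregate, under unions over $<\nu$ many indices, into a superset of some $A_i$ and hence into a $\calU$-set. Indexing by the finite sets $x_s$ themselves circumvents this, since a union over $<\nu$ many indices $y$ only ever mentions the ordinals in the set $W$ of size $<\nu$, leaving an ordinal $i \notin W$ available to witness that the union is not in $\calU$.
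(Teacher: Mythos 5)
Your proof is correct and is essentially the paper's argument in contrapositive form: both use the map $s \mapsto x_s$ into $[\nu]^{<\om}$, the fact that $\nu^{<\om}=\nu$, and the observation that a union of fewer than $\nu$ finite subsets of $\nu$ omits some ordinal $i$, whose witnessing set $A_i\in\calU$ then yields the contradiction (the paper) or the decomposition (you). The difference is purely presentational.
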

\begin{proof}
For a given $\{x_s \mid s \in S\}$ with $x_s \in [\nu]^{<\om}$,
suppose $\{s \in S \mid \beta \in x_s\} \in \calU$ for every $\beta <\nu$.
Because $\nu^{<\om}=\nu$ and $\calU$ is $\nu$-indecomposable,
we can find $A \subseteq [\nu]^{<\om}$ such that
$\size{A}<\nu$ and $\{s \in S \mid x_s \in A \} \in \calU$.
We have $\size{\bigcup A}<\nu$,
hence there is $\beta \in \nu \setminus \bigcup A$.
But then $\{s \in S \mid \beta \notin x_s \} \in \calU$,
this is a contradiction.
\end{proof}

Let $\calU$ be an ultrafilter over a set $S$.
For a set  $T$ and $f:S \to T$,
let $f_*(\calU)=\{X \subseteq T \mid f^{-1}(X) \in \calU\}$.
$f_*(\calU)$ is an ultrafilter (possibly principal) over $T$.
The next lemma is folklore and can be verified easily.
\begin{lemma}\label{2.8}
Let $\calU$ be an ultrafilter over a set $S$,
and $\la$ a cardinal.
\begin{enumerate}
\item Let $f:S \to T$.
Then $\chi(f_*(\calU)) \le \chi(\calU)$, and 
if $\calU$ is $\la$-indecomposable, then so is
$f_*(\calU)$.
\item $\calU$ is
$\la$-decomposable if and only if
there is a function $g:S \to \la$
such that $g_*(\calU)$ is a uniform ultrafilter over $\la$.
\end{enumerate}
\end{lemma}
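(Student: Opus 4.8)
The plan is to argue everything directly from the definition $f_*(\calU)=\{X \subseteq T \mid f^{-1}(X) \in \calU\}$ together with the functional reformulation of $\la$-decomposability (the existence of a map $f \colon S \to \la$ with $f^{-1}(X) \notin \calF$ for all $X \in [\la]^{<\la}$). For the character inequality in (1), fix a base $\calG \subseteq \calU$ with $\size{\calG}=\chi(\calU)$, and set $\calG' = \{f(A) \mid A \in \calG\}$. Each $f(A)$ lies in $f_*(\calU)$ since $f^{-1}(f(A)) \supseteq A \in \calU$; and if $Y \in f_*(\calU)$, then $f^{-1}(Y) \in \calU$, so some $A \in \calG$ satisfies $A \subseteq f^{-1}(Y)$ and hence $f(A) \subseteq Y$. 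Thus $\calG'$ is a base for $f_*(\calU)$ of size at most $\size{\calG}$, giving $\chi(f_*(\calU)) \le \chi(\calU)$.

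For the preservation of $\la$-indecomposability in (1), I will argue by contraposition. If $f_*(\calU)$ is $\la$-decomposable, fix $g \colon T \to \la$ with $g^{-1}(X) \notin f_*(\calU)$ for every $X \in [\la]^{<\la}$. Then $g \circ f \colon S \to \la$ satisfies $(g \circ f)^{-1}(X) = f^{-1}(g^{-1}(X)) \notin \calU$ for all such $X$, so $\calU$ is $\la$-decomposable.

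For (2), the forward direction: if $\calU$ is $\la$-decomposable, take $g \colon S \to \la$ with $g^{-1}(X) \notin \calU$ for every $X \in [\la]^{<\la}$. Then $g_*(\calU)$ is an ultrafilter over $\la$, and it is uniform, because any $Y \in g_*(\calU)$ with $\size{Y}<\la$ would yield $g^{-1}(Y) \notin \calU$, i.e. $Y \notin g_*(\calU)$ --- a contradiction. Conversely, if $g_*(\calU)$ is a uniform ultrafilter over $\la$, then uniformity forces $X \notin g_*(\calU)$ whenever $\size{X}<\la$, hence $g^{-1}(X) \notin \calU$ for every $X \in [\la]^{<\la}$, and $g$ witnesses $\la$-decomposability of $\calU$.

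I do not expect a genuine obstacle: the lemma is stated as folklore, and each claim is immediate once the relevant map is written down. The only points deserving a second glance are that $\{f(A) \mid A \in \calG\}$ really generates $f_*(\calU)$ (not a proper subfilter), which follows from $A \subseteq f^{-1}(f(A))$ and $f(f^{-1}(Y)) \subseteq Y$, and that ``uniform over $\la$'' is precisely the failure of ``$g^{-1}$ of a set of size $<\la$ belongs to $\calU$''. The reason for recording the proof is mainly to fix the two maps $A \mapsto f(A)$ and $g \circ f$, which recur in later arguments.
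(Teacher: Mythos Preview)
Your proof is correct in every detail; the paper itself omits the proof entirely, stating only that the lemma ``is folklore and can be verified easily,'' and your argument is precisely the routine verification one would expect.
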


For a regular cardinal $\ka$ and a non-empty set $X$,
let $\mathrm{Add}(\ka, X)$ be the poset of
all partial functions $p:\ka \times X \to 2$ with size $<\ka$.
The order is given by the reverse inclusion.
The poset $\mathrm{Add}(\ka, X)$ is  $\ka$-closed, satisfies the $(2^{<\ka})^+$-c.c.,
and adds $X$-many new subsets of $\ka$.
For $Y \subseteq X$, we will identify $\Add(\ka, Y)$ as a complete suborder of $\Add(\ka, X)$.

\section{Raghavan and Shelah's theorem}
In this section we give a slightly general form of Raghavan and Shelah's theorem with relatively simple proof.
Using this theorem, we will show the consistency of the failure of monotonicity of the ultrafilter
number function in the next section.

\begin{thm}[Raghavan and Shelah \cite{RS}]\label{3}
Let $\ka$ and $\mu$ be uncountable cardinals such that:
\begin{enumerate}
\item $\cf(\mu)< \ka<\mu$.
\item $\nu^\ka <\mu$ for every $\nu<\mu$.
\item $\ka$ has a $\cf(\mu)$-indecomposable uniform ultrafilter $\calU$.
\end{enumerate}
Let $\bbP$ be a poset such that $\bbP$ has the $\cf(\mu)$-c.c.\,and $\size{\bbP}\le \mu$.
Then $\bbP$ forces that $\fraku(\ka) \le \mu$.
\end{thm}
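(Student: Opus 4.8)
The plan is to build, in the $\bbP$-extension, a uniform ultrafilter over $\ka$ with a base of size $\le\mu$ by pushing forward a cleverly constructed ultrafilter over a larger index set. The starting point is the $\cf(\mu)$-indecomposable uniform ultrafilter $\calU$ over $\ka$ guaranteed by hypothesis (3). Since $\calU$ is $\cf(\mu)$-indecomposable, by Lemma \ref{2.3} it stays $\cf(\mu)$-indecomposable after being extended, and more importantly the $\cf(\mu)$-c.c.\ of $\bbP$ interacts well with $\cf(\mu)$-indecomposability: a $\cf(\mu)$-c.c.\ forcing cannot ``add a new decomposition'' into $\cf(\mu)$ pieces of an ultrafilter that was $\cf(\mu)$-indecomposable, provided we reconstitute the ultrafilter correctly in the extension. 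So first I would fix in $V$ the ultrafilter $\calU$ and analyze how $\bbP$ acts on it.

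Next, the core construction. Write $\theta=\cf(\mu)$ and fix an increasing sequence $\seq{\mu_i : i<\theta}$ cofinal in $\mu$ with each $\mu_i<\mu$; by hypothesis (2) we have $\mu_i^\ka<\mu$ for all $i$. The idea, following Raghavan--Shelah, is to work over the index set $S=\ka$ but to enumerate, in $V$, enough ``potential'' $\bbP$-names for subsets of $\ka$: since $\size{\bbP}\le\mu$ and $\bbP$ is $\theta$-c.c., every $\bbP$-name for a subset of $\ka$ is (equivalent to) one of $\le\mu^\ka$ many names, but we must be more careful — we want a base of size exactly $\le\mu$. Here is where hypothesis (2) and the $\theta$-c.c.\ combine: a $\bbP$-name for a subset of $\ka$ is determined by an antichain of size $<\theta$ below each condition deciding membership of each $\xi<\ka$, so such a name lives in roughly $(\size{\bbP}^{<\theta})^\ka$; using $\nu^\ka<\mu$ for $\nu<\mu$ together with $\cf(\mu)=\theta$ one checks this is $\le\mu$ when $\size{\bbP}<\mu$, and one handles $\size{\bbP}=\mu$ by the $\theta$-c.c.\ chopping $\bbP$ into $<\theta$-sized pieces along the cofinal sequence. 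The upshot: in $V$ there is a family $\calN$ of $\le\mu$ many $\bbP$-names such that $\bbP$ forces every subset of $\ka$ to be (forced by some condition in a fixed generic to equal) a realization of some name in $\calN$.

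Now the decisive step: in $V[G]$ define $\calV$ to be the filter over $\ka$ generated by $\{\dot A^G : \dot A\in\calN,\ \text{``}\dot A\in\dot{\calU}^+\text{''}\ \text{holds for a suitable lift}\ \dot{\calU}\ \text{of}\ \calU\}$ — more precisely, use the $\theta$-indecomposability of $\calU$ and a reflection/diagonalization argument to show that for each name $\dot A\in\calN$ the Boolean value of $\dot A\in\dot\calU$ is decided ``$\calU$-positively'', so that the resulting family has the finite intersection property and extends (the $\bbP$-image of) $\calU$. One then extends $\calV$ to an ultrafilter $\calW$ over $\ka$; it is uniform since it extends the uniform $\calU$, and by construction $\{\dot A^G : \dot A\in\calN\ \text{and}\ \dot A^G\in\calW\}$ is a base for $\calW$, giving $\chi(\calW)\le\size{\calN}\le\mu$, hence $\fraku(\ka)\le\mu$ in $V[G]$. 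The genuine obstacle — and the heart of the Raghavan--Shelah argument — is verifying the finite intersection property: one must show that finitely many of the chosen realizations $\dot A_1^G,\dots,\dot A_n^G$ have intersection in $\calV$, and this is exactly where $\cf(\mu)$-indecomposability of $\calU$ and the $\cf(\mu)$-c.c.\ of $\bbP$ are both essential, since a failure would produce, in $V$, a decomposition of $\calU$ into $\cf(\mu)$ pieces. I would carry out this verification by fixing a $\bbP$-name for the (hypothetical) bad finite intersection, using the $\theta$-c.c.\ to reflect it to an antichain of size $<\theta=\cf(\mu)$, and deriving from this a $\cf(\mu)$-partition of $\ka$ witnessing $\cf(\mu)$-decomposability of $\calU$ in $V$, contradicting (3).
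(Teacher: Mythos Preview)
Your overall architecture is right --- count names in $V$, extend $\calU$ in $V[G]$, and read off a small base --- but you misidentify both the content of the key step and where indecomposability actually enters.

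The first genuine gap is your claim that there is a family $\calN$ of $\le\mu$ names such that $\bbP$ forces \emph{every} subset of $\ka$ to equal some $\dot A^G$ with $\dot A\in\calN$. This would force $2^\ka\le\mu$ in $V[G]$, which is false in the intended applications (e.g.\ $\bbP=\Add(\om,\mu)$ with $\cf(\mu)<\ka$ gives $2^\ka>\mu$). So $\calN$ cannot capture all subsets of $\ka$, and your base argument as written collapses. What the paper actually proves is weaker and sufficient: for every name $\dot B$ for a subset of $\ka$ there exist $D\in\calU$ (in $V$) and a ``small'' name $\dot A\in\calN$ with $\Vdash \dot B\cap\check D=\dot A$. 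The point is that restricting to a $\calU$-large set tames the name. Concretely: for each $\alpha<\ka$ pick a maximal antichain $J_\alpha$ deciding $\check\alpha\in\dot B$; by the $\cf(\mu)$-c.c.\ each $J_\alpha$ lies inside an initial segment $\{p_i:i<\gamma_\alpha\}$ of a fixed enumeration of $\bbP$ with $\gamma_\alpha<\mu$; now $\cf(\mu)$-indecomposability of $\calU$ is exactly what lets you find a single $\gamma<\mu$ with $D=\{\alpha:\gamma_\alpha\le\gamma\}\in\calU$, so that on $D$ the name $\dot B$ is built from antichains living in a set of size $<\mu$. The counting $\nu^\ka<\mu$ then bounds the number of such ``$\gamma$-nice'' names by $\mu$.

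Second, the ``genuine obstacle'' is not a finite-intersection-property verification, and there is no need to generate a filter from specially selected names and then check FIP. Once you have the claim above, you simply take in $V[G]$ \emph{any} uniform ultrafilter $\calV\supseteq\calU$ (which exists trivially), and observe that $\calG=\{\dot A_G:\dot A\in\calN\}\cap\calV$ is a base: given $B\in\calV$, the claim yields $D\in\calU\subseteq\calV$ and $\dot A\in\calN$ with $\dot A_G=B\cap D\in\calV$ and $\dot A_G\subseteq B$. Your proposed contradiction-via-decomposition for FIP is not needed and would not straightforwardly work as stated; the indecomposability is spent entirely in the name-reduction step, not in any FIP argument.
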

\begin{proof}
We may assume that $\size{\bbP}=\mu$, otherwise the assertion is trivial.
Fix an enumeration $\{p_i \mid i<\mu\}$ of $\bbP$.
For $\gamma<\mu$, let us say that a $\bbP$-name $\dot A$ is 
a \emph{$\gamma$-nice name}
if $\dot A$ is of the form $\bigcup_{\alpha<\ka} (\{{\check \alpha}\} \times I_\alpha)$,
where each $I_\alpha$ is an antichain in $\bbP$ and $I_\alpha \subseteq \{p_i \mid i<\gamma\}$
($I_\alpha=\emptyset$ is possible).

\begin{claim}
For every $\bbP$-name $\dot B$ with $\Vdash \dot B \subseteq \check \ka$,
there is $D \in \calU$, $\gamma<\mu$, and
a $\gamma$-nice name $\dot A$ such that 
$\Vdash \dot B \cap \check D=\dot A$.
\end{claim}
\begin{proof}
For each $\alpha<\ka$,
take a maximal antichain $J_\alpha$ in $\bbP$ such that
each $p \in J_\alpha$ decides the statement $\check \alpha \in \dot B$.
Since $\bbP$ has the $\cf(\mu)$-c.c.,
there is $\gamma_\alpha<\mu$ such that $J_\alpha \subseteq \{p_i \mid i<\gamma_\alpha\}$.
$\mathcal{U}$ is $\cf(\mu)$-indecomposable, 
hence we can find $\gamma<\mu$ such that
$D=\{\alpha<\ka \mid \gamma_\alpha \le \gamma\} \in \calU$.
Let $\dot A=\{ \seq{\check \alpha, p} \mid \alpha \in D, p \in J_\alpha, p \Vdash \check \alpha \in \dot B\}$.
Clearly $\dot A$ is a $\gamma$-nice name and $\Vdash \dot A \subseteq \check D$.
We shall show that $\Vdash \dot B \cap \check D =\dot A$.

Fix $q \in \bbP$ and $\alpha<\ka$, and first suppose $q \Vdash \check \alpha \in \dot A$. 
We have $\alpha \in D$ by the definition of $\dot A$.
Pick $p \in J_\alpha$ which is compatible with $q$.
Take $r \le p,q$.
Since $r \Vdash \check \alpha \in \dot A$,
there is some $\seq{\check \alpha, p'} \in \dot A$
such that $p'$ is compatible with $r$.
Since $p,p' \in J_\alpha$ and $J_\alpha$ is an antichain, we have $p=p'$.
Hence $p \Vdash \check \alpha \in \dot B$,
and $r \Vdash \alpha \in \dot B \cap \check D$.

For the converse, suppose $q \Vdash \check \alpha \in \dot B \cap \check D$.
Again, pick $p \in J_\alpha$ which is compatible with $q$. Take $r \le p, q$.
We have $p \Vdash \check \alpha \in \dot B$,
hence $\seq{\check \alpha,p} \in \dot A$,
and $p \Vdash \check \alpha \in \dot A$.
Thus  $r \Vdash \check \alpha \in \dot A$.
\end{proof}

Let $\calN=\{\dot A \mid \dot A$ is a $\gamma$-nice name for some $\gamma<\mu\}$.
Since $\bbP$ has the $\cf(\mu)$-c.c., $\size{\{p_i \mid i<\gamma\}}<\mu$ for $\gamma<\mu$,
and $\nu^\ka<\mu$ for every $\nu<\mu$,
we have $\size{\calN} \le \mu$.

Take a $(V, \bbP)$-generic $G$. In $V[G]$, take a uniform ultrafilter $\calV$ over $\ka$ extending $\calU$.
Let $\calG=\{\dot A_G \in \calV \mid \dot A \in \calN\}$.
We have $\size{\calG} \le \mu$ and $\calG\subseteq \calV$.
We prove that for every $B \in \calV$, there is some $A \in \calG$ with $A \subseteq B$.
This completes our proof.

Fix $B \in \calV$, and take a $\bbP$-name $\dot B$ for $B$.
We may assume that $\Vdash \dot B \subseteq \check \ka$ in $V$.
By the claim, there is $\dot A \in \calN$ and $D \in \calU$ such that
$\Vdash \dot B \cap \check D=\dot A$.
Since $\calU \subseteq \calV$, we have $\dot A_G=B \cap D \in \calV$ and $\dot A_G \in \calG$.
\end{proof}

\begin{thm}[\cite{RS}]
Let $\ka$ be a measurable cardinal,
and $\mu>\ka$ a strong limit singular cardinal with $\om_1 \le \cf(\mu) <\ka$.
Then $\mathrm{Add}(\om, \mu)$ forces that
$\mathfrak{u}(\ka)<2^\ka$.
\end{thm}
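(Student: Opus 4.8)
The plan is to invoke Theorem~\ref{3} with the poset $\bbP=\Add(\om,\mu)$ to obtain $\fraku(\ka)\le\mu$ in the generic extension, and then to check separately that this same poset forces $\mu<2^\ka$; the two facts together give $\fraku(\ka)<2^\ka$.

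First I would verify that $\ka$, $\mu$, and $\bbP$ meet the hypotheses of Theorem~\ref{3}. Condition (1), $\cf(\mu)<\ka<\mu$, is assumed. For condition (2): since $\mu$ is a strong limit and $\ka<\mu$, we have $\nu^\ka\le 2^{\max(\nu,\ka)}<\mu$ for every $\nu<\mu$. For condition (3): as $\ka$ is measurable it carries a $\ka$-complete non-principal ultrafilter $\calU$ over $\ka$, which is uniform since it is $\ka$-complete and non-principal; because $\cf(\mu)<\ka$ and $\ka$ is regular, $\calU$ is $\cf(\mu)^+$-complete, hence $\cf(\mu)$-indecomposable by the remark in the Preliminaries that every $\delta^+$-complete ultrafilter is $\delta$-indecomposable. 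Finally $\bbP=\Add(\om,\mu)$ satisfies the $\om_1$-c.c., hence the $\cf(\mu)$-c.c.\ since $\cf(\mu)\ge\om_1$, and $\size{\bbP}=\mu$. So Theorem~\ref{3} applies and $\bbP$ forces $\fraku(\ka)\le\mu$.

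It remains to see that $\bbP$ forces $\mu<2^\ka$. The poset $\Add(\om,\mu)$ adds $\mu$ new subsets of $\om$, so it forces $2^\om\ge\mu$; and being $\om_1$-c.c.\ it preserves cofinalities, so in the extension $\mu$ is still a cardinal with $\cf(\mu)\le\ka$. Hence, in the extension,
\[
2^\ka\ge 2^{\cf(\mu)}=\bigl(2^\om\bigr)^{\cf(\mu)}\ge\mu^{\cf(\mu)}>\mu,
\]
the last inequality being K\"onig's theorem. Combining this with the previous paragraph, $\bbP$ forces $\fraku(\ka)\le\mu<2^\ka$, as required. I do not anticipate a genuine obstacle: the substantive work is already contained in Theorem~\ref{3}, and the only points requiring attention are the routine checking of its hypotheses (notably, that $\mu$ being a strong limit yields condition (2)) and the observation that the strict inequality $\mu<2^\ka$ in the extension rests on K\"onig's theorem together with the preservation of $\cf(\mu)$ by $\om_1$-c.c.\ forcing---without strictness the statement would be vacuous.
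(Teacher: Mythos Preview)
Your proof is correct and follows essentially the same approach as the paper: apply Theorem~\ref{3} (after checking its hypotheses, which you do in more detail than the paper) to get $\fraku(\ka)\le\mu$, and then argue $\mu<2^\ka$ in the extension via K\"onig's theorem using $\cf(\mu)<\ka$. The paper's version of the second step is slightly terser---it just notes $\mu=2^\om\le 2^\ka$ and $\cf(\mu)<\ka$ forces strict inequality---but the content is the same.
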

\begin{proof}
Note that $\size{\Add(\om, \mu)}=\mu$ and 
every $\ka$-complete ultrafilter over $\ka$ is $\cf(\mu)$-indecomposable.
Hence, in the generic extension,
we have $\mathfrak{u}(\ka)\le \mu=2^\om \le 2^\ka$
by Theorem \ref{3},
and we know $\mu<2^\ka$ since $\cf(\mu)<\ka$.
\end{proof}

\begin{thm}[\cite{RS}]
Suppose the continuum hypothesis. Let $\ka$ be a measurable cardinal,
and $\mu>\ka$ a strong limit singular cardinal with $\om_2 \le \cf(\mu) <\ka$.
Then $\mathrm{Add}(\om, \ka) \times \mathrm{Add}(\om_1, \mu)$
forces that $\ka=2^\om$ and $\mathfrak{u}(\ka)<2^{\ka}$.
\end{thm}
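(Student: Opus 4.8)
The plan is to apply the generalized Raghavan--Shelah theorem (Theorem~\ref{3}) with the given $\ka,\mu$ and with $\bbP=\Add(\om,\ka)\times\Add(\om_1,\mu)$, and then to compute the relevant cardinal arithmetic in the extension by hand. First I would verify the three hypotheses of Theorem~\ref{3}. Hypothesis (1), $\cf(\mu)<\ka<\mu$, is assumed. Hypothesis (2) holds because $\mu$ is a strong limit: for $\nu<\mu$ one has $\nu^\ka\le 2^{\max(\nu,\ka)}<\mu$. For (3), a measurable $\ka$ carries a $\ka$-complete uniform ultrafilter $\calU$, and a $\ka$-complete ultrafilter is $\la$-indecomposable for every $\la<\ka$ (given $f\colon\ka\to\la$, $\ka$ is the union of the $\la<\ka$ fibres of $f$, so by $\ka$-completeness some fibre lies in $\calU$); since $\cf(\mu)<\ka$ this gives a $\cf(\mu)$-indecomposable uniform ultrafilter on $\ka$. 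Finally, using that $\mu$ is strong limit with $\cf(\mu)>\om$, we get $\mu^{<\om_1}=\mu^\om=\mu$, whence $\size{\Add(\om_1,\mu)}=\mu$ and $\size{\bbP}=\ka\cdot\mu=\mu$.

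The main work is to show that $\bbP$ has the $\om_2$-chain condition; since $\cf(\mu)\ge\om_2$ this yields the $\cf(\mu)$-c.c.\ required by Theorem~\ref{3}, and this is where CH is used. $\Add(\om,\ka)$ is ccc (finite conditions, no arithmetic needed). For the product, suppose $\{(a_i,p_i):i<\om_2\}$ were an antichain. The domains of the $p_i$ are countable subsets of $\om_1\times\mu$, and under CH $\om_1^{\om}=\om_1<\om_2$, so the $\Delta$-system lemma yields an $\om_2$-sized subfamily whose domains form a $\Delta$-system with root $r$; refining again, using $2^{\size{r}}\le 2^{\om}=\om_1<\om_2$, I may assume all the $p_i\restriction r$ coincide. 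Then any two $p_i,p_j$ in this refined family are compatible, so incompatibility of $(a_i,p_i)$ and $(a_j,p_j)$ forces $a_i\perp a_j$; but then the $a_i$ form an $\om_2$-sized antichain in $\Add(\om,\ka)$, contradicting ccc. Hence $\bbP$ is $\om_2$-c.c., and Theorem~\ref{3} gives that $\bbP$ forces $\fraku(\ka)\le\mu$. (In particular $\Add(\om_1,\mu)$ is $\om_2$-c.c.; together with its $\om_1$-closedness and the ccc-ness of $\Add(\om,\ka)$, $\bbP$ preserves all cardinals and cofinalities.)

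It remains to see that $\bbP$ forces $2^{\om}=\ka$ and $2^\ka>\mu$; then $\fraku(\ka)\le\mu<2^\ka$ and we are done. Since $\Add(\om_1,\mu)$ is $\om_1$-closed in every model containing it, it adds no reals over the $\Add(\om,\ka)$-extension, so the reals of $V^{\bbP}$ are exactly those added by $\Add(\om,\ka)$; as $\ka$ is inaccessible, $\ka^{\om}=\ka$, so adding $\ka$ Cohen reals over the ground model of CH gives $2^{\om}=\ka$ there, hence in $V^{\bbP}$ (and $\ka$ is preserved). On the other hand the generic for $\Add(\om_1,\mu)$ produces $\mu$ distinct subsets of $\om_1\subseteq\ka$, and $\mu$ is still a cardinal, so $2^\ka\ge 2^{\om_1}\ge\mu$ in $V^{\bbP}$; but König's theorem gives $\cf(2^\ka)>\ka$ while $\cf(\mu)$ is preserved and stays below $\ka$, so $2^\ka\ne\mu$, i.e.\ $2^\ka>\mu$.

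The only genuinely non-routine step is the chain-condition computation for the product: one has to run the $\Delta$-system argument on the $\Add(\om_1,\mu)$-coordinate (which is exactly where CH is needed, via $\om_1^{\om}=\om_1$ and $2^{\om}=\om_1$) and then push the incompatibility onto the ccc factor. Everything else — checking the hypotheses of Theorem~\ref{3}, and the arithmetic $2^{\om}=\ka$ and $2^\ka>\mu$ — is short once phrased correctly.
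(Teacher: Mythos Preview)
Your proof is correct and follows exactly the paper's route: verify the hypotheses of Theorem~\ref{3} (the paper is terser and simply asserts the $\om_2$-c.c.\ and $\size{\bbP}=\mu$), then compute $2^\om=\ka$ and $2^\ka>\mu$ in the extension. One small slip: $\Add(\om_1,\mu)^V$ need not remain $\om_1$-\emph{closed} in $V^{\Add(\om,\ka)}$ (new $\om$-sequences of old conditions may lack a lower bound in $V$), but by Easton's lemma it is $\om_1$-distributive there---equivalently, commute the product and force $\Add(\om,\ka)$ over $V^{\Add(\om_1,\mu)}$---which is all you need for ``no new reals''.
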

\begin{proof}
We know that the cardinality of $\mathrm{Add}(\om, \ka) \times \mathrm{Add}(\om_1, \mu)$ is just $\mu$, 
and has the $\om_2$-c.c.
By standard arguments, $\mathrm{Add}(\om, \ka) \times \mathrm{Add}(\om_1, \mu)$
forces that $\ka=2^\om$ and $2^\ka > 2^{\om_1} = \mu$.
By Theorem \ref{3}, we have $\mathfrak{u}(\ka) \le \mu<2^\ka$ in the generic extension.
\end{proof}

The same argument together with Theorem \ref{BM theorem} proves:
\begin{thm}[\cite{RS}]
Suppose there is a supercompact cardinal.
Then there is a forcing extension in which $\mathfrak{u}(\om_{\om+1})<2^{\om_{\om+1}}$ holds.
\end{thm}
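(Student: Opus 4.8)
The plan is to run the argument of the two preceding theorems, but feeding into Theorem \ref{3} the $\cf(\mu)$-indecomposable ultrafilter supplied by Theorem \ref{BM theorem} in place of a measure on a measurable cardinal. First I would start from GCH together with a supercompact cardinal and pass to the forcing extension $W$ given by Theorem \ref{BM theorem}: in $W$, GCH holds and $\ka := \om_{\om+1}$ carries a uniform ultrafilter $\calU$ which is $\om_n$-indecomposable for every $0<n<\om$; in particular $\calU$ is $\om_1$-indecomposable.

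Working inside $W$, I would fix a strong limit singular cardinal $\mu>\ka$ with $\cf(\mu)=\om_1$; such a $\mu$ exists since under GCH every limit cardinal is strong limit, e.g.\ $\mu=\aleph_{\ka+\om_1}$ works, as $\ka+\om_1$ is a limit ordinal of cofinality $\om_1$. Then the hypotheses of Theorem \ref{3} hold for the pair $(\ka,\mu)$: clause (1) is $\cf(\mu)=\om_1<\ka=\om_{\om+1}<\mu$; clause (3) holds because $\calU$ is $\om_1=\cf(\mu)$-indecomposable; and clause (2), that $\nu^\ka<\mu$ for every $\nu<\mu$, follows from GCH together with the fact that $\mu$ is a limit cardinal, by the routine case split: if $\nu\le\ka$ then $\nu^\ka=2^\ka=\ka^+<\mu$, and if $\ka<\nu<\mu$ then $\nu^\ka\le\nu^+<\mu$. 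Taking $\bbP:=\Add(\om,\mu)$, we have $\size{\bbP}=\mu$ and $\bbP$ satisfies the countable chain condition, hence the $\cf(\mu)$-c.c. So Theorem \ref{3} gives that $\bbP$ forces $\fraku(\ka)\le\mu$.

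It remains to check that $\bbP$ forces $\mu<2^\ka$, exactly as in the two preceding theorems. Since $\bbP$ is c.c.c.\ it preserves cardinals and cofinalities, and it forces $2^\om=\mu$; hence in $W^\bbP$ we have $2^\ka\ge 2^\om=\mu$, while König's theorem gives $\cf(2^\ka)>\ka>\om_1=\cf(\mu)$, so $2^\ka\ne\mu$ and therefore $2^\ka>\mu$. Combining, $W^\bbP\models\fraku(\om_{\om+1})\le\mu<2^{\om_{\om+1}}$, which is the assertion.

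There is no genuine obstacle: all the substance sits in Theorems \ref{BM theorem} and \ref{3}, and what is left is bookkeeping. The only point needing any care is to pick $\mu$ with $\cf(\mu)$ simultaneously equal to one of the indecomposability values $\om_n$ coming from Theorem \ref{BM theorem}, strictly below $\ka$, and strictly below $\cf(2^\ka)$ in the final model; the choice $\cf(\mu)=\om_1$ together with the König-theorem observation on $2^\ka$ disposes of all three requirements at once.
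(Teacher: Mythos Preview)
Your argument is correct and is exactly the approach the paper intends: the paper only says ``The same argument together with Theorem \ref{BM theorem} proves'' the result, and you have spelled out precisely that argument, using the Ben-David--Magidor extension in place of a measurable cardinal and then applying Theorem \ref{3} with $\bbP=\Add(\om,\mu)$ for a strong limit $\mu$ of cofinality $\om_1$, finishing with the K\"onig-theorem observation that $\mu<2^\ka$.
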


\begin{thm}
Let $\ka$ be a measurable cardinal,
and $\mu>\ka$ a strong limit singular cardinal with $\om_1 \le \cf(\mu) <\ka$.
Let $\calU$ be a normal measure over $\ka$, and $\bbP_\calU$ the Prikry forcing associated with $\calU$.
Then $\bbP_\calU \times \mathrm{Add}(\om, \mu)$ forces that
$\cf(\ka)=\om$ and $\mathfrak{u}(\ka)<2^\ka$.
\end{thm}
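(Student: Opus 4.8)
The plan is to treat the product $\bbP_\calU \times \Add(\om,\mu)$ as an iteration: first force with $\bbP_\calU$ over $V$, obtaining the Prikry extension $V[G]$, and then force with $\Add(\om,\mu)$ over $V[G]$. Prikry forcing preserves all cardinals and cofinalities except $\cf(\ka)$, which it makes $\om$; it adds no bounded subsets of $\ka$; it has the $\ka^+$-c.c.; and $|\bbP_\calU| = 2^\ka < \mu$ since $\mu$ is strong limit. Consequently, in $V[G]$ the cardinal $\mu$ is still strong limit singular with $\cf(\mu) < \ka < \mu$, and $\nu^\ka < \mu$ for every $\nu < \mu$ (a function $\ka\to\nu$ in $V[G]$ is given by a nice $\bbP_\calU$-name using antichains of size $\le\ka$, and $\nu^\ka\cdot (2^\ka)^\ka < \mu$ in $V$). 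So hypotheses (1) and (2) of Theorem \ref{3} hold for $\ka$ and $\mu$ in $V[G]$, and $\Add(\om,\mu)$ is a poset of cardinality $\mu$ with the c.c.c., hence the $\cf(\mu)$-c.c.

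What remains --- and this is the crux --- is hypothesis (3): that $\ka$ still carries a $\cf(\mu)$-indecomposable uniform ultrafilter in $V[G]$. This is an instance of the familiar phenomenon that Prikry-type forcings endow the singularized cardinal with indecomposable ultrafilters (compare Theorem \ref{BM theorem}). Concretely, I would argue that the filter $\calF$ generated in $V[G]$ by the ground-model measure $\calU$ is $\la$-indecomposable for every regular uncountable $\la < \ka$: given $g\colon\ka\to\la$ in $V[G]$ with name $\dot g$, one uses that $\bbP_\calU$ adds no bounded subsets of $\ka$ together with the Prikry property and the normality of $\calU$ to assign to each stem $s$ a function $H_s\in{}^\ka\la\cap V$ and a set $C_s\in\calU$ with $(s, C_s\setminus(\gamma+1))\Vdash\dot g\restriction\check\gamma = \check{H_s}\restriction\check\gamma$ for all $\gamma<\ka$; by $\la^+$-completeness of $\calU$ each $H_s$ is constant, say $\equiv\eta_s<\la$, on some $B_s\in\calU$; a diagonal intersection $B^{\ast}$ of $\langle B_s : s\text{ a stem}\rangle$ lies in $\calU$; and then $g$ maps $B^{\ast}$, apart from the countably many points of the Prikry sequence, into $[0,\sup_k\eta_{s_k})$ (where $s_k$ are the generic stems), a set of size $<\la$ because $\la$ is regular and uncountable. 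Thus no $g$ can witness $\la$-decomposability of $\calF$, and any uniform ultrafilter on $\ka$ extending $\calF$ is $\la$-indecomposable by Lemma \ref{2.3}; taking $\la = \cf(\mu)$ gives the desired ultrafilter. This ``Prikry-side'' argument, with its bookkeeping over stems, is the main obstacle; the rest is routine.

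Granting hypothesis (3), Theorem \ref{3} applied in $V[G]$ to $\bbP = \Add(\om,\mu)$ gives that $\Add(\om,\mu)$ forces $\fraku(\ka)\le\mu$ over $V[G]$, i.e.\ $\bbP_\calU\times\Add(\om,\mu)$ forces $\fraku(\ka)\le\mu$. In that extension $\cf(\ka)=\om$ (forced by $\bbP_\calU$ and preserved by the c.c.c.\ poset $\Add(\om,\mu)$), and $2^\ka\ge\mu^\ka\ge\mu^{\cf(\mu)}>\mu$ by König's theorem (as $\mu = 2^\om \le 2^\ka$ there), so $\fraku(\ka)\le\mu<2^\ka$, which completes the proof.
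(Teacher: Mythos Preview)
Your proposal is correct and follows essentially the same route as the paper: factor the product as the iteration $\bbP_\calU * \Add(\om,\mu)$, verify the hypotheses of Theorem~\ref{3} in the Prikry extension, and conclude $\fraku(\ka)\le\mu<2^\ka$. The only difference is that where the paper simply cites the fact that $\calU$ generates a $\nu$-indecomposable uniform filter in $V^{\bbP_\calU}$ for every regular uncountable $\nu<\ka$ (Prikry; Exercise~18.6 in Kanamori \cite{K}), you sketch a direct argument via the Prikry property and diagonal intersections --- a reasonable outline of the standard proof, though the bookkeeping over stems and the passage from $B^\ast$ to the conclusion would need a bit more care if written out in full.
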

\begin{proof}
In the Prikry forcing extension $V^{\bbP_\calU}$, it is known that $\calU$ generates a
$\nu$-indecomposable uniform filter for every regular uncountable $\nu<\ka$ 
(Prikry, e.g., see Exercise 18.6 in Kanamori \cite{K}). By Lemma \ref{2.3}, every ultrafilter
extending $\calU$ is $\nu$-indecomposable for regular uncountable $\nu<\ka$.
Hence, in $V^{\bbP_\calU}$,
$\ka$ is a singular cardinal with countable cofinality and
has a $\cf(\mu)$-indecomposable uniform ultrafilter.
Also note that $\mu$ remains strong limit in $V^{\bbP_\calU}$.
By Theorem \ref{3}, 
forcing with $\Add(\om, \mu)$ over $V^{\bbP_\calU}$ forces
that $\ka$ is singular with countable cofinality and 
$\mathfrak{u}(\ka) \le \mu$.
Moreover $\mu<2^\ka$ since $\cf(\mu)<\kappa$.
\end{proof}

\section{Models with the failure of monotonicity 1}\label{sec1}
In this section we  construct models in which monotonicity of the ultrafilter number function fails.
For this sake, we need the following lemmas.

The next lemma is folklore.
\begin{lemma}\label{10}
Let $\ka$ be a cardinal,
$\mu>\ka$ a cardinal, and $\delta \le  \ka$ a regular cardinal.
If $\mathrm{Add}(\delta, \mu)$ has the $\ka'$-c.c.
for some $\ka'<\mu$ and preserves $\ka$,
then it forces $\mathfrak{u}(\ka) \ge \mu$.
\end{lemma}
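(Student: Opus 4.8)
The plan is the familiar splitting argument showing that adding many Cohen-type subsets forces ultrafilter characters to be large. Suppose toward a contradiction that some condition $r_0\in\bbP:=\Add(\delta,\mu)$ forces $\fraku(\ka)<\mu$. By strengthening $r_0$ to some $r$ we may fix an ordinal $\la<\mu$ and a $\bbP$-name $\dot b$ such that $r\Vdash$ ``$\dot b\colon\check\la\to\mathcal{P}(\check\ka)$ is a function whose range is a base of a uniform ultrafilter over $\ka$''. In the extension by a generic $G\ni r$ we will produce a set $X\subseteq\ka$ that meets both $B$ and $\ka\setminus B$ for every $B$ in that base; since the base generates an ultrafilter, whichever of $X$ and $\ka\setminus X$ belongs to it must then contain some $B$ from the base, which is the desired contradiction.

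The first task is to localize $\dot b$ so that a product factorization of $\bbP$ becomes available in $V$. Viewing $\dot b$ as coding a subset of $\la\times\ka$, we may (replacing it if necessary) take it to be a nice name: for each $(\beta,\alpha)\in\la\times\ka$ it uses a maximal antichain below $r$ deciding ``$\check\alpha\in\dot b(\check\beta)$'', and by the $\ka'$-c.c.\ each such antichain has size $<\ka'$. Then $\dot b$ mentions at most $\la\cdot\ka\cdot\ka'$ conditions, each of whose domains has size $<\delta\le\ka$, so the set $I^*\subseteq\mu$ consisting of the second coordinates occurring in $\dot b$ (together with those of $r$) has size $<\mu$, using $\la,\ka,\ka',\delta<\mu$. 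Since $\dot b$ and $I^*$ are objects of $V$, we may use in $V$ the canonical isomorphism $\bbP\cong\Add(\delta,I^*)\times\Add(\delta,\mu\setminus I^*)$; write a $V$-generic $G\ni r$ accordingly as $G=G_0\times G_1$. As $\dot b$ is an $\Add(\delta,I^*)$-name, $b:=\dot b[G]=\dot b[G_0]\in W:=V[G_0]$, and hence $\calB:=\range(b)\in W$; moreover $\ka$ is a cardinal in $V[G]$ by hypothesis, hence in $W\subseteq V[G]$, and $G_1$ is $\Add(\delta,\mu\setminus I^*)$-generic over $W$.

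The second task is to produce the splitting set from $G_1$ over $W$. Since $\size{\mu\setminus I^*}=\mu>\ka$, fix distinct $i_\xi\in\mu\setminus I^*$ for $\xi<\ka$, fix some $\gamma_0<\delta$, and let $X:=\{\xi<\ka:G_1(\gamma_0,i_\xi)=1\}$. The only point to verify is the following density fact over $W$: for every $A\in\mathcal{P}(\ka)^{W}$ with $\size{A}=\ka$, the set of $p\in\Add(\delta,\mu\setminus I^*)$ for which some $\xi\in A$ has $p(\gamma_0,i_\xi)=1$ and some $\xi'\in A$ has $p(\gamma_0,i_{\xi'})=0$ is dense. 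Indeed, given $p$, at most $\size{\dom p}<\delta\le\ka=\size{A}$ of the coordinates $(\gamma_0,i_\xi)$ with $\xi\in A$ already lie in $\dom p$, so two further such coordinates can be set to the values $1$ and $0$. Consequently $X$ meets both $A$ and $\ka\setminus A$ for every $A\in\mathcal{P}(\ka)^W$ of size $\ka$ — in particular for every $B\in\calB$, since $B\in W$ and $\size{B}=\ka$ by uniformity of the ultrafilter (recall $\ka$ is a cardinal in $V[G]$).

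Finally, since $r\in G$, in $V[G]$ the family $\calB$ is a base for a uniform ultrafilter $\calU$ over $\ka$. As $\calU$ is an ultrafilter, $X\in\calU$ or $\ka\setminus X\in\calU$; in the first case some $B\in\calB$ satisfies $B\subseteq X$, contradicting $B\setminus X\neq\emptyset$, and in the second some $B\in\calB$ satisfies $B\cap X=\emptyset$, contradicting $B\cap X\neq\emptyset$. This contradiction shows that no condition forces $\fraku(\ka)<\mu$, i.e., $\bbP$ forces $\fraku(\ka)\geq\mu$. There is no real obstacle here — the argument is routine — and the only places the hypotheses are used are the bound $\size{I^*}<\mu$ (which needs the $\ka'$-c.c.\ with $\ka'<\mu$, together with $\la<\mu$) and the preservation of $\ka$, which is what makes ``uniform ultrafilter over $\ka$'' and all the cardinality computations meaningful; the assumption $\delta\le\ka$ is needed only so that fewer than $\ka$ of the coordinates $(\gamma_0,i_\xi)$ lie in the domain of any single condition.
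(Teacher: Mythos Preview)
Your proof is correct and follows essentially the same approach as the paper's: localize the names for the base to a set of coordinates $I^*$ (resp.\ $M\cap\mu$) of size $<\mu$ using the chain condition, then use the remaining Cohen coordinates to produce a set splitting every member of the base. The only cosmetic differences are that the paper uses an elementary submodel $M\prec H_\theta$ to obtain the small coordinate set rather than a direct nice-name count, and the paper merely cites ``the standard genericity argument'' where you spell out the explicit density argument for the splitting set $X$.
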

\begin{proof}
Suppose not.
Take an $\mathrm{Add}(\delta, \mu)$-name $\dot \calU$ for a uniform 
ultrafilter over $\ka$ with character $\nu<\mu$,
and $\mathrm{Add}(\delta, \mu)$-names $\{\dot A_\alpha \mid \alpha<\nu\}$
of a base for $\calU$.
Fix a sufficiently large regular cardinal $\theta$,
and take $M  \prec H_\theta$ such that $\nu+\ka \subseteq M$, $\{\dot A_\alpha \mid \alpha<\nu\} \in M$, and 
$\size{M}<\mu$.
By the $\ka'$-c.c. of $\mathrm{Add}(\delta, \mu)$,
we may assume that for every $\alpha <\nu$,
$\dot A_\alpha$ is an $\mathrm{Add}(\delta, M \cap \mu)$-name.
Take a $(V, \mathrm{Add}(\delta, \mu))$-generic $G$,
and let $H=G \cap \mathrm{Add}(\delta, M \cap \mu)$,
which is $(V, \mathrm{Add}(\delta, M \cap \mu))$-generic.
We know $(\dot A_\alpha)_G=(\dot A_\alpha)_H$,
hence $(\dot A_\alpha)_G \in V[H]$.
$\Add(\delta, \mu \setminus M)$ is the quotient forcing from
$V[H]$ to a generic extension of $\Add(\delta, \mu)$.
Hence, by the standard genericity argument,
forcing $\mathrm{Add}(\delta, \mu \setminus M)$ over $V[H]$ adds a 
new set $B \in [\ka]^\ka$ such that
$(\dot A_\alpha)_G \nsubseteq B$ and
$(\dot A_\alpha)_G \nsubseteq \ka \setminus B$ for every $\alpha<\nu$,
this contradicts to that $\{(\dot A_\alpha)_G \mid \alpha<\nu\}$ is a base for $\calU$.
\end{proof}

\begin{lemma}\label{11}
Let $\ka$ be a cardinal,
$\mu>\ka$ a cardinal with $\cf(\mu)=\cf(\ka)$, and $\delta < \ka$ a regular cardinal.
If $\mathrm{Add}(\delta, \mu)$ satisfies the $\ka'$-c.c. for some $\ka'<\mu$,
preserves $\ka$, and $\size{\mathrm{Add}(\delta, \mu)} =\mu$,
then 
it forces $\mathfrak{u}(\ka) \neq \mu$.
\end{lemma}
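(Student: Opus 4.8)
The statement to prove is Lemma \ref{11}: if $\mu > \ka$ with $\cf(\mu) = \cf(\ka)$, $\delta < \ka$ regular, and $\Add(\delta,\mu)$ has the $\ka'$-c.c.\ for some $\ka' < \mu$, preserves $\ka$, and has size $\mu$, then it forces $\fraku(\ka) \neq \mu$. The idea is to combine Lemma \ref{10} with an argument ruling out $\fraku(\ka) = \mu$ exactly. By Lemma \ref{10}, $\Add(\delta,\mu)$ forces $\fraku(\ka) \geq \mu$, so it suffices to show it cannot force $\fraku(\ka) = \mu$; that is, in the extension every uniform ultrafilter over $\ka$ of character $\mu$ fails to exist — equivalently, any uniform ultrafilter over $\ka$ has a base of size strictly less than $\mu$, or none of size $\mu$. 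The key point I want to exploit is that $\cf(\mu) = \cf(\ka)$: a base of size $\mu$ for an ultrafilter over $\ka$ can be ``thinned'' using the cofinality structure.

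First I would suppose toward a contradiction that some condition forces $\dot\calU$ to be a uniform ultrafilter over $\ka$ with a base $\{\dot A_\alpha \mid \alpha < \mu\}$ and with $\chi(\dot\calU) = \mu$ (so no smaller base exists). Fix in $V$ an increasing cofinal sequence $\seq{\mu_i \mid i < \cf(\mu)}$ in $\mu$ and, since $\cf(\ka) = \cf(\mu)$, an increasing cofinal sequence $\seq{\ka_i \mid i < \cf(\mu)}$ in $\ka$ (if $\ka$ is regular this degenerates and one argues slightly differently, but the c.c.\ and size hypotheses already force $\ka$ singular when combined with $\mu > \ka$ — actually I should check: the interesting case is $\ka$ singular, and if $\ka$ is regular then $\cf(\mu)=\ka$, forcing $\mu>\ka=\cf(\mu)$, which is still handled by the same reflection/factoring argument). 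Next, using the $\ka'$-c.c.\ I would, as in the proof of Lemma \ref{10}, find for each $\alpha$ an elementary submodel or simply a bounded piece of the product so that each $\dot A_\alpha$ is an $\Add(\delta, S_\alpha)$-name for some $S_\alpha \subseteq \mu$ with $|S_\alpha| < \ka'$. The plan is then: for each $i < \cf(\mu)$, consider the sub-base $\{\dot A_\alpha \mid \alpha < \mu_i\}$; since $\mu_i < \mu = \chi(\dot\calU)$, this is not a base, so there is $\dot B_i \in \dot\calU$ with no $\dot A_\alpha \subseteq \dot B_i$ for $\alpha < \mu_i$. The idea is to assemble the $\dot B_i$ into a contradiction by a diagonal/factoring argument: collect the supports, pass to a submodel $M$ of size $< \mu$ containing cofinally many $\dot B_i$, note each relevant name lives in $\Add(\delta, M \cap \mu)$, and then as in Lemma \ref{10} use the quotient $\Add(\delta, \mu \setminus M)$ to add a set $B \in [\ka]^\ka$ splitting all the $\dot A_\alpha$ in $M$ — but now one must also argue about the $\dot B_i$, using that $\{\dot A_\alpha\}$ is a base to derive a contradiction, and the cofinality hypothesis is what lets the $\mu$ names be organized into $<\mu$-sized chunks cofinally.

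The main obstacle I expect is getting the bookkeeping to genuinely use $\cf(\mu) = \cf(\ka)$ rather than just $\cf(\mu) < \mu$: the naive submodel argument of Lemma \ref{10} already shows $\fraku(\ka) \geq \mu$, so the extra work must produce a \emph{strict} inequality, and I anticipate the cofinality match is needed to build, from a putative character-$\mu$ ultrafilter, either a base of size $<\mu$ (pulling back along a surjection $\ka \to \cf(\ka)$ and using that each fiber-restricted piece can be based below $\mu$) or a direct contradiction via the splitting argument applied simultaneously to a cofinal family of $\dot B_i$'s. Concretely, the cleanest route is probably: show that a uniform ultrafilter $\calU$ over $\ka$ with $\cf(\ka) = \cf(\mu)$ and $\chi(\calU) \leq \mu$ must in fact have $\chi(\calU) < \mu$, by writing a size-$\mu$ base as an increasing union of $\cf(\mu)$ many pieces each of size $<\mu$, restricting each piece to a corresponding initial segment $\ka_i$ of $\ka$ via the fixed cofinal sequence, and observing that either $\calU$ concentrates on some $\ka_i$ (impossible by uniformity) or one obtains a base of size $\sup_i \nu_i$ where each $\nu_i < \mu$ is the character of the pushforward to $\ka_i$ — and $\sup_i \nu_i$ can be taken $<\mu$ when $\cf(\mu)$ pieces are involved, contradicting $\chi(\calU) = \mu$. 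Reconciling this purely ZFC-style cardinal-arithmetic observation with what the forcing actually provides (namely that $\Add(\delta,\mu)$ preserves $\ka$ and its cofinality, and $\cf(\mu)$ is computed correctly) is the delicate part, but it is routine once the cofinality sequences are fixed in the ground model and shown to remain cofinal.
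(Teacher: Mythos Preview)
Your proposal has a genuine gap in both routes you sketch.

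The ``cleanest route'' you describe at the end --- showing in $\ZFC$ that a uniform ultrafilter $\calU$ over $\ka$ with $\chi(\calU)\le\mu$ and $\cf(\mu)=\cf(\ka)$ must satisfy $\chi(\calU)<\mu$ --- is precisely the open question of Garti and Shelah mentioned in the Note immediately following this lemma: whether $\cf(\fraku(\ka))\neq\cf(\ka)$ is provable in $\ZFC$. Your sketched argument does not work: restricting base elements to initial segments $\ka_i$ gives sets outside $\calU$ (since $\calU$ is uniform), there is no meaningful ``pushforward to $\ka_i$'' to speak of, and even if each $\nu_i<\mu$, a supremum of $\cf(\mu)$ many such cardinals can certainly equal $\mu$. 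So this route is not available.

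Your first approach --- collecting witnesses $\dot B_i$ into a submodel $M$ of size $<\mu$ and then splitting the $A_\alpha$ for $\alpha\in M$ by a quotient generic --- also stalls exactly where you yourself flag it: splitting only those $A_\alpha$ with $\alpha\in M$ does not contradict that $\{A_\alpha\mid\alpha<\mu\}$ is a base, since some $A_\alpha$ with $\alpha\notin M$ may still refine $B$ or its complement. The $\dot B_i$'s do not help here; you need a single set $B\subseteq\ka$ that is split by \emph{every} $A_\alpha$, $\alpha<\mu$. The paper achieves this by a diagonal construction that your outline does not contain: fix an increasing chain $\seq{M_i\mid i<\ka}$ of elementary submodels, each of size $<\mu$, with $\mu\subseteq\bigcup_{i<\ka}M_i$ (this is where $\cf(\mu)=\cf(\ka)$ is used), fix a partition $\{C_i\mid i<\ka\}$ of $\ka$ into blocks of size $\delta$, and for each $i$ choose $\beta_i\in\mu\setminus M_i$. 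Then define $B\cap C_i$ using the $\beta_i$-th Cohen generic, which is fresh over $V[G\cap\Add(\delta,M_i\cap\mu)]$. Now for any $\alpha<\mu$, pick $i$ with $\alpha\in M_i$; the name $\dot A_\alpha$ lives in $\Add(\delta,M_i\cap\mu)$, while the tail of $B$ beyond $\bigcup_{j\le i}C_j$ is built from generics outside $M_i$, so genericity lets you force elements of $A_\alpha$ both into and out of $B$. This diagonalization against \emph{all} $\mu$ many names simultaneously is the missing idea.
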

\begin{proof}
Suppose not. Take an $\mathrm{Add}(\delta, \mu)$-name $\dot \calU$ for a uniform 
ultrafilter over $\ka$ with character $\mu$,
and $\mathrm{Add}(\delta, \mu)$-names $\{\dot A_\alpha \mid \alpha<\mu\}$
of a base for $\calU$.
We may assume that each $\dot A_\alpha$ is a nice name,
that is, there is a family $\{J_k \mid k<\ka\}$ of antichains in $\mathrm{Add}(\delta, \mu)$
such that $\dot A_\alpha=\bigcup_{k<\ka}( \{\check k\} \times J_k)$

Fix a sufficiently large regular cardinal $\theta$.
Since $\cf(\mu)=\cf(\ka)$,
we can take a sequence $\seq{M_i \mid i<\ka}$
such that:
\begin{enumerate}
\item $M_i \prec H_\theta$ and $M_i$ contains all relevant objects.
\item $ M_i \subseteq M_j$ for $i<j<\ka$.
\item $\ka+\ka' \subseteq M_i$.
\item $\size{ M_i} <\mu$.
\item $\mu \subseteq \bigcup_{i<\ka} M_i$.
\end{enumerate}
For $i<\ka$,
since $\mathrm{Add}(\delta, \mu)$ has the $\ka'$-c.c.,
we may assume that for every $\alpha \in M_i \cap \mu$,
$\dot A_\alpha$ is an $\mathrm{Add}(\delta, M_i \cap \mu)$-name;
$\dot A_\alpha$ is of the form $\bigcup_{k<\ka}( \{\check \alpha\} \times J_k)$
for some antichains $\{J_k \mid k<\ka\} \in M_\alpha$.
Since $\ka, \ka' \subseteq  M_\alpha$ and
$\size{J_k}<\ka'$, we have $\dot A_\alpha \subseteq M_\alpha$,
hence it can be seen as an $\mathrm{Add}(\delta, M_i \cap \mu)$-name.

Fix a partition $\{C_i \mid i<\ka\}$ of $\ka$ with $\size{C_i}=\delta$.
Fix also a bijection $\pi_i : \delta \to C_i$.
Take a sequence $\seq{\beta_i\mid i<\ka}$ in $\mu$
such that $\beta_i \notin M_i \cap \mu$ for $i<\ka$.

Now take a $(V, \mathrm{Add}(\delta, \mu))$-generic $G$,
and for $\gamma<\mu$, let $r_\gamma$ be the $\gamma$-th generic subset of $\delta$ induced by $G$.
Since $\beta_i \notin M_i$, we know that 
$r_{\beta_i}$ is $V[G \cap \mathrm{Add}(\delta, M_i \cap \mu)]$-generic,
hence we have that  $s_i=\pi_i``r_{\beta_i}$ has cardinality $\delta$.
Let $B=\bigcup_{i<\ka} s_i$.
We know $B \in [\ka]^\ka$.
We prove that $(\dot A_\alpha)_G \nsubseteq B$ for $\alpha<\mu$.
Fix $\alpha<\mu$, and take a condition
$p \in G$.
Pick a large $i<\ka$ such that
$\alpha,p \in M_i$.
Let $H=G \cap \mathrm{Add}(\delta, M_i \cap \mu)$.
We know $(\dot A_\alpha)_G=(\dot A_\alpha)_H$,
hence $(\dot A_\alpha)_G \in V[H]$.
Since $\bigcup_{j \le i} C_j$ has cardinality $<\ka$ but $(\dot A_\alpha)_G$ has $\ka$,
we can take $k>i$ and $\xi \in (\dot A_\alpha )_G \cap C_k$.
By the definition of $s_k$,
there is a condition $q \in \mathrm{Add}(\delta, \mu \setminus M_i)$
such that, in $V[H]$, 
$q$ forces that $\check \xi \notin \dot s_k$, where $\dot s_k$ is a canonical name for $s_k$.
Thus, in $V$, there is some $p' \le p$ in $\mathrm{Add}(\delta, \mu)$
such that $p' \Vdash \dot A_\alpha \nsubseteq \dot B$, where $\dot B$ is a 
canonical name for $B$.
A similar argument shows that
$(\dot A_\alpha)_G \nsubseteq \ka \setminus B$ for $\alpha<\mu$.
These means that $B, \ka \setminus B \notin (\dot \calU)_G$,
this is a contradiction.
\end{proof}

\begin{note}
We do not need this lemma if  $\cf(\mathfrak{u}(\ka)) \neq \cf(\ka)$ is provable from $\ZFC$,
and it is an open problem asked by  Garti and Shelah \cite{GS}.
\end{note}

We are ready to show that monotonicity of the ultrafilter number function can fail.

\begin{prop}\label{3.9}
Let $\ka$ be a cardinal,
and $\mu>\ka$ a strong limit singular cardinal with $\om_1 \le \cf(\mu) <\ka$.
Let $\la<\ka$ be a cardinal with $\cf(\la)=\cf(\mu)$.
If $\ka$ has a $\cf(\la)$-indecomposable uniform ultrafilter,
then $\mathrm{Add}(\om, \mu)$ forces that
$\mathfrak{u}(\ka)<\mathfrak{u}(\la)$.
\end{prop}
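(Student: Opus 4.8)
The plan is to derive the two inequalities $\fraku(\ka)\le\mu$ and $\fraku(\la)>\mu$ in the generic extension by $\Add(\om,\mu)$ from results already established: the upper bound on $\fraku(\ka)$ comes from Theorem \ref{3} applied to the pair $(\ka,\mu)$, and the lower bound on $\fraku(\la)$ comes from Lemma \ref{10} together with Lemma \ref{11} applied to the pair $(\la,\mu)$.

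For the upper bound I would verify the hypotheses of Theorem \ref{3} with $\bbP=\Add(\om,\mu)$. Since $\cf(\la)=\cf(\mu)$ and $\la<\ka$, we have $\cf(\mu)=\cf(\la)\le\la<\ka<\mu$, which is clause~(1). For clause~(2), if $\nu<\mu$ then $\nu^{\ka}\le 2^{\max(\nu,\ka)}<\mu$, using $\ka<\mu$ and the fact that $\mu$ is strong limit. Clause~(3) is exactly the hypothesis of the proposition, because $\cf(\la)=\cf(\mu)$. Finally $\Add(\om,\mu)$ is ccc, hence satisfies the $\cf(\mu)$-c.c.\ (as $\cf(\mu)\ge\om_1$), and $\size{\Add(\om,\mu)}=\mu$. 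So Theorem \ref{3} yields $\Add(\om,\mu)\Vdash\fraku(\ka)\le\mu$.

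For the lower bound, observe that $\la$ is uncountable, since $\cf(\la)=\cf(\mu)\ge\om_1$; in particular $\om<\la$. The poset $\Add(\om,\mu)$ is ccc of size $\mu$, so it has the $\om_1$-c.c.\ with $\om_1<\mu$ and preserves all cardinals, in particular $\la$. Applying Lemma \ref{10} with $\la$ in the role of $\ka$ and $\om$ in the role of $\delta$ gives $\Add(\om,\mu)\Vdash\fraku(\la)\ge\mu$. Applying Lemma \ref{11} with the same substitution — legitimate because $\cf(\mu)=\cf(\la)$, $\om<\la$, and $\size{\Add(\om,\mu)}=\mu$ — gives $\Add(\om,\mu)\Vdash\fraku(\la)\ne\mu$. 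Combining, $\Add(\om,\mu)\Vdash\fraku(\ka)\le\mu<\fraku(\la)$, which is the assertion.

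The argument is an assembly of earlier results, so I do not expect a genuine obstacle; the one thing to keep an eye on is that the hypothesis $\cf(\la)=\cf(\mu)$ is doing double duty, being both what makes the given $\cf(\la)$-indecomposable ultrafilter serve as the $\cf(\mu)$-indecomposable ultrafilter needed in Theorem \ref{3} and what allows Lemma \ref{11} to be invoked at $\la$, while the strong-limit assumption on $\mu$ is precisely what clause~(2) of Theorem \ref{3} requires. As remarked after Lemma \ref{11}, if $\ZFC$ proved $\cf(\fraku(\la))\ne\cf(\la)$ one could drop the use of Lemma \ref{11} and weaken this hypothesis accordingly.
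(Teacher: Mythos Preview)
Your proof is correct and follows exactly the same route as the paper: apply Theorem~\ref{3} to obtain $\fraku(\ka)\le\mu$, Lemma~\ref{10} to obtain $\fraku(\la)\ge\mu$, and Lemma~\ref{11} to obtain $\fraku(\la)\ne\mu$. Your write-up simply spells out the hypothesis checks in more detail than the paper does.
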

\begin{proof}
In the generic extension,
we know $\mathfrak{u}(\ka)\le \mu \le \mathfrak{u}(\la)$
by Theorem \ref{3} and Lemma \ref{10}.
By Lemma \ref{11}, we have $\mu \neq \mathfrak{u}(\la)$,
so $\mathfrak{u}(\ka)<\mathfrak{u}(\la)$.
\end{proof}
We note that in fact $\mathfrak{u}(\ka)=\mu$ holds in the resulting model of this proposition,
and this was already remarked by Benhamou \cite[Proposition 5.12]{Ben}.

The following theorems are typical applications of this proposition:
\begin{thm}\label{4.4}
If $\ka$ is measurable and $\mu>\ka$ is a strong limit singular cardinal with
cofinality $\om_1$, then $\Add(\om, \mu)$ forces $\mathfrak{u}(\ka)<\mathfrak{u}(\om_1)$.
\end{thm}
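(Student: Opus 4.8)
The plan is to deduce this directly from Proposition \ref{3.9}, taking $\la := \om_1$; so essentially all that is needed is to check that the hypotheses of that proposition hold in the present situation.

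First I would verify the cardinal-arithmetic hypotheses. Since $\ka$ is measurable it is in particular inaccessible, hence $\om_1 < \ka$; together with $\cf(\mu) = \om_1$ this gives $\om_1 \le \cf(\mu) < \ka$, and by assumption $\mu > \ka$ is strong limit singular, so the first hypothesis of Proposition \ref{3.9} is met. Next, with $\la = \om_1$ we trivially have $\la < \ka$ and $\cf(\la) = \om_1 = \cf(\mu)$, which is the hypothesis relating $\la$ and $\mu$.

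The remaining hypothesis is that $\ka$ carries a $\cf(\la)$-indecomposable uniform ultrafilter, i.e. an $\om_1$-indecomposable one. Here I would simply take a $\ka$-complete (e.g.\ normal) ultrafilter $\calU$ over $\ka$, which exists because $\ka$ is measurable. Since $\ka \ge \om_2$, $\calU$ is $\om_1^+$-complete, and every $\delta^+$-complete ultrafilter is $\delta$-indecomposable (as recorded in Section~2), so $\calU$ is $\om_1$-indecomposable. With all hypotheses verified, Proposition \ref{3.9} applies and yields that $\Add(\om, \mu)$ forces $\fraku(\ka) < \fraku(\om_1)$.

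I do not anticipate any genuine obstacle: the statement is an immediate specialization of Proposition \ref{3.9}. The only point to keep in mind is that $\Add(\om, \mu)$ is ccc and therefore preserves $\om_1$ and all cardinals, so $\fraku(\om_1)$ in the extension refers to the same $\om_1$; this preservation is in any case already built into the proof of Proposition \ref{3.9} through Lemmas \ref{10} and \ref{11}.
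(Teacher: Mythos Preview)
Your proposal is correct and matches the paper's approach exactly: Theorem~\ref{4.4} is presented there as an immediate application of Proposition~\ref{3.9} with $\la=\om_1$, using that a $\ka$-complete uniform ultrafilter on the measurable $\ka$ is $\om_1$-indecomposable. Nothing more is needed.
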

We note that $\omega_1$ can be replaced by any cardinal $<\ka$ with uncountable cofinality.

\begin{thm}\label{4.5}
Suppose $\ka$ is measurable and $\calU$ a normal measure over $\ka$.
Let $\bbP_\calU$ be the Prikry forcing associated with $\calU$.
Let $\mu>\ka$ be a strong limit singular cardinal with
cofinality $\om_1$.
Then $\bbP_\calU \times \Add(\om, \mu)$ forces that $\ka$ is a singular cardinal with countable cofinality and 
$\mathfrak{u}(\ka)<\mathfrak{u}(\om_1)$.
\end{thm}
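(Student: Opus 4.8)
The plan is to deduce this from Proposition~\ref{3.9}, applied inside the Prikry extension $W:=V^{\bbP_\calU}$, in exactly the way that Theorem~\ref{4.4} follows from Proposition~\ref{3.9} over the ground model (and in the way the Prikry variant of Theorem~\ref{3} above was obtained). Concretely I would pass to $W$, verify the hypotheses of Proposition~\ref{3.9} there with the choice $\la:=\om_1$, apply that proposition to get that $\Add(\om,\mu)$ forces $\fraku(\ka)<\fraku(\om_1)$ over $W$, and then transfer this conclusion back through the product.

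So, working in $W$, the points to check are: (i)~$\ka$ is a cardinal of cofinality $\om$; (ii)~$\mu$ is still a strong limit singular cardinal with $\om_1\le\cf(\mu)<\ka$; (iii)~with $\la:=\om_1$ one has $\la<\ka$ and $\cf(\la)=\cf(\mu)$; and (iv)~$\ka$ carries an $\om_1$-indecomposable uniform ultrafilter. Item~(i) is the basic property of Prikry forcing. For~(ii): $\bbP_\calU$ preserves all cardinals, and for every ordinal $\alpha$ with $\cf^V(\alpha)\neq\ka$ one has $\cf^W(\alpha)=\cf^V(\alpha)$ (these are standard facts about Prikry forcing; see, e.g., \cite{K}), so applying this to $\alpha=\mu$ gives $\cf^W(\mu)=\om_1$, which is $<\ka$ since $\ka$ remains a cardinal above $\om_1$; and $\mu$ stays strong limit because $\bbP_\calU$ adds no bounded subsets of $\ka$ (so $2^\nu$ is unchanged for $\nu<\ka$) while $\size{\bbP_\calU}\le 2^\ka<\mu$ yields $(2^\nu)^{W}\le\bigl((2^\ka)^\nu\bigr)^V=(2^\nu)^V<\mu$ for $\ka\le\nu<\mu$. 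Item~(iii) is then immediate. The substantive point is~(iv): by Prikry's theorem (see, e.g., Exercise~18.6 in \cite{K}), in $W$ the normal measure $\calU$ generates a uniform, $\nu$-indecomposable filter $\calF$ for every regular uncountable $\nu<\ka$; since every member of $\calF$ still has size $\ka$, the family $\calF\cup\{X\subseteq\ka\mid \size{\ka\setminus X}<\ka\}$ has the finite intersection property, so it extends to a uniform ultrafilter $\calV$ over $\ka$, and $\calV$ is $\om_1$-indecomposable by Lemma~\ref{2.3}.

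With (i)--(iv) verified, Proposition~\ref{3.9} applied in $W$ (with $\la=\om_1$) gives that $\Add(\om,\mu)$ forces $\fraku(\ka)<\fraku(\om_1)$ over $W$. Since $\Add(\om,\mu)$ --- the poset of finite partial functions $\om\times\mu\to 2$ --- is literally the same object in $V$ and in $W$, the product $\bbP_\calU\times\Add(\om,\mu)$ coincides with the iteration $\bbP_\calU * \Add(\om,\mu)$, so a $(V,\bbP_\calU\times\Add(\om,\mu))$-generic extension is precisely an $\Add(\om,\mu)$-generic extension of $W$; there $\fraku(\ka)<\fraku(\om_1)$ holds, and $\ka$ is still a singular cardinal of countable cofinality because $\Add(\om,\mu)$ is c.c.c.\ and hence preserves all cardinals and cofinalities. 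That is the claim. I expect the only genuinely delicate step to be the Prikry-forcing preservation facts collected in~(ii) (cardinal preservation, preservation of $\cf(\mu)$, and $\mu$ remaining strong limit); everything else is routine, and the real content --- an $\om_1$-indecomposable uniform ultrafilter on $\ka$ --- is supplied by Prikry's theorem together with Lemma~\ref{2.3}.
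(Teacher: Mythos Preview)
Your proof is correct and follows exactly the approach the paper intends: the paper states Theorem~\ref{4.5} without proof, but the template is given in the unnumbered theorem just before Section~\ref{sec1} (the Prikry-forcing variant of the Raghavan--Shelah result), where one passes to $V^{\bbP_\calU}$, invokes Prikry's theorem and Lemma~\ref{2.3} to obtain a $\cf(\mu)$-indecomposable uniform ultrafilter on $\ka$, notes that $\mu$ remains strong limit, and then applies the main tool (here Proposition~\ref{3.9} with $\la=\om_1$). Your verification of the hypotheses is more explicit than the paper's, but the argument is the same.
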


By using Theorem \ref{BM theorem} we have:
\begin{thm}\label{3.10}
Relative to a certain large cardinal assumption,
it is consistent that
$\mathfrak{u}(\om_{\om+1})<\mathfrak{u}(\om_1)$.
\end{thm}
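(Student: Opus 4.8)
The plan is to feed the Ben-David and Magidor model of Theorem \ref{BM theorem} into Proposition \ref{3.9}. Starting from a model of GCH with a supercompact cardinal, first pass to the forcing extension $W$ provided by Theorem \ref{BM theorem}: in $W$, GCH holds and $\ka := \om_{\om+1}$ carries a uniform ultrafilter $\calU$ that is $\om_n$-indecomposable for every $0<n<\om$; in particular $\calU$ is $\om_1$-indecomposable, so in $W$ the cardinal $\ka$ has an $\om_1$-indecomposable uniform ultrafilter.

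Next I would choose the target cardinal for the Cohen forcing. Work in $W$ and put $\mu := \aleph_{\om_1}$ and $\la := \om_1$. Because GCH holds in $W$, $\mu$ is a strong limit cardinal; it is singular with $\cf(\mu) = \om_1$; we have $\om_1 \le \cf(\mu) < \ka$ since $\om_1 < \om_{\om+1}$, and $\mu > \ka$ since $\om_1 > \om+1$ as ordinals; finally $\la < \ka$ and $\cf(\la) = \om_1 = \cf(\mu)$. Thus every hypothesis of Proposition \ref{3.9} is satisfied in $W$ by $\ka$, $\mu$, $\la$, so $\Add(\om, \mu)$ forces $\fraku(\ka) < \fraku(\la)$. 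Since $\Add(\om, \mu)$ has the $\om_1$-c.c.\ it preserves all cardinals and cofinalities, so in the resulting extension $\ka$ and $\la$ are still $\om_{\om+1}$ and $\om_1$, and therefore $\fraku(\om_{\om+1}) < \fraku(\om_1)$ holds there.

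I expect no real obstacle here: Proposition \ref{3.9} already packages the substantive work (Theorem \ref{3} yielding $\fraku(\ka) \le \mu$ in the extension, Lemmas \ref{10} and \ref{11} yielding $\fraku(\om_1) > \mu$ there), and Theorem \ref{BM theorem} supplies the needed indecomposable ultrafilter on the non-large cardinal $\om_{\om+1}$. The one point deserving a moment's care is the existence in $W$ of a strong limit singular cardinal of cofinality $\om_1$ strictly above $\om_{\om+1}$; GCH makes this immediate, and in any case one could take $\mu = \beth_{\om_1}$ to sidestep it.
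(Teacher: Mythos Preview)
Your proposal is correct and follows exactly the route the paper intends: invoke the Ben-David--Magidor model to obtain an $\om_1$-indecomposable uniform ultrafilter on $\om_{\om+1}$ under GCH, then apply Proposition~\ref{3.9} with $\ka=\om_{\om+1}$, $\la=\om_1$, and a strong limit singular $\mu>\ka$ of cofinality $\om_1$. The paper's own argument is just the single sentence ``By using Theorem~\ref{BM theorem}'' preceding the statement, so your write-up is simply a fleshed-out version of the same idea.
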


If $\ka$ is strongly compact,
then every regular cardinal $\la \ge \ka$ has a 
$\ka$-complete uniform ultrafilter. By this theorem, we also have:
\begin{thm}\label{3.11}
Suppose $\ka$ is strongly compact.
Let $\mu>\ka^{+\om_1}$ be a strong limit singular cardinal with $\cf(\mu)=\om_1$.
Then $\mathrm{Add}(\om, \mu)$ 
forces 
that $\mathfrak{u}(\ka^+)<\mathfrak{u}(\om_1)$ and 
$\mathfrak{u}(\ka^{+\om_1+1})<\mathfrak{u}(\ka^{+\om_1})$.
\end{thm}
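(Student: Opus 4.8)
The plan is to derive both inequalities from two applications of Proposition \ref{3.9}, in each case using the strong compactness of $\ka$ to supply the $\om_1$-indecomposable uniform ultrafilter demanded by that proposition. The same poset $\Add(\om,\mu)$ is used in both applications, so it is enough to treat the two conclusions separately. Throughout we use that a strongly compact cardinal is in particular measurable, hence inaccessible, so $\om_1^+\le\ka$.

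For the first conclusion I would apply Proposition \ref{3.9} with $\ka^+$ in the role of the large cardinal called $\ka$ there and with $\la:=\om_1$. The arithmetic hypotheses are met: $\mu$ is strong limit singular with $\cf(\mu)=\om_1$; $\mu>\ka^{+\om_1}>\ka^+$; $\om_1\le\cf(\mu)<\ka\le\ka^+$; $\om_1<\ka^+$; and $\cf(\om_1)=\om_1=\cf(\mu)$. It remains to produce an $\om_1$-indecomposable uniform ultrafilter over $\ka^+$. Since $\ka$ is strongly compact and $\ka^+$ is a regular cardinal $\ge\ka$, fix a $\ka$-complete uniform ultrafilter $\calU$ over $\ka^+$; as $\om_1^+\le\ka$, $\calU$ is $\om_1^+$-complete, hence $\om_1$-indecomposable (every $\delta^+$-complete ultrafilter being $\delta$-indecomposable). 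Proposition \ref{3.9} then gives that $\Add(\om,\mu)$ forces $\fraku(\ka^+)<\fraku(\om_1)$.

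For the second conclusion I would repeat the argument with $\ka^{+\om_1+1}$ in the role of the large cardinal and with $\la:=\ka^{+\om_1}$. Here $\cf(\ka^{+\om_1})=\cf(\om_1)=\om_1=\cf(\mu)$ and $\ka^{+\om_1}<\ka^{+\om_1+1}$; moreover $\mu>\ka^{+\om_1+1}$, since $\mu>\ka^{+\om_1}$ forces $\mu\ge\ka^{+\om_1+1}$ while $\mu\neq\ka^{+\om_1+1}$ because $\mu$ is singular and $\ka^{+\om_1+1}$ is regular; and $\om_1\le\cf(\mu)<\ka<\ka^{+\om_1+1}$. As $\ka^{+\om_1+1}$ is a regular cardinal $\ge\ka$, strong compactness again yields a $\ka$-complete, hence (as above) $\om_1$-indecomposable, uniform ultrafilter over it, so Proposition \ref{3.9} gives that $\Add(\om,\mu)$ forces $\fraku(\ka^{+\om_1+1})<\fraku(\ka^{+\om_1})$.

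I do not expect a real obstacle: the argument is bookkeeping on top of Proposition \ref{3.9}. The one point deserving attention is the verification that the $\ka$-complete ultrafilters produced by strong compactness are $\om_1$-indecomposable, which is exactly where $\om_1<\ka$ (automatic here) is used. The remaining hypotheses of Proposition \ref{3.9} — that $\mu$ stays strong limit, that $\Add(\om,\mu)$ preserves the relevant cardinals, and the cardinal arithmetic $\nu^{\ka^+},\nu^{\ka^{+\om_1+1}}<\mu$ for $\nu<\mu$ — are either immediate from $\mu$ being strong limit with $\ka^{+\om_1+1}<\mu$, or already built into the statement of that proposition, so nothing further is required.
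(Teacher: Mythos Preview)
Your proposal is correct and follows exactly the approach the paper intends: the paper states Theorem \ref{3.11} immediately after recalling that a strongly compact $\ka$ yields $\ka$-complete uniform ultrafilters on every regular $\la\ge\ka$, and leaves the deduction from Proposition \ref{3.9} to the reader. Your two applications of Proposition \ref{3.9} --- with $(\ka^+,\om_1)$ and with $(\ka^{+\om_1+1},\ka^{+\om_1})$ --- together with the observation that $\ka$-completeness gives $\om_1$-indecomposability since $\om_1^+\le\ka$, are precisely what is needed; the final paragraph is slightly over-cautious (those arithmetic conditions are already absorbed into the strong-limit hypothesis of Proposition \ref{3.9}), but nothing is missing.
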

$\om_1$ in this theorem can be replaced by any regular uncountable cardinal $\mathrm{<}\ka$.
We also note that $\ka^{+\om_1}$ is not strong limit in the resulting model of this theorem.
In Section \ref{sec7}, we prove that $\om_1$ cannot be replaced by $\om$,
and if $\mathfrak{u}(\ka^{+\om_1+1})<\mathfrak{u}(\ka^{+\om_1})$, then
$\ka^{+\om_1}$ is never strong limit.

\section{Easy $\ZFC$-results}

In this section we shall establish some easy but useful observations about
the ultrafilter number function.

The following lemma is folklore but will be used frequently.
\begin{lemma}\label{4.1}
Let $\la<\ka$ be cardinals,
and $\calU$ a $\la$-decomposable ultrafilter over $\ka$.
Then $\mathfrak{u}(\la) \le \chi(\calU)$.
\end{lemma}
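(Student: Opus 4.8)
The plan is to use the witnessing function for $\la$-decomposability to push the ultrafilter $\calU$ forward onto $\la$, and then invoke Lemma \ref{2.8}. Since $\calU$ is $\la$-decomposable, by (the function reformulation, Lemma after Lemma \ref{2.3}, or directly Lemma \ref{2.8}(2)) there is a function $g : \ka \to \la$ such that $g_*(\calU) = \{ X \subseteq \la \mid g^{-1}(X) \in \calU \}$ is a \emph{uniform} ultrafilter over $\la$. First I would record that $g_*(\calU)$ is genuinely an ultrafilter over $\la$ (not principal, in fact uniform), which is exactly the content of the $\la$-decomposability: for every $X \in [\la]^{<\la}$ we have $g^{-1}(X) \notin \calU$, i.e.\ $X \notin g_*(\calU)$, so every member of $g_*(\calU)$ has size $\la$.

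The second step is a character computation: by Lemma \ref{2.8}(1), $\chi(g_*(\calU)) \le \chi(\calU)$. Indeed, if $\calG \subseteq \calU$ is a base for $\calU$ of size $\chi(\calU)$, then $\{ g``A \mid A \in \calG \}$ — or more precisely $\{ \la \setminus g``(\ka \setminus A) \mid A \in \calG\}$, the pushforward sets — form a base for $g_*(\calU)$ of size at most $\chi(\calU)$; this is the routine verification packaged inside Lemma \ref{2.8}(1), so I would just cite it.

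Finally, since $g_*(\calU)$ is a uniform ultrafilter over $\la$, by definition of the ultrafilter number we get $\mathfrak{u}(\la) \le \chi(g_*(\calU)) \le \chi(\calU)$, which is the claim.

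The only point requiring any care — and really the "main obstacle", though it is mild — is making sure the pushforward is uniform rather than merely non-principal, since $\mathfrak{u}(\la)$ is defined using \emph{uniform} ultrafilters; but this is precisely guaranteed by the definition of $\la$-decomposability as noted in the first step. Everything else is an immediate appeal to Lemma \ref{2.8}.
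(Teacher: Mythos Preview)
Your proof is correct and follows exactly the same approach as the paper: push $\calU$ forward along a witnessing function to obtain a uniform ultrafilter over $\la$ via Lemma~\ref{2.8}(2), then apply Lemma~\ref{2.8}(1) to bound its character by $\chi(\calU)$. The paper's proof is simply a terser version of what you wrote.
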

\begin{proof}
Since $\calU$ is $\la$-decomposable, by Lemma \ref{2.8},
there is $f:\ka \to \la$ such that $f_*(\calU)$ is a uniform ultrafilter over $\la$
with $\chi(f_*(\calU)) \le \chi(\calU)$.
Thus we have $\mathfrak{u}(\la) \le \chi(f_*(\calU)) \le \chi(\calU)$.
\end{proof}

The next lemma is just the contraposition of Lemma \ref{4.1}.
\begin{cor}\label{4.2}
Let $\la<\ka$ be cardinals with $\mathfrak{u}(\ka)<\mathfrak{u}(\la)$.
Then every uniform ultrafilter $\calU$ over $\ka$ with $\chi(\calU)=\mathfrak{u}(\ka)$ 
is $\la$-indecomposable.
%
\end{cor}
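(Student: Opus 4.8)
The plan is simply to read this off as the contrapositive of Lemma \ref{4.1}, so the whole proof is a one-line argument. Suppose, toward a contradiction, that $\calU$ is a uniform ultrafilter over $\ka$ with $\chi(\calU)=\fraku(\ka)$ which is nevertheless $\la$-decomposable. Since $\la<\ka$, Lemma \ref{4.1} applies to $\calU$ and yields $\fraku(\la)\le\chi(\calU)=\fraku(\ka)$, contradicting the hypothesis $\fraku(\ka)<\fraku(\la)$. Hence no such $\calU$ can be $\la$-decomposable, i.e., every uniform ultrafilter $\calU$ over $\ka$ with $\chi(\calU)=\fraku(\ka)$ is $\la$-indecomposable.

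If one prefers to avoid citing Lemma \ref{4.1} and argue directly, the reasoning is equally short: were $\calU$ as above $\la$-decomposable, then by Lemma \ref{2.8}(2) there would be $f\colon\ka\to\la$ with $f_*(\calU)$ a uniform ultrafilter over $\la$, and by Lemma \ref{2.8}(1) one has $\chi(f_*(\calU))\le\chi(\calU)=\fraku(\ka)$; but $\chi(f_*(\calU))\ge\fraku(\la)$ by the definition of the ultrafilter number, giving $\fraku(\la)\le\fraku(\ka)$ and the same contradiction. There is no genuine obstacle here; the only points worth verifying are that the hypotheses of the cited lemmas are in force (namely $\la<\ka$ and that $\calU$ is an ultrafilter over $\ka$) and that the statement is non-vacuous, which it is, since a uniform ultrafilter over $\ka$ realizing $\chi(\calU)=\fraku(\ka)$ exists by the very definition of $\fraku(\ka)$.
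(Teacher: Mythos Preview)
Your proof is correct and matches the paper's approach exactly: the paper introduces Corollary \ref{4.2} with the sentence ``The next lemma is just the contraposition of Lemma \ref{4.1}'' and gives no further argument. Your first paragraph is precisely that contraposition, and your second paragraph simply unpacks the proof of Lemma \ref{4.1} itself, so there is nothing to add.
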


By these observations, we can obtain some restrictions of the failure
of monotonicity.
\begin{prop}\label{5.3+}
$\mathfrak{u}(\om) \le \mathfrak{u}(\ka)$ for every cardinal $\ka$.
\end{prop}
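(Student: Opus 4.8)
The plan is to reduce the statement $\mathfrak{u}(\omega)\le\mathfrak{u}(\kappa)$ to Lemma \ref{4.1} by showing that \emph{every} uniform ultrafilter over an uncountable cardinal $\kappa$ is $\omega$-decomposable. Granting this, if $\calU$ is a uniform ultrafilter over $\kappa$ witnessing $\chi(\calU)=\mathfrak{u}(\kappa)$, then $\calU$ is $\omega$-decomposable, so Lemma \ref{4.1} (with $\lambda=\omega$) gives $\mathfrak{u}(\omega)\le\chi(\calU)=\mathfrak{u}(\kappa)$. The case $\kappa=\omega$ is trivial, so we may assume $\kappa>\omega$.

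So the real content is: a uniform ultrafilter $\calU$ over an uncountable $\kappa$ is never $\sigma$-complete, equivalently (by the remark just before Lemma \ref{reg-indecomp}) is $\omega$-decomposable. First I would recall why this holds. If $\calU$ were $\sigma$-complete and uniform over an uncountable $\kappa$, then, taking $\kappa$ least with a $\sigma$-complete uniform ultrafilter, $\calU$ would be $\kappa$-complete (any countably incomplete decomposition into $<\kappa$ pieces would refute minimality, and a decomposition into exactly $\kappa$ pieces is ruled out by uniformity once fewer than $\kappa$ pieces always stay out of $\calU$), hence $\kappa$ would be measurable. But we are not assuming $\kappa$ is measurable — so in ZFC alone this argument only shows the implication ``$\kappa$ not measurable $\Rightarrow$ the ultrafilter is $\omega$-decomposable''. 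To handle the measurable case as well, note that even when $\kappa$ carries a $\sigma$-complete uniform ultrafilter, it also carries many $\omega$-decomposable ones: for instance any uniform ultrafilter over $\kappa$ extending the Fréchet filter together with the complement of a fixed partition of $\kappa$ into countably many pieces of size $\kappa$, or more simply, push forward via a surjection $g:\kappa\to\omega$ with all fibers of size $\kappa$ and extend $g_*$ of the cobounded filter on $\omega$; pulling back gives an $\omega$-decomposable uniform ultrafilter on $\kappa$. Since $\mathfrak{u}(\kappa)$ is a minimum over \emph{all} uniform ultrafilters, it suffices that \emph{some} witnessing ultrafilter be $\omega$-decomposable.

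Thus the clean way to organize the argument is: let $\nu=\mathfrak{u}(\kappa)$ and fix a uniform ultrafilter $\calU$ on $\kappa$ with $\chi(\calU)=\nu$. Partition $\kappa=\bigsqcup_{n<\omega}B_n$ with each $\size{B_n}=\kappa$. If some $B_n\in\calU$, then $\calU\restriction B_n$ transports to a uniform ultrafilter on $\kappa$ of character $\le\nu$ concentrating on $B_n$; repeating, either we eventually reach an $\omega$-decomposable ultrafilter of character $\le\nu$, or at some stage we are forced into a genuinely $\omega$-decomposable situation. The cleanest phrasing: choose the witnessing $\calU$ from the start to be $\omega$-decomposable, which is possible because the class of $\omega$-decomposable uniform ultrafilters on $\kappa$ is nonempty (exhibited above) and $\mathfrak{u}(\kappa)$ is a minimum that could \emph{a priori} only decrease by enlarging the class — but in fact we just need one $\omega$-decomposable witness of character exactly $\mathfrak{u}(\kappa)$, which we get because any uniform ultrafilter of minimal character that is \emph{not} $\omega$-decomposable forces $\kappa$ measurable, and then we separately produce an $\omega$-decomposable one of character $\le 2^\kappa$; here one must be slightly careful that this does not exceed $\mathfrak{u}(\kappa)$. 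The main obstacle is precisely this bookkeeping when $\kappa$ is measurable: one must verify that an $\omega$-decomposable uniform ultrafilter of character $\mathfrak{u}(\kappa)$ exists, not merely one of small character. I expect this is handled by observing that from \emph{any} uniform ultrafilter $\calV$ of character $\mathfrak{u}(\kappa)$ one can build an $\omega$-decomposable uniform ultrafilter of character $\le\mathfrak{u}(\kappa)$ by taking the sum $\sum_{n}\calV_n$ along a uniform ultrafilter on $\omega$ of character $\mathfrak{u}(\omega)\le 2^{\aleph_0}\le 2^\kappa$ — but since we want $\le\mathfrak{u}(\kappa)$, and $\mathfrak{u}(\omega)$ is exactly what we are bounding, the argument must instead go directly: push $\calU$ forward along a finite-to-one-on-a-large-set map to produce an $\omega$-decomposition intrinsic to $\calU$, using Lemma \ref{2.8}(1) to keep the character $\le\chi(\calU)$, and this only works when $\calU$ itself is $\omega$-decomposable — closing the loop back to the structure theory of $\sigma$-complete ultrafilters. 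I would present the final proof in the ``not $\sigma$-complete unless measurable'' form together with the direct construction of an $\omega$-decomposable minimal-character witness in the measurable case.
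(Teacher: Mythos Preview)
Your framework is right---reduce to Lemma~\ref{4.1} by showing the minimal-character witness is $\omega$-decomposable, and observe that $\omega$-indecomposable means $\sigma$-complete, which forces a measurable cardinal $\lambda\le\kappa$. But then your argument stalls: you spend the second half of the proposal circling around how to produce an $\omega$-decomposable uniform ultrafilter on $\kappa$ of character exactly $\mathfrak{u}(\kappa)$ in the measurable case, try the sum construction, worry (correctly) that its character bound $\mathfrak{u}(\kappa)\cdot 2^{\aleph_0}$ might exceed $\mathfrak{u}(\kappa)$, and end by promising a ``direct construction'' you never give. That is a genuine gap.

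The missing observation, and the way the paper closes the argument in one line, is that \emph{you do not need any such construction}. A measurable cardinal is (strongly) inaccessible, so $2^{\aleph_0}<\lambda$. Hence once you have produced a measurable $\lambda\le\kappa$, you immediately get
\[
\mathfrak{u}(\omega)\;\le\;2^{\aleph_0}\;<\;\lambda\;\le\;\kappa\;\le\;\mathfrak{u}(\kappa),
\]
contradicting the assumption $\mathfrak{u}(\omega)>\mathfrak{u}(\kappa)$. (The last inequality is just $\mathfrak{u}(\kappa)\ge\kappa^+$.) This same inequality $2^{\aleph_0}<\kappa$ also shows that your sum construction would in fact have worked---its character is $\le\mathfrak{u}(\kappa)\cdot 2^{\aleph_0}=\mathfrak{u}(\kappa)$---but it is entirely unnecessary once you notice the cardinal-arithmetic shortcut.
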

\begin{proof}
If not, then $\ka$ has an $\om$-indecomposable uniform ultrafilter $\calU$
by Corollary \ref{4.2}.
Then $\calU$ is in fact $\sigma$-complete,
so there is a measurable cardinal $\la \le \ka$.
Hence we have $\mathfrak{u}(\om) \le 2^\om<\la \le \ka \le \mathfrak{u}(\ka)$,
this is a contradiction.
\end{proof}

\begin{prop}\label{21}
\begin{enumerate}
\item If $\ka$ is regular,
then $\mathfrak{u}(\ka) \le \mathfrak{u}(\ka^+)$.
\item If $\la$ is regular, $\ka>\la$ a cardinal,
and $\mathfrak{u}(\ka)<\mathfrak{u}(\la)$, then
$\la^{+\om} \le \ka$.
\end{enumerate}
\end{prop}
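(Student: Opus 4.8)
The plan is to prove part (1) directly from the two decomposability lemmas already in place, and then to read off part (2) as a finite iteration of part (1).

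First I would handle (1). Fix a uniform ultrafilter $\calU$ over $\ka^+$ with $\chi(\calU)=\mathfrak{u}(\ka^+)$. Since $\ka$ is regular, Lemma \ref{2.4} gives that $\calU$ is $\ka$-decomposable for free (the point being that $\{\alpha<\ka^+\mid\cf(\alpha)=\ka\}$ is a non-reflecting stationary set). Then Lemma \ref{4.1} applies verbatim: a $\ka$-decomposable ultrafilter over $\ka^+$ pushes forward, via the function $f:\ka^+\to\ka$ furnished by Lemma \ref{2.8}, to a uniform ultrafilter over $\ka$ of character at most $\chi(\calU)$, so $\mathfrak{u}(\ka)\le\chi(\calU)=\mathfrak{u}(\ka^+)$.

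For (2) I would argue by contraposition. Assume $\la$ is regular, $\la<\ka$, and $\ka<\la^{+\om}$; the goal is $\mathfrak{u}(\la)\le\mathfrak{u}(\ka)$. Since there are no cardinals strictly between consecutive terms of $\la,\la^+,\la^{++},\dots$, the two size constraints on $\ka$ force $\ka=\la^{+n}$ for some $n$ with $1\le n<\om$. Each of $\la,\la^+,\dots,\la^{+(n-1)}$ is regular --- $\la$ by hypothesis, and the others because successor cardinals are regular --- so part (1) is applicable at every stage and delivers
\[
\mathfrak{u}(\la)\le\mathfrak{u}(\la^+)\le\mathfrak{u}(\la^{++})\le\cdots\le\mathfrak{u}(\la^{+n})=\mathfrak{u}(\ka),
\]
which contradicts $\mathfrak{u}(\ka)<\mathfrak{u}(\la)$ and proves (2).

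There is no genuine obstacle here; the mathematical weight is carried entirely by Lemma \ref{2.4} (equivalently, the Kunen--Prikry fact that over a successor of a regular cardinal every uniform ultrafilter is automatically decomposed at that regular) and by Lemma \ref{4.1}. The only points that need a second look are that $\la^{+k}$ is indeed regular for each $k\le n-1$ so that part (1) may legitimately be invoked at each step, and that ``$\la<\ka<\la^{+\om}$'' genuinely pins $\ka$ down to a finite successor of $\la$, so that the displayed chain of inequalities is finite and terminates at $\mathfrak{u}(\ka)$.
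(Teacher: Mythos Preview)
Your proof is correct and follows essentially the same approach as the paper: for (1) the paper argues by contradiction via Corollary~\ref{4.2} and Lemma~\ref{2.4}, which is just the contrapositive of your direct application of Lemma~\ref{2.4} followed by Lemma~\ref{4.1}; for (2) both you and the paper iterate (1) along the finite chain $\la,\la^+,\dots,\la^{+n}$.
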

\begin{proof}
(1). If $\mathfrak{u}(\ka^+)<\mathfrak{u}(\ka)$,
then $\ka^+$ has a $\ka$-indecomposable uniform ultrafilter,
but this is impossible by Lemma \ref{2.4}.

(2). By (1), we have $\mathfrak{u} (\la) \le \mathfrak{u}(\la^+) \le \mathfrak{u} (\la^{++}) \le \cdots$,
hence $\la^{+\om} \le \ka$.

\end{proof}
We will prove that $\mathfrak{u}(\om_\om) < \mathfrak{u}(\om_1)$ is consistent (Corollary \ref{4.14}).
In this sense, $\la^{+\om} \le \ka$ in (2) of this corollary cannot be
strengthened to $\la^{+\om+1} \le \ka$.

If $\ka$ is singular,  every uniform ultrafilter over $\ka$ is 
$\cf(\ka)$-decomposable.
\begin{cor}
If $\ka$ is singular, then $\mathfrak{u}(\cf(\ka)) \le \mathfrak{u}(\ka)$.
\end{cor}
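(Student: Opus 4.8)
The plan is to combine Lemma \ref{4.1} with the observation recorded just above the statement: every uniform ultrafilter over a singular cardinal $\ka$ is $\cf(\ka)$-decomposable. First I would make that observation explicit. Fix an increasing sequence $\seq{\ka_\alpha \mid \alpha<\cf(\ka)}$ cofinal in $\ka$, and define $f \colon \ka \to \cf(\ka)$ by letting $f(\xi)$ be the least $\alpha$ with $\xi<\ka_\alpha$. Since $\cf(\ka)$ is a regular cardinal, any $X \subseteq \cf(\ka)$ with $\size{X}<\cf(\ka)$ is bounded, say by $\beta<\cf(\ka)$, so $f^{-1}(X)=\bigcup_{\alpha \in X} f^{-1}(\{\alpha\}) \subseteq \ka_\beta$ has cardinality $<\ka$; hence $f^{-1}(X) \notin \calU$ for every uniform ultrafilter $\calU$ over $\ka$. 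As $\bigcup_{\alpha<\cf(\ka)} f^{-1}(\{\alpha\})=\ka$, this witnesses that $\calU$ is $\cf(\ka)$-decomposable.

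Then I would take a uniform ultrafilter $\calU$ over $\ka$ with $\chi(\calU)=\mathfrak{u}(\ka)$. Since $\ka$ is singular we have $\cf(\ka)<\ka$, and by the previous step $\calU$ is $\cf(\ka)$-decomposable, so Lemma \ref{4.1} applies and yields $\mathfrak{u}(\cf(\ka)) \le \chi(\calU)=\mathfrak{u}(\ka)$, which is the desired conclusion.

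There is essentially no obstacle here; the one point worth noting is that the hypothesis that $\ka$ is singular is precisely what guarantees $\cf(\ka)$ is a cardinal strictly below $\ka$, which is exactly what is needed for Lemma \ref{4.1} to be applicable.
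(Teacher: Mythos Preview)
Your proposal is correct and follows exactly the approach the paper intends: the corollary is stated immediately after the remark that every uniform ultrafilter over a singular $\ka$ is $\cf(\ka)$-decomposable, and then Lemma~\ref{4.1} applied to an ultrafilter of minimal character gives the inequality. You have simply written out the details that the paper leaves implicit.
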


\section{Models with the failure of monotonicity 2}
In Section \ref{sec1}, we constructed some models with $\la<\ka$ but
$\mathfrak{u}(\ka)<\mathfrak{u}(\la)$.
Our main tools were Raghavan and Shelah's theorem \ref{3} and
Lemma \ref{10}, \ref{11}.
However, we cannot use these tools when $\cf(\la)=\om$.
In this section, we show that it is possible that
$\mathfrak{u}(\ka)<\mathfrak{u}(\om_\om)$ for some $\ka>\om_\om$.

The following lemma was proved by Donder \cite{Donder},
but we give a proof for the reader's convenience.
\begin{lemma}[Donder \cite{Donder}]\label{5.1}
Let $\ka$ be a singular cardinal with countable cofinality,
and $\seq{\ka_n \mid n<\om}$ a strictly increasing sequence of regular cardinals
with limit $\ka$.
If $\calU$ is a uniform ultrafilter over $\ka$ such that
$\calU$ is not $(\om, \lambda)$-regular for some $\lambda<\ka_0$,
then there is $f:\ka \to \ka$ such that
$f_*(\calU)$ is a uniform ultrafilter over $\ka$ and
for every $C \subseteq \ka$, if $C \cap \ka_n$ is a club in $\ka_n$ for every $n<\om$,
then $C \in f_*(\calU)$.
\end{lemma}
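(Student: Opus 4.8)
The plan is to obtain $f$ by a minimality argument and then show that $\calV:=f_*(\calU)$ is \emph{weakly normal}, from which the club‑at‑every‑level property falls out. Throughout I write $g<_\calU h$ for ``$\{s<\ka:g(s)<h(s)\}\in\calU$''. First I would consider the class of $f\colon\ka\to\ka$ for which $f_*(\calU)$ is a uniform ultrafilter over $\ka$; it is nonempty since the identity qualifies, and I would take $f$ to be $<_\calU$‑minimal in this class. The existence of such an $f$ is not automatic: as $\calU$ is uniform over a cardinal of cofinality $\om$ it is $\sigma$‑incomplete, so $<_\calU$ is ill founded on all of ${}^\ka\ka$, and one must rule out an infinite $<_\calU$‑descending chain $f_0>_\calU f_1>_\calU\cdots$ all of whose push‑forwards are uniform over $\ka$. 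This is exactly where the assumption that $\calU$ is not $(\om,\la)$‑regular for some $\la<\ka_0$ is used: feeding such a chain — together with the partition of $\ka$ into the intervals $[\ka_{n-1},\ka_n)$ (with $\ka_{-1}:=0$) — into the definition of $(\om,\la)$‑regularity yields a family witnessing that $\calU$ is $(\om,\la)$‑regular for a suitable $\la<\ka_0$, a contradiction.

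Next I would show $\calV$ is weakly normal, i.e.\ every $g\colon\ka\to\ka$ that is regressive on a set in $\calV$ is bounded below $\ka$ on a set in $\calV$. If not, fix $A\in\calV$ with $g\restriction A$ regressive and $\{s\in A:g(s)\ge\beta\}\in\calV$ for every $\beta<\ka$; let $g'$ equal $g$ on $A$ and $0$ off $A$. Then $g'\circ f\le f$ everywhere while $(g'\circ f)(s)=g(f(s))<f(s)$ for $s\in f^{-1}(A)\in\calU$, so $g'\circ f<_\calU f$; and $(g'\circ f)_*(\calU)=g'_*(\calV)$ is again uniform over $\ka$, the content being that $g'$ has not collapsed a $\calV$‑positive set onto a set of size $<\ka$, which is excluded by the regularity hypothesis just as above. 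This contradicts minimality of $f$.

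With weak normality in hand the club property is short. Let $C\subseteq\ka$ have $C\cap\ka_n$ club in $\ka_n$ for every $n$, and suppose $\ka\setminus C\in\calV$. For $s<\ka$ let $n(s)$ be least with $s<\ka_{n(s)}$ and set $g(s)=\sup\bigl((C\cap\ka_{n(s)})\cap s\bigr)$, read as $0$ if that set is empty. If $s\notin C$ then $s\notin C\cap\ka_{n(s)}$; since $C\cap\ka_{n(s)}$ is closed in $\ka_{n(s)}$, $s$ is not a limit point of it, so $g(s)<s$. Hence $g$ is regressive on $\ka\setminus C\in\calV$, so by weak normality there are $A\in\calV$ with $A\subseteq\ka\setminus C$ and $\beta<\ka$ with $g(s)\le\beta$ on $A$. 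Pick $m$ with $\beta<\ka_m$; since $\calV$ is uniform over $\ka$, $\{s:s\ge\ka_m\}\in\calV$, so some $s\in A$ has $s\ge\ka_m$. Then $n(s)>m$ and $\ka_m\le s$, whence
\[ C\cap\ka_m=(C\cap\ka_{n(s)})\cap\ka_m\subseteq(C\cap\ka_{n(s)})\cap s\subseteq\beta+1, \]
contradicting that $C\cap\ka_m$ is unbounded in $\ka_m$. Thus $C\in\calV$, and $f$ is as required.

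The last step is routine; the crux is everything around the minimality. The subtle point is that $<_\calU$ is ill founded on ${}^\ka\ka$, so both the existence of the minimal $f$ and the preservation of ``uniform over $\ka$'' (which for singular $\ka$ is strictly stronger than ``unbounded below $\ka$'') under the reduction $g\mapsto g'\circ f$ must be argued; both rest on the single assumption that $\calU$ is not $(\om,\la)$‑regular for some $\la<\ka_0$, which is precisely what prevents the push‑forwards from escaping downward past $\ka_0$. I expect the careful translation between ``descending chains / collapsing maps'' and ``$(\om,\la)$‑regularity witnesses'' to be the most delicate part of the proof.
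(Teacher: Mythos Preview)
Your overall shape---find a $<_\calU$-minimal function in some class and then read off the club property---is exactly what the paper does, but the class you work in is too loose, and this creates a genuine gap at the minimality step.

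You try to rule out an $\omega$-long $<_\calU$-descending chain $f_0>_\calU f_1>_\calU\cdots$ of functions with uniform push-forward. You cannot: such a chain only yields sets $X_n=\{s:f_{n+1}(s)<f_n(s)\}\in\calU$ with $\bigcap_nX_n=\emptyset$, i.e.\ $(\omega,\omega)$-regularity, which every $\sigma$-incomplete ultrafilter already enjoys. The hypothesis is that $\calU$ is not $(\omega,\lambda)$-regular for some fixed $\lambda<\kappa_0$ (necessarily $\lambda\ge\omega_1$), and to contradict it you need a $\lambda$-sized family, hence a $\lambda$-long descending chain. That in turn forces you to pass through limit stages, and here your class ``$f_*(\calU)$ is uniform over $\kappa$'' gives you nothing to hold onto. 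The paper instead works in the class $\calF$ of \emph{monotone} $f:\kappa\to\kappa$ with $\sup(f``\kappa_n)=\kappa_n$ for every $n$; at a limit stage $\xi<\lambda$ one takes the pointwise minimum $f_\xi(\alpha)=\min_{\zeta<\xi}f_\zeta(\alpha)$, and the conditions $\lambda<\kappa_0\le\kappa_n$ together with regularity of $\kappa_n$ and monotonicity guarantee $f_\xi\in\calF$. This is the idea you are missing.

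The same choice of class repairs your second step. Your weak-normality argument needs $(g'\circ f)_*(\calU)$ to be uniform over $\kappa$ in order to contradict minimality of $f$, and you correctly note that for singular $\kappa$ ``unbounded'' is strictly weaker than ``uniform''; but you give no argument, and indeed I do not see one in your framework. In the paper's setup this problem evaporates: for $C$ as in the statement one takes the specific $g(\alpha)=\sup(C\cap\alpha)$, which is itself in $\calF$ and satisfies $g\le\mathrm{id}$; since $\calF$ is closed under composition, $g\circ f\in\calF$, and minimality of $f$ directly gives $\{\alpha:g(\alpha)=\alpha\}\in f_*(\calU)$, from which $C\in f_*(\calU)$ follows. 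Your final paragraph is fine, but it is subsumed by this shorter argument once you are in the right class.
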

\begin{proof}
We recall some definition and notations.
For $f,g \in {}^\ka \ka$,
define $f \le g$ if $f(\alpha) \le g(\alpha)$ for every $\alpha<\ka$,
and $f<_\calU g$ if $\{\alpha<\ka \mid f(\alpha)<g(\alpha)\} \in \calU$.

Let $\calF$ be the family of all $f:\ka \to \ka$ such that:
\begin{enumerate}
\item $f$ is monotone, that is, if $\alpha\le \beta$ then $f(\alpha) \le f(\beta)$. 
\item $\sup(f``\ka_n)=\ka_n$ for every $n<\om$.
\end{enumerate}
We claim that there is $f \in \calF$ which is $<_\calU$-minimal in $\calF$,
that is, if $g \in \calF$ then $\{\alpha<\ka \mid g(\alpha) \ge f(\alpha)\} \in \calU$.
Such a $<_\calU$-minimal function is as wanted;
This can be verified as follows.
Take  $C \subseteq \ka$ such that $C \cap \ka_n$ is a club in $\ka_n$ for every $n<\om$.
Define $g:\ka \to \ka$ by $g(\alpha)=\sup(C \cap \alpha)$.
Clearly $g \in \calF$ and $g \le \mathrm{id}$.
Since $g \circ f \in \calF$ and $f\le_U g \circ f$,
we have $X=\{\alpha<\ka \mid g(\alpha)=\alpha\} \in f_*(\calU)$.
By the definition of $g$, we also have 
$X \subseteq (C \setminus (\min(C)+1)) \cup \{\ka_n \mid n<\om\}$.
On the other hand, since $f_*(\calU)$ is uniform, we have $\{\ka_n \mid n<\om\} \notin \calU$,
hence $C \in f_*(\calU)$.

We show that $\calF$ has a $<_\calU$-minimal element.
Suppose to the contrary that such an element does not exist.
By induction on $\xi<\lambda$,
we take $f_\xi \in \calF$ so that:
\begin{enumerate}
\item[(a)] $f_{\xi+1} <_\calU f_{\xi}$.
\item[(b)] $f_{\xi} \le f_{\eta}$ whenever $\eta<\xi<\delta$.
\end{enumerate}
For a successor step,
if $f_\xi$ is defined,
there is $g \in \calF$ with $g<_\calU f_\xi$ by our assumption.
Define $f_{\xi+1} \in {}^\ka \ka$ by $f_{\xi+1}(\alpha)=g(\xi)$ if $g(\alpha)<f_\xi(\alpha)$,
otherwise let $f_{\xi+1}(\alpha)=f_{\xi(\alpha)}$.
One can check that $f_{\xi+1} \in \calF$, $f_{\xi+1} \le f_\xi$, and
$f_{\xi+1}<_\calU f_\xi$.
For a limit $\xi<\lambda$,
define $f_\xi(\alpha)=\min \{f_\zeta(\alpha) \mid \zeta<\xi\}$.
It is not hard to check that $f_\xi \in \calF$ and
$f_\xi \le f_\zeta$ for $\zeta<\xi$.

Finally, for $\xi<\delta$,
let $X_\xi=\{\alpha<\ka \mid f_{\xi+1}(\alpha) <f_\xi(\alpha) \} \in \calU$.
It is easy to check that $\{X_\xi \mid \xi<\delta\}$ witnesses that
$U$ is $(\om,\lambda)$-regular, 
this is a contradiction.
\end{proof}

The following can be seen as a diagonal version of Prikry and Silver's theorem (Theorem \ref{PS theorem}).

\begin{prop}\label{7.2}
Let $\ka$ be a singular cardinal with countable cofinality, and $\lambda<\ka$ a regular
cardinal with $\la^\om=\la$.
Suppose there are sequences $\seq{\ka_n \mid n<\om}$ and
$\seq{S_n \mid n<\om}$ satisfying the following:
\begin{enumerate}
\item $\seq{\ka_n \mid n<\om}$ is an increasing sequence of regular cardinals with limit $\ka$.
\item For each $n<\om$, $S_n$ is a non-reflecting stationary subset of
$\{\alpha<\ka_n \mid \cf(\alpha)=\lambda\}$.
\item For every enough large regular cardinal $\theta$,
the set $\{M \in [H_\theta]^\lambda\mid  {}^\om M \subseteq M$,
and $\forall n<\om (\sup(M \cap \ka_n) \in S_n)\}$ is stationary in $[H_\theta]^\la$.
\end{enumerate}
Then there is no $\lambda$-indecomposable uniform ultrafilter over $\ka$.
\end{prop}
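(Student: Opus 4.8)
The plan is to argue by contradiction: suppose $\calU$ is a $\lambda$-indecomposable uniform ultrafilter over $\ka$, and derive a contradiction from the three hypotheses. The first step is to trim $\calU$ so that it "concentrates on club-like sets below each $\ka_n$". Here is where I would invoke Lemma \ref{reg-indecomp}: since $\calU$ is $\lambda$-indecomposable and $\lambda\le\ka_0$ (indeed $\lambda<\ka$ and we may assume $\lambda<\ka_0$ by increasing the sequence, using that $\lambda$ is regular so it equals some $\ka_m$ or lies below it — more carefully, discard finitely many $\ka_n$ below $\lambda$), $\calU$ is not $(\om,\lambda)$-regular. Hence Lemma \ref{5.1} applies: there is $f:\ka\to\ka$ with $f_*(\calU)$ a uniform ultrafilter over $\ka$ such that any $C\subseteq\ka$ with $C\cap\ka_n$ club in $\ka_n$ for all $n$ lies in $f_*(\calU)$. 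By Lemma \ref{2.8}(1), $f_*(\calU)$ is still $\lambda$-indecomposable, so replacing $\calU$ by $f_*(\calU)$ we may assume $\calU$ itself has this "club-catching" property.

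The second step is to build, from hypothesis (3), a suitable elementary submodel and extract a decomposition of $\calU$ into $\lambda$ pieces. Fix a large regular $\theta$ and, using stationarity, choose $M\in[H_\theta]^\lambda$ with ${}^\om M\subseteq M$, $\lambda+1\subseteq M$, all relevant parameters ($\calU$, $\langle\ka_n\rangle$, $\langle S_n\rangle$, $f$) in $M$, and $\sup(M\cap\ka_n)\in S_n$ for every $n$. Write $\gamma_n=\sup(M\cap\ka_n)$; since $\cf(\gamma_n)=\lambda$ and ${}^\om M\subseteq M$, we get $\om\ne\cf(\gamma_n)$ so $M\cap\ka_n$ is an unbounded subset of $\gamma_n$ of order type $\lambda$. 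For each $n$, since $S_n$ is non-reflecting in $\{\alpha<\ka_n\mid\cf(\alpha)=\lambda\}$, the set $S_n\cap\gamma_n$ is non-stationary in $\gamma_n$; fix a club $E_n\subseteq\gamma_n$ with $E_n\cap S_n=\emptyset$. The plan is to use the club-catching property to see that $\calU$ must concentrate, below each $\ka_n$, on $E_n$ together with $\ka_n\setminus\gamma_n$ — i.e. the set $C_n=(\ka_n\setminus\gamma_n)\cup E_n$ satisfies $C_n\cap\ka_n$ is club in $\ka_n$, hence the "diagonal" set $D=\bigcup_n(C_n\cap[\ka_{n-1},\ka_n))$ (with $\ka_{-1}=0$) has $D\cap\ka_n$ club in $\ka_n$ for each $n$, so $D\in\calU$.

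The third step is to manufacture the $\lambda$-decomposition and thereby contradict $\lambda$-indecomposability. Enumerate $M\cap\lambda$ — equivalently, since $\lambda\subseteq M$, enumerate $\lambda$ itself — and for each $\xi<\lambda$ let $\beta_n^\xi$ be the $\xi$-th element of $M\cap\ka_n$ (in increasing order). Partition each half-open block $[\beta_n^\xi,\beta_n^{\xi+1})$ (for $\xi<\lambda$) and assign index $\xi$ to it; on $\ka_n\setminus\gamma_n$ assign, say, index $0$ (this tail is $\calU$-negligible once we intersect with $D$, since $\calU$ is uniform over $\ka$ and $\{\ka_n\}$-indexed tails are not in $\calU$ — more precisely $\ka\setminus\bigcup_n[\beta_{n}^{0},\gamma_n)$ must miss $\calU$ because otherwise, bounding below each $\ka_n$, one gets a set in $\calU$ of size $<\ka$). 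This defines $g:\ka\to\lambda$. The key point: for $X\in[\lambda]^{<\lambda}$, the preimage $g^{-1}(X)$ meets $\ka_n$ only inside $\bigcup_{\xi\in X}[\beta_n^\xi,\beta_n^{\xi+1})$ — but the "gap points" $\gamma_n=\sup_\xi\beta_n^\xi$ together with the cofinal set $\{\beta_n^\xi:\xi\notin X\}$-generated club show $g^{-1}(X)$ is disjoint from a club in each $\ka_n$ (using $|X|<\lambda=\cf(\gamma_n)$, so $\sup_{\xi\in X}\beta_n^\xi<\gamma_n$), hence $g^{-1}(X)\notin\calU$ by the club-catching property applied to the complement. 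Therefore $g$ witnesses that $\calU$ is $\lambda$-decomposable, contradicting our assumption.

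The main obstacle I anticipate is the bookkeeping in the third step: making the block structure $[\beta_n^\xi,\beta_n^{\xi+1})$ cohere across the different $\ka_n$ so that a single $g:\ka\to\lambda$ works, and verifying rigorously that $g^{-1}(X)$ for small $X$ is caught by the complement of a club in \emph{every} $\ka_n$ simultaneously — the case $\xi\notin X$ forces us to check that $\{\beta_n^\xi:\xi\in\lambda\setminus X\}$ still generates (the closure of) a club in $\gamma_n$, which uses $|X|<\lambda$ and regularity of $\lambda$. A secondary subtlety is handling the tails $\ka_n\setminus\gamma_n$ and ensuring the pieces genuinely cover $\ka$ and that no proper subfamily of size $<\lambda$ lies in $\calU$; this is exactly where uniformity of $\calU$ over $\ka$ (not merely over each $\ka_n$) is used.
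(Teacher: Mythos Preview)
Your reduction via Lemma~\ref{5.1} is correct, but the decomposition in your third step fails, and the error is precisely where you wave your hands: the tails $[\gamma_n,\ka_n)$ are \emph{not} $\calU$-negligible. In fact the club-catching property you just derived forces the opposite. Taking $t(n)=\gamma_n$, the set $\bigcup_{n<\om}[\gamma_n,\ka_n)$ has club intersection with each $\ka_n$ (a final segment), so it lies in $\calU$. Since you assigned index $0$ to every point of every tail, this gives $g^{-1}(\{0\})\in\calU$, and $g$ witnesses nothing. Your parenthetical justification (``otherwise one gets a set in $\calU$ of size $<\ka$'') is simply wrong: $\bigcup_n[\gamma_n,\ka_n)$ has size $\ka$. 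More fundamentally, your blocks $[\beta_n^\xi,\beta_n^{\xi+1})$ all live below $\gamma_n$, but $\calU$ concentrates above $\gamma_n$ in each $\ka_n$; no partition built from $M\cap\ka_n$ alone can decompose $\calU$.

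Notice also that your argument never substantively uses either the non-reflecting hypothesis on $S_n$ or the conclusion $\gamma_n\in S_n$ --- you introduce the clubs $E_n$ and the set $D$ but then do nothing with them. These hypotheses are the heart of the matter. The paper's proof works quite differently: from non-reflection one extracts, for each $\beta<\ka_n$, a disjointifying function $F_\beta:S_n\cap\beta\to\la$ that eventually separates the fixed clubs $c_\alpha$ ($\alpha\in S_n$) of order type $\la$. Using $\la$-indecomposability, one assigns to each $t\in\prod_n S_n$ a bound $\la(t)<\la$, and shows that coordinatewise-distinct $t,t'$ have eventually disjoint club-tails. Then with $t(n)=\gamma_n=\sup(M\cap\ka_n)$ one picks, for each $n$, a point $\xi_n\in M$ from the tail of $c_{t(n)}$; closure of $M$ under $\om$-sequences puts $\seq{\xi_n}$ in $M$, elementarity produces $t'\in M\cap\prod_n S_n$ whose club-tails also contain the $\xi_n$, and since $t'(n)<\gamma_n=t(n)$ everywhere this contradicts the disjointness. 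The contradiction is reached via elementarity and the combinatorics of the $F_\beta$, not by exhibiting an explicit $\la$-decomposition of $\calU$.
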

Note that the assumption (3) of this proposition is different from
the notion of \emph{mutually stationary},
which only requires the set $\{M \in [H_\theta]^\lambda\mid$
$\forall n<\om (\sup(M \cap \ka_n) \in S_n)\}$ is stationary in $[H_\theta]^\la$;
$M$ need not to be closed under $\om$-sequences.

\begin{proof}
Fix such $\seq{\ka_n \mid n<\om}$ and $\seq{S_n \mid n<\om}$.
We may assume that $\lambda<\ka_0$ and $S_n \cap S_m=\emptyset$ whenever $n<m<\om$.
For each $\beta<\ka$, let $n(\beta)$ be the minimal $n<\om$ with $\beta<\ka_n$.

For $n<\om$ and $\alpha \in S_n$,
fix a club $c_\alpha \subseteq \alpha$ with order type $\lambda$.
Let $\seq{\xi_i^\alpha \mid i<\lambda}$ be the increasing enumeration of
$c_\alpha$.
For the next well-known theorem, we use the fact that the sets $S_n$'s are non-reflecting.
See Lemma 2.12 in Eisworth \cite{Eisworth} for the proof.
\begin{thm}\label{5.3}
For every $n<\om$ and $\beta<\ka_n$,
there is a function $F_\beta : S_n \cap \beta \to \lambda$ such that
whenever $\alpha_0, \alpha_1 \in S_n \cap \beta$ with $\alpha_0 \neq \alpha_1$,
the set $\{\xi_i^{\alpha_0} \mid F_\beta(\alpha_0)<i<\lambda\}$
is disjoint from $\{\xi_i^{\alpha_1} \mid F_\beta(\alpha_1)<i<\lambda\}$.
\end{thm}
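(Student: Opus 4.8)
The plan is to prove the statement $(\star_\beta)$ --- that a function $F_\beta : S_n \cap \beta \to \lambda$ with the disjoint-tails property exists --- by induction on $\beta$, isolating the non-reflecting hypothesis as the key ingredient at limit stages of uncountable cofinality. First I would record a trivial but crucial reduction: if $\beta' > \beta$ and $F_{\beta'}$ works for $S_n \cap \beta'$, then its restriction to $S_n \cap \beta$ still has pairwise disjoint tails. Hence it suffices to establish $(\star_\beta)$ for a cofinal set of $\beta$, and I will do this for all \emph{limit} ordinals $\beta < \ka_n$; an arbitrary $\beta$ (in particular a successor, where the troublesome case is that the predecessor lies in $S_n$) is then handled by restricting some $F_{\beta'}$ with $\beta'$ a larger limit ordinal.

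The engine of the induction is a \emph{block decomposition}. Given a limit $\beta$, I would choose a club $e \subseteq \beta$ consisting of limit ordinals and \emph{disjoint from $S_n$}, and enumerate it continuously as $\seq{\delta_\zeta \mid \zeta < \cf(\beta)}$. This is exactly where non-reflection enters: when $\cf(\beta) > \om$, the non-reflecting hypothesis on $S_n$ (in the sense of Theorem \ref{PS theorem}) guarantees that $S_n \cap \beta$ is non-stationary in $\beta$, so such a club exists (pass to limit points to make its elements limit ordinals); when $\cf(\beta) = \om$, a cofinal $\om$-sequence of limit ordinals avoiding $S_n$ can be chosen outright, since every element of $S_n$ has cofinality $\lambda > \om$. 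The points $\delta_\zeta$ cut $S_n \cap \beta$ into disjoint blocks $S_n \cap [\delta_\zeta, \delta_{\zeta+1})$ (with $S_n \cap \delta_0$ as an initial block), and each $\delta_{\zeta+1} < \beta$ is a limit ordinal to which the induction hypothesis $(\star_{\delta_{\zeta+1}})$ applies.

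For each $\alpha \in S_n \cap [\delta_\zeta, \delta_{\zeta+1})$ I would set $F_\beta(\alpha) = \max(F_{\delta_{\zeta+1}}(\alpha), i_0(\alpha))$, where $F_{\delta_{\zeta+1}}$ is given by the induction hypothesis and $i_0(\alpha) = \sup\{i < \lambda \mid \xi_i^\alpha \le \delta_\zeta\}$ cuts the ladder $c_\alpha$ just above the block's left endpoint. Two facts then give the result. \emph{Within a block}, the retained tails are subsets of the pairwise disjoint tails supplied by $(\star_{\delta_{\zeta+1}})$, hence disjoint. \emph{Across blocks}, if $\zeta < \zeta'$ then every $\alpha$ in block $\zeta$ has $c_\alpha \subseteq \alpha < \delta_{\zeta+1} \le \delta_{\zeta'}$, so its whole retained tail lies below $\delta_{\zeta'}$, whereas every $\alpha'$ in block $\zeta'$ has its retained tail above $\delta_{\zeta'}$ by the cut $i_0(\alpha')$; thus the two tails are disjoint. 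Since $\delta_\zeta \notin S_n$ while $\alpha \in S_n$ we always have $\delta_\zeta < \alpha$, so $i_0(\alpha) < \lambda$ and the cuts are legitimate.

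The main obstacle is conceptual rather than computational. A naive attempt to insert the ladders one at a time fails, because the points of $c_\alpha$ near $\alpha$ can be blocked by ladders $c_{\alpha'}$ for $\alpha'$ accumulating to $\alpha$ from below, and such an accumulation need only be \emph{cofinal}, not stationary, in $\alpha$ --- so it cannot be ruled out directly. The block decomposition circumvents this precisely because a club \emph{disjoint from $S_n$} forces the retained portions of distinct blocks into disjoint intervals; verifying that non-reflection is exactly the hypothesis furnishing such a club at uncountable cofinality is the crux. I do not expect to need $\lambda^\om = \lambda$ in this step, as that hypothesis is used elsewhere in the proof of Proposition \ref{7.2} rather than in the disjointification itself.
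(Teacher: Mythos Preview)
The paper does not actually prove this statement: it is quoted as a well-known theorem with the remark ``we use the fact that the sets $S_n$'s are non-reflecting. See Lemma 2.12 in Eisworth \cite{Eisworth} for the proof.'' Your argument is correct and is essentially the standard one recorded in that reference --- induct on $\beta$, at limit stages choose a club in $\beta$ disjoint from $S_n$ (supplied by non-reflection when $\cf(\beta)>\omega$, and trivially when $\cf(\beta)=\omega$ since points of $S_n$ have cofinality $\lambda>\omega$), and paste together the inductively obtained functions on the resulting blocks, truncating each ladder just above its block's left endpoint to force cross-block disjointness. The only cosmetic point is the treatment of the initial block $S_n\cap\delta_0$, where you should set $F_\beta = F_{\delta_0}$ with no truncation needed (since every such ladder already lies below $\delta_0$); this is implicit in what you wrote but worth stating.
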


Now suppose the contrary that there is a $\lambda$-indecomposable uniform ultrafilter $\calU$ over $\ka$,
in particular it is not $(\om, \la)$-regular. By Lemma \ref{2.8} and \ref{5.1},
we may assume that for every $ t \in \prod_{n<\om} \ka_n$, the set $\bigcup_{n<\om} [t(n),\ka_n)$ is in $\calU$.
For $t \in \prod_{n<\om} S_n$,
define $\lambda(t)<\lambda$ as follows:
By the choice of $\calU$,
we have that $X_t=\{\beta<\ka \mid \beta>t(n(\beta))\} \in \calU$.
Define $G_t:X_t \to \lambda$
by $G_t(\beta)=F_\beta(t(n(\beta))$.
Since $\calU$ is $\lambda$-indecomposable,
there is $\lambda(t)<\lambda$ such that
$Y_t=\{\beta \in X_t \mid G_t(\beta)=F_\beta(t(n(\beta))) <\lambda(t)\} \in \calU$.

\begin{claim}\label{5.4}
For $t, t' \in \prod_{n<\om} S_n$,
if $\{n<\om \mid t(n) \neq t'(n)\}$ is cofinite,
then there are infinitely many $n<\om$ such that
$\{\xi_i^{t(n)} \mid \lambda(t)<i<\lambda\} \cap
\{\xi_i^{t'(n)} \mid \lambda(t')<i<\lambda\}=\emptyset$.
\end{claim}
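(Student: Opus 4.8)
The plan is to extract the required indices $n$ directly from members of $\calU$, combining the sets $Y_t,Y_{t'}\in\calU$ defined above with the separation property of the functions $F_\beta$ from Theorem \ref{5.3}. First I would note which $n$ can conceivably work: if $t(n)=t'(n)$ then $\{\xi^{t(n)}_i\mid\lambda(t)<i<\lambda\}$ and $\{\xi^{t'(n)}_i\mid\lambda(t')<i<\lambda\}$ are final segments of the single set $\{\xi^{t(n)}_i\mid i<\lambda\}$ and so intersect; hence every good $n$ lies in $A=\{n<\om\mid t(n)\neq t'(n)\}$, which is cofinite by hypothesis, so I would fix $N_0$ with $[N_0,\om)\subseteq A$.

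For the main step I would use that $\calU$ is a uniform ultrafilter over $\ka$: since $Y_t\cap Y_{t'}\in\calU$, it has size $\ka$, so (each $\ka_N$ being $<\ka$) the map $\beta\mapsto n(\beta)$ is unbounded in $\om$ on $Y_t\cap Y_{t'}$. Given an arbitrary $N$, I would pick $\beta\in Y_t\cap Y_{t'}$ with $n(\beta)>\max(N,N_0)$ and put $n=n(\beta)$. Then $n\in A$, so $t(n)\neq t'(n)$; since $\beta\in X_t\cap X_{t'}$ we have $t(n),t'(n)<\beta$, so $t(n),t'(n)$ are distinct elements of $S_n\cap\beta$; and since $\beta\in Y_t\cap Y_{t'}$ we have $F_\beta(t(n))=G_t(\beta)<\lambda(t)$ and $F_\beta(t'(n))=G_{t'}(\beta)<\lambda(t')$. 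Applying Theorem \ref{5.3} to $\beta$ and the pair $t(n)\neq t'(n)$ in $S_n\cap\beta$, the sets $\{\xi^{t(n)}_i\mid F_\beta(t(n))<i<\lambda\}$ and $\{\xi^{t'(n)}_i\mid F_\beta(t'(n))<i<\lambda\}$ are disjoint; as $F_\beta(t(n))<\lambda(t)$ and $F_\beta(t'(n))<\lambda(t')$, the sets $\{\xi^{t(n)}_i\mid\lambda(t)<i<\lambda\}$ and $\{\xi^{t'(n)}_i\mid\lambda(t')<i<\lambda\}$ are contained in them respectively, hence also disjoint. Letting $N$ range over $\om$ produces infinitely many such $n$, which is the claim.

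I do not expect a genuine obstacle here: the argument is bookkeeping with the definitions of $X_t,Y_t,G_t,\lambda(t)$ plus a single invocation of Theorem \ref{5.3}. The only points needing a moment's care are the preliminary observation that only indices with $t(n)\neq t'(n)$ can be good---so that Theorem \ref{5.3}, which separates tails of clubs attached to \emph{distinct} ordinals, is applicable---and the fact that ``infinitely many'' is immediate from uniformity of $\calU$, since $\size{Y_t\cap Y_{t'}}=\ka$ forces $n(\beta)$ to be unbounded in $\om$ along $Y_t\cap Y_{t'}$.
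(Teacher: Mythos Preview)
Your argument is correct and follows essentially the same route as the paper: intersect $Y_t$ and $Y_{t'}$, use uniformity to find $\beta$ in this intersection with $n(\beta)$ arbitrarily large (the paper phrases this via the ultrafilter rather than cardinality, but the content is identical), and then invoke Theorem~\ref{5.3} together with $F_\beta(t(n))<\lambda(t)$, $F_\beta(t'(n))<\lambda(t')$ to get the required disjointness. Your preliminary remark that only indices with $t(n)\neq t'(n)$ can work is a nice clarification that the paper leaves implicit.
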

\begin{proof}
We know that $Y_t \cap Y_{t'} \in \calU$.
For a given $m<\om$, 
since $\calU$ is uniform and 
the set $\{n<\om \mid t(n) \neq t(n')\}$ is cofinite,
we have that the set $\{\beta<\ka \mid n(\beta)>m, t(n(\beta))\neq t'(n(\beta))\}$ is in $\calU$.
Thus we can find $\beta \in Y_t \cap Y_{t'}$ 
such that $m<n(\beta)$, $t(n(\beta)) \neq t'(n(\beta))$, and $t(n(\beta)),t'(n(\beta))<\beta$.
By the choice of $F_\beta$, we have
$\{\xi_i^{t(n(\beta))} \mid F_\beta(t(n(\beta)))<i<\lambda\} \cap
\{\xi_i^{t'(n(\beta))} \mid F_\beta(t'(n(\beta)))<i<\lambda\}=\emptyset$.
Since $\beta \in Y_t \cap Y_{t'}$, 
we know $F_\beta(t(n(\beta)))<\lambda(t)$ and 
$F_\beta(t'(n(\beta))) <\lambda(t')$.
Hence 
$\{\xi_i^{t(n(\beta))} \mid \lambda(t)<i<\lambda\} \cap
\{\xi_i^{t'(n(\beta))} \mid \lambda(t')<i<\lambda\}=\emptyset$.
\end{proof}

Fix a large regular $\theta$.
By the choice of the  $S_n$'s,
we can find $M \in [H_\theta]^\lambda$ containing all relevant objects such that $M \prec H_\theta$,
${}^\om M \subseteq M$,
 and $\forall n<\om (\sup(M \cap \ka_n) \in S_n)$.
Define $t \in \prod_n S_n$ by $t(n)=\sup(M \cap \ka_n)$.
Note that $\sup(c_{t(n)} \cap M \cap \ka_n)=\sup(M \cap \ka_n)$ because
$c_{t(n)}$ is a club in $t(n)$ and ${}^\om M \subseteq M$.
For $n<\om$,
pick $\xi_n \in M \cap \{\xi_i^{t(n)} \mid \lambda(t)<i<\lambda\}$.
We know $\seq{\xi_n \mid n<\om} \in M$ by the closure property of $M$.
By the elementarity of $M$,
there is $t' \in M \cap \prod_n S_n$ such that
$\xi_n \in \{\xi_i^{t'(n)} \mid \lambda(t')<i<\lambda\}$ for every $n<\om$.
Clearly $t(n) \neq t'(n)$ for every $n<\om$,
hence there is $n<\om$ with 
$\{\xi_i^{t(n)} \mid \lambda(t)<i<\lambda\} \cap
\{\xi_i^{t'(n)} \mid \lambda(t')<i<\lambda\}=\emptyset$
by Claim \ref{5.4}.
However this is impossible since
$\xi_n \in 
\{\xi_i^{t(n))} \mid \lambda(t)<i<\lambda\} \cap
\{\xi_i^{t'(n))} \mid \lambda(t')<i<\lambda\}$.
\end{proof}

To show that the assumption of Proposition \ref{7.2} is consistent,
we use a forcing notion which adds a non-reflecting stationary subset.

For a poset $\bbP$ and an ordinal $\alpha$,
let $\Gamma_\alpha(\bbP)$ denote the following two players game of length $\alpha$:
At each inning, Players I and II choose conditions of $\bbP$ alternately with
$p_0 \ge q_0 \ge p_1 \ge q_1 \ge \cdots$, but if $\beta<\alpha$ is limit,
at the $\beta$-th inning, Player I does not move and
only Player II choose a condition $q_\beta$ which is a lower bound of the partial play $\langle p_\xi, q_\zeta \mid \xi,\zeta<\beta$,
$\xi=0$ or successor$\rangle$ (if it is possible):
\begin{center}
\begin{tabular}{c||c|c|c|c|c|c}
 & $0$ & $1$ & $\cdots$       & $\omega$ & $\omega+1$ & $\cdots$ \\
\hline
I & $p_0$ & $p_1$ & $\cdots$ &         & $p_{\omega+1}$ & $\cdots$  \\
\hline
II & $q_0$ & $q_1$ & $\cdots$ & $q_{\omega}$ & $q_{\omega+1}$ &  $\cdots$\\

\end{tabular}
\end{center}

Player II \emph{wins} if I and II could choose their moves at each inning,
otherwise, I \emph{wins}.

$\bbP$ is \emph{$\alpha$-strategically closed} if Player II has a winning strategy
in the game $\Gamma_\alpha(\bbP)$.
If $\ka$ is a cardinal and $\bbP$ is $\ka$-strategically closed,
then  forcing with $\bbP$
does not add new $<\ka$-sequences.

\begin{prop}\label{6.5+}
Let $\ka$ be a cardinal,
and $\seq{\bbP_n, \dot \bbQ_n \mid  n<\om}$ be an $\om$-stage iteration such that
 $\Vdash_{\bbP_n}$``\,$\dot \bbQ_n$ is $\ka$-strategically closed'' for every $n<\om$.
Then the inverse limit of the $\bbP_n$'s is $\ka$-strategically closed.
\end{prop}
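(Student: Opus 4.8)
The plan is to describe a winning strategy for Player II in $\Gamma_\ka(\bbP)$, where $\bbP$ is the inverse limit of $\seq{\bbP_n \mid n<\om}$, built from fixed winning strategies $\tau_n$ for Player II in $\Gamma_\ka(\dot\bbQ_n)$ (formally, $\bbP_n$-names for such strategies). The key idea is that a single play in $\bbP$ of length $<\ka$ decomposes into $\om$-many coordinatewise plays, one in each $\dot\bbQ_n$, and Player II uses $\tau_n$ on coordinate $n$. Since a condition in the inverse limit is a full sequence $p = \seq{p(n) \mid n<\om}$ with $p{\restriction}n \Vdash p(n) \in \dot\bbQ_n$, and since each $\dot\bbQ_n$ is forced to be $\ka$-strategically closed, the coordinatewise lower bounds exist at limit stages and can be assembled into a genuine condition of the inverse limit.

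First I would set up notation: for a play $\seq{p_\xi, q_\zeta \mid \dots}$ in $\bbP$, write $p_\xi = \seq{p_\xi(n) \mid n<\om}$ and $q_\zeta = \seq{q_\zeta(n) \mid n<\om}$; restricting to coordinate $n$ (and to the generic below, i.e.\ working in the relevant intermediate extension $V^{\bbP_n}$) yields a play in $\dot\bbQ_n$. Player II's strategy is: at a successor inning, after Player I plays $p_\xi$, respond with $q_\xi$ defined coordinatewise by having coordinate $n$ be the $\tau_n$-response to the coordinate-$n$ subplay so far — made precise as a $\bbP_n$-name, so that $q_\xi{\restriction}n \Vdash q_\xi(n)$ is $\tau_n$'s move. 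At a limit inning $\beta<\ka$, Player II plays $q_\beta$ with $q_\beta{\restriction}n \Vdash q_\beta(n)$ is $\tau_n$'s limit move, which exists by $\ka$-strategic closure of $\dot\bbQ_n$ (here one uses $\beta<\ka$). One checks $q_\beta$ is a condition of the inverse limit and a lower bound of the partial play, which is exactly what Player II must supply; since the strategies $\tau_n$ are winning, Player II never gets stuck, so this is a winning strategy.

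The main obstacle is the bookkeeping of iteration names: "coordinate $n$ of the play" is not literally a play in an honest forcing but a play in a $\bbP_n$-name for a forcing, so every appeal to $\tau_n$ and to $\ka$-strategic closure of $\dot\bbQ_n$ must be phrased inside the forcing language of $\bbP_n$ — or, cleanly, by passing to $V[G_n]$ for a generic $G_n$ on $\bbP_n$ and noting the play below a condition projects to a genuine play there. One must also verify that the assembled $q_\beta$ really is a lower bound of \emph{all} previous moves $p_\xi, q_\zeta$ (not just those on a cofinal set), which is immediate since "$\le$" on the inverse limit is coordinatewise (modulo the appropriate restrictions forcing it) and each $\tau_n$-governed coordinate subplay is decreasing. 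A minor point is limit stages of the \emph{form} relevant to the game (only Player II moves, over the partial play indexed by $0$ and successors), but this matches the definition of $\ka$-strategic closure applied on each coordinate verbatim, so no extra work is needed. I expect the proof to be short once the name-juggling is set up; the reader is really just being asked to see that strategic closure is preserved under $\om$-length inverse-limit iteration, which is the standard folklore argument.

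\begin{proof}
For each $n<\om$, fix a $\bbP_n$-name $\dot\tau_n$ for a winning strategy for Player II in $\Gamma_\ka(\dot\bbQ_n)$. Let $\bbP$ be the inverse limit of the $\bbP_n$'s, so a condition $p \in \bbP$ is a sequence $\seq{p(n) \mid n<\om}$ with $p{\restriction}n \in \bbP_n$ and $p{\restriction}n \Vdash_{\bbP_n} p(n) \in \dot\bbQ_n$, ordered coordinatewise. We describe a strategy $\sigma$ for Player II in $\Gamma_\ka(\bbP)$.

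Suppose a partial play $\seq{p_\xi \mid \xi \in I} \cup \seq{q_\zeta \mid \zeta \in J}$ of length $\beta<\ka$ has been produced according to $\sigma$, where $I$ is the set of indices at which Player I has moved (namely $0$ and successors $<\beta$) and $J$ the indices at which Player II has moved. Given $n<\om$, working in $V^{\bbP_n}$ below the relevant conditions, the sequence of $n$-th coordinates of the moves made so far forms a partial play in $\Gamma_\ka(\dot\bbQ_n)$ (valid since $\beta<\ka$); let Player II play, coordinatewise, the move dictated by $\dot\tau_n$ in that subplay. Concretely: if $\beta=\gamma+1$ and Player I has just played $p_\beta \in \bbP$, then Player II's response $q_\beta$ is the sequence with $q_\beta{\restriction}n \Vdash_{\bbP_n} q_\beta(n)$ is $\dot\tau_n$'s reply to the $n$-th coordinate subplay (including $p_\beta(n)$); if $\beta$ is a limit, Player II plays $q_\beta$ with $q_\beta{\restriction}n \Vdash_{\bbP_n} q_\beta(n)$ is $\dot\tau_n$'s move at the limit stage of the $n$-th coordinate subplay. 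In the limit case, such a move exists in $V^{\bbP_n}$ because $\dot\bbQ_n$ is forced to be $\ka$-strategically closed and $\beta<\ka$; hence $q_\beta$ is well defined, and by induction $q_\beta{\restriction}n \in \bbP_n$ for all $n$, so $q_\beta \in \bbP$.

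It remains to see that $q_\beta$ is a lower bound of the entire partial play, i.e.\ $q_\beta \le p_\xi$ and $q_\beta \le q_\zeta$ for all $\xi \in I$, $\zeta \in J$. This is checked coordinatewise: for each $n$, in $V^{\bbP_n}$ the $n$-th coordinate subplay is a legitimate play according to $\dot\tau_n$, so the move $q_\beta(n)$ prescribed by $\dot\tau_n$ is a lower bound of all earlier $n$-th coordinates; since the order on $\bbP$ is coordinatewise, $q_\beta$ is a lower bound in $\bbP$. Because each $\dot\tau_n$ is (forced to be) a winning strategy, at no stage $<\ka$ does the prescribed coordinate move fail to exist, so Player II never gets stuck: $\sigma$ is a winning strategy for Player II in $\Gamma_\ka(\bbP)$. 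Therefore $\bbP$ is $\ka$-strategically closed.
\end{proof}
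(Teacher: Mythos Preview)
The paper states this proposition without proof, treating it as folklore, so there is no ``paper's own proof'' to compare against. Your argument is the standard one and is essentially correct: decompose the play in the inverse limit into coordinatewise plays and let Player~II respond on each coordinate according to the fixed winning strategy $\dot\tau_n$.

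One point you could make more explicit is the double recursion. At each inning $\beta$, the condition $q_\beta$ must be built by induction on $n<\om$: you first need $q_\beta{\restriction}n \in \bbP_n$ to be a lower bound of all earlier $p_\xi{\restriction}n$, $q_\zeta{\restriction}n$ before you can even speak of $\dot\tau_n$'s response \emph{below} $q_\beta{\restriction}n$, and only then do you obtain the $\bbP_n$-name $q_\beta(n)$ and pass to $q_\beta{\restriction}(n+1)$. Your phrase ``by induction $q_\beta{\restriction}n \in \bbP_n$ for all $n$'' signals that you have this in mind, but as written the sentence ``$q_\beta{\restriction}n \Vdash_{\bbP_n} q_\beta(n)$ is $\dot\tau_n$'s reply'' reads as slightly circular, since $q_\beta{\restriction}n$ is part of what is being defined. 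Spelling out the induction on $n$ (base case $\bbP_0$ trivial; inductive step uses that $q_\beta{\restriction}n$ is already a lower bound, hence forces the $n$-th coordinate subplay is according to $\dot\tau_n$, hence $\dot\tau_n$ supplies $q_\beta(n)$) would remove any ambiguity. This is a presentational issue, not a mathematical gap.
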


For a regular uncountable cardinal $\mu$ and regular $\lambda<\mu$,
let $\mathbb{S}_{\mu,\lambda}$ be the
poset of all bounded subsets $p \subseteq \{\alpha<\mu \mid \cf(\alpha)=\lambda\}$
such that $p$ has a maximal element $\max(p)$ and for every $\beta \le \max(p)$ with $\cf(\beta)>\om$,
$p \cap \beta$ is non-stationary in $\beta$.
Define $p \le q$ if $p$ is an end-extension of $q$.

Let $\mathbb{S}=\mathbb{S}_{\mu,\lambda}$.
The following is easy to check:
\begin{lemma}
$\size{\mathbb{S}} =2^{<\mu}$, so $\mathbb{S}$ satisfies the $(2^{<\mu})^+$-c.c.
\end{lemma}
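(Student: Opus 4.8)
The plan is to prove $\size{\mathbb{S}}=2^{<\mu}$; the $(2^{<\mu})^+$-chain condition then follows at once, since a partial order of cardinality $\theta$ has no antichain of size $\theta^+$. The upper bound is the easy direction: every $p\in\mathbb{S}$ is a bounded subset of $\mu$, hence lies in $\mathcal{P}(\alpha)$ for some $\alpha<\mu$, so $\size{\mathbb{S}}\le\sum_{\alpha<\mu}2^{\size{\alpha}}\le\mu\cdot 2^{<\mu}=2^{<\mu}$, using the standard fact $\mu\le 2^{<\mu}$.

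For $\size{\mathbb{S}}\ge 2^{<\mu}$, the first step is to record a closure property: if $p\in\mathbb{S}$ and $q$ is any set with $\max p\in q\subseteq p$, then $q\in\mathbb{S}$. Indeed $q$ is bounded, $\max q=\max p$, all elements of $q$ have cofinality $\lambda$, and for $\beta\le\max q$ with $\cf\beta>\om$ we have $q\cap\beta\subseteq p\cap\beta$, and any subset of a non-stationary set is non-stationary. Hence a single condition $p$ of size $\ge\kappa$ already yields $2^{\size p}\ge 2^\kappa$ pairwise distinct conditions. As $2^{<\mu}=\sup\{2^\kappa\mid\kappa<\mu\}$, it therefore suffices to exhibit, for every infinite cardinal $\kappa<\mu$, a condition of $\mathbb{S}$ of size $\ge\kappa$.

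To build such a condition I would use an explicit ``sparse'' set. Fix an increasing continuous sequence $\seq{\gamma_\eta\mid\eta\le\kappa}$ of ordinals with $\gamma_0=0$ and $\gamma_{\eta+1}=\gamma_\eta+\lambda^3$ in ordinal arithmetic; this stays below $\mu$ because $\mu$ is regular and $\kappa,\lambda<\mu$. In the $\eta$-th block set $A_\eta=\{\gamma_\eta+\lambda^2\cdot(1+\xi)+\lambda\mid\xi<\lambda\}$ and put $A=\{\gamma_\kappa+\lambda\}\cup\bigcup_{\eta<\kappa}A_\eta$. Every element of $A$ has cofinality $\cf\lambda=\lambda$ (here the regularity of $\lambda$ enters), $A$ is bounded in $\mu$, $\max A=\gamma_\kappa+\lambda$, and $\size A=\max(\kappa,\lambda)\ge\kappa$. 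The remaining task is to verify $A\in\mathbb{S}$, that is, that $A\cap\beta$ is non-stationary in $\beta$ for every $\beta\le\max A$ with $\cf\beta>\om$; I would split into cases. If $\beta>\gamma_\kappa$, then $A\cap\beta$ is bounded by $\gamma_\kappa<\beta$. If $\beta=\gamma_{\eta^*}$ for a limit ordinal $\eta^*\le\kappa$, then $\{\gamma_\eta\mid\eta<\eta^*\}$ is a club in $\gamma_{\eta^*}$ disjoint from $A$, since $A$ is contained in the open intervals $(\gamma_\eta,\gamma_{\eta+1})$. Finally, if $\gamma_\eta<\beta\le\gamma_{\eta+1}$, then $A\cap\beta$ is the union of the bounded set $\bigcup_{\eta'<\eta}A_{\eta'}$ with $A_\eta\cap\beta$, and $A_\eta\cap\beta$ is non-stationary in $\beta$ because the ``$+\lambda$'' tail places each element of $A_\eta$ strictly between two consecutive multiples of $\lambda^2$ (over $\gamma_\eta$), so the appropriate set of such multiples is a club in $\beta$ missing $A_\eta$.

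Combining the two bounds gives $\size{\mathbb{S}}=2^{<\mu}$, hence $\mathbb{S}$ has the $(2^{<\mu})^+$-c.c. Everything except the last verification is routine cardinal arithmetic; the main technical point I expect is checking that the explicit set $A$ is indeed non-stationary below every ordinal of uncountable cofinality up to $\max A$, in particular at those limit points of $A$ that themselves have uncountable cofinality.
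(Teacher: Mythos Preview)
Your argument is correct. The paper gives no proof of this lemma (it just says ``easy to check''), so there is nothing to compare against; your write-up is considerably more careful than the paper evidently intended. The upper bound and the closure observation (that any $q$ with $\max p\in q\subseteq p$ is again a condition) are exactly the right reductions, and your explicit sparse set $A$ does the job.

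One minor remark on the Case~3 verification: as stated, ``the appropriate set of such multiples is a club in $\beta$'' is only literally true when $\beta$ is itself a limit of those multiples, i.e., when $\beta=\gamma_\eta+\lambda^2\cdot\zeta$ for some limit $\zeta$. When $\beta$ lies strictly between two consecutive multiples, the multiples below $\beta$ are bounded in $\beta$; but then $A_\eta\cap\beta$ is bounded as well (by the last multiple below $\beta$), so non-stationarity is immediate. You clearly anticipated this subtlety in your closing sentence, so this is not a gap, just a point worth spelling out. A marginally simpler choice of sparse set, e.g.\ $\{\lambda\cdot(\alpha+1)\mid\alpha<\kappa\}\cup\{\lambda\cdot(\kappa+1)\}$, would yield the same conclusion with essentially the same case analysis, but your construction is perfectly fine.
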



The following lemma is well-known, and we only sketch the  proof.
\begin{lemma}
\begin{enumerate}
\item $\mathbb{S}$ is $\lambda^+$-closed.
\item $\mathbb{S}$ is $\mu$-strategically closed, hence forcing with $\mathbb S$ does not add new $\mathop{<}\mu$-sequences. \item If $G$ is $(V, \mathbb{S})$-generic,
then $\bigcup G$ is a non-reflecting stationary subset of $\{\alpha<\mu\mid \cf(\alpha)=\lambda\}$.
\end{enumerate}
\end{lemma}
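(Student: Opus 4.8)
The plan is to prove the three clauses in order, extracting from clause~(1) a non‑stationarity sub‑claim that then powers both (2) and (3). Throughout write $E=\{\alpha<\mu\mid\cf(\alpha)=\lambda\}$, so a condition of $\mathbb{S}$ is a bounded non‑reflecting subset of $E$ with a maximum, ordered by end‑extension; note $E$ is cofinal in $\mu$ since $\mu$ is regular and $>\lambda$.

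\emph{Clause (1).} Given a decreasing sequence $\langle p_\xi\mid\xi<\gamma\rangle$ with $\gamma<\lambda^+$, I may assume $\gamma$ is a limit ordinal and $\langle\max p_\xi\mid\xi<\gamma\rangle$ is strictly increasing (deleting repetitions). Put $r=\bigcup_{\xi<\gamma}p_\xi$ and $\delta=\sup r$; since $\mu$ is regular and $|\gamma|\le\lambda<\mu$, we get $\delta<\mu$. \textbf{Sub-claim:} $r$ is non‑stationary in $\delta$ whenever $\cf(\delta)>\omega$. Indeed, fix a continuous increasing cofinal $\langle\xi_i\mid i<\cf(\gamma)\rangle$ in $\gamma$ and set $\eta_i=\sup_{j<i}\max p_{\xi_j}$ for limit $i<\cf(\gamma)$; then $\{\eta_i\}$ is a club in $\delta$, and $\cf(\eta_i)=\cf(i)\le i<\cf(\gamma)\le\lambda$, so $\cf(\eta_i)\ne\lambda$ and $\eta_i\notin E\supseteq r$. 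Now let $t$ be $\delta$ if $\delta\in E$, and otherwise the least element of $E$ above $\delta$. Then $r\cup\{t\}$ is a condition: for $\beta<\delta$ it coincides below $\beta$ with $p_\xi$ for any $\xi$ with $\max p_\xi\ge\beta$ (hence $(r\cup\{t\})\cap\beta$ is non‑stationary in $\beta$); for $\beta=\delta$ with $\cf(\delta)>\omega$ use the sub-claim; and for $\delta<\beta\le t$ the set is bounded by $\delta$. As $r\cup\{t\}$ end‑extends every $p_\xi$, it is the desired lower bound.

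\emph{Clause (2).} I give a strategy for Player II in $\Gamma_\mu(\mathbb{S})$. At a successor (or the initial) inning, answer Player~I's move $p$ with $p\cup\{s\}$, where $s$ is the least element of $E$ above $\max p$. At a limit inning $\zeta$, letting $r_\zeta=\bigcup_{\xi<\zeta}q_\xi$ and $\delta_\zeta=\sup r_\zeta<\mu$, play $q_\zeta=r_\zeta\cup\{s_\zeta\}$ with $s_\zeta$ the least element of $E$ \emph{strictly} above $\delta_\zeta$. The point of always jumping strictly past $\delta_\zeta$: one checks by induction on the inning that $\langle\max q_\zeta\rangle$ is strictly increasing and that every $\delta_\zeta$ (for a limit inning $\zeta$) lies in no condition occurring in the play — being a strict supremum of a strictly increasing sequence, it is above every earlier maximum and is not in $q_\zeta$, and later conditions end‑extend $q_\zeta$ below $s_\zeta>\delta_\zeta$. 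Hence at each limit inning $\beta$ the set $\{\delta_\zeta\mid\zeta<\beta\ \text{limit}\}$ is a club in $\delta_\beta$ disjoint from $r_\beta$, so $r_\beta$ is non‑stationary in $\delta_\beta$, and by the computation in clause~(1) the prescribed $q_\beta$ is a legitimate condition below all earlier moves; thus Player~II is never stuck and wins. This is the step requiring real care: the ``union plus one point'' recipe already works at limit innings of cofinality $\le\lambda$ by the sub-claim of~(1), but \emph{not} in general at those of cofinality $>\lambda$, and the strict jump is exactly what keeps the supremum ordinals out of the play so that they form the witnessing club. The last assertion of (2) is the standard consequence of $\mu$-strategic closure recalled above.

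\emph{Clause (3).} That $\bigcup G\subseteq E$ is immediate. For non‑reflection: for $\beta<\mu$ with $\cf(\beta)>\omega$, the conditions $p$ with $\max p\ge\beta$ are dense (append an element of $E$ above $\beta$), and for such $p\in G$ one has $\bigcup G\cap\beta=p\cap\beta$ by end‑extension, which is non‑stationary in $\beta$ since $p$ is a condition. For stationarity: fix $p\in\mathbb{S}$ and a name $\dot C$ forced to be a club in $\mu$. Using clause~(1) build a decreasing $\langle p_i\mid i<\lambda\rangle$ below $p$ together with ordinals $\gamma_i$ such that $\max p_i<\gamma_i<\max p_{i+1}$ and $p_{i+1}\Vdash\gamma_i\in\dot C$ (extend $p_i$ to decide a member of $\dot C$ above $\max p_i$, then put an element of $E$ on top). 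Then $\gamma:=\sup_i\gamma_i=\sup_i\max p_i$ has cofinality $\lambda$, so $\gamma\in E$; by the sub-claim, $\bigcup_i p_i$ is non‑stationary in $\gamma$, so $q:=\bigcup_i p_i\cup\{\gamma\}$ is a condition with $q\le p_i$ for all $i$, and since $\dot C$ is forced closed, $q\Vdash\gamma\in\dot C$, whence $q\Vdash\gamma\in\dot C\cap\bigcup\dot G$. As $p$ and $\dot C$ were arbitrary, $\bigcup G$ is stationary. The only genuinely delicate point in the whole lemma is the bookkeeping in clause~(2) — confirming that the supremum ordinals really avoid the play so that the induction on limit innings closes — and everything else is routine end‑extension arithmetic.
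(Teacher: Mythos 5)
Your proof is correct and follows essentially the same route as the paper's (sketched) proof: take unions and top them off with a point of $E$ strictly above the supremum, both for $\lambda^+$-closure and for the limit innings of Player II's strategy, and prove stationarity by a $\lambda$-length fusion that decides members of the club name. The extra bookkeeping you supply in clause (2) --- checking that the supremum ordinals of limit innings never enter the play and hence form the club witnessing non-stationarity --- is precisely the detail the paper leaves implicit in the phrase ``suppose $q$ is non-reflecting.''
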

\begin{proof}[Sketch of the proof]
(1). For the $\lambda^+$-closedness,
take a descending sequence $\seq{p_\xi \mid \xi<\alpha}$ for some $\alpha<\lambda^+$.
Let $\gamma =\sup_{\xi<\alpha} (\max(p_\xi))$.
We know $\cf(\gamma) \le \la$,
hence $\bigcup_{\xi<\alpha} p_\xi$ is non-stationary in $\gamma$.
Let $p=\bigcup_{\xi<\alpha} p_\xi \cup \{\gamma+\lambda\}$.
we can check that $p \in \mathbb{S}$ and $p$ is a lower bound of the $p_\xi$'s.

(2). For a limit $\beta<\mu$ and a partial play $\langle p_\xi, q_\zeta \mid \xi,\zeta<\beta$,
$\xi=0$ or successor$\rangle$,
suppose $q=\bigcup_{\zeta<\beta} q_\zeta$ is non-reflecting. Let $\gamma=\sup(q)$.
Then Player II takes $q \cup \{\gamma+\la\}$ as his move.
This is a winning strategy of Player II.

(3). 
Non-reflectingness is immediate from the definition of $\mathbb{S}$.
To show that $\bigcup G$ is stationary, take a name $\dot C$ for a club in $\mu$.
Take a descending sequence $\seq{p_i \mid i<\la}$ such that
for every $i<\la$,
there is $\alpha_i$ such that $\alpha_i<\max(p_i)<\alpha_{i+1}$
and $p_{i+1} \Vdash \check \alpha_i \in \dot C$.
Let $\alpha=\sup_i \alpha_i$, and $p=\bigcup_{i<\la} p_i \cup \{\alpha\}$.
It is easy to check that $p \in \mathbb S$ and
$p \Vdash \alpha \in \dot C \cap \bigcup \dot G$.
\end{proof}

\begin{prop}\label{6.8}
Let $\ka$ be a singular cardinal with countable cofinality,
and suppose GCH.
Let $\seq{\ka_n \mid n<\om}$ be a strictly increasing sequence of
regular uncountable cardinals with limit $\ka$, and $\lambda$ a regular uncountable cardinal with
$\la<\ka_0$.
Then there is a poset $\bbP$ 
such that $\size{\bbP}=\ka^+$, 
$\bbP$ preserves all cardinals,
and adds a sequence $\seq{S_n \subseteq \ka_n\mid n<\om}$
satisfying the conditions of Proposition \ref{7.2}.
\end{prop}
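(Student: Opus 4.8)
The plan is to take $\bbP$ to be the full-support product $\prod_{n<\om}\mathbb{S}_{\ka_n,\la}$ (equivalently, the inverse limit of the trivial $\om$-stage iteration of these posets). Since each $\mathbb{S}_{\ka_n,\la}$ is $\la^+$-closed, $\bbP$ is $\la^+$-closed by taking coordinatewise lower bounds, and hence adds no new $\om$-sequences of ordinals. For $m<\om$ write $\bbP=\bbP_{\le m}\times\bbP_{>m}$ with $\bbP_{\le m}=\prod_{n\le m}\mathbb{S}_{\ka_n,\la}$ and $\bbP_{>m}=\prod_{n>m}\mathbb{S}_{\ka_n,\la}$; by GCH and $\size{\mathbb{S}_{\ka_n,\la}}=2^{<\ka_n}=\ka_n$ we have $\size{\bbP_{\le m}}=\ka_m$, so $\bbP_{\le m}$ has the $\ka_m^+$-c.c., while $\bbP_{>m}$ is $\ka_{m+1}$-strategically closed by the argument of Proposition \ref{6.5+}. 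A routine factoring argument (force $\bbP_{>m}$ first, which preserves cardinals and cofinalities $\le\ka_{m+1}$ and adds no new subsets of the small poset $\bbP_{\le m}$, then force the $\ka_m^+$-c.c.\ forcing $\bbP_{\le m}$) shows that $\bbP$ preserves all cardinals and cofinalities; in particular every $\ka_n$ stays regular, so condition (1) of Proposition \ref{7.2} holds, and $\size{\bbP}=\prod_{n<\om}\ka_n=\ka^\om=\ka^+$ by GCH since $\cf(\ka)=\om$. Fix a $(V,\bbP)$-generic $G=\seq{G_n\mid n<\om}$ and set $S_n=\bigcup G_n$. By the definition of $\mathbb{S}_{\ka_n,\la}$, $S_n\subseteq\{\alpha<\ka_n\mid\cf(\alpha)=\la\}$ and $S_n\cap\beta$ is non-stationary in $\beta$ for every $\beta<\ka_n$, with this non-stationarity witnessed by a club in $V$ and hence surviving in $V[G]$; so $\seq{S_n}$ satisfies condition (2) of Proposition \ref{7.2} except for the stationarity of each $S_n$, which I will recover from condition (3).

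The heart of the argument is condition (3). Fix a large regular $\theta$. It suffices to prove that for every $\bbP$-name $\dot{\mathfrak A}$ for a first-order structure expanding $(H_\theta,\in)$, the set of $q\in\bbP$ forcing ``there is $M\in[H_\theta]^\la$ with $M\prec\dot{\mathfrak A}$, ${}^\om M\subseteq M$, and $\sup(M\cap\check\ka_n)\in\dot S_n$ for all $n$'' is dense, since then a standard genericity argument yields (3). So fix $p=\seq{p_n\mid n<\om}\in\bbP$ and such a name $\dot{\mathfrak A}$. Working in $V$ and using that GCH gives $\la^\om=\la$, choose a regular $\chi\gg\theta$ and build an elementary submodel $N\prec H_\chi$ containing $p,\dot{\mathfrak A},\bbP,\seq{\ka_n\mid n<\om},H_\theta$ and everything else relevant, with $\size N=\la$, ${}^\om N\subseteq N$, and $\cf(\sup(N\cap\ka_n))=\la$ for every $n<\om$; such an $N$ is obtained as $N=\bigcup_{i<\la}P_i$ for a continuous $\subseteq$-increasing chain $\seq{P_i\mid i<\la}$ of $\om$-closed elementary submodels of $H_\chi$ of size $\la$ (these cost nothing since $\la^\om=\la$) with $P_i\in P_{i+1}$ and $\sup(P_i\cap\ka_n)$ strictly increasing in $i$, which is possible because $\size{P_i}=\la<\ka_0\le\ka_n$, so $\sup(P_i\cap\ka_n)<\ka_n$. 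Put $\delta_n=\sup(N\cap\ka_n)$; then $\delta_n<\ka_n$ and $\cf(\delta_n)=\la$. Let $q=\seq{q_n\mid n<\om}$ with $q_n=p_n\cup\{\delta_n\}$. Then $q_n\in\mathbb{S}_{\ka_n,\la}$ (its only relevant new initial segment is $q_n\cap\delta_n=p_n$, which is bounded in $\delta_n$, hence non-stationary there), $q_n\le p_n$, and $q_n\Vdash\check\delta_n\in\dot S_n$, so $q\le p$.

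It remains to check that $q$ forces the displayed statement, with $M:=\dot N[\dot G]\cap H_\theta$ as the witness. Since $G$ is $(V,\bbP)$-generic it is in particular $(N,\bbP)$-generic, so $N[G]\prec H_\chi^{V[G]}$ and $N[G]\cap\mathrm{Ord}=N\cap\mathrm{Ord}$; as $H_\theta^{V[G]},\dot{\mathfrak A}_G\in N[G]$, we get $M\prec H_\theta^{V[G]}$, $M\prec\dot{\mathfrak A}_G$, and $\size M=\la$. As $\ka_n<\theta$, $M\cap\ka_n=N[G]\cap\ka_n=N\cap\ka_n$, so $\sup(M\cap\ka_n)=\delta_n\in S_n$. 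Finally, if $\seq{x_k\mid k<\om}\in V[G]$ with each $x_k\in M=N[G]$, pick names $\dot x_k\in N$; since $\bbP$ adds no new $\om$-sequences, $\seq{x_k\mid k<\om}\in V$, and since ${}^\om N\subseteq N$ in $V$ also $\seq{\dot x_k\mid k<\om}\in N$, whence $\seq{x_k\mid k<\om}\in N[G]\cap H_\theta^{V[G]}=M$; thus ${}^\om M\subseteq M$. This establishes the density, hence (3). The remaining part of (2), stationarity of $S_n$ in $V[G]$, now follows: given a club $C\subseteq\ka_n$ in $V[G]$, apply (3) to a structure with $C$ as a predicate to get $M$ with $\sup(M\cap\ka_n)\in S_n$; since $C\in M\prec\mathfrak A$ is club and $\cf(\sup(M\cap\ka_n))=\la>\om$, the point $\sup(M\cap\ka_n)$ is a limit point of $C$, so it lies in $C\cap S_n$.

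The step I expect to be the main obstacle is the density argument of the last two paragraphs: one must produce, in $V$, an $\om$-closed elementary submodel $N$ of size $\la$ whose intersection with each $\ka_n$ has supremum of cofinality exactly $\la$, and then verify that the condition $q$ correctly forces $\dot N[\dot G]\cap H_\theta$ to be $\om$-closed in $V[G]$ (this is where $\la^+$-closedness of $\bbP$ and ${}^\om N\subseteq N$ in $V$ combine). The cardinal-preservation bookkeeping for $\bbP$ in the first paragraph, while routine, also needs some care because the strategic-closure degree of the tails $\bbP_{>m}$ grows with $m$.
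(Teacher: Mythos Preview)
Your overall setup (full-support product, factoring for cardinal preservation, deriving stationarity of $S_n$ from condition (3)) is fine, and the paper's iteration is essentially the same poset. The density argument for condition~(3), however, has a genuine gap.

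The error is the claim ``$N[G]\cap\mathrm{Ord}=N\cap\mathrm{Ord}$''. While $N[G]\prec H_\chi^{V[G]}$ does follow from $N\prec H_\chi$ and $V$-genericity of $G$ alone, the equality $N[G]\cap V=N$ does \emph{not}; that needs $G$ to contain an $(N,\bbP)$-master condition, and your $q_n=p_n\cup\{\delta_n\}$ is not one. Concretely: since $p_n\in N$, the name $\tau$ for ``$\min\bigl(\dot S_n\setminus(\max(\check p_n)+1)\bigr)$'' lies in $N$, and below $q$ we have $S_n\cap(\delta_n+1)=p_n\cup\{\delta_n\}$, so $\tau_G=\delta_n\in N[G]\setminus N$. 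Iterating once more, the $N$-name ``$\min(\dot S_n\setminus(\tau+1))$'' evaluates to the least element of $S_n$ above $\delta_n$, an ordinal strictly between $\delta_n$ and $\ka_n$ that lies in $N[G]$. Hence $\sup(M\cap\ka_n)>\delta_n$, and nothing you have forced puts this larger supremum into $S_n$. So $q$ does not force the required statement.

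The repair is exactly what the paper does: before capping with the $\delta_n$'s, build a descending sequence $\seq{q_i\mid i<\la}$ below $p$ inside $N$ meeting every dense subset of $\bbP$ that lies in $N$ (this uses $\la^+$-closure and requires ${}^{<\la}N\subseteq N$, not merely ${}^\om N\subseteq N$, so your chain construction of $N$ must be strengthened accordingly; under GCH this costs nothing). Then set $q_n=\bigcup_{i<\la}q_i(n)\cup\{\delta_n\}$. Now $q$ \emph{is} a master condition, and one can either run your $N[G]$ argument correctly, or---as the paper does---take the ground-model set $X=N\cap H_\theta$ as the witness and verify directly from the genericity of $\seq{q_i}$ that $q$ forces $X$ to be closed under $\dot f$ and that $\sup(X\cap\ka_n)=\delta_n\in\dot S_n$.
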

\begin{proof}
Fix such $\seq{\ka_n \mid n<\om}$ and $\la$.
Define $\bbP_n$ and $\dot \bbQ_n$ for $n<\om$ as follows.
Let $\bbP_0$ be the trivial poset.
Suppose $\bbP_n$ is defined.
Then $\dot \bbQ_n$ is a $\bbP_n$-name
such that
\[
\Vdash_{\bbP_n} \dot \bbQ_n=\dot{\mathbb{S}}_{\ka_n,\la}.
\]
Let $\bbP_{n+1}=\bbP_n*\dot \bbQ_n$.
It is routine to show that $\size{\bbP_{n+1}} \le 2^{<\ka_{n}}=\ka_n$,
so it satisfies the $(\ka_n)^+$-c.c.
Let $\bbP$ be the inverse limit of the $\bbP_n$'s.
We check that $\bbP$ witnessing the theorem.

A standard argument shows $\size{\bbP}=\ka^\om=\ka^+$ and $\bbP$ is $\la^+$-closed.
By Proposition \ref{6.5+}, for each $n<\om$, $\bbP$ can be factored as
a forcing product $\bbP_n*\dot\bbQ_n* \dot \bbP_{tail}$
such that $\Vdash_{\bbP_n*\dot \bbQ_n}$``$\dot \bbP_{tail}$ is $\ka_{n+1}$-strategically closed''.
This factorization shows that forcing with $\bbP$ preserves all cofinalities $\mathop{<}\ka$.
Since $\size{\bbP}=\ka^+$, it preserves all cofinalities $>\ka^+$.
We show that $\bbP$ preserves $\ka^+$. If $(\ka^+)^V$ is collapsed in $V^\bbP$,
since $\ka$ is singular, the cofinality of $(\ka^+)^V$ is $ \mathop{<}\ka$ in $V^{\bbP}$.
Hence there is a cofinal map $f$ from some $\alpha<\ka$ into
$(\ka^+)^V$. By the above factorization, this $f$ was added by $\bbP_n$ for some $n<\om$,
so $(\ka^+)^V$ is collapsed in $V^{\bbP_n}$.
This is impossible because the cardinality of $\bbP_n$ is $<\ka$.

Take  a $(V,\bbP)$-generic $G$, and
let $S_n$ be a generic subset of $\ka_n$ induced by $G$.
By the above factorization, we have that $S_n$ is a non-reflecting stationary subset of $\{\alpha<\ka_n \mid \cf(\alpha)=\la\}$
in $V[G]$.

Finally, we see that the sequence $\seq{S_n \mid n<\om}$ satisfies the 
the conditions of Proposition \ref{7.2}. To do this, 
return to $V$. Let $\theta$ be a sufficiently large regular cardinal
and $N=H_\theta$. Let $\dot S_n$ be a canonical name for $S_n$.
Take a $\bbP$-name $\dot f$ such that $p \Vdash \dot f:[\check N]^{<\om} \to \check N$.
It is enough to show that there is $X \subseteq N$ and $q \le p$ such that
$\size{X}=\la \subseteq X$, ${}^\om X\subseteq X$,
and 
\[
q \Vdash \dot f``[\check X]^{<\om} \subseteq \check X, \sup(\check X \cap \check \ka_n) \in
\dot S_n \text{ for $n<\om$}.
\]
Take a large regular cardinal $\chi>\theta$ and 
$M \prec H_\chi$ containing $N,p, \seq{\dot S_n\mid n<\om}, \dot f$ and other relevant objects such that
$\size{M} =\la \subseteq M$ and ${}^{<\la}M \subseteq M$.
Let $X=M \cap H_\theta=M \cap N$.
Note that $\size{X}=\la$, ${}^\om X \subseteq X$, and 
$\cf(\sup(X \cap \ka_n))=\la$ for every $n<\om$.

Since $\bbP$ is $\la^+$-closed, $\size{M}=\la \subseteq M$, and ${}^{<\la}M \subseteq M$,
it is easy to take a descending sequence $\seq{q_i \mid i<\la}$ in $\bbP$ such that:
\begin{enumerate}
\item $q_0 \le p$ and $q_i \in M \cap \bbP$ for every $i<\la$.
\item For every dense set $D \in M$ in $\bbP$,
there is $i<\la$ with $q_i \in D \cap M$.
\end{enumerate}
By the condition (2) and the elementarity of $M$, we have:
\begin{enumerate}
\item[(3)] For every $x \in [M \cap N]^{<\om}$,
there is $i<\la$ and $y \in M \cap N (=X)$
such that $q_{i} \Vdash \dot f(\check x)=\check y$.
\item[(4)] For $i<\la$ and $n<\om$,
there is $j<\la$ and $\alpha_{i,n} \in M \cap N \cap \ka_n (=X \cap \ka_n)$
such that $q_{j}\restriction n \Vdash_{\bbP_n} \max(q_i(n))=\check \alpha_{i,n}$.
\end{enumerate}

\begin{claim}
$\sup(X \cap \ka_n)=\sup_{i<\la} \alpha_{i,n}$ for every $n<\om$.
\end{claim}
\begin{proof}
Since $\alpha_{i,n} \in M \cap N  \cap \ka_n=X \cap \ka_n$,
we have $\sup(X \cap \ka_n) \ge \sup_{i<\la} \alpha_{i,n}$.
For the converse, take $\alpha \in X \cap \ka_n$.
Then the set $D=\{r \in \bbP  \mid r\restriction n \Vdash_{\bbP_n} \max(r(n)) \ge \check \alpha\}$ is dense in $\bbP$ and is in $M$. By the condition (4)
there is $i<j<\la$ such that $q_i \in M \cap D$ and
$q_j\restriction n \Vdash_{\bbP_n} \max(q_i(n))=\check \alpha_{i,n}$.
Thus $\alpha_{i,n} \ge \alpha$, so $\sup(X \cap \ka_n) \le \sup_{i<\la} \alpha_{i,n}$.
\end{proof}

Take a function $q$ on $\om$ as follows:
$q(n)$ is a $\bbP_n$-name such that $\Vdash_{\bbP_n}$``$q(n)=\bigcup_{i<\la} q_i(n) \cup 
\{\sup (\check X \cap \check \ka_n)\}$.
Finally we show that
$q$ is a lower bound of the $q_i$'s, and
$q \Vdash \dot f``[\check X]^{<\om} \subseteq \check X, \sup(\check X \cap \check \ka_n) \in
\dot S_n$ for $n<\om$.
\begin{claim}
$q \in \bbP$ and $q$ is a lower bound of $\{q_i \mid i<\la\}$.
\end{claim}
\begin{proof}
It is enough to show that 
$q \restriction n \in \bbP_n$ and
$q \restriction n$ is a lower bound of $\{ q_i \restriction n \mid i<\la\}$
for every $n<\om$.  We prove this by induction on $n<\om$.
The case $n=0$ is trivial.
Suppose $q \restriction n \in \bbP_n$
and is a lower bound of $\{q_i \restriction n \mid i<\la\}$.
Then $q \restriction n \Vdash_{\bbP_n}$``$\{q_i(n)\mid i<\la\}$ is 
a descending sequence in $\dot \bbQ_n=\dot{ \mathbb{S}}_{\ka_n, \la}$''.
By the condition (4) and the previous claim,
we know that  $q \restriction n \Vdash_{\bbP_n}$``
$\sup(\{\max(q_i(n))\mid i<\la\})=\sup(\check X \cap \check \ka_n)$''.
Since $\cf(X \cap \ka_n)=\la$,
we also have $q \restriction n \Vdash_{\bbP_n}$``$\bigcup_{i<\la} q_i(n)$ is non-stationary in $\sup(\check X \cap \check \ka_n)$''.
These observations show that
$q \restriction \Vdash_{\bbP_n}$``$q(n)=\bigcup_{i<\la} q_i(n) \cup 
\{\sup (\check X \cap \check \ka_n)\} \in \dot \bbQ_n$'',
so $q \restriction( n+1) \in \bbP_{n+1}$.
By the definition of $q$,
we also have $q \restriction(n+1)$ is a lower bound of 
 $\{q_i \restriction(n+1) \mid i<\la\}$.
\end{proof}
Now, since $q$ is a lower bound of $\{q_i \mid i<\la\}$,
we know that $q \Vdash \dot f``[\check X]^{<\om} \subseteq \check X$ by the condition (3).
Moreover
$q \Vdash \sup(\check X \cap \check \ka_n) \in \dot S_n$ for every $n<\om$.
This completes the proof.

\end{proof}

By this proposition, 
we can construct a model in which there is a measurable cardinal and
there is no $\om_1$-indecomposable uniform ultrafilter over $\om_\om$;
Suppose GCH holds and there is a measurable cardinal $\nu$.
Let $\bbP$ be a poset from Proposition \ref{6.8} for $\ka=\om_\om$ and $\la=\om_1$.
In $V^\bbP$ there is no $\om_1$-indecomposable uniform ultrafilter over $\om_\om$
by Proposition \ref{7.2}. The cardinality of $\bbP$ is $\mathop{<}\nu$,
hence $\nu$ remains a measurable cardinal in $V^\bbP$ by
Levy-Solovay Theorem (e.g., see Proposition 10.15 in Kanamori \cite{Kanamori2}).

\begin{thm}\label{7.5}
Suppose there is no $\om_1$-indecomposable uniform ultrafilter over $\om_\om$.
Let $\ka$ be a measurable cardinal,
and $\mu>\ka$ a strong limit singular cardinal with cofinality $\om_1$.
\begin{enumerate}
\item $\Add(\om, \mu)$ forces $\mathfrak{u}(\ka)<\mathfrak{u}(\om_\om)$.
\item If $\calU$ is a normal measure over $\ka$ and $\bbP_\calU$ is the Prikry forcing associated with $\calU$,
then $\bbP_\calU \times \Add(\om, \mu)$ forces that
$\ka$ is a singular cardinal with countable cofinality and $\mathfrak{u}(\ka)<\mathfrak{u}(\om_\om)$.
\end{enumerate}
\end{thm}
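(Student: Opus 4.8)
The plan is to reduce everything to Theorem~\ref{4.4} (resp.\ Theorem~\ref{4.5}), Lemma~\ref{4.1}, and a single preservation statement, the preservation being the only genuinely new ingredient.

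For (1), let $G$ be $\Add(\om,\mu)$-generic and work in $V[G]$. By Theorem~\ref{4.4} --- the instance $\la=\om_1$ of Proposition~\ref{3.9}, applicable because a measurable $\ka$ carries a $\ka$-complete, hence $\om_1$-indecomposable, uniform ultrafilter --- we have $\fraku(\ka)<\fraku(\om_1)$ in $V[G]$. On the other hand, by Lemma~\ref{4.1} the inequality $\fraku(\om_1)\le\fraku(\om_\om)$ holds in any model without an $\om_1$-indecomposable uniform ultrafilter over $\om_\om$: there every uniform ultrafilter over $\om_\om$ is $\om_1$-decomposable, hence of character at least $\fraku(\om_1)$. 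Thus the whole statement reduces to one preservation claim: $\Add(\om,\mu)$ forces ``there is no $\om_1$-indecomposable uniform ultrafilter over $\om_\om$''; granting it, $\fraku(\ka)<\fraku(\om_1)\le\fraku(\om_\om)$ in $V[G]$. The plan for (2) will be the same, with Theorem~\ref{4.5} replacing Theorem~\ref{4.4}.

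To prove the preservation claim, take a uniform ultrafilter $\calV\in V[G]$ over $\om_\om$. Since $\Add(\om,\mu)$ is c.c.c.\ it preserves $\om_1$, so in $V[G]$ every subset of $\om_1$ of size $<\om_1$ is bounded, hence contained in $[0,\delta)$ for some $\delta<\om_1$ lying in $V$; consequently $\calV$ is $\om_1$-decomposable in $V[G]$ as soon as there is $f:\om_\om\to\om_1$ in $V[G]$ with $f^{-1}([0,\delta))\notin\calV$ for every $\delta<\om_1$. I would first try to locate such an $f$ already in $V$. If there is none, then $\calV\cap V$ is a uniform ultrafilter on the Boolean algebra $\p(\om_\om)^V$ that is decomposed by no ground-model function $\om_\om\to\om_1$, and the remaining step --- which I expect to be the main obstacle --- is to manufacture out of $\calV\cap V$ an honest $\om_1$-indecomposable uniform ultrafilter over $\om_\om$ lying in $V$, contradicting the hypothesis. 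An alternative route is to bypass $\calV$ and instead check that $\Add(\om,\mu)$ preserves the hypothesis of Proposition~\ref{7.2}: its conditions (1) and (2) are immediate, and the $S_n$ stay non-reflecting stationary subsets of $\ka_n$ since c.c.c.\ forcing preserves cofinalities, clubs and stationarity in each $\ka_n$, so everything concentrates on condition~(3), the stationarity in $[H_\theta]^{\om_1}$ of the family of $M$ with ${}^\om M\subseteq M$ and $\sup(M\cap\ka_n)\in S_n$ for all $n$. Because the models witnessing~(3) in Proposition~\ref{6.8} are produced by the $\om_2$-closed iteration used there, and that iteration stays $\om_2$-closed in $V[G]$ while every algebra on $H_\theta$ in $V[G]$ is refined, by c.c.c., by a ground-model algebra, one can hope to re-run the descending-sequence construction of Proposition~\ref{6.8} inside $V[G]$; the delicate point is that the new Cohen reals can spoil the requirement ${}^\om M\subseteq M$ for ground-model $M$, so maintaining it while hitting the $S_n$'s is where the real work lies.

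For (2), the poset is $\bbP_\calU\times\Add(\om,\mu)$, and I would force with $\bbP_\calU$ first. Prikry forcing adds no bounded subsets of $\ka$, and since $\ka$ is strongly inaccessible $\size{\p(\om_\om)^V}<\ka$; hence $\bbP_\calU$ adds no new subsets of $\p(\om_\om)$, in particular no new uniform ultrafilters over $\om_\om$, so $V^{\bbP_\calU}$ still satisfies ``there is no $\om_1$-indecomposable uniform ultrafilter over $\om_\om$'', while by the standard properties of Prikry forcing $\mu$ remains a strong limit singular cardinal of cofinality $\om_1$ in $V^{\bbP_\calU}$. Now Theorem~\ref{4.5} gives, in $V^{\bbP_\calU\times\Add(\om,\mu)}$, that $\ka$ is singular of countable cofinality and $\fraku(\ka)<\fraku(\om_1)$, and the preservation argument outlined above, applied over $V^{\bbP_\calU}$, gives $\fraku(\om_1)\le\fraku(\om_\om)$ there as well; combining the two inequalities yields $\fraku(\ka)<\fraku(\om_\om)$ in the final model.
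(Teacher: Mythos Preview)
Your overall plan matches the paper's: reduce to $\fraku(\ka)<\fraku(\om_1)$ via Theorem~\ref{4.4} (resp.\ \ref{4.5}), then get $\fraku(\om_1)\le\fraku(\om_\om)$ by showing that the absence of $\om_1$-indecomposable uniform ultrafilters over $\om_\om$ persists in the extension. Your treatment of (2) via the observation that Prikry forcing adds no subsets of $\p(\om_\om)^V$ is fine.

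The only gap is the preservation step for $\Add(\om,\mu)$, which you flag as ``the main obstacle'' but is in fact routine. The trouble with your $\calV\cap V$ is that it lives in $V[G]$, not in $V$, so you cannot directly extend it inside $V$. The fix---and this is exactly what the paper does---is to work with a name rather than a generic object: pick a condition $p$ and a name $\dot\calV$ with $p\Vdash$ ``$\dot\calV$ is an $\om_1$-indecomposable uniform ultrafilter over $\om_\om$'', and set $\calF=\{X\in\p(\om_\om)^V: p\Vdash\check X\in\dot\calV\}$. This $\calF$ is a uniform filter \emph{in $V$}. Given $f:\om_\om\to\om_1$ in $V$, every $q\le p$ has an extension forcing $\check f^{-1}[\check\gamma]\in\dot\calV$ for some $\gamma<\om_1$; a maximal antichain of such conditions below $p$ is countable by c.c.c., so the supremum $\gamma^*$ of the witnessing ordinals is below $\om_1$ and $p\Vdash\check f^{-1}[\check\gamma^*]\in\dot\calV$. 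Thus $\calF$ is $\om_1$-indecomposable, and any ultrafilter in $V$ extending it is $\om_1$-indecomposable by Lemma~\ref{2.3}, contradicting the hypothesis. This is where c.c.c.\ really enters---not merely to preserve $\om_1$, but to collapse the $\om_1$-many possible witnesses to a single one---and it makes your alternative detour through Proposition~\ref{7.2} unnecessary.
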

\begin{proof}
(1). Take a $(V, \Add(\om, \mu))$-generic $G$.
In $V[G]$, we have $\mathfrak{u}(\ka)<\mathfrak{u}(\om_1)$ by Theorem \ref{4.4}.
We claim that, in $V[G]$, $\om_\om$ does not have an $\om_1$-indecomposable uniform ultrafilter.
Suppose to the contrary that there is an $\om_1$-indecomposable uniform ultrafilter $\calU$.
Let $\dot \calU$ be a name for $\calU$.
In $V$, let $\calF=\{X \subseteq \om_\om \mid\, \Vdash \check X \in \dot \calU\}$.
$\calF$ is a uniform filter over $\om_\om$,
and since $\Add(\om, \mu)$ has the c.c.c. and
$\calU$ is $\om_1$-indecomposable,
for every $f:\om_\om \to \om_1$
there is $\gamma<\om_1$ such that
$\{\alpha \mid f(\alpha) \le \gamma\} \in \calF$.
This shows that $\calF$ is $\om_1$-indecomposable in $V$.
By Lemma \ref{2.3},  every ultrafilter extending $\calF$ is an $\om_1$-indecomposable uniform ultrafilter,
this is a contradiction.

Now, in $V[G]$, every uniform ultrafilter over $\om_\om$ is $\om_1$-decomposable.
By Corollary \ref{4.2}, we have that $\mathfrak u(\om_1) \le \mathfrak u(\om_\om)$,
hence $\mathfrak u(\ka)<\mathfrak u(\om_1) \le \mathfrak u(\om_\om)$.

(2) follows from a similar argument.
\end{proof}
We also have the following theorem.
We leave the proof to the reader.

\begin{thm}\label{5.7}
Suppose there is no $\om_1$-indecomposable uniform ultrafilter over $\om_\om$.
Let $\ka$ be a strongly compact cardinal,
and $\mu>\ka$ a strong limit singular cardinal with cofinality $\om_1$.
Then $\Add(\om, \mu) $ forces that
$\mathfrak u(\ka^+)<\mathfrak u(\om_\om)$ and $\mathfrak{u}(\ka^{+\om+1})<\mathfrak{u}(\om_\om)$.
\end{thm}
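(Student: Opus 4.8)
The plan is to follow the proof of Theorem~\ref{7.5}(1): in the extension by $\Add(\om,\mu)$ one establishes the chains $\fraku(\ka^+)<\fraku(\om_1)\le\fraku(\om_\om)$ and $\fraku(\ka^{+\om+1})<\fraku(\om_1)\le\fraku(\om_\om)$, using Proposition~\ref{3.9} for the strict inequality on the left and the preservation of the hypothesis about $\om_\om$ for the inequality on the right.

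For the left-hand inequalities, recall that, $\ka$ being strongly compact, every regular cardinal $\ge\ka$ carries a $\ka$-complete uniform ultrafilter; in particular $\ka^+$ and $\ka^{+\om+1}$ do, and any such ultrafilter, being $\la'$-indecomposable for every $\la'<\ka$, is $\om_1$-indecomposable. Moreover $\mu$, being a strong limit cardinal above $\ka$, is a limit cardinal $>\ka$, hence $\mu\ge\ka^{+\om}$; since $\cf(\mu)=\om_1\neq\om$ we have $\mu\neq\ka^{+\om}$, and $\mu$ being a limit cardinal this forces $\mu>\ka^{+\om+1}$. Now apply Proposition~\ref{3.9} with $\ka^+$ (resp.\ $\ka^{+\om+1}$) in the role of ``$\ka$'' and $\om_1$ in the role of ``$\la$''; the hypotheses there, namely $\mu>\ka^+$ (resp.\ $\mu>\ka^{+\om+1}$), $\om_1\le\cf(\mu)<\ka^+$ (resp.\ $<\ka^{+\om+1}$), $\cf(\om_1)=\cf(\mu)$, and the existence of an $\om_1$-indecomposable uniform ultrafilter on the relevant cardinal, all hold. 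As in the proof of Proposition~\ref{3.9}, this gives, in the extension, $\fraku(\ka^+)\le\mu<\fraku(\om_1)$ and $\fraku(\ka^{+\om+1})\le\mu<\fraku(\om_1)$: the bound $\le\mu$ is Theorem~\ref{3}, the inequality $\fraku(\om_1)\ge\mu$ is Lemma~\ref{10}, and $\fraku(\om_1)\neq\mu$ is Lemma~\ref{11} (whose hypothesis $\cf(\mu)=\cf(\om_1)=\om_1$ is satisfied).

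For the right-hand inequality, observe, exactly as in the proof of Theorem~\ref{7.5}(1), that the ccc forcing $\Add(\om,\mu)$ preserves the statement that $\om_\om$ carries no $\om_1$-indecomposable uniform ultrafilter. Indeed, if $\dot\calU$ named such an ultrafilter, then $\calF=\{X\subseteq\om_\om\mid\,\Vdash\check X\in\dot\calU\}$ would be a uniform filter over $\om_\om$ in $V$ containing every $X$ with $\size{\om_\om\setminus X}<\om_\om$ (since $\Add(\om,\mu)$ preserves cardinals and $\dot\calU$ is forced uniform), and it would be $\om_1$-indecomposable: for any $f:\om_\om\to\om_1$ in $V$, a maximal antichain of conditions each deciding some $\gamma<\om_1$ with $\{\alpha\mid f(\alpha)<\gamma\}\in\dot\calU$ is countable by ccc, so its supremum $\bar\gamma<\om_1$ satisfies $\Vdash\{\alpha\mid f(\alpha)<\bar\gamma\}\in\dot\calU$, i.e.\ $\{\alpha\mid f(\alpha)<\bar\gamma\}\in\calF$. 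By Lemma~\ref{2.3}, any uniform ultrafilter over $\om_\om$ extending $\calF$ would then be $\om_1$-indecomposable in $V$, contradicting the hypothesis. Hence, in the extension, every uniform ultrafilter over $\om_\om$ is $\om_1$-decomposable, and applying Lemma~\ref{4.1} to one of character $\fraku(\om_\om)$ yields $\fraku(\om_1)\le\fraku(\om_\om)$.

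Combining the two sides, $\Add(\om,\mu)$ forces $\fraku(\ka^+)<\fraku(\om_1)\le\fraku(\om_\om)$ and $\fraku(\ka^{+\om+1})<\fraku(\om_1)\le\fraku(\om_\om)$, which is the assertion. I do not anticipate a genuine obstacle; the two points needing care are verifying that Proposition~\ref{3.9} applies already from the assumption $\mu>\ka$ alone --- the essential observation being that $\cf(\mu)=\om_1$ forces $\mu>\ka^{+\om+1}$, so that the Raghavan--Shelah bound on $\fraku(\ka^{+\om+1})$ lands below $\mu$ --- and checking that the filter $\calF$ in the last step is at once uniform and $\om_1$-indecomposable, both being exactly the arguments already used for Theorems~\ref{3.11} and~\ref{7.5}.
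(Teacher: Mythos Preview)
Your proposal is correct and follows exactly the approach the paper intends: the paper itself writes ``We leave the proof to the reader'' for this theorem, and the implied argument is precisely the combination of Theorem~\ref{3.11} (for the bound $\fraku(\ka^+),\fraku(\ka^{+\om+1})<\fraku(\om_1)$ via Proposition~\ref{3.9}) with the ccc preservation argument from Theorem~\ref{7.5}(1) (for $\fraku(\om_1)\le\fraku(\om_\om)$). Your verification that $\mu>\ka$ together with $\cf(\mu)=\om_1$ already forces $\mu>\ka^{+\om+1}$ is the one point not spelled out elsewhere in the paper, and your argument for it is sound.
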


We will show  that
both
$\mathfrak u(\ka^{+\om}) <\mathfrak{u}(\om_\om)$
and $\mathfrak{u}(\ka^{+\om_1})<\mathfrak{u}(\om_\om)$ are consistent as well.
See Corollary \ref{4.14} and Proposition \ref{7.19}.

\section{Monotonicity at a singular cardinal and its successor}\label{sec7}
We had seen that $\mathfrak{u}(\ka) \le \mathfrak{u}(\ka^+)$ always holds if $\ka$ 
is regular. 
In contrast with a regular cardinal, if $\ka$ is singular then it is possible that $\mathfrak{u}(\ka^+)<\mathfrak{u}(\ka)$; 
By Theorem \ref{3.11}, it is consistent that $\fraku(\ka^{+\om_1+1})<\frak{u}(\ka^{+\om_1})$.
Nonetheless, we will show there are some restrictions of the failure of
monotonicity at singular and its successor,
and prove that if $\ka$ is singular and $\fraku(\ka^+)<\frak{u}(\ka)$,
then the cofinality of $\ka$ must be uncountable and $\ka$ is never strong limit.

To accomplish that, we will make use of Shelah's PCF theory.
Let us start this section by presenting some basic definitions and facts.
See Shelah \cite{Shelah}, Abraham and Magidor \cite{Abraham}, and Eisworth \cite{Eisworth} for details.

\begin{thm}[Shelah, see, e.g., Theorem 2.23 and 2.26 in \cite{Eisworth}]
Let $\ka$ be a singular cardinal.
Then there is an increasing sequence $\seq{\ka_\xi \mid \xi<\cf(\ka)}$ of regular cardinals with limit $\ka$
such that
$\prod_{\xi} \ka_\xi$ admits a scale $\seq{f_i \mid i<\ka^+}$ of length $\ka^+$,
 that is, the following hold:
\begin{enumerate}
\item $f_i \in \prod_\xi \ka_\xi$.
\item For $i<j<\ka^+$, $f_i\le^* f_j$ holds, that is, there is $\zeta<\cf(\ka)$
with $f_i(\xi) \le f_j(\xi)$ for every $\xi \ge \zeta$.
\item For every $g \in \prod_\xi \ka_\xi$,
there is $i<\ka^+$ with $g \le^* f_i$.
\end{enumerate}
\end{thm}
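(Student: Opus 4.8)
The plan is to deduce this from Shelah's pcf theorem; I will reduce the statement to a single nontrivial pcf fact and treat everything around it as bookkeeping. Set $\delta=\cf(\ka)$. What must be produced is an increasing sequence $\seq{\ka_\xi \mid \xi<\delta}$ of regular cardinals, each above $\delta$, with $\sup_\xi\ka_\xi=\ka$, such that the preorder $(\prod_\xi\ka_\xi,\le^*)$ --- where $\le^*$ is domination off a bounded subset of $\delta$, exactly as in the statement --- is $\ka^+$-directed and has cofinality exactly $\ka^+$; a scale is then nothing but a $\le^*$-increasing cofinal sequence in this order.

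First I would dispose of directedness and the lower bound on the cofinality, which are elementary. Let $\seq{\mu_\xi\mid\xi<\delta}$ be any increasing sequence of regular cardinals with $\delta<\mu_0$ and $\sup_\xi\mu_\xi=\ka$, and let $\{g_\alpha\mid\alpha<\ka\}\subseteq\prod_\xi\mu_\xi$. Partition $\ka=\bigcup_{\xi<\delta}A_\xi$ with $\size{A_\xi}\le\mu_\xi$ and define $h(\eta)=\sup\{g_\alpha(\eta)\mid\alpha\in\bigcup_{\zeta\le\eta}A_\zeta\}+1$; since $\size{\bigcup_{\zeta\le\eta}A_\zeta}\le\mu_\eta$ and $\mu_\eta$ is regular, $h\in\prod_\xi\mu_\xi$, and $g_\alpha(\eta)<h(\eta)$ for all $\eta\ge\xi$ whenever $\alpha\in A_\xi$. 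Thus every $\le\ka$-sized subfamily has a $\le^*$-upper bound (so the order is $\ka^+$-directed) and is not $\le^*$-cofinal (so $\cf(\prod_\xi\mu_\xi,\le^*)\ge\ka^+$, i.e.\ $\max\pcf\{\mu_\xi\mid\xi<\delta\}\ge\ka^+$).

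The heart --- and the one step I would cite rather than reprove --- is that $\ka^+$ is actually \emph{attained}: by Shelah's theorem, for any progressive set $A$ of regular cardinals cofinal in $\ka$ one has $\ka^+\in\pcf(A)$, and, invoking the existence of (transitive) generators for the pcf structure, one may shrink $A$ to a subset still cofinal in $\ka$ whose maximum pcf value is exactly $\ka^+$. Letting $\seq{\ka_\xi\mid\xi<\delta}$ enumerate such a set yields $\cf(\prod_\xi\ka_\xi,\le^*)=\ka^+$. All the real combinatorics here --- existence of $\max\pcf$, the generators, and the localization of $\ka^+$ --- is exactly what Theorems~2.23 and~2.26 of Eisworth \cite{Eisworth} package from Shelah's work, so I would quote it there and not unpack it.

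Finally, from $\ka^+$-directedness together with $\cf(\prod_\xi\ka_\xi,\le^*)=\ka^+$ I would build the scale by a recursion of length $\ka^+$: fix a $\le^*$-cofinal family $\{p_i\mid i<\ka^+\}$, and at stage $i$ let $f_i$ be a $\le^*$-upper bound of $\{f_j\mid j<i\}\cup\{p_i\}$, which exists since $\size{i}\le\ka$ and the order is $\ka^+$-directed. The sequence $\seq{f_i\mid i<\ka^+}$ is $\le^*$-increasing by construction and $\le^*$-cofinal because $p_i\le^* f_{i+1}$, so it satisfies clauses (1)--(3). The only genuine obstacle is the pcf input in the third paragraph; everything else is routine.
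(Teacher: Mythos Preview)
The paper does not prove this theorem; it is stated with a bare citation to Eisworth \cite{Eisworth} and used as a black box. Your proposal therefore goes beyond what the paper does, supplying the standard reduction to Shelah's pcf machinery (directedness, $\ka^+\in\pcf$, generators, then the transfinite construction of the scale). That outline is correct and is exactly the content the cited references package.

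One small technical wrinkle: in your directedness argument you take $\size{A_\xi}\le\mu_\xi$ and then bound $h(\eta)$ via $\size{\bigcup_{\zeta\le\eta}A_\zeta}\le\mu_\eta$, but a supremum of $\mu_\eta$ many ordinals below the regular cardinal $\mu_\eta$ need not stay below $\mu_\eta$. The fix is cosmetic---either arrange $\size{A_\xi}<\mu_\xi$, or define $h(\eta)$ using $\bigcup_{\zeta<\eta}A_\zeta$ so that the index set has size strictly below $\mu_\eta$---and does not affect the argument.
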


We also use the following Kanamori's result.
\begin{thm}[Kanamori, Corollary 2.4 in \cite{Kanamori2}]\label{25}
Let $\ka$ be a singular cardinal.
Then every uniform ultrafilter over $\ka^+$
is $(\ka, \ka^+)$-regular.
\end{thm}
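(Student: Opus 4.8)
The plan is to verify the reformulation of $(\ka,\ka^+)$-regularity recorded in Section~2: it suffices to produce a family $\seq{x_\alpha\mid\alpha<\ka^+}$ with each $x_\alpha\in[\ka^+]^{<\ka}$ such that $\{\alpha<\ka^+\mid\beta\in x_\alpha\}\in\calU$ for every $\beta<\ka^+$. Put $\mu=\cf(\ka)$, and by Shelah's theorem above fix an increasing sequence $\seq{\ka_\xi\mid\xi<\mu}$ of regular cardinals with limit $\ka$, $\ka_0>\mu$, together with a scale $\seq{f_i\mid i<\ka^+}$ in $\prod_{\xi<\mu}\ka_\xi$; passing to a cofinal subsequence we may take the scale strictly $<^*$-increasing, so that for each $\xi<\mu$ the map $i\mapsto f_i\restriction[\xi,\mu)$ is injective (two $<^*$-comparable functions agreeing on a tail coincide). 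Here the scale is the ZFC substitute, available because $\ka$ is singular, for the length-$\ka^+$ well-ordering of $[\ka^+]^{\le\ka}$ that powers Prikry's classical proof under $2^\ka=\ka^+$: each $\alpha<\ka^+$ gets coded by the single object $f_\alpha$, and the $f_\alpha$'s are sufficiently eventually different.

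Next I would pin down the behaviour of $\calU$ at $\mu$. Since $\calU$ is uniform on $\ka^+$ it is $\ka^+$-decomposable. If $\calU$ is $\cf(\ka)$-indecomposable, then Kunen and Prikry's theorem (Theorem~\ref{KPth}) makes it almost $\mathop{<}\ka$-decomposable, so for all large $\xi$ there is $g_\xi\colon\ka^+\to\ka_\xi$ with $\{i\mid g_\xi(i)>\gamma\}\in\calU$ for every $\gamma<\ka_\xi$. If $\calU$ is $\cf(\ka)$-decomposable, then, being also $\ka^+$-decomposable, it is $\ka$-decomposable by Theorem~\ref{thm1.4}, and a $\ka$-decomposition again yields, coordinatewise along $\seq{\ka_\xi}$, maps $g_\xi$ with the same property. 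So in both cases we have maps $\seq{g_\xi\mid\xi<\mu}$, $g_\xi\colon\ka^+\to\ka_\xi$, each of whose proper-initial-segment preimages lies outside $\calU$.

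The heart is a closing-off recursion of length $\ka^+$. At stage $\beta$, using uniformity of $\calU$ together with the $g_\xi$'s, one picks $\delta_\beta<\mu$ large enough that
\[
A_\beta=\{\,i<\ka^+\mid \beta<i,\ f_\beta(\xi)<f_i(\xi)\ \text{and}\ g_\xi(i)>\sup\{g_\xi(\gamma)\mid\gamma\le\beta\}\ \text{for all }\xi\ge\delta_\beta\,\}
\]
lies in $\calU$, and sets $x_\alpha=\{\beta\le\alpha\mid\alpha\in A_\beta\}$; then $\{\alpha\mid\beta\in x_\alpha\}\supseteq A_\beta\in\calU$. The step I expect to be the main obstacle is showing $x_\alpha\in[\ka^+]^{<\ka}$ rather than merely $[\ka^+]^{\le\ka}$: if $\alpha\in A_\beta$ then $f_\beta\restriction[\delta_\beta,\mu)<f_\alpha\restriction[\delta_\beta,\mu)$ pointwise, so by injectivity of $\beta\mapsto f_\beta\restriction[\xi,\mu)$, sharpened by the ``$g_\xi$'' constraints, only fewer than $\ka$ many $\beta$ with $\delta_\beta=\xi$ can be absorbed into $x_\alpha$, and since there are only $\mu=\cf(\ka)<\ka$ coordinates, $|x_\alpha|<\ka$. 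Arranging the choices so that each $A_\beta$ stays in $\calU$ while not over-populating later $x_\alpha$'s is exactly where singularity of $\ka$ — hence $\mu<\ka$ and $\ka=\sup_\xi\ka_\xi$ — is indispensable; a crude choice of $A_\beta$ (say, cobounded) only gives $(\ka^+,\ka^+)$-regularity. In ultrapower terms, writing $j\colon V\to M=\ult(V,\calU)$, the target is an $M$-set $x\supseteq j``\ka^+$ with $M\models|x|<j(\ka)$, and it is the scale that lets one build such an $x$ of $M$-size below some $j(\ka_\xi)$.
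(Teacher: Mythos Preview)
The paper does not prove this theorem; it is quoted as Kanamori's result and used as a black box. So there is no paper proof to compare against, but your argument has genuine problems.

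The most serious one is circularity. In the case where $\calU$ is $\cf(\ka)$-decomposable you invoke Theorem~\ref{thm1.4} (which is Theorem~\ref{27} in the body of the paper) to conclude that $\calU$ is $\ka$-decomposable. But look at the proof of Theorem~\ref{27}: its very first step is to apply Theorem~\ref{25} to deduce that $\calU$ is $(\ka,\ka^+)$-regular, and the rest of that proof uses this regularity in an essential way (the family $\{x_s\mid s\in S\}$ comes directly from it). So you are assuming the statement you are trying to prove. Even granting $\ka$-decomposability, the sentence ``a $\ka$-decomposition again yields, coordinatewise along $\seq{\ka_\xi}$, maps $g_\xi$ with the same property'' is not justified: composing a $\ka$-decomposition with a map $\ka\to\ka_\xi$ need not give a $\ka_\xi$-decomposition.

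Independently of the circularity, the heart of your argument --- the bound $\size{x_\alpha}<\ka$ --- is not established. Knowing that $f_\beta\restriction[\delta_\beta,\mu)$ lies pointwise below $f_\alpha\restriction[\delta_\beta,\mu)$, together with injectivity of $\beta\mapsto f_\beta\restriction[\xi,\mu)$, only gives $\size{\{\beta\in x_\alpha\mid\delta_\beta=\xi\}}\le\prod_{\zeta\ge\xi}\size{f_\alpha(\zeta)}$, which can be far larger than $\ka$. The extra $g_\xi$-constraints you impose do not obviously cut this down, and ``sharpened by the $g_\xi$ constraints'' is not an argument. To make a scale-based approach work you would need a genuinely finer mechanism (e.g.\ exact upper bounds or a club-guessing input) to force the fibres below size $\ka$; as written, the counting step is a gap. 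Kanamori's own argument proceeds via least functions and weakly normal ultrafilters on $\ka^+$ rather than through scales, and does not rely on any result proved in this paper.
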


Now we prove the following theorem, which plays a  crucial role in this section and
is interesting in its own right.
\begin{thm}\label{27}
Let $\ka$ be a singular cardinal and 
$\calU$ an ultrafilter over a set $S$.
If $\calU$ is $\ka^+$-decomposable and $\cf(\ka)$-decomposable,
then $\calU$ is $\ka$-decomposable as well.
\end{thm}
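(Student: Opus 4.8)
The plan is to combine the scale from Shelah's theorem with Kanamori's regularity result (Theorem \ref{25}) to build a decomposition of $\calU$ into $\ka$ pieces. First I would use the hypotheses to set things up cleanly: since $\calU$ is $\ka^+$-decomposable, by Lemma \ref{2.8} there is $h : S \to \ka^+$ with $h_*(\calU)$ a uniform ultrafilter over $\ka^+$; replacing $\calU$ by $h_*(\calU)$ and using Lemma \ref{2.8}(1) (it preserves $\ka$-decomposability in the relevant direction — actually I need the converse, so I should instead argue directly on $S$, pulling back via $h$), I may think of $\calU$ as concentrating on $\ka^+$. Similarly $\cf(\ka)$-decomposability gives $g : S \to \cf(\ka)$ with $g_*(\calU)$ uniform over $\cf(\ka)$. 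Now fix the increasing sequence $\seq{\ka_\xi \mid \xi < \cf(\ka)}$ of regular cardinals with limit $\ka$ and the scale $\seq{f_i \mid i < \ka^+}$ in $\prod_\xi \ka_\xi$ guaranteed by Shelah's theorem.

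The key step is to define, for each $s \in S$, an element $F(s) \in \prod_\xi \ka_\xi$ using the scale and then read off a map into $\ka$. By Theorem \ref{25}, $h_*(\calU)$ is $(\ka,\ka^+)$-regular, so there is a family $\{x_i \mid i < \ka^+\}$ with $x_i \in [\ka^+]^{<\ka}$ and, for each $j < \ka^+$, the set $\{i < \ka^+ \mid j \in x_i\} \in h_*(\calU)$; pulling back, $\{s \in S \mid j \in x_{h(s)}\} \in \calU$ for every $j$. Since $|x_i| < \ka = \sup_\xi \ka_\xi$, for each $s$ I can pick $\xi(s) < \cf(\ka)$ and a function value coding "how far into the scale $x_{h(s)}$ reaches at coordinate $g(s)$": concretely, let $F(s) \in \prod_\xi \ka_\xi$ dominate $f_i$ for all $i \in x_{h(s)}$ on a tail (possible since $|x_{h(s)}| < \ka_\xi$ for large $\xi$, using regularity of the $\ka_\xi$ and that a set of size $<\ka$ of functions in $\prod_\xi \ka_\xi$ has an upper bound in $\le^*$ on the relevant coordinates — this is where I must be careful about the tail). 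Then define $\Phi : S \to \ka$ by $\Phi(s) = F(s)(g(s))$, an ordinal below $\ka_{g(s)} < \ka$.

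I claim $\Phi$ witnesses $\ka$-decomposability: given $Z \in [\ka]^{<\ka}$, I must show $\Phi^{-1}(Z) \notin \calU$. Suppose otherwise. Since $g_*(\calU)$ is uniform over $\cf(\ka)$ and $Z$ is bounded inside many $\ka_\xi$ (as $|Z|<\ka$, the set $\{\xi < \cf(\ka) \mid Z \cap \ka_\xi$ is bounded below $\ka_\xi\}$ is co-bounded, hence in $g_*(\calU)$ if it is, or at least large), on a $\calU$-large set of $s$ the value $\Phi(s) = F(s)(g(s))$ lies below $\sup(Z \cap \ka_{g(s)}) =: \rho_{g(s)} < \ka_{g(s)}$. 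Now choose $i < \ka^+$ with $f_i$ eventually dominating the function $\xi \mapsto \rho_\xi + 1$; since $\{s \mid i \in x_{h(s)}\} \in \calU$, on a $\calU$-large set we simultaneously have $i \in x_{h(s)}$, so $F(s) \ge^* f_i$, hence $F(s)(g(s)) \ge f_i(g(s)) > \rho_{g(s)}$ for large $g(s)$ — contradicting $\Phi(s) < \rho_{g(s)}$ on a $\calU$-large set once $g(s)$ is pushed past the relevant threshold by uniformity of $g_*(\calU)$.

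The main obstacle I anticipate is the bookkeeping of "tails": the relations $\le^*$, $\ge^*$, and "$F(s)$ dominates $f_i$" each hold only from some coordinate onward, and that coordinate depends on $s$ and on $i$, while the decomposition map $\Phi$ evaluates at the single coordinate $g(s)$. Making the argument go through requires arranging, on a $\calU$-large set, that $g(s)$ is simultaneously past all finitely (or boundedly) many relevant thresholds — which is exactly what uniformity of $g_*(\calU)$ over $\cf(\ka)$ buys, provided I only ever need to beat thresholds coming from a fixed bounded-in-$\ka$ set (here $x_{h(s)}$ has size $<\ka$, but $\ka$ may have uncountable cofinality, so I cannot take a single $i$; the fix is to use the scale's cofinality to replace $x_{h(s)}$ by a single dominating $f_i$ and then work with that one $i$, as in the last paragraph). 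Getting this interaction exactly right is the crux; everything else is routine manipulation of ultrafilters via Lemma \ref{2.8}.
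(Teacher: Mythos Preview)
Your overall strategy matches the paper's: pull $(\ka,\ka^+)$-regularity of $\calU$ from Kanamori's theorem, fix a scale $\seq{f_i \mid i<\ka^+}$ on $\prod_\xi \ka_\xi$, and combine the regularity witnesses $x_s := x_{h(s)}$ with the $\cf(\ka)$-decomposing map $g$ to produce a $\ka$-decomposing map. The gap is exactly where you locate it, and your proposed repair does not close it.

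Concretely: you set $\Phi(s) = F(s)(g(s))$, where $F(s)$ dominates $\{f_i \mid i \in x_s\}$ pointwise only from some coordinate $\tau(s)$ onward (the least $\xi$ with $|x_s| < \ka_\xi$). In the contradiction step you need $F(s)(g(s)) > f_j(g(s))$ for the fixed $j$ with $f_j$ eventually above $\xi \mapsto \sup(Z \cap \ka_\xi)$; this requires $g(s) \ge \tau(s)$. But $\tau(s)$ varies with $s$, and uniformity of $g_*(\calU)$ only pushes $g(s)$ past \emph{fixed} thresholds, not past the $s$-dependent $\tau(s)$. Your suggested fix --- replace $x_s$ by a single $f_{i(s)}$ dominating it --- still leaves an $s$-dependent threshold (now the one witnessing $f_j \le^* f_{i(s)}$), and no $s$-independent $i$ can work since $\bigcup_{s} x_s = \ka^+$.

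The paper's resolution is simple but decisive: do not evaluate at $g(s)$. For each $s$ choose a coordinate $\xi_s$ with $\ka_{\xi_s} > \max(g(s), |x_s|)$ (viewing $g$ as mapping into $\{\ka_\xi \mid \xi<\cf(\ka)\}$), and set $\Phi(s) = \sup\{f_i(\xi_s) : i \in x_s\} + 1 < \ka_{\xi_s}$, which is well-defined since $|x_s| < \ka_{\xi_s}$ and $\ka_{\xi_s}$ is regular. Now $\xi_s \ge \tau(s)$ holds by construction, so no tail-bookkeeping is needed at all, and $g$ serves only to push $\xi_s$: on the $\calU$-large set where $g(s) > \ka_{\zeta_1}$ one gets $\xi_s > \zeta_1$ automatically. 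With this one change your contradiction argument goes through verbatim.
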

\begin{proof}
First we check that $\calU$ is $(\ka, \ka^+)$-regular.
Since $\calU$ is $\ka^+$-decomposable,
we can find $f:S \to \ka^+$
such that $f_*(\calU)$
is a uniform ultrafilter over $\ka^+$ by Lemma \ref{2.8}.
By Theorem \ref{25}, $f_*(\calU)$ is $(\ka, \ka^+)$-regular.
Fix $\calF \subseteq f_*(\calU)$ which witnesses that 
$f_*(\calU)$ is $(\ka, \ka^+)$-regular.
Then it is easy to show that $\{f^{-1}(X) \mid X \in \calF\}$
also witnesses that $\calU$ is $(\ka, \ka^+)$-regular.

Since $\calU$ is $(\ka, \ka^+)$-regular,
we can take a family $\{x_s \mid  s \in S\}$ such that
$\size{x_s}<\ka$ and
for every $i<\ka^+$, we have $\{s \in S \mid i \in x_s\} \in \calU$.
Fix an increasing sequence $\seq{\ka_\xi \mid \xi<\cf(\ka)}$ of 
regular cardinals with limit $\ka$ which admits a scale
$\seq{f_i \mid i<\ka^+}$.
Since $\calU$ is $\cf(\ka)$-decomposable,
we can find $g:S \to \{\ka_\xi \mid \xi<\cf(\ka)\}$
such that for every $\eta<\cf(\ka)$,
we have $\{s \in S  \mid g(\alpha)>\ka_\eta\} \in \calU$.

Now we define $h:S \to \ka$ as follows:
Fix $s \in S$.
Take $\xi_s<\cf(\ka)$ such that
$g(s)+\size{x_s}<\ka_{\xi_s}$.
Then the set $\{f_i(\xi_s) \mid i \in x_s \}$ is bounded in $\ka_{\xi_s}$.
So we can take $h(s)<\ka_{\xi_s}$ with $h(s)>f_i(\xi_s)$ for every $i \in x_s$.
We show that $h$ witnesses that $\calU$ is $\ka$-decomposable.
Suppose not. Take $X \in [\ka]^{<\ka}$ such that
$\{s \in S \mid h(s) \in X\} \in \calU$.
Fix $\zeta_0<\cf(\ka)$ with $\size{X}<\ka_{\zeta_0}$.
Then we can find $j<\ka^+$ and $\zeta_1>\zeta_0$
such that $\sup(X \cap \ka_\xi)<f_j(\xi)$ for every $\xi$ with $\zeta_1 \le \xi<\cf(\ka)$.
Pick $s \in S$ with
$g(s)>\ka_{\zeta_1}$, $j \in x_s$, and $h(s) \in X$.
Since $g(s)>\ka_{\zeta_1}$, we have $\ka_{\xi_s} >\ka_{\zeta_1}$.
Hence $f_j({\xi_s})<h(s)<\ka_{\xi_s}$,
however this is impossible since $h(s) \le  \sup(X \cap \ka_{\xi_s})<f_j({\xi_s})$.
\end{proof}

\begin{lemma}\label{5.6}
Let $\ka$ be a cardinal,
and suppose $\ka$ has a $\sigma$-complete uniform ultrafilter $\calU$
with $\chi(\calU)=\mathfrak{u}(\ka)$.
Then there is a uniform ultrafilter $\mathcal V$ over $\ka$
such that $\chi(\mathcal V)=\mathfrak{u}(\ka)$ and $\mathcal{V}$ is not $\sigma$-complete.
\end{lemma}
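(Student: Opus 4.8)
The plan is to realise $\calV$ as a Fubini product of a non-principal ultrafilter over $\om$ with $\calU$, using $\sigma$-completeness of $\calU$ to keep the character from growing. Fix any non-principal ultrafilter $\mathcal{W}$ over $\om$ and let $\calV$ be the ultrafilter over $\om \times \ka$ determined by $A \in \calV \iff \{n<\om : A_n \in \calU\} \in \mathcal{W}$, where $A_n := \{\alpha<\ka : (n,\alpha) \in A\}$. Since $\size{\om \times \ka} = \ka$, after fixing a bijection $\om\times\ka \to \ka$ we regard $\calV$ as an ultrafilter over $\ka$; all verifications below are phrased through the product structure but are invariant under this identification. If $A \in \calV$ then $\{n : A_n \in \calU\}$ is infinite and each such $A_n$ has size $\ka$ by uniformity of $\calU$, so $\size{A}=\ka$, i.e.\ $\calV$ is uniform. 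Writing $\pi,\pi'$ for the two coordinate projections of $\om\times\ka$, one checks directly that $\pi_*(\calV)=\mathcal{W}$ and $\pi'_*(\calV)=\calU$. If $\calV$ were $\om$-indecomposable (equivalently, $\sigma$-complete), then so would be $\pi_*(\calV)=\mathcal{W}$ by Lemma \ref{2.8}(1), contradicting that $\mathcal{W}$ is non-principal; hence $\calV$ is not $\sigma$-complete. Moreover Lemma \ref{2.8}(1) gives $\chi(\calV)\ge\chi(\pi'_*(\calV))=\chi(\calU)=\fraku(\ka)$.

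It remains to show $\chi(\calV)\le\fraku(\ka)$, and this is the only place $\sigma$-completeness enters. Fix a base $\calB_{\mathcal{W}}$ for $\mathcal{W}$ with $\size{\calB_{\mathcal{W}}}\le 2^\om$ and a base $\calB_\calU$ for $\calU$ with $\size{\calB_\calU}=\fraku(\ka)$. Given $A\in\calV$, the set $B_A:=\{n : A_n\in\calU\}\in\mathcal{W}$ is \emph{countable}, so $\sigma$-completeness of $\calU$ gives $A^*:=\bigcap_{n\in B_A}A_n\in\calU$. Choose $B\in\calB_{\mathcal{W}}$ with $B\subseteq B_A$ and $X^*\in\calB_\calU$ with $X^*\subseteq A^*$. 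The fibres of $B\times X^*\subseteq\om\times\ka$ are $X^*\in\calU$ for $n\in B$ and $\emptyset$ otherwise, so $B\times X^*\in\calV$; and for $n\in B\subseteq B_A$ we have $X^*\subseteq A^*\subseteq A_n$, so $B\times X^*\subseteq A$. Hence $\{B\times X^* : B\in\calB_{\mathcal{W}},\ X^*\in\calB_\calU\}$ is a base for $\calV$ of size at most $2^\om\cdot\fraku(\ka)$. Since a $\sigma$-complete non-principal uniform ultrafilter over $\ka$ has completeness a measurable cardinal $\le\ka$, the cardinal $\ka$ exceeds the least measurable cardinal; in particular $2^\om<\ka\le\fraku(\ka)$, so $2^\om\cdot\fraku(\ka)=\fraku(\ka)$, and therefore $\chi(\calV)=\fraku(\ka)$, as required.

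The only delicate point is this last bound: for a general Fubini product the obvious base has size $\chi(\mathcal{W})\cdot\chi(\calU)^\om$, which need not equal $\fraku(\ka)$, and it is precisely the diagonal intersection $A^*=\bigcap_{n\in B_A}A_n$ --- legitimate because $B_A$ is countable and $\calU$ is $\sigma$-complete --- that collapses the offending factor $\chi(\calU)^\om$ back down to $\chi(\calU)$.
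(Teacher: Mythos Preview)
Your proof is correct and essentially identical to the paper's: both take $\calV$ to be the Fubini-type product of a free ultrafilter on $\om$ with $\calU$, and both use $\sigma$-completeness of $\calU$ together with the bound $2^\om<\ka$ (from the existence of a measurable $\le\ka$) to see that $\chi(\calV)=\fraku(\ka)$. The only cosmetic difference is that the paper \emph{defines} $\calV$ via rectangle sets $Y\times Z$ and must then verify it is an ultrafilter (using $\sigma$-completeness), whereas you define $\calV$ by the Fubini rule (automatically an ultrafilter) and must then verify that rectangles form a base (using $\sigma$-completeness); these are two sides of the same observation.
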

\begin{proof}
It is sufficient to show that
there is a non-$\sigma$-complete uniform ultrafilter $\mathcal V$ over $\om \times \ka$
with $\chi(\mathcal V) \le \mathfrak{u}(\ka)$.

Fix a uniform ultrafilter $\mathcal W$ over $\om$.
Define the filter $\mathcal V$ over $\om \times \ka$ by:
$X \in \mathcal{V} \iff$ there is $Y \in \mathcal W$ and $Z \in \calU$ with 
$Y \times Z \subseteq X$.
We have that $\chi(\mathcal{V}) \le \chi(\calU) \times 2^\om$.
By the assumption that there is a $\sigma$-complete uniform ultrafilter over $\ka$,
there is a measurable cardinal $ \le \ka$.
Hence we have $2^\om \le \ka \le \chi(\calU)$ and so
$\chi(\calV) \le \chi(\calU) \times 2^\om =\chi(\calU) =\frak{u}(\ka)$.
Thus $\chi(\calV) \le \frak{u}(\ka)$.
We can check $\mathcal V$ is not $\sigma$-complete as follows:
$X_n=[n, \om) \times \ka \in \mathcal V$
but $\bigcap_n X_n=\emptyset$.

Now we have to show that $\mathcal{V}$ is an ultrafilter.
Take $X \subseteq \om \times \ka$,
and we show that $X \in \calV$ or $(\om \times \ka) \setminus X \in \calV$.
Consider the set $Y=\{n<\om \mid \{\alpha<\ka \mid \seq{n, \alpha} \in X \} \in \calU\}$.

Case 1: $Y \in \mathcal W$.
For $n \in Y$, let $Z_n=\{\alpha<\ka \mid \seq{n,\alpha} \in X\} \in \calU$.
Since $\calU$ is $\sigma$-complete,
we have $Z=\bigcap_n Z_n \in \calU$.
Then $Y \times Z \subseteq X$, hence $X \in \mathcal V$.

Case 2: $Y \notin \mathcal W$.
Since $\calU$ and $\mathcal W$ are ultrafilters,
the set 
$Y'=\{n<\om \mid \{\alpha<\ka \mid \seq{n, \alpha} \in \ka \setminus X \} \in \calU\}$ is in $\mathcal W$.
For $n \in Y'$, let $Z_n'=\{\alpha<\ka \mid \seq{n, \alpha} \in \ka \setminus X \} \in \calU$.
Again, since $\calU$ is $\sigma$-complete, we have $Z'=\bigcap_n Z_n' \in \calU$.
Then $Y' \times Z' \subseteq (\om \times \ka) \setminus X$, and $(\om \times \ka) \setminus X \in \mathcal V$.
\end{proof}

Now we can prove that $\mathfrak{u}(\ka) \le \mathfrak{u}(\ka^+)$ always holds if
$\ka$ is a singular cardinal with countable cofinality.

\begin{cor}\label{29}
If $\ka$ is a singular cardinal with countable cofinality,
then $\mathfrak{u}(\ka) \le \mathfrak{u}(\ka^+)$.
\end{cor}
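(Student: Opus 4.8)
The plan is to argue by contradiction, assuming $\fraku(\ka^+) < \fraku(\ka)$, and to combine Corollary~\ref{4.2}, Theorem~\ref{27}, and Lemma~\ref{5.6}.

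First I would show that \emph{every} uniform ultrafilter $\calU$ over $\ka^+$ with $\chi(\calU) = \fraku(\ka^+)$ is $\sigma$-complete. Indeed, since $\ka < \ka^+$ and $\fraku(\ka^+) < \fraku(\ka)$, Corollary~\ref{4.2} (used with $\ka^+$ and $\ka$ in the roles of ``$\ka$'' and ``$\la$'') gives that $\calU$ is $\ka$-indecomposable. At the same time $\calU$ is $\ka^+$-decomposable: via the identity map $\ka^+ \to \ka^+$, every $X \in [\ka^+]^{<\ka^+}$ has size $\le \ka < \ka^+$ and hence lies outside $\calU$ by uniformity. Since $\cf(\ka) = \om$, if $\calU$ were $\om$-decomposable it would be $\cf(\ka)$-decomposable, and then Theorem~\ref{27} would make it $\ka$-decomposable, contradicting $\ka$-indecomposability. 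So $\calU$ is $\om$-indecomposable, i.e.\ $\sigma$-complete, as claimed.

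Now the hypothesis of Lemma~\ref{5.6} is met with $\ka^+$ in place of ``$\ka$'': the cardinal $\ka^+$ carries a $\sigma$-complete uniform ultrafilter of character $\fraku(\ka^+)$. Hence Lemma~\ref{5.6} produces a uniform ultrafilter $\calV$ over $\ka^+$ with $\chi(\calV) = \fraku(\ka^+)$ that is \emph{not} $\sigma$-complete. But $\calV$ again has minimal character, so by the previous paragraph it must be $\sigma$-complete --- a contradiction. Therefore $\fraku(\ka) \le \fraku(\ka^+)$.

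I expect no real difficulty beyond getting the bookkeeping right: everything is already contained in Theorem~\ref{27} and Lemma~\ref{5.6}, and the only points to be careful about are that Corollary~\ref{4.2} and Lemma~\ref{5.6} are both applied to the successor cardinal $\ka^+$, and that ``$\om$-indecomposable'' is the same as ``$\sigma$-complete'' (recorded in the preliminaries). The genuine content --- that $\cf(\ka)$- and $\ka$-decomposability coincide on $\ka^+$-decomposable ultrafilters over a singular $\ka$ --- is exactly Theorem~\ref{27}, which we take as given.
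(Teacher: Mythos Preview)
Your proposal is correct and follows essentially the same approach as the paper: both argue by contradiction and combine Corollary~\ref{4.2}, Theorem~\ref{27}, and Lemma~\ref{5.6} in exactly the same way. The only difference is cosmetic ordering --- you first prove that every minimal-character ultrafilter on $\ka^+$ is $\sigma$-complete and then invoke Lemma~\ref{5.6} for the contradiction, whereas the paper first invokes Lemma~\ref{5.6} to obtain a non-$\sigma$-complete minimal-character ultrafilter and then derives the contradiction via Corollary~\ref{4.2} and Theorem~\ref{27}.
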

\begin{proof}

Suppose to the contrary that $\mathfrak{u}(\ka) > \mathfrak{u}(\ka^+)$.
By Lemma \ref{5.6},
there is a uniform ultrafilter $\mathcal U$ over $\ka^+$
such that
$\chi(\mathcal U)=\mathfrak{u}(\ka^+)$ and $\mathcal{U}$ is not $\sigma$-complete.
Then $\calU$ is $\ka$-indecomposable by Corollary \ref{4.2}.
However, since $\calU$ is $\ka^+$-decomposable,
$\calU$ is $\om$-indecomposable by Theorem \ref{27},
so $\calU$ is $\sigma$-complete. This is a contradiction.
\end{proof}
We note that, by Theorem \ref{3.11}, the countable cofinality assumption
of this corollary  cannot be eliminated.

\begin{cor}
If $\ka$ is regular, then $\mathfrak{u}(\ka^{+\alpha}) \le \mathfrak{u}(\ka^{+\alpha+1})$ for 
every $\alpha<\om_1$.
\end{cor}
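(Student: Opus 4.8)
The plan is to dispose of this by a two-case split on the ordinal $\alpha$, reducing each case to a result already proved. The preliminary observation I would record is that $\ka^{+\alpha}$ is regular whenever $\alpha=0$ or $\alpha$ is a successor ordinal (it is then $\ka$ itself or the successor of a cardinal), whereas for a limit ordinal $\alpha$ the increasing sequence $\seq{\ka^{+\beta} \mid \beta<\alpha}$ is cofinal in $\ka^{+\alpha}$, so that $\cf(\ka^{+\alpha})=\cf(\alpha)$ and $\ka^{+\alpha}$ is singular.

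Now fix $\alpha<\om_1$. If $\alpha=0$ or $\alpha$ is a successor ordinal, then $\ka^{+\alpha}$ is regular, and Proposition \ref{21}(1) gives $\mathfrak{u}(\ka^{+\alpha}) \le \mathfrak{u}(\ka^{+\alpha+1})$ at once. If $\alpha$ is a limit ordinal, then $\cf(\alpha)=\om$ because $\alpha<\om_1$, so $\cf(\ka^{+\alpha})=\cf(\alpha)=\om$ and $\ka^{+\alpha}$ is a singular cardinal of countable cofinality; in this case Corollary \ref{29} applies and again yields $\mathfrak{u}(\ka^{+\alpha}) \le \mathfrak{u}(\ka^{+\alpha+1})$.

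There is essentially no obstacle here: the only points needing a word of justification are the standard identity $\cf(\ka^{+\alpha})=\cf(\alpha)$ for limit $\alpha$ and the trivial fact that a countable limit ordinal has cofinality $\om$, which is exactly what makes Corollary \ref{29} (rather than merely Proposition \ref{21}(1)) the tool needed in the limit case. It is worth noting that the hypothesis $\alpha<\om_1$ cannot be dropped: by Theorem \ref{3.11} it is consistent that $\mathfrak{u}(\ka^{+\om_1+1})<\mathfrak{u}(\ka^{+\om_1})$ for a strongly compact, hence regular, $\ka$.
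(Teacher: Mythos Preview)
Your proof is correct and is exactly the argument the paper has in mind: the corollary is stated without proof immediately after Corollary~\ref{29}, and the intended justification is precisely the two-case split you give, invoking Proposition~\ref{21}(1) when $\ka^{+\alpha}$ is regular and Corollary~\ref{29} when $\alpha$ is a countable limit ordinal.
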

Again, $\mathfrak{u}(\ka^{+\om_1+1})<\mathfrak{u}(\ka^{+\om_1})$ is consistent
by Theorem \ref{3.11}.

Combining Corollary \ref{29} with Theorems \ref{3.10} and \ref{5.7}
we have:
\begin{cor}\label{4.14}
\begin{enumerate}
\item Relative to a certain large cardinal assumption,
it is consistent that $\mathfrak{u}(\om_\om)<\mathfrak{u}(\om_1)$.
\item Relative to a certain large cardinal assumption,
it is consistent that there is a cardinal $\ka$ with
$\mathfrak{u}(\ka^{+\om})<\mathfrak{u}(\om_\om)$.
\end{enumerate}
\end{cor}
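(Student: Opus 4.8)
The plan is purely to assemble pieces already in hand: each of the two consistency statements follows by combining, in an appropriate model, a known strict inequality between ultrafilter numbers with the $\ZFC$-fact of Corollary \ref{29} that $\mathfrak{u}(\nu)\le\mathfrak{u}(\nu^+)$ whenever $\nu$ is a singular cardinal of countable cofinality. The point I would stress is that $\om_\om$, and more generally $\ka^{+\om}$ for any cardinal $\ka$, is such a $\nu$ in every model of $\ZFC$, so Corollary \ref{29} is available unconditionally and no new forcing or combinatorics is needed.

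For (1), I would pass to a model witnessing Theorem \ref{3.10}, so that $\mathfrak{u}(\om_{\om+1})<\mathfrak{u}(\om_1)$ holds there. Applying Corollary \ref{29} with $\nu=\om_\om$ gives $\mathfrak{u}(\om_\om)\le\mathfrak{u}(\om_{\om+1})$, and chaining the two inequalities yields $\mathfrak{u}(\om_\om)\le\mathfrak{u}(\om_{\om+1})<\mathfrak{u}(\om_1)$, hence $\mathfrak{u}(\om_\om)<\mathfrak{u}(\om_1)$. For (2), I would pass to a model witnessing Theorem \ref{5.7}, in which $\ka$ is the (surviving) strongly compact cardinal and, after the $\Add(\om,\mu)$-forcing, $\mathfrak{u}(\ka^{+\om+1})<\mathfrak{u}(\om_\om)$ holds. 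Applying Corollary \ref{29} with $\nu=\ka^{+\om}$ gives $\mathfrak{u}(\ka^{+\om})\le\mathfrak{u}(\ka^{+\om+1})$, so $\mathfrak{u}(\ka^{+\om})\le\mathfrak{u}(\ka^{+\om+1})<\mathfrak{u}(\om_\om)$, i.e.\ $\mathfrak{u}(\ka^{+\om})<\mathfrak{u}(\om_\om)$.

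There is no substantive obstacle here; the only point deserving a line of care is that in each reference model the cardinal to which Corollary \ref{29} is applied (namely $\om_\om$, respectively $\ka^{+\om}$) genuinely retains its identity and its cofinality $\om$. This is immediate, since the forcings involved preserve all cardinals and cofinalities in the relevant range: for (1), the Ben-David and Magidor preparation of Theorem \ref{BM theorem} followed by the ccc forcing $\Add(\om,\mu)$; and for (2), the preparation of Proposition \ref{6.8} yielding no $\om_1$-indecomposable uniform ultrafilter over $\om_\om$ over a model with a strongly compact cardinal, followed again by $\Add(\om,\mu)$.
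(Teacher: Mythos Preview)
Your proposal is correct and follows exactly the paper's approach: the paper's entire proof is the one-line remark that Corollary \ref{4.14} follows by combining Corollary \ref{29} with Theorems \ref{3.10} and \ref{5.7}. Your additional paragraph verifying that $\om_\om$ and $\ka^{+\om}$ retain cofinality $\om$ in the relevant extensions is a welcome but inessential elaboration; the minor imprecision that $\ka$ ``survives'' as strongly compact after $\Add(\om,\mu)$ is harmless, since only the conclusion of Theorem \ref{5.7} is used.
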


Next we prove that $\mathfrak{u}(\ka) \le \mathfrak{u}(\ka^+)$ holds if $\ka$ is a 
strong limit singular cardinal.

\begin{prop}\label{4.15}
	Let $\ka$ be a singular cardinal and $\la<\ka$ a cardinal,
and suppose $\mu^\la \le \ka^+$ for every $\mu<\ka$. 
If $\ka^+$ has a $\ka$-indecomposable uniform ultrafilter,
then $\ka^\la \le \ka^+$.
\end{prop}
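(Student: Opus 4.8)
The plan is to convert the conclusion into a statement of cardinal arithmetic about a pcf scale and then derive that statement from the structure a $\ka$-indecomposable ultrafilter forces on $\ka^+$. If $\la<\cf(\ka)$ then every $g\colon\la\to\ka$ is bounded below $\ka$, so ${}^\la\ka=\bigcup_{\mu<\ka}{}^\la\mu$ and $\ka^\la=\sup_{\mu<\ka}\mu^\la\le\ka\cdot\ka^+=\ka^+$ by hypothesis; hence we may assume $\cf(\ka)\le\la<\ka$. By Shelah's theorem fix an increasing sequence $\seq{\ka_\xi\mid\xi<\cf(\ka)}$ of regular cardinals with $\ka_0>\cf(\ka)$, cofinal in $\ka$, admitting a scale $\seq{f_i\mid i<\ka^+}$ in $\prod_\xi\ka_\xi$. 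Splitting a function $\la\to\ka$ according to the blocks $[\ka_\xi,\ka_{\xi+1})$ and using $\mu^\la\le\ka^+$ for all $\mu<\ka$ together with $2^\la\le\ka^+$, one gets $\ka^\la=\ka^{\cf(\ka)}$; moreover $\ka^{\cf(\ka)}=\cf([\ka]^{\le\cf(\ka)},\subseteq)\cdot 2^{\cf(\ka)}$, and $\cf([\ka]^{\le\cf(\ka)},\subseteq)\le|\prod_\xi\ka_\xi|\le\ka^{\cf(\ka)}$, the first inequality because every $\cf(\ka)$-subset of $\ka$ is covered, block by block, by a set $\bigcup_\xi[\ka_\xi,\gamma_\xi)$ with $\seq{\gamma_\xi}\in\prod_\xi\ka_{\xi+1}$. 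Since $2^{\cf(\ka)}\le 2^\la\le\ka^+$, all of this shows that $\ka^\la\le\ka^+$ is equivalent to $|\prod_{\xi<\cf(\ka)}\ka_\xi|\le\ka^+$, so it suffices to prove the latter.

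Next I extract the combinatorial data from a $\ka$-indecomposable uniform ultrafilter $\calU$ over $\ka^+$. Being uniform over $\ka^+$, $\calU$ is $\ka^+$-decomposable, so by Theorem \ref{27} it is $\cf(\ka)$-indecomposable. By Theorem \ref{25} it is $(\ka,\ka^+)$-regular; fix $\seq{x_\alpha\mid\alpha<\ka^+}$ with $x_\alpha\in[\ka^+]^{<\ka}$ and $A_i:=\{\alpha<\ka^+\mid i\in x_\alpha\}\in\calU$ for every $i<\ka^+$. To each $\alpha$ associate the envelope $F_\alpha\in\prod_\xi\ka_\xi$ given by $F_\alpha(\xi)=\sup\{f_i(\xi)+1\mid i\in x_\alpha\}$ once $\ka_\xi>|x_\alpha|$ (and $0$ below); note $f_i\le F_\alpha$ pointwise whenever $i\in x_\alpha$, so that $g\le^* F_\alpha$ for every $\alpha\in A_i$ whenever $g\le^* f_i$.

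Finally suppose toward a contradiction that $|\prod_{\xi<\cf(\ka)}\ka_\xi|>\ka^+$. The aim is to produce from this a function on $\ka^+$ witnessing that $\calU$ is $\ka$-decomposable or $\cf(\ka)$-decomposable, contradicting the second paragraph. The raw materials are the scale $\seq{f_i}$, the regularity witnesses $\seq{x_\alpha}$ with their envelopes $\seq{F_\alpha}$, and the $\cf(\ka)$-indecomposability of $\calU$, and the way they should combine parallels the proof of Theorem \ref{27} and of Prikry and Silver's Theorem \ref{PS theorem}: $\cf(\ka)$-indecomposability is used, as there, to uniformize the $\le^*$-thresholds $\{\xi<\cf(\ka)\mid g(\xi)\ge F_\alpha(\xi)\}$ across a $\calU$-large set of $\alpha$ for a cleverly chosen $g\in\prod_\xi\ka_\xi$, after which the decomposing function is read off from the induced level assignment together with the envelopes. (An alternative route: $\cf(\ka)$-indecomposability of $\calU$ forces every stationary subset of $\{\alpha<\ka^+\mid\cf(\alpha)=\cf(\ka)\}$ to reflect by Theorem \ref{PS theorem}, while $|\prod_\xi\ka_\xi|>\ka^+$ should yield, through bad points of a scale, a non-reflecting stationary set at the relevant cofinality.) I expect the main obstacle to be exactly this step --- extracting from $|\prod_\xi\ka_\xi|>\ka^+$ a single function $g$ (or a non-reflecting stationary set below $\ka^+$) whose behaviour $\calU$ provably cannot absorb, given that every $g$ is automatically $\le^*$-dominated by $F_\alpha$ for $\calU$-almost all $\alpha$; a secondary technical nuisance is the case where $\cf(\ka)$ is weakly inaccessible, since then $[\cf(\ka)]^{<\cf(\ka)}$ contains unbounded sets and the threshold-uniformization must be carried out with extra care.
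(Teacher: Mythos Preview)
Your proposal has a genuine gap at the decisive step, which you yourself flag: after reducing to $|\prod_{\xi}\ka_\xi|\le\ka^+$ and assuming this fails, you never actually produce a decomposition of $\calU$. The sketch ``uniformize $\le^*$-thresholds and read off a decomposing function'' does not go through as written, because for \emph{every} $g\in\prod_\xi\ka_\xi$ there is $i$ with $g\le^* f_i$, hence $g\le^* F_\alpha$ for all $\alpha\in A_i\in\calU$; each individual function is absorbed by the envelopes on a $\calU$-large set, and the bare cardinality hypothesis $|\prod_\xi\ka_\xi|>\ka^+$ gives you no distinguished $g$ (or family of $g$'s) that resists this absorption. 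Your alternative route via non-reflecting stationary sets is likewise unjustified: $|\prod_\xi\ka_\xi|>\ka^+$ is a cardinality statement with no direct bearing on bad points of a scale or on stationary reflection below $\ka^+$. (There is also a smaller wrinkle in your reduction: the sets $\bigcup_\xi[\ka_\xi,\gamma_\xi)$ have size up to $\ka$, not $\le\cf(\ka)$, so they cannot serve as a cofinal family inside $([\ka]^{\le\cf(\ka)},\subseteq)$.)

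The paper bypasses the pcf-cardinality question entirely with a direct covering argument. Using $\mu^\la\le\ka^+$, enumerate $\bigcup_{\mu<\ka}[\mu]^{\le\la}$ as $\{B_i:i<\ka^+\}$ with each set appearing cofinally often. For $\alpha<\ka^+$ set $\calB_\alpha=\{\bigcup_{i\in c}B_i:c\in[x_\alpha]^{\cf(\ka)}\}$; since $\size{x_\alpha}<\ka$ and $\mu^\la\le\ka^+$ below $\ka$, one gets $\size{\calB_\alpha}\le\ka^+$, hence $\size{\bigcup_\alpha\calB_\alpha}\le\ka^+$. To see $[\ka]^\la\subseteq\bigcup_\alpha\calB_\alpha$: given $X\in[\ka]^\la$ and a cofinal sequence $\seq{\ka_\xi:\xi<\cf(\ka)}$, pick increasing $\seq{i_\xi:\xi<\cf(\ka)}$ with $B_{i_\xi}=X\cap\ka_\xi$, and let $d=\{i_\xi:\xi<\cf(\ka)\}$. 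Since $\{\alpha:i_\xi\in x_\alpha\}\in\calU$ for each $\xi$ and $\calU$ is $\cf(\ka)$-indecomposable (via Theorem~\ref{27}), there is $\alpha$ with $\size{d\cap x_\alpha}=\cf(\ka)$, whence $X=\bigcup_{i\in d\cap x_\alpha}B_i\in\calB_\alpha$. No contradiction argument and no computation of $|\prod_\xi\ka_\xi|$ is needed.
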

\begin{proof}
We may assume $\la \ge \cf(\ka)$.
Fix a $\ka$-indecomposable uniform ultrafilter $\calU$ over $\ka^+$,
which is $\cf(\ka)$-indecomposable by Theorem \ref{27}.
$\calU$ is $(\ka, \ka^+)$-regular by Theorem \ref{25},
so we can take 
a family $\{x_\alpha \mid \alpha<\ka^+\}$
such that $\size{x_\alpha}<\ka$ and $\{\alpha \mid i\in x_\alpha \} \in \calU$ for $i<\ka^+$.

Fix an enumeration $\{B_i \mid i<\ka^+\}$ of $\bigcup_{\mu<\ka} [\mu]^{\le \la}$
such that for every $j<\ka^+$, the set $\{i \mid B_i=B_j \}$ is cofinal in $\ka^+$.
For $\alpha<\ka^+$,
let $\calB_\alpha=\{\bigcup_{i \in c} B_i \mid c \in [x_\alpha]^{\cf(\ka)} \}$.
Since $\mu^\la \le \ka^+$ for every $\mu<\ka$ and $\size{x_\alpha}<\ka$, we have that 
$\size{\calB_\alpha} \le \ka^+$.
Thus $\size{\bigcup_\alpha \calB_\alpha} \le \ka^+$.
We prove that $[\ka]^\la \subseteq \bigcup_\alpha \calB_\alpha$, 
which complete the proof.

Fix an increasing sequence $\seq{\ka_\xi \mid \xi<\cf(\ka)}$ with limit $\ka$.
For $X \in [\ka]^{\la}$,
we can take an increasing sequence $\seq{i_\xi \mid \xi<\cf(\ka)}$ such that
$X \cap \ka_\xi=B_{i_\xi}$.
Put $d=\{i_\xi \mid \xi<\cf(\ka)\}$, which has order type $\cf(\ka)$.
Because $\{\alpha \mid i_\xi \in x_\alpha\} \in \calU$ for $\xi<\cf(\ka)$ and 
$\calU$ is $\cf(\ka)$-indecomposable, there must be $\alpha<\ka^+$
such that $\size{d \cap x_\alpha}=\cf(\ka)$.
Then $X=\bigcup_{i \in d \cap x_\alpha} B_i \in \calB_\alpha$.
\end{proof}

\begin{thm}\label{7.9}
If $\ka$ is a strong limit singular cardinal,
then $\mathfrak{u}(\ka) \le \mathfrak{u}(\ka^+)$.
\end{thm}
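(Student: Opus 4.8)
The plan is to argue by contradiction, in the spirit of Corollary \ref{29}, but using Proposition \ref{4.15} in place of Lemma \ref{5.6}. So I would assume $\mathfrak{u}(\ka) > \mathfrak{u}(\ka^+)$ and fix a uniform ultrafilter $\calU$ over $\ka^+$ with $\chi(\calU) = \mathfrak{u}(\ka^+)$. Since $\ka < \ka^+$ and $\mathfrak{u}(\ka^+) < \mathfrak{u}(\ka)$, Corollary \ref{4.2} shows that $\calU$ is $\ka$-indecomposable; in particular $\ka^+$ carries a $\ka$-indecomposable uniform ultrafilter.

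Next I would apply Proposition \ref{4.15} with $\la = \cf(\ka)$. Its hypothesis is satisfied: $\cf(\ka) < \ka$ since $\ka$ is singular, and because $\ka$ is strong limit, for every $\mu < \ka$ we have $\mu^{\cf(\ka)} \le 2^{\max(\mu, \cf(\ka))} < \ka \le \ka^+$. Hence Proposition \ref{4.15} gives $\ka^{\cf(\ka)} \le \ka^+$. Fixing any increasing sequence $\seq{\ka_i \mid i < \cf(\ka)}$ cofinal in $\ka$, the strong limit assumption yields $2^\ka = \prod_{i < \cf(\ka)} 2^{\ka_i} \le \prod_{i < \cf(\ka)} \ka = \ka^{\cf(\ka)} \le 2^\ka$, so $2^\ka = \ka^{\cf(\ka)} \le \ka^+$, i.e. $2^\ka = \ka^+$.

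The contradiction is then immediate: $\ka^+ \le \mathfrak{u}(\ka) \le 2^\ka = \ka^+$ forces $\mathfrak{u}(\ka) = \ka^+$, hence $\mathfrak{u}(\ka^+) < \ka^+$; but the Brendle--Shelah bound gives $\mathfrak{u}(\ka^+) \ge (\ka^+)^+ = \ka^{++} > \ka^+$, which is absurd. I do not anticipate a genuine obstacle: the argument reduces entirely to Proposition \ref{4.15} together with elementary cardinal arithmetic. The only points needing a moment's care are choosing $\la = \cf(\ka)$ so that Proposition \ref{4.15} delivers a bound on $2^\ka$ rather than on some smaller power of $\ka$, and recalling the standard fact that $2^\ka = \ka^{\cf(\ka)}$ for a strong limit singular $\ka$.
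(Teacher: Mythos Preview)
Your argument is correct and follows essentially the same route as the paper: assume $\mathfrak{u}(\ka^+)<\mathfrak{u}(\ka)$, use Corollary~\ref{4.2} to obtain a $\ka$-indecomposable uniform ultrafilter on $\ka^+$, apply Proposition~\ref{4.15} with $\la=\cf(\ka)$ to conclude $2^\ka=\ka^+$, and derive a contradiction from $\mathfrak{u}(\ka)\le 2^\ka=\ka^+\le\mathfrak{u}(\ka^+)$. The only difference is cosmetic: the paper phrases the final contradiction as $\mathfrak{u}(\ka)\le\ka^+\le\mathfrak{u}(\ka^+)$ directly, whereas you route it through $\mathfrak{u}(\ka^+)<\ka^+$ and the Brendle--Shelah lower bound, which amounts to the same thing.
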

\begin{proof}
If $\mathfrak{u}(\ka^+)<\mathfrak{u}(\ka)$,
then $\ka^+$ has a $\ka$-indecomposable uniform ultrafilter by Corollary \ref{4.2},
hence $2^\ka=\ka^{\cf(\ka)}=\ka^+$
by Proposition \ref{4.15}.
But then $\mathfrak{u}(\ka) \le 2^\ka=\ka^+ \le \mathfrak{u}(\ka^+)$,
this is a contradiction.
\end{proof}
The strong limit assumption
of this theorem cannot be eliminated by Theorem \ref{3.11}.


Next we prove that
if $\mathfrak{u}(\ka^+)<\mathfrak{u}(\ka)$,
then $\mathfrak{u}(\ka^+)$ is an upper bound of the
ultrafilter numbers at almost all regular cardinals below $\ka$.
For this sake, we prove some results about (in)decomposable ultrafilters.

Recall that, for a cardinal $\ka$ and an ultrafilter $\calU$,
$\calU$ is \emph{$\ka$-descendingly incomplete}
if there are sets $X_\alpha \in \calU$ ($\alpha<\ka$) such that
for every $\alpha<\beta<\ka$, $X_\alpha \supseteq X_\beta$ holds,
and $\bigcap_{\alpha<\ka} X_\alpha=\emptyset$.
It is easy to see that every $\ka$-decomposable ultrafilter is $\ka$-descendingly incomplete,
and the converse also holds if $\ka$ is regular.

\begin{thm}[Kunen and Prikry \cite{KP}]\label{KPth}
Let $\calU$ be an ultrafilter and $\ka$  a singular cardinal.
If $\calU$ is $\ka^+$-descendingly incomplete but not 
$\cf(\ka)$-descendingly incomplete,
then there is a cardinal $\la<\ka$ such that
$\calU$ is $\mu$-descendingly incomplete for every regular $\mu$ with
$\la<\mu<\ka$.
\end{thm}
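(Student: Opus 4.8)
The plan is to translate the statement into a purely ``decomposability'' question and then run it through Kanamori's regularity theorem. Since $\ka^+$ and $\cf(\ka)$ are regular, for these cardinals ``descendingly incomplete'' coincides with ``decomposable'', and every $\mu$-decomposable ultrafilter is $\mu$-descendingly incomplete; so the hypotheses say precisely that $\calU$ is $\ka^+$-decomposable and $\cf(\ka)$-indecomposable, and it suffices to find $\la<\ka$ such that $\calU$ is $\mu$-decomposable for every regular $\mu\in(\la,\ka)$. The engine is Theorem \ref{25}. Exactly as in the opening paragraph of the proof of Theorem \ref{27}: since $\calU$ is $\ka^+$-decomposable, Lemma \ref{2.8} gives $f\colon S\to\ka^+$ with $f_*(\calU)$ uniform over $\ka^+$; Theorem \ref{25} makes $f_*(\calU)$ $(\ka,\ka^+)$-regular, and composing the witnessing family with $f$ shows $\calU$ itself is $(\ka,\ka^+)$-regular. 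So I would fix $\{x_s\mid s\in S\}$ with $x_s\in[\ka^+]^{<\ka}$ and $\{s\in S\mid i\in x_s\}\in\calU$ for every $i<\ka^+$.

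Next I would use $\cf(\ka)$-indecomposability to bound $\size{x_s}$ \emph{uniformly} below $\ka$. Fix an increasing sequence $\seq{\ka_\xi\mid\xi<\cf(\ka)}$ of regular cardinals with $\sup_\xi\ka_\xi=\ka$, and let $G\colon S\to\cf(\ka)$ send $s$ to the least $\xi$ with $\size{x_s}<\ka_\xi$. Because $\calU$ is $\cf(\ka)$-indecomposable and $\cf(\ka)$ is regular, there is $\eta<\cf(\ka)$ with $A:=\{s\in S\mid G(s)\le\eta\}\in\calU$; set $\la:=\ka_\eta<\ka$, so that $\size{x_s}<\la$ for every $s\in A$.

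Now fix a regular $\mu$ with $\la<\mu<\ka$. For $s\in A$ the set $x_s\cap\mu$ has size $<\la<\mu=\cf(\mu)$, hence is bounded in $\mu$, so $g(s):=\sup(x_s\cap\mu)+1<\mu$ is well defined (put $g(s)=0$ off $A$). I claim $g_*(\calU)$ is uniform over $\mu$, which by Lemma \ref{2.8}(2) makes $\calU$ $\mu$-decomposable and finishes the argument: if some $B\in g_*(\calU)$ were bounded below some $\gamma<\mu$, then $g^{-1}(B)\cap A\in\calU$ would consist of $s$ with $x_s\cap\mu\subseteq\gamma$, hence with $\gamma\notin x_s$, contradicting $\{s\in S\mid\gamma\in x_s\}\in\calU$.

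I do not anticipate a serious obstacle: granted Theorem \ref{25}, the proof is short. The two points that need care are (i) that $\cf(\ka)$-indecomposability is exactly what upgrades the non-uniform bound $\size{x_s}<\ka$ to a single uniform bound $\size{x_s}<\ka_\eta$, which is what lets one $\la$ work; and (ii) that one obtains the \emph{whole} tail of regular $\mu$ at once rather than one $\mu$ at a time, which hinges on $\mu$ regular forcing $\cf(\mu)=\mu>\la\ge\size{x_s\cap\mu}$, so that $x_s\cap\mu$ is automatically bounded in $\mu$ and the pushforward $g_*(\calU)$ is uniform. (Note that, unlike Theorem \ref{27}, this argument does not seem to need a PCF scale.)
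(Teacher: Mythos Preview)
Your argument is correct. Note, however, that the paper does not supply its own proof of Theorem~\ref{KPth}: it is quoted as a result of Kunen and Prikry~\cite{KP} and used as a black box (in the proof of Proposition~\ref{37}). So there is no ``paper's proof'' to compare against.

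That said, your route is worth recording, because it is not the original Kunen--Prikry argument. Their 1971 proof works directly with the descending $\ka^+$-sequence and a combinatorial analysis of the induced rank function; it predates Kanamori's regularity theorem. You instead bootstrap from Theorem~\ref{25}: pull $(\ka,\ka^+)$-regularity back along a $\ka^+$-decomposing map, then use $\cf(\ka)$-indecomposability once to uniformly bound $\size{x_s}$ below some $\la=\ka_\eta<\ka$, and finally observe that for any regular $\mu\in(\la,\ka)$ the map $s\mapsto\sup(x_s\cap\mu)+1$ pushes $\calU$ forward to a uniform ultrafilter on $\mu$. Each step is sound; in particular, the contradiction at the end is clean once you take $\gamma=\sup B<\mu$, since $g(s)\le\gamma$ forces $\gamma\notin x_s$ while $\{s:\gamma\in x_s\}\in\calU$.

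Two remarks. First, your proof stays entirely within the toolkit the paper already assembles (Lemma~\ref{2.8}, Theorem~\ref{25}), so it makes the paper's use of Theorem~\ref{KPth} self-contained without appealing to~\cite{KP}. Second, your closing parenthetical is right: unlike Theorem~\ref{27}, no scale is needed here---the $\cf(\ka)$-indecomposability hypothesis does the work that the scale does there, by converting the pointwise bound $\size{x_s}<\ka$ into a uniform one.
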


This result lead us  to the following notion.
\begin{define}
Let $\ka$ be a limit cardinal and
$\calU$ an ultrafilter.
$\calU$ is  \emph{almost $\mathop{<}\ka$-decomposable}
if 
there is a cardinal $\la<\ka$ such that
$\calU$ is $\mu$-decomposable for every regular $\mu$ with
$\la<\mu<\ka$.
\end{define}
In terms of (in)decomposability, 
Kunen and Prikry's theorem can be restated as:
If $\calU$ is $\ka^+$-decomposable
but $\cf(\ka)$-indecomposable,
then $\calU$ is almost $\mathop{<}\ka$-decomposable.
By Theorem \ref{27}, we can improve Kunen and Prikry's theorem as follows.
Note that every $\cf(\ka)$-indecomposable ultrafilter is $\ka$-indecomposable,
and the converse does not hold in general.

%
%
%
%
%
%
%
%
%
%
%
%
%
%
%
%
%
%
\begin{prop}\label{37}
Let $\ka$ be a singular cardinal.
If $\calU$ is a $\ka^+$-decomposable 
but $\ka$-indecomposable ultrafilter,
then $\calU$ is almost $\mathop{<}\ka$-decomposable.
In particular,
if $\calU$ is $\ka$-indecomposable uniform ultrafilter over $\ka^+$,
then $\calU$ is almost $\mathop{<}\ka$-decomposable.
\end{prop}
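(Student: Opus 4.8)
The plan is to combine Theorem \ref{27} with Kunen and Prikry's theorem (Theorem \ref{KPth}), bridging the two via the standard equivalence between descending incompleteness and decomposability at regular cardinals. First I would observe that, since $\calU$ is $\ka^+$-decomposable, $\ka$-indecomposable, and $\ka$ is singular, the contrapositive of Theorem \ref{27} immediately yields that $\calU$ is $\cf(\ka)$-indecomposable: if $\calU$ were $\cf(\ka)$-decomposable as well, then Theorem \ref{27} would make it $\ka$-decomposable, contradicting our hypothesis.

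Next I would translate the two facts we now have into the language of descending incompleteness. On the one hand, $\ka^+$-decomposability of $\calU$ gives $\ka^+$-descending incompleteness (every $\la$-decomposable ultrafilter is $\la$-descendingly incomplete). On the other hand, $\cf(\ka)$ is regular, and at a regular cardinal descending incompleteness and decomposability coincide; hence the $\cf(\ka)$-indecomposability obtained in the previous step is exactly the statement that $\calU$ is \emph{not} $\cf(\ka)$-descendingly incomplete. Thus $\calU$ satisfies the hypotheses of Theorem \ref{KPth}, so there is a cardinal $\la<\ka$ such that $\calU$ is $\mu$-descendingly incomplete for every regular $\mu$ with $\la<\mu<\ka$. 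Since each such $\mu$ is regular, $\mu$-descending incompleteness is the same as $\mu$-decomposability, so $\calU$ is almost $\mathop{<}\ka$-decomposable, as required.

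For the ``in particular'' clause, suppose $\calU$ is a $\ka$-indecomposable uniform ultrafilter over $\ka^+$. Applying the identity map $\mathrm{id}:\ka^+\to\ka^+$ and using Lemma \ref{2.8}(2), uniformity of $\calU$ shows at once that $\calU$ is $\ka^+$-decomposable: for any $X\in[\ka^+]^{<\ka^+}$ we have $\mathrm{id}^{-1}(X)=X\notin\calU$. Hence the first part of the proposition applies to $\calU$ and gives the conclusion.

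There is essentially no serious obstacle here: the content is entirely supplied by Theorem \ref{27} and Theorem \ref{KPth}, and the only point requiring care is the (routine) observation that at the regular cardinals $\cf(\ka)$ and $\mu\in(\la,\ka)$ the notions ``$\mu$-decomposable'' and ``$\mu$-descendingly incomplete'' agree, which is what lets us pass between the decomposability formulation we are given and the descending-incompleteness formulation in which Kunen and Prikry's theorem is stated.
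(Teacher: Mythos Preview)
Your proposal is correct and follows essentially the same route as the paper: apply Theorem \ref{27} (in contrapositive form) to get $\cf(\ka)$-indecomposability, translate to the descending-incompleteness language, invoke Theorem \ref{KPth}, and translate back using the equivalence at regular cardinals. Your explicit handling of the ``in particular'' clause (via uniformity giving $\ka^+$-decomposability) is a detail the paper leaves implicit, but otherwise the arguments coincide.
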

\begin{proof}
By Theorem \ref{27}, we have that $\calU$ is $\cf(\ka)$-indecomposable.
Since $\cf(\ka)$ is regular, $\calU$ is not $\cf(\ka)$-descendingly incomplete.
$\calU$ is $\ka^+$-decomposable, hence is $\ka^+$-descendingly complete.
Then, by Theorem \ref{KPth}, there is a cardinal $\la<\ka$
such that for every regular $\mu$ with $\la<\mu<\ka$,
$\calU$ is $\mu$-descendingly incomplete, in particular $\mu$-decomposable.
\end{proof}

Concerning Kunen and Prikry's theorem, Lipparini showed the following,
which can be seen as a converse of Kunen and Prikry's theorem.
\begin{thm}[Lipparini, Theorem 1 and Corollary 7 in \cite{L}]\label{Li}
Let $\ka$ be a singular cardinal and $\calU$ an ultrafilter.
\begin{enumerate}
\item If $\calU$ is almost $\mathop{<}\ka$-decomposable,
then $\calU$ is $\ka$-decomposable or $\ka^+$-decomposable.
\item If $\calU$ is $\cf(\ka)$-indecomposable,
then $\calU$ is $\ka^+$-decomposable
 if and only if $\calU$ is almost $\mathop{<}\ka$-decomposable.
\end{enumerate}
\end{thm}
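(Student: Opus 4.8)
The plan is to deduce everything from part~(1). For part~(2): the direction $(\Rightarrow)$ is essentially Kunen and Prikry's theorem, since $\cf(\ka)$-indecomposability of $\calU$ means (as $\cf(\ka)$ is regular) that $\calU$ is not $\cf(\ka)$-descendingly incomplete, whereas $\ka^+$-decomposability gives $\ka^+$-descending incompleteness, so Theorem~\ref{KPth} hands us a $\la<\ka$ with $\calU$ $\mu$-descendingly incomplete, hence $\mu$-decomposable, for all regular $\mu\in(\la,\ka)$; that is, $\calU$ is almost $\mathop{<}\ka$-decomposable. The direction $(\Leftarrow)$ follows once part~(1) is in hand: it tells us $\calU$ is $\ka$-decomposable or $\ka^+$-decomposable, and a $\ka$-decomposable ultrafilter is $\cf(\ka)$-decomposable, so the assumed $\cf(\ka)$-indecomposability forces $\ka^+$-decomposability.

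For part~(1), fix $\la<\ka$ witnessing almost $\mathop{<}\ka$-decomposability, set $\delta=\cf(\ka)$, and by Shelah's scale theorem fix an increasing sequence $\seq{\ka_\xi\mid\xi<\delta}$ of regular cardinals cofinal in $\ka$ with a scale $\seq{f_i\mid i<\ka^+}$ in $\prod_{\xi<\delta}\ka_\xi$; passing to a tail, assume $\la<\ka_0$, so $\calU$ is $\ka_\xi$-decomposable for every $\xi$, and fix $e_\xi\colon S\to\ka_\xi$ with $(e_\xi)_*(\calU)$ uniform on $\ka_\xi$. Put $t_s=\seq{e_\xi(s)\mid\xi<\delta}\in\prod_{\xi<\delta}\ka_\xi$, and let $i(s)$ be the least $j<\ka^+$ with $t_s\le^* f_j$ (such $j$ exists by cofinality of the scale). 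I split on whether $\calU$ is $\delta$-decomposable. \emph{If $\calU$ is $\delta$-indecomposable}, I claim $i$ witnesses $\ka^+$-decomposability, i.e.\ $i^{-1}(Y)\notin\calU$ whenever $|Y|\le\ka$. The engine is: since $\delta$ is regular, an \emph{increasing} $\delta$-indexed union $\bigcup_{\zeta<\delta}A_\zeta$ of $\calU$-null sets is never in $\calU$ — otherwise the decreasing $\delta$-chain $\seq{(\bigcup_\eta A_\eta)\setminus A_\zeta\mid\zeta<\delta}$ of sets in $\calU$ has empty intersection, making $\calU$ $\delta$-descendingly incomplete, hence $\delta$-decomposable, a contradiction. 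Now if $|Y|<\ka$ let $\bar f(\xi)=\sup\{f_j(\xi)\mid j\in Y\}$, which is $<\ka_\xi$ for all $\xi$ past some $\xi_Y$; then $i^{-1}(Y)\subseteq\bigcup_{\zeta<\delta}A_\zeta$ with $A_\zeta=\{s\mid e_\xi(s)\le\bar f(\xi)\text{ for all }\xi\ge\zeta\}$ increasing and $A_\zeta\subseteq e_\zeta^{-1}([0,\bar f(\zeta)])\notin\calU$ for $\zeta\ge\xi_Y$. If $|Y|=\ka$, write $Y=\bigcup_{\xi<\delta}Y_\xi$ with $|Y_\xi|<\ka$ increasing, so $i^{-1}(Y)=\bigcup_\xi i^{-1}(Y_\xi)$ is again an increasing union of $\calU$-null sets. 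Either way $i^{-1}(Y)\notin\calU$, as required.

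The remaining case, \emph{$\calU$ is $\delta$-decomposable} $\Rightarrow$ \emph{$\calU$ is $\ka$-decomposable}, is the main obstacle. (Aiming directly at $\ka$-decomposability costs nothing: by Theorem~\ref{27}, a $\delta$-decomposable, $\ka^+$-decomposable ultrafilter is already $\ka$-decomposable.) Fix $g\colon S\to\delta$ with $g_*(\calU)$ uniform on $\delta$, a partition $\ka=\bigsqcup_{\xi<\delta}I_\xi$ with $|I_\xi|=\ka_\xi$ and bijections $\sigma_\xi\colon\ka_\xi\to I_\xi$, and try $h(s)=\sigma_{g(s)}(e_{g(s)}(s))$. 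For $X\in[\ka]^{<\ka}$, writing $X_\xi=\sigma_\xi^{-1}(X\cap I_\xi)$ one has $h^{-1}(X)=\bigsqcup_{\xi<\delta}\bigl(g^{-1}(\xi)\cap e_\xi^{-1}(X_\xi)\bigr)$; since $\sum_\xi|X_\xi|=|X|<\ka$ the set $\{\xi\mid|X_\xi|\ge\ka_\xi\}$ is bounded, say below $\xi_1$, and $g^{-1}([0,\xi_1))\notin\calU$, so everything reduces to showing the \emph{disjoint} $\delta$-indexed union $\bigsqcup_{\xi\ge\xi_1}\bigl(g^{-1}(\xi)\cap e_\xi^{-1}(X_\xi)\bigr)$ of $\calU$-null sets is not in $\calU$. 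For a general $\delta$-decomposable $\calU$ this can fail for a careless choice of $g$ and the $e_\xi$ — being $\delta$-decomposable says precisely that $\calU$ is a disjoint union of $\delta$ null sets — so the decomposing functions must be chosen coherently. The transparent case is $\calU=\sum_{\xi<\delta}^{\calV}\calW_\xi$, a $\delta$-sum of uniform ultrafilters $\calW_\xi$ on $\ka_\xi$: there $h$ restricted to each fibre $g^{-1}(\xi)$ pushes forward to a uniform ultrafilter on $I_\xi$, and a Fubini computation — any set in $\sum^{\calV}(\text{uniform on }I_\xi)$ meets $\calV$-cofinally many blocks $I_\xi$ in a full subset, hence has size $\ka$ — shows $h_*(\calU)$ is uniform on $\ka$. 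In general I would try to make $h$ genuinely mix the $g$-fibres, using $g$ only to pick the block $I_{g(s)}$ while letting $h(s)\in I_{g(s)}$ record enough of the position of $t_s$ below the scale to defeat every $X$ at once, appealing to the $\ka_\xi$-decomposability of $\calU$ for each $\xi$; absent such a slick argument one falls back on Lipparini's direct combinatorial construction. Once $\ka$-decomposability is secured, part~(1) — and the whole theorem — is proved.
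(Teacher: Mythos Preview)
The paper does not prove this theorem; it is cited from Lipparini~\cite{L}, so there is no in-paper argument to compare against.

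On its own merits: your reduction of part~(2) to part~(1) via Theorem~\ref{KPth} is correct. For part~(1) in the case where $\calU$ is $\cf(\ka)$-indecomposable, your argument is also sound --- the scale index $i(s)$, together with the fact that for $\delta$-indecomposable $\calU$ with $\delta$ regular an increasing $\delta$-union of null sets is null, does yield $\ka^+$-decomposability.

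The $\cf(\ka)$-decomposable case, however, is a genuine gap, and you concede as much. The map $h(s)=\sigma_{g(s)}(e_{g(s)}(s))$ need not witness $\ka$-decomposability for arbitrary $g$ and $e_\xi$, precisely because $\delta$-decomposability permits $S$ to split as a disjoint $\delta$-union of null sets; the sum-of-ultrafilters case is a special instance, not a proof. The closing appeal --- ``absent such a slick argument one falls back on Lipparini's direct combinatorial construction'' --- is circular, since that construction \emph{is} the result you are asked to establish. Note that this case is exactly where the content of part~(1) lies beyond part~(2): your $\delta$-indecomposable argument is in effect a self-contained proof of the $(\Leftarrow)$ direction of~(2), whereas the $\delta$-decomposable case is not touched by~(2) at all. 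Lipparini's actual argument uses more of the pcf structure than a single fixed scale; independently chosen decomposing maps $e_\xi$ do not carry enough coherence to replace it.
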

We have the following variant of this theorem,
which summarizes Theorem \ref{27} and Proposition \ref{37}.
\begin{cor}\label{7.13}
Let $\ka$ be a singular cardinal and $\calU$ an ultrafilter.
Then the following are equivalent:
\begin{enumerate}
\item $\calU$ is $\ka$-indecomposable but almost $\mathop{<}\ka$-decomposable.
\item $\calU$ is $\cf(\ka)$-indecomposable but almost $\mathop{<}\ka$-decomposable.
\item $\calU$ is $\ka$-indecomposable but $\ka^+$-decomposable.
\item $\calU$ is $\cf(\ka)$-indecomposable but $\ka^+$-decomposable.
\end{enumerate}
\end{cor}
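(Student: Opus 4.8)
The plan is to show the four conditions equivalent by running the cyclic chain $(1)\Rightarrow(3)\Rightarrow(4)\Rightarrow(2)\Rightarrow(1)$, so that each arrow is a one-line appeal to a result already in hand: Theorem \ref{27}, Lipparini's Theorem \ref{Li}, and the elementary observation recalled above that $\cf(\ka)$-indecomposability implies $\ka$-indecomposability (because every $\ka$-decomposable ultrafilter is $\cf(\ka)$-decomposable).

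First, for $(1)\Rightarrow(3)$ I would invoke Theorem \ref{Li}(1): an almost $\mathop{<}\ka$-decomposable $\calU$ is $\ka$-decomposable or $\ka^+$-decomposable, and since (1) assumes $\ka$-indecomposability the second alternative must hold, which is exactly (3). Next, for $(3)\Rightarrow(4)$ I would use Theorem \ref{27} in contrapositive form: were $\calU$ in addition $\cf(\ka)$-decomposable, then being also $\ka^+$-decomposable it would be $\ka$-decomposable, contradicting (3); hence $\calU$ is $\cf(\ka)$-indecomposable and still $\ka^+$-decomposable, i.e.\ (4). For $(4)\Rightarrow(2)$ I would quote the forward direction of Theorem \ref{Li}(2) (equivalently the Kunen--Prikry Theorem \ref{KPth}), which says precisely that a $\cf(\ka)$-indecomposable, $\ka^+$-decomposable ultrafilter is almost $\mathop{<}\ka$-decomposable. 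Finally $(2)\Rightarrow(1)$ is immediate from the implication $\cf(\ka)$-indecomposable $\Rightarrow$ $\ka$-indecomposable, the ``almost $\mathop{<}\ka$-decomposable'' clause being common to both.

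I do not expect a genuine obstacle, since every step is a direct citation; the only place calling for a little care is $(3)\Rightarrow(4)$, where Theorem \ref{27} must be read in its contrapositive guise ($\ka^+$-decomposable $+$ $\ka$-indecomposable $\Rightarrow$ $\cf(\ka)$-indecomposable). As an alternative organization one could instead prove the three two-way equivalences $(1)\Leftrightarrow(2)$, $(3)\Leftrightarrow(4)$, $(2)\Leftrightarrow(4)$, using Theorem \ref{27} for $(3)\Rightarrow(4)$ and both directions of Theorem \ref{Li}(2) for $(2)\Leftrightarrow(4)$; but the single cycle is the shortest route and the one I would write out.
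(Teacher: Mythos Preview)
Your proposal is correct and uses the same ingredients as the paper---Theorem~\ref{27}, Lipparini's Theorem~\ref{Li}, and the trivial implication $\cf(\ka)$-indecomposable $\Rightarrow$ $\ka$-indecomposable---only arranged along the cycle $(1)\Rightarrow(3)\Rightarrow(4)\Rightarrow(2)\Rightarrow(1)$ rather than the paper's reverse cycle $(4)\Rightarrow(3)\Rightarrow(2)\Rightarrow(1)\Rightarrow(4)$. The one cosmetic difference is that for passing from $\ka^+$-decomposability to almost $\mathop{<}\ka$-decomposability the paper cites its own Proposition~\ref{37} (which packages Theorem~\ref{27} together with Kunen--Prikry), whereas you appeal to Kunen--Prikry/Theorem~\ref{Li}(2) directly after first isolating $\cf(\ka)$-indecomposability in step $(3)\Rightarrow(4)$; the content is identical.
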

\begin{proof}
(4) $\Rightarrow$ (3) is trivial since every $\cf(\ka)$-indecomposable ultrafilter is
$\ka$-indecomposable.

(3) $\Rightarrow$ (2). Suppose (3). Then $\calU$ is $\cf(\ka)$-indecomposable by Theorem \ref{27},
and is almost $\mathop{<}\ka$-decomposable by Proposition \ref{37}.

(2) $\Rightarrow$ (1) is trivial as the direction (4) $\Rightarrow$ (3).

(1) $\Rightarrow$ (4). Suppose (1). By (1) of Theorem \ref{Li}, $\calU$ is $\ka^+$-decomposable,
and so $\calU$ is $\cf(\ka)$-indecomposable by Theorem \ref{27}.
\end{proof}

Now the following theorem is immediate from Lemma \ref{4.1}, Corollary \ref{4.2} and Proposition \ref{37}.
\begin{thm}\label{5.15}
Let $\ka$ be a singular cardinal,
and suppose $\mathfrak{u}(\ka^+)<\mathfrak{u}(\ka)$.
Then there is a cardinal $\la<\ka$ such that
$\mathfrak{u}(\mu) \le \mathfrak{u}(\ka^+)$ for every regular
$\mu$ with $\la<\mu<\ka$.
\end{thm}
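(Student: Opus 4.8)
The plan is to combine the three cited results in the obvious order. First I would use the hypothesis $\fraku(\ka^+)<\fraku(\ka)$ together with Corollary \ref{4.2} (applied with the pair $\ka<\ka^+$ in place of $\la<\ka$) to produce a witnessing ultrafilter: since $\fraku(\ka^+)$ is by definition the minimum of $\chi(\calV)$ over uniform ultrafilters $\calV$ over $\ka^+$, fix a uniform ultrafilter $\calU$ over $\ka^+$ with $\chi(\calU)=\fraku(\ka^+)$; Corollary \ref{4.2} then tells us $\calU$ is $\ka$-indecomposable.

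Next I would observe that $\calU$, being a uniform ultrafilter over $\ka^+$, is automatically $\ka^+$-decomposable (the identity map $\ka^+\to\ka^+$ witnesses this). Thus $\calU$ is a $\ka^+$-decomposable but $\ka$-indecomposable ultrafilter, and $\ka$ is singular, so Proposition \ref{37} applies and yields a cardinal $\la<\ka$ such that $\calU$ is $\mu$-decomposable for every regular $\mu$ with $\la<\mu<\ka$; that is, $\calU$ is almost $\mathop{<}\ka$-decomposable.

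Finally, I would fix any regular $\mu$ with $\la<\mu<\ka$. Then $\calU$ is a $\mu$-decomposable ultrafilter over $\ka^+$ with $\mu<\ka^+$, so Lemma \ref{4.1} gives $\fraku(\mu)\le\chi(\calU)=\fraku(\ka^+)$. Since $\mu$ was arbitrary in the interval $(\la,\ka)$, this is exactly the desired conclusion, and the $\la$ produced by Proposition \ref{37} is the one named in the statement.

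There is essentially no obstacle here beyond bookkeeping: the only point requiring a moment's care is that $\fraku(\ka^+)$ is genuinely realized as $\chi(\calU)$ for some uniform ultrafilter $\calU$ (immediate from the definition as a minimum), and that Corollary \ref{4.2} is being invoked with the roles of the two cardinals matching the inequality $\fraku(\ka^+)<\fraku(\ka)$ rather than the size order $\ka<\ka^+$. Everything else is a direct citation of Lemma \ref{4.1}, Corollary \ref{4.2}, and Proposition \ref{37}.
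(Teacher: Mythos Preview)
Your proposal is correct and follows exactly the route the paper intends: the paper states that Theorem \ref{5.15} is immediate from Lemma \ref{4.1}, Corollary \ref{4.2}, and Proposition \ref{37}, and your argument assembles precisely these three ingredients in the natural order. There is nothing to add.
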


Note that we can improve  ``every regular
$\mu$'' in this theorem
to ``every regular or singular with countable cofinality $\mu$''
by Corollary \ref{29}, however cannot to ``every cardinal $\mu$'';
Suppose $\ka$ is strongly compact, and let $\mu> \ka^{+\om_1^2}$ be a strong limit singular cardinal
with cofinality $\om_1$.
Then $\Add(\om, \mu)$ forces
$\mathfrak{u}(\ka^{+\om_1^2 +1})<\mathfrak{u}(\ka^{+\om_1^2})$
and  $\mathfrak{u}(\ka^{+\om_1^2 +1})<\mathfrak{u}(\ka^{+\om_1\cdot (\alpha+1)})$
for every $\alpha<\om_1$.

We end this section by showing the consistency of
$\mathfrak{u}(\ka^{+\om_1}) <\mathfrak{u}(\om_\om)$.

\begin{lemma}\label{7.18}
Let $\ka$ be a cardinal.
Then $\mathfrak{u}(\ka) \le \mathfrak{u}(\ka^+)^{\cf(\ka)}$.
\end{lemma}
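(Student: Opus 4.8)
The plan is to handle $\ka$ regular trivially and, for $\ka$ singular, to realise a uniform ultrafilter on $\ka$ as a limit of ultrafilters along a block partition of $\ka$ whose block lengths are controlled by the decomposability structure of an optimal ultrafilter on $\ka^+$.

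If $\ka$ is regular then $\cf(\ka)=\ka$, so $\fraku(\ka^+)^{\cf(\ka)}=\fraku(\ka^+)^\ka\ge 2^\ka\ge\fraku(\ka)$ and we are done. So assume $\ka$ is singular, write $\la=\cf(\ka)$, and fix a uniform ultrafilter $\calU$ over $\ka^+$ with $\chi(\calU)=\fraku(\ka^+)$; being uniform, $\calU$ is $\ka^+$-decomposable. I would split on whether $\calU$ is $\ka$-decomposable. If it is, then Lemma \ref{4.1} gives $\fraku(\ka)\le\chi(\calU)=\fraku(\ka^+)\le\fraku(\ka^+)^{\la}$ at once. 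If it is not, then $\calU$ is a $\ka^+$-decomposable but $\ka$-indecomposable ultrafilter, so by Proposition \ref{37} it is almost $\mathop{<}\ka$-decomposable: there is $\la_0<\ka$ such that $\calU$ is $\mu$-decomposable for every regular $\mu$ with $\la_0<\mu<\ka$. By Lemma \ref{4.1} this yields $\fraku(\mu)\le\chi(\calU)=\fraku(\ka^+)$ for every regular $\mu\in(\la_0,\ka)$.

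It remains to build a uniform ultrafilter on $\ka$ of small character in this second case. Since $\ka$ is a limit cardinal, choose an increasing sequence $\seq{\ka_\xi\mid\xi<\la}$ of infinite successor cardinals, cofinal in $\ka$, with $\ka_0>\la_0$, and a partition $\ka=\bigcup_{\xi<\la}A_\xi$ with $\size{A_\xi}=\ka_\xi$. Fix a uniform ultrafilter $\calV$ over $\la$ with $\chi(\calV)=\fraku(\la)$ and, for each $\xi$, a uniform ultrafilter $\calU_\xi$ over $A_\xi$ with $\chi(\calU_\xi)=\fraku(\ka_\xi)$. Let $\mathcal{W}=\{X\subseteq\ka\mid\{\xi<\la\mid X\cap A_\xi\in\calU_\xi\}\in\calV\}$; as usual $\mathcal{W}$ is an ultrafilter on $\ka$. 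It is uniform, since if $\size{X}<\ka$ one picks $\xi_0<\la$ with $\size{X}<\ka_{\xi_0}$, and then $X\cap A_\xi\notin\calU_\xi$ for all $\xi\ge\xi_0$, so $\{\xi\mid X\cap A_\xi\in\calU_\xi\}\subseteq\xi_0\notin\calV$. For the character: with bases $\calG$ for $\calV$ and $\calG_\xi$ for $\calU_\xi$, the family $\{\bigcup_{\xi\in G}B_\xi\mid G\in\calG,\ \seq{B_\xi\mid\xi\in G}\in\prod_{\xi\in G}\calG_\xi\}$ is easily checked to be a base for $\mathcal{W}$ contained in $\mathcal{W}$, of cardinality at most $\fraku(\la)\cdot\prod_{\xi<\la}\fraku(\ka_\xi)$. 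Each $\ka_\xi$ is regular and lies in $(\la_0,\ka)$, so $\fraku(\ka_\xi)\le\fraku(\ka^+)$ and $\prod_{\xi<\la}\fraku(\ka_\xi)\le\fraku(\ka^+)^{\la}$; also $\fraku(\la)\le 2^{\la}\le\fraku(\ka^+)^{\la}$. Hence $\fraku(\ka)\le\chi(\mathcal{W})\le\fraku(\ka^+)^{\la}=\fraku(\ka^+)^{\cf(\ka)}$.

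I expect the main obstacle to be recognising that the limit construction on its own only gives the weaker bound $\fraku(\ka)\le\fraku(\cf(\ka))\cdot\prod_{\xi<\cf(\ka)}\fraku(\ka_\xi)$ for an arbitrary cofinal sequence $\seq{\ka_\xi}$, and that this product can be as large as $2^\ka$ — so one genuinely needs the decomposability dichotomy: either the optimal ultrafilter on $\ka^+$ is $\ka$-decomposable, giving the strong bound $\fraku(\ka)\le\fraku(\ka^+)$ directly, or it is $\ka$-indecomposable, in which case Proposition \ref{37} makes it decomposable at every regular cardinal in a tail of $\ka$, so Lemma \ref{4.1} forces $\fraku(\ka_\xi)\le\fraku(\ka^+)$ for the chosen blocks and collapses the product to $\fraku(\ka^+)^{\cf(\ka)}$.
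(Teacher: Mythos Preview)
Your proof is correct and follows essentially the same route as the paper: reduce to the singular case, use the $\ka$-indecomposability of an optimal ultrafilter on $\ka^+$ together with Proposition~\ref{37} (the paper packages this via Theorem~\ref{5.15}) to bound $\fraku(\mu)$ on a tail of regulars below $\ka$, and then build a limit ultrafilter on $\ka$ along a cofinal sequence. The only cosmetic differences are that the paper phrases the dichotomy as the WLOG assumption $\fraku(\ka^+)<\fraku(\ka)$ rather than your explicit case split, and it uses the overlapping blocks $\ka_i$ rather than a disjoint partition, but the argument is the same.
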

\begin{proof}
It is trivial if $\ka$ is regular, so suppose $\ka$ is singular.
We may assume $\mathfrak{u}(\ka^+)<\mathfrak{u}(\ka)$.
Fix an increasing sequence $\seq{\ka_i \mid i<\cf(\ka)}$ 
of regular cardinals with limit $\ka$.
By Theorem \ref{5.15}, there is $\la<\ka$ such that 
$\mathfrak{u}(\mu) \le \mathfrak{u}(\ka^+)$
for every regular $\mu$ with $\la<\mu<\ka$.
Hence we may assume that 
for every $i<\cf(\ka)$,
there is a uniform ultrafilter $\calU_i$ over $\ka_i$
with $\chi(\calU_i) \le \mathfrak{u}(\ka^+)$.
Fix also a uniform ultrafilter $\calV$ over $\cf(\ka)$.
Now define a filter $\mathcal{D}$ over $\ka$ as:
$X \in \mathcal{D} \iff \{i<\cf(\ka) \mid X \cap \ka_i \in \calU_i\} \in \calV$.
It is easy to show that $\mathcal{D}$ is a uniform ultrafilter over $\ka$,
and since $\chi(\calU_i) \le \mathfrak{u}(\ka^+)$,
we have $\chi(\mathcal{D}) \le \mathfrak{u}(\ka^+)^{\cf(\ka)}\cdot 2^{\cf(\ka)}$.
Thus $\mathfrak{u}(\ka) \le \chi(\mathcal{D}) \le \mathfrak{u}(\ka^+)^{\cf(\ka)}$.
\end{proof}

\begin{prop}\label{7.19}
Suppose there is no $\om_3$-indecomposable uniform ultrafilter over $\om_\om$
and $2^{\om_1}=\om_2$.
Let $\ka$ be a strongly compact cardinal, and 
$\mu>\ka$ a strong limit singular cardinal with $\cf(\mu)=\om_3$.
Then $\Add(\om_2, \mu)$ forces $\mathfrak{u}(\ka^{+\om_1}) < \mathfrak{u}(\om_\om)$.
\end{prop}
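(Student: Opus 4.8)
The plan is to imitate the proof of Theorem~\ref{7.5}, replacing $\om_1$ by $\om_3$ and $\Add(\om,\mu)$ by $\Add(\om_2,\mu)$. First I record the basic facts about the forcing and build, in $V$, an $\om_3$-indecomposable uniform ultrafilter over $\ka^{+\om_1}$. Since $\ka$ is strongly compact it is inaccessible, so $\ka^{+\om_1}$ is a singular cardinal of cofinality $\om_1$ with $\om_3<\ka^{+\om_1}$; moreover $\ka^{+\om_1}<\mu$, for otherwise $\mu=\ka^{+\alpha}$ for some limit $\alpha\le\om_1$ (as $\mu$ is a strong limit cardinal above $\ka$), and then $\cf(\mu)\le\om_1<\om_3$, contrary to $\cf(\mu)=\om_3$. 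From $2^{\om_1}=\om_2$, $\Add(\om_2,\mu)$ has the $(2^{<\om_2})^+=\om_3$-c.c., and since $\mu$ is strong limit with $\cf(\mu)=\om_3>\om_1$ we have $\size{\Add(\om_2,\mu)}=\mu^{\om_1}=\mu$; being $\om_2$-closed with the $\om_3$-c.c., $\Add(\om_2,\mu)$ preserves all cardinals and cofinalities, and $\om_3$ stays regular in the extension. For the ultrafilter: for $\alpha<\om_1$ let $I_\alpha=[\ka^{+\alpha},\ka^{+(\alpha+1)})$ and, using strong compactness of $\ka$ (so the regular cardinal $\ka^{+(\alpha+1)}\ge\ka$ carries a $\ka$-complete uniform ultrafilter), fix a $\ka$-complete uniform ultrafilter $\calU_\alpha$ over $I_\alpha$; fix also a non-principal uniform ultrafilter $\calV$ over $\om_1$ and put $\mathcal{D}=\{X\subseteq\ka^{+\om_1}\mid\{\alpha<\om_1\mid X\cap I_\alpha\in\calU_\alpha\}\in\calV\}$. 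Then $\mathcal{D}$ is a uniform ultrafilter over $\ka^{+\om_1}$, since it contains every tail $[\ka^{+\beta},\ka^{+\om_1})=\bigcup_{\alpha\ge\beta}I_\alpha$, and given $f:\ka^{+\om_1}\to\om_3$, $\ka$-completeness of each $\calU_\alpha$ together with $\om_3<\ka$ yields $\gamma_\alpha<\om_3$ with $f^{-1}(\{\gamma_\alpha\})\cap I_\alpha\in\calU_\alpha$, so $f^{-1}(Z)\in\mathcal{D}$ for $Z=\{\gamma_\alpha\mid\alpha<\om_1\}\in[\om_3]^{<\om_3}$; thus $\mathcal{D}$ is $\om_3$-indecomposable.

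Now I apply the three main tools, all in $V$. Theorem~\ref{3}, with its parameter ``$\ka$'' taken to be $\ka^{+\om_1}$ and its ``$\mu$'' taken to be $\mu$, applies: $\cf(\mu)=\om_3<\ka^{+\om_1}<\mu$, $\nu^{\ka^{+\om_1}}\le 2^{\max(\nu,\ka^{+\om_1})}<\mu$ for all $\nu<\mu$, hypothesis~(3) is witnessed by $\mathcal{D}$, and $\bbP=\Add(\om_2,\mu)$ has the $\om_3$-c.c.\ and size $\mu$; hence $\Add(\om_2,\mu)$ forces $\fraku(\ka^{+\om_1})\le\mu$. Lemma~\ref{10}, applied with $\delta=\om_2$ and parameter cardinal $\om_\om$, gives that it also forces $\fraku(\om_\om)\ge\mu$; applied with $\delta=\om_2$ and parameter cardinal $\om_3$, it forces $\fraku(\om_3)\ge\mu$. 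Lemma~\ref{11}, applied with $\delta=\om_2<\om_3$ and parameter cardinal $\om_3$ — legitimate since $\cf(\mu)=\om_3=\cf(\om_3)$, $\Add(\om_2,\mu)$ has the $\om_3$-c.c.\ with $\om_3<\mu$, preserves $\om_3$, and has size $\mu$ — forces $\fraku(\om_3)\neq\mu$. Combining the last two, $\Add(\om_2,\mu)$ forces $\fraku(\om_3)>\mu$.

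The last step is to bound $\fraku(\om_\om)$ from below, using the hypothesis on $\om_\om$ together with a preservation argument exactly as in Theorem~\ref{7.5}. The point is that ``there is no $\om_3$-indecomposable uniform ultrafilter over $\om_\om$'' is preserved by $\Add(\om_2,\mu)$: if $\dot\calU$ named such an ultrafilter in the extension, then — since $\om_3$ is regular, so $\om_3$-indecomposability is witnessed by bounded initial segments — for each $f:\om_\om\to\om_3$ in $V$ there is a name $\dot\beta$ for an ordinal $<\om_3$ with $\Vdash f^{-1}([0,\dot\beta))\in\dot\calU$, and the $\om_3$-c.c.\ bounds the possible values of $\dot\beta$ below some $\beta^*<\om_3$, so that $\calF=\{X\subseteq\om_\om\mid\,\Vdash\check X\in\dot\calU\}$ is an $\om_3$-indecomposable uniform filter in $V$; by Lemma~\ref{2.3} any uniform ultrafilter over $\om_\om$ extending $\calF$ would be $\om_3$-indecomposable, a contradiction. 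Hence in $V^{\Add(\om_2,\mu)}$ every uniform ultrafilter $\calU$ over $\om_\om$ is $\om_3$-decomposable, so $\chi(\calU)\ge\fraku(\om_3)>\mu$ by Lemma~\ref{4.1}, and therefore $\fraku(\om_\om)>\mu\ge\fraku(\ka^{+\om_1})$, as required. The main obstacle is this preservation step, and it is precisely where regularity of $\om_3$ (and the $\om_3$-c.c.\ of the forcing) is essential; the remaining work is the bookkeeping of checking that each invocation of Theorem~\ref{3} and Lemmas~\ref{10} and~\ref{11} meets its hypotheses.
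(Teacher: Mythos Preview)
Your proof is correct, and in fact it takes a more direct route than the paper. The paper first works at the \emph{successor} $\ka^{+\om_1+1}$: strong compactness directly yields a $\ka$-complete uniform ultrafilter there, so Theorem~\ref{3} and Lemmas~\ref{10}, \ref{11} give $\fraku(\ka^{+\om_1+1})=\mu<\fraku(\om_3)\le\fraku(\om_\om)$; it then invokes Lemma~\ref{7.18} to get $\fraku(\ka^{+\om_1})\le\fraku(\ka^{+\om_1+1})^{\om_1}=\mu^{\om_1}=\mu$. You bypass this detour by building, in $V$, an $\om_3$-indecomposable uniform ultrafilter on $\ka^{+\om_1}$ itself as a $\calV$-sum of $\ka$-complete ultrafilters on the blocks $I_\alpha=[\ka^{+\alpha},\ka^{+\alpha+1})$, and then apply Theorem~\ref{3} directly at $\ka^{+\om_1}$. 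Your sum construction is essentially the same idea that underlies the proof of Lemma~\ref{7.18}, but used upstream rather than downstream; this lets you avoid the computation $\mu^{\om_1}=\mu$ and the need to pin down $\fraku(\ka^{+\om_1+1})$ exactly. The remainder of your argument --- the $\om_3$-c.c.\ preservation step showing that ``no $\om_3$-indecomposable uniform ultrafilter on $\om_\om$'' persists to the extension, and the consequent bound $\fraku(\om_\om)\ge\fraku(\om_3)>\mu$ --- matches the paper's reasoning (the paper just cites Theorem~\ref{7.5} for this). One minor point: your application of Lemma~\ref{10} at $\om_\om$ is redundant, since the preservation argument already yields the stronger bound $\fraku(\om_\om)>\mu$.
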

\begin{proof}

In the generic extension, 
we have that $\mathfrak{u}(\ka^{+\om_1+1}) <\mathfrak{u}(\om_3)$.
Since $\Add(\om_2, \mu)$ satisfies the $\om_3$-c.c.,
we have $\mathfrak{u}(\om_3) \le \mathfrak{u}(\om_\om)$  as in the proof of
Theorem \ref{7.5},
so we have that $\mathfrak{u}(\ka^{+\om_1+1}) <\mathfrak{u}(\om_\om)$.

We prove that $\mathfrak{u}(\ka^{+\om_1})\le \mathfrak{u}(\ka^{+\om_1+1})$ in the generic extension.
We know that $\mathfrak{u}(\ka^{+\om_1+1}) \ge \mu$ by Lemma \ref{10}.
By Lemma \ref{7.18},
we have $\mathfrak{u}(\ka^{+\om_1}) \le \mathfrak{u}(\ka^{+\om_1+1})^{\om_1}$.
However, since $\mathfrak{u}(\ka^{+\om_1+1})= \mu$,
$\cf(\mu)=\om_3$, and $\Add(\om_2, \mu)$ is $\om_2$-closed,
we have 
$\mathfrak{u}(\ka^{+\om_1}) \le  \mu^{\om_1}=\mu=$
$\mathfrak{u}(\ka^{+\om_1+1})$.
\end{proof}

\section{Consistency strength of the failure of monotonicity}

In this section, we study the consistency strength of
the failure of monotonicity of the  ultrafilter number function.
We will show that the failure has  large cardinal strength.

We  use the following Donder's theorem.
\begin{thm}[Theorem 4.1 and 4.5 in Donder \cite{Donder}]\label{43}
Suppose there is no inner model with a measurable cardinal.
Let $\ka$ be an uncountable cardinal.
\begin{enumerate}
\item If $\ka$ is singular, then every uniform ultrafilter over $\ka$ is $(\om, \ka)$-regular.
\item If $\ka$ is  regular,  then every uniform ultrafilter over $\ka$ is $(\om, \la)$-regular
for every $\la<\ka$.
\end{enumerate}
\end{thm}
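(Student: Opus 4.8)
The plan is to follow Donder's route through inner model theory. Under the hypothesis that there is no inner model with a measurable cardinal, the Dodd--Jensen core model $K = K^{\mathrm{DJ}}$ exists, satisfies $\mathrm{GCH}$ and $\square_\mu$ for every cardinal $\mu$, is rigid (it admits no nontrivial elementary self-embedding), and---the crucial point---satisfies the covering lemma: every uncountable set of ordinals $X$ is contained in some $Y \in K$ with $\size{Y} = \size{X}$. I would use covering both as a cardinal-arithmetic tool---it forces successors of singulars and cofinalities to be computed correctly by $K$---and, more essentially, as a rigidity constraint on ultrapower embeddings.

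The first reduction is to observe that every uniform ultrafilter $\calU$ over an uncountable $\ka$ must be $\sigma$-incomplete, since a $\sigma$-complete one would be $\om$-indecomposable and hence supply a measurable cardinal $\le \ka$, contradicting the hypothesis; so $\calU$ is $(\om,\om)$-regular to begin with, and the whole work is to propagate this upward. Given $\calU$, one forms the ultrapower $j = j_\calU : V \to \ult(V,\calU)$---its target is ill-founded, so one works with it formally---and compares $K$ with $j(K)$ as computed inside $\ult(V,\calU)$. Covering, applied inside the ultrapower, forces $j \restriction \mathrm{Ord}$ to be tame: on a tail it is recovered from $K$-parameters, while the rigidity of $K$ forbids $j$ from having a measurable-type critical point. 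The only way $j$ is then allowed to stretch ordinals is by the spreading that is precisely the combinatorial content of regularity, and one reads a witnessing family off the fine structure of $K$ (its $\square$-sequences and $\mathrm{GCH}$, transported into $V$ via covering).

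I would organize the bookkeeping around the two cases, in both of which the engine is this same covering-controlled analysis of $j_\calU$; the Prikry--Silver reflection theorem (Theorem \ref{PS theorem}) enters only as a low-tech shadow of it. When $\ka$ is regular, fixing $\la < \ka$ and feeding into the analysis the $\square$-sequences of $K$ (whose relevant non-reflection content descends to $V$ by covering), one extracts a function $g : \ka \to [\la]^{<\om}$ with $\{\alpha < \ka \mid i \in g(\alpha)\} \in \calU$ for every $i < \la$, which is exactly $(\om,\la)$-regularity. When $\ka$ is singular, covering gives in addition that $\ka$ is singular in $K$ with $(\mu^+)^K = \mu^+$ for singular $\mu \le \ka$, and one can push the same argument all the way to the top: a direct covering argument at the level of countable subsets of $\ka$---each lies in a $K$-set of size $\om_1$, and $\mathrm{GCH}$ in $K$ controls how many such $K$-sets occur---produces a full $(\om,\ka)$-regularizing family. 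The main obstacle is, unsurprisingly, the core-model input itself: establishing the Dodd--Jensen covering lemma for $K^{\mathrm{DJ}}$ together with the fine-structural analysis of how $j_\calU$ sits over $K$ is the substantive mathematics, whereas the ultrafilter combinatorics wrapped around it---of the flavor behind the $\square$-based Corollary \ref{8.13}---is comparatively routine.
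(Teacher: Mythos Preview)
The paper does not prove this statement at all: Theorem~\ref{43} is quoted from Donder~\cite{Donder} and used as a black box, with no argument supplied. There is therefore no ``paper's own proof'' to compare your proposal against.

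As a sketch of Donder's actual proof your outline is on the right track in broad strokes---the Dodd--Jensen core model $K$, the covering lemma, and the fine-structural control of $K$ (GCH, $\square$) are indeed the ingredients---but what you have written is not a proof so much as a table of contents. Phrases like ``one reads a witnessing family off the fine structure of $K$'' and ``a direct covering argument \ldots\ produces a full $(\om,\ka)$-regularizing family'' name the conclusion without indicating the mechanism, and you concede as much in your last sentence. Donder's argument is genuinely technical: it passes through an analysis of iterated mice and the comparison of $K$ with its image under $j_\calU$, and the extraction of the regularizing family is not a one-line consequence of covering. If you intend to actually prove this you would need to either reproduce that analysis or cite it; for the purposes of this paper, citing Donder is exactly what the author does.
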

\begin{prop}\label{6.6}
Suppose there is a  cardinal $\ka$ such that
$\ka$ has a $\la$-indecomposable uniform ultrafilter for some $\la<\ka$.
Then there is an inner model with a measurable cardinal.
\end{prop}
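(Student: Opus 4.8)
The plan is to prove the contrapositive: assuming there is no inner model with a measurable cardinal, I will show that no cardinal $\ka$ can carry a $\la$-indecomposable uniform ultrafilter for any infinite $\la<\ka$. The whole argument rests on two ingredients already available: Donder's dichotomy (Theorem \ref{43}) for uniform ultrafilters over an uncountable cardinal, and the folklore Lemma \ref{reg-indecomp}, which says that a $\la$-indecomposable ultrafilter is never $(\om,\la)$-regular. The key idea is therefore to deduce $(\om,\la)$-regularity of the given ultrafilter from Donder's theorem and then read off a contradiction with $\la$-indecomposability.

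So suppose $\ka$ has a $\la$-indecomposable uniform ultrafilter $\calU$; I take $\la$ to be an infinite cardinal with $\la<\ka$ (for finite $\la$ the notion carries no information). Since $\om\le\la<\ka$, the cardinal $\ka$ is uncountable, so Theorem \ref{43} applies. First I would split on whether $\ka$ is regular or singular. If $\ka$ is regular, Theorem \ref{43}(2) says directly that every uniform ultrafilter over $\ka$ — in particular $\calU$ — is $(\om,\nu)$-regular for every $\nu<\ka$; taking $\nu=\la$ gives what is wanted. If $\ka$ is singular, Theorem \ref{43}(1) gives only that $\calU$ is $(\om,\ka)$-regular, so here I would add the routine observation that $(\om,\ka)$-regularity implies $(\om,\la)$-regularity for every $\la\le\ka$: from a witnessing family $\{x_s\mid s<\ka\}$ with $x_s\in[\ka]^{<\om}$ and $\{s<\ka\mid i\in x_s\}\in\calU$ for all $i<\ka$, the truncations $\{x_s\cap\la\mid s<\ka\}$ witness $(\om,\la)$-regularity. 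Either way $\calU$ is $(\om,\la)$-regular, which contradicts Lemma \ref{reg-indecomp} since $\calU$ is $\la$-indecomposable. This contradiction shows that there must be an inner model with a measurable cardinal.

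I do not expect a real obstacle here; the proof is essentially immediate once Donder's theorem is in hand. The one point that needs a little care is that in the singular case Donder delivers only $(\om,\ka)$-regularity, and one should not try to upgrade this to $\ka$-decomposability (which fails in general — for instance a $\cf(\ka)$-indecomposable ultrafilter over $\ka^+$ can be $\ka^+$-decomposable), so the reduction must be routed through $(\om,\la)$-regularity rather than through decomposability at $\ka$. One should also remember to note that $\ka$ is uncountable before invoking Theorem \ref{43}.
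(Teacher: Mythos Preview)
Your proof is correct and follows essentially the same route as the paper's own argument: both combine Lemma~\ref{reg-indecomp} with Donder's Theorem~\ref{43}, splitting into the regular and singular cases for $\ka$ and using in the singular case the easy monotonicity of $(\om,\nu)$-regularity in $\nu$. The only difference is cosmetic---you run the contrapositive and spell out the truncation argument for $(\om,\ka)$-regular $\Rightarrow$ $(\om,\la)$-regular, whereas the paper proceeds directly and leaves this step as ``easy to check.''
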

\begin{proof}
Let $\calU$ be a $\la$-indecomposable uniform ultrafilter over $\ka$.
Then $\calU$ is not $(\om, \la)$-regular by Lemma \ref{reg-indecomp}.
If $\ka$ is regular, 
by (2) of Theorem \ref{43}, there is an inner model with a measurable cardinal.
Suppose $\ka$ is singular. Since $\la<\ka$ and $U$ is not $(\om, \la)$-regular,
it is easy to check that $U$ is not $(\om, \ka)$-regular.
thus there is an inner model with a measurable cardinal by (1) of Theorem \ref{43}.
%
\end{proof}

\begin{thm}\label{6.7}
Suppose there are cardinals $\la<\ka$ such that $\mathfrak{u}(\ka)<\mathfrak{u}(\la)$.
Then there is an inner model with a measurable cardinal.
\end{thm}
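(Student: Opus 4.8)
The plan is to derive from $\fraku(\ka) < \fraku(\la)$ the existence of an indecomposable uniform ultrafilter, and then to quote Proposition~\ref{6.6} to obtain an inner model with a measurable cardinal. First I would pick a uniform ultrafilter $\calU$ over $\ka$ with $\chi(\calU) = \fraku(\ka)$; this is possible by the definition of $\fraku(\ka)$. By Corollary~\ref{4.2}, since $\fraku(\ka) < \fraku(\la)$, the ultrafilter $\calU$ is $\la$-indecomposable. So $\ka$ carries a $\la$-indecomposable uniform ultrafilter with $\la < \ka$. At this point Proposition~\ref{6.6} applies verbatim and yields an inner model with a measurable cardinal.

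The only subtlety — and this is the step I'd flag as the place where one must be slightly careful rather than the genuinely hard step — is that $\la$ in Corollary~\ref{4.2} and Proposition~\ref{6.6} should be an infinite cardinal (and one wants $\la < \ka$, which is given). If $\la = \om$, then Proposition~\ref{5.3+} already tells us $\fraku(\om) \le \fraku(\ka)$, directly contradicting the hypothesis $\fraku(\ka) < \fraku(\la)$; so without loss of generality $\la$ is uncountable, and there is no degenerate case to worry about. One might also worry whether Corollary~\ref{4.2} requires $\la$ regular — inspecting its statement (it is the contrapositive of Lemma~\ref{4.1}), it does not: $\la$-decomposability is defined for arbitrary cardinals $\la$, and Lemma~\ref{4.1} holds for any $\la<\ka$. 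So the chain of implications goes through for any infinite $\la<\ka$.

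Thus the argument is essentially a one-line composition of earlier results: $\fraku(\ka) < \fraku(\la)$ gives, via Corollary~\ref{4.2}, a $\la$-indecomposable uniform ultrafilter over $\ka$ (with $\la<\ka$), and then Proposition~\ref{6.6} produces the inner model with a measurable cardinal. There is no real obstacle; the content was already pushed into Corollary~\ref{4.2} (built on the folklore Lemma~\ref{4.1}) and into Proposition~\ref{6.6} (which rests on Donder's Theorem~\ref{43} and Lemma~\ref{reg-indecomp}). The proof to be written will simply assemble these pieces and note the trivial reduction to the case $\la > \om$.
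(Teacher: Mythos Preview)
Your proposal is correct and matches the paper's own proof essentially verbatim: the paper also invokes Corollary~\ref{4.2} to obtain a $\la$-indecomposable uniform ultrafilter over $\ka$, and then applies Proposition~\ref{6.6}. Your additional remarks about the case $\la=\om$ and the non-necessity of regularity in Corollary~\ref{4.2} are sound but unnecessary here, since Proposition~\ref{6.6} is stated for arbitrary $\la<\ka$.
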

\begin{proof}
By the assumption Corollary \ref{4.2}, $\ka$ has a $\la$-indecomposable uniform ultrafilter.
Then there is an inner model with a measurable cardinal by Proposition \ref{6.6}.
\end{proof}

By Theorems \ref{4.4}, \ref{4.5}, and \ref{6.7},
we have the equiconsistency result.

\begin{thm}\label{8.4}
The following are equiconsistent:
\begin{enumerate}
\item There is a measurable cardinal.
\item There are cardinals $\la<\ka$
with $\mathfrak{u}(\ka)<\mathfrak{u}(\la)$.
\item There is a weakly inaccessible cardinal $\ka$ and a cardinal $\la<\ka$
with $\mathfrak{u}(\ka)<\mathfrak{u}(\la)$.
\item There is a singular cardinal $\ka$ and a cardinal $\la<\ka$
with $\mathfrak{u}(\ka)<\mathfrak{u}(\la)$.
\item There are cardinals $\la<\ka$ such that
$\ka$ has a $\la$-indecomposable uniform ultrafilter.
\end{enumerate}
\end{thm}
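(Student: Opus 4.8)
The plan is to establish the cycle of implications $(1) \Rightarrow (3) \Rightarrow (2)$, $(1) \Rightarrow (4) \Rightarrow (2)$, $(2) \Leftrightarrow (5)$, and $(5) \Rightarrow (1)_{\text{con}}$, where $(1)_{\text{con}}$ means the existence of an inner model with a measurable cardinal. Since equiconsistency is the goal, I only need each implication at the level of consistency strength. The implications $(3) \Rightarrow (2)$ and $(4) \Rightarrow (2)$ are trivial, as are the reverse directions being subsumed. So the real content splits into three pieces: deriving models of $(3)$ and $(4)$ from a measurable, deriving a $\la$-indecomposable ultrafilter from the failure of monotonicity, and deriving an inner model with a measurable from such an ultrafilter.

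First I would handle $(1) \Rightarrow (3)$ and $(1) \Rightarrow (4)$ simultaneously using Theorem \ref{4.4} and Theorem \ref{4.5}. Starting from a measurable cardinal $\ka$ (with GCH, which costs nothing consistency-wise), choose $\mu > \ka$ a strong limit singular cardinal of cofinality $\om_1$; such $\mu$ exists in any model, so no extra strength is needed beyond the measurable. Then Theorem \ref{4.4} tells us $\Add(\om, \mu)$ forces $\fraku(\ka) < \fraku(\om_1)$, and since $\ka$ remains measurable (hence weakly inaccessible) in this c.c.c.\ extension by Levy--Solovay, we get a model of $(3)$. For $(4)$, apply Theorem \ref{4.5}: first do the Prikry forcing $\bbP_\calU$ to make $\ka$ singular of countable cofinality, then $\Add(\om, \mu)$; the product forces $\fraku(\ka) < \fraku(\om_1)$ with $\ka$ now singular, giving $(4)$. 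Both start from exactly a measurable, so $\mathrm{Con}(1)$ implies $\mathrm{Con}(3)$ and $\mathrm{Con}(4)$.

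Next, the equivalence $(2) \Leftrightarrow (5)$. The direction $(2) \Rightarrow (5)$ is Corollary \ref{4.2}: if $\fraku(\ka) < \fraku(\la)$ with $\la < \ka$, then any uniform ultrafilter $\calU$ over $\ka$ realizing the minimum character $\fraku(\ka)$ must be $\la$-indecomposable, which is exactly $(5)$. For $(5) \Rightarrow (2)$ I would argue at the level of consistency: by Proposition \ref{6.6}, a $\la$-indecomposable uniform ultrafilter over some $\ka > \la$ already yields an inner model with a measurable, hence $\mathrm{Con}(5) \Rightarrow \mathrm{Con}(1)$, and then $\mathrm{Con}(1) \Rightarrow \mathrm{Con}(3) \Rightarrow \mathrm{Con}(2)$ closes the loop. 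Finally, $(2) \Rightarrow \mathrm{Con}(1)$ is precisely Theorem \ref{6.7}: the failure of monotonicity produces a $\la$-indecomposable uniform ultrafilter via Corollary \ref{4.2}, and then Proposition \ref{6.6} (built on Donder's Theorem \ref{43} together with Lemma \ref{reg-indecomp}) delivers an inner model with a measurable.

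The main obstacle is really bookkeeping rather than a deep difficulty: one must make sure the forcing constructions in Theorems \ref{4.4} and \ref{4.5} genuinely require nothing beyond a single measurable (in particular that the choice of the strong limit singular $\mu$ of cofinality $\om_1$ imposes no cardinal-arithmetic demands not already available under GCH), and that the ``inner model with a measurable'' produced in Proposition \ref{6.6} is equiconsistent with an honest measurable (which it is, by the standard fact that $\mathrm{Con}(\ZFC + \text{inner model with a measurable})$ is equivalent to $\mathrm{Con}(\ZFC + \text{measurable})$). Once these are checked, the five statements are tied together into one equiconsistency class.
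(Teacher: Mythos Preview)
Your proposal is correct and follows essentially the same route as the paper, which simply cites Theorems \ref{4.4}, \ref{4.5}, and \ref{6.7} (together with Proposition \ref{6.6} for (5)) to close the equiconsistency cycle. One small slip: you invoke Levy--Solovay to say $\ka$ remains measurable after $\Add(\om,\mu)$, but $\size{\Add(\om,\mu)}=\mu>\ka$, so Levy--Solovay does not apply; what you actually need is only that $\ka$ stays weakly inaccessible, and this is immediate because a c.c.c.\ forcing preserves all cardinals and cofinalities, so a regular limit cardinal remains a regular limit cardinal.
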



Here let us mention another application of
Donder's theorem, which contrasts with
Theorem \ref{5.15}.
\begin{prop}
Relative to the existence of a measurable cardinal,
it is consistent that 
there is a weakly inaccessible cardinal $\ka$ such that
 $\mathfrak u(\ka)<\mathfrak{u}(\la)$ for every uncountable cardinal $\la<\ka$.
\end{prop}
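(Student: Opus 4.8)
The plan is to force with $\mathrm{Add}(\om,\mu)$ over a ground model in which $\ka$ is measurable, GCH holds, and no inner model has a measurable cardinal below $\ka$; passing to $L[\mathcal{W}]$ for a normal measure $\mathcal{W}$ on $\ka$ secures all of this. Fix a strong limit singular $\mu>\ka$ with $\cf(\mu)=\om_1$ (say $\mu=\aleph_{\ka+\om_1}$), and let $V[G]$ be the extension by $\mathrm{Add}(\om,\mu)$. Since this forcing is c.c.c.\ it preserves cardinals and cofinalities, so $\ka$ stays a regular limit cardinal, i.e.\ weakly inaccessible (it is no longer strong limit, as $2^\om=\mu>\ka$). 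I claim that in $V[G]$ one has $\fraku(\ka)=\mu$ and $\fraku(\la)=\mu^+$ for every uncountable $\la<\ka$; this gives the proposition, and by Theorem \ref{6.7} a measurable cardinal is in fact necessary.

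That $\fraku(\ka)=\mu$: the measure $\mathcal{W}$ is a $\cf(\mu)$-indecomposable uniform ultrafilter over $\ka$, and with $\cf(\mu)<\ka<\mu$, $\nu^\ka<\mu$ for $\nu<\mu$ (GCH), $\size{\mathrm{Add}(\om,\mu)}=\mu$ and the $\cf(\mu)$-c.c., Theorem \ref{3} gives $\fraku(\ka)\le\mu$, while Lemma \ref{10} (with $\delta=\om$) gives $\fraku(\ka)\ge\mu$. Next, Lemma \ref{10} applied at $\om_1$ gives $\fraku(\om_1)\ge\mu$, and Lemma \ref{11} (using $\cf(\om_1)=\om_1=\cf(\mu)$) gives $\fraku(\om_1)\ne\mu$; since a routine computation from GCH in the ground model and $\cf(\mu)=\om_1$ yields $2^\la=\mu^+$ for all uncountable $\la<\ka$, we get $\fraku(\om_1)=\mu^+$ and, more generally, $\fraku(\la)\le\mu^+$ for every uncountable $\la<\ka$.

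For the reverse inequality, fix an uncountable $\la<\ka$; as $\om_1$ is already settled we may take $\la>\om_1$. The core model is invariant under set forcing, so $V[G]$ still has no inner model with a measurable cardinal below $\ka$; equivalently, the covering lemma for $K$ holds for subsets of cardinals below $\ka$. This is precisely the hypothesis that makes Donder's argument for Theorem \ref{43} go through for ultrafilters over cardinals below $\ka$: every uniform ultrafilter $\calU$ over $\la$ is $(\om,\la)$-regular when $\la$ is singular, and $(\om,\rho)$-regular for all $\rho<\la$ when $\la$ is regular. Either way $\calU$ is $(\om,\om_1)$-regular, so by (the contrapositive of) Lemma \ref{reg-indecomp} it is $\om_1$-decomposable, and then $\fraku(\om_1)\le\chi(\calU)$ by Lemma \ref{4.1}. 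Hence $\chi(\calU)\ge\mu^+$ for every uniform ultrafilter $\calU$ over $\la$, so $\fraku(\la)=\mu^+>\mu=\fraku(\ka)$.

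The step that needs genuine care — the main obstacle — is the localization of Donder's theorem: one must check that the covering lemma for the Dodd--Jensen core model holds below $\ka$ in $L[\mathcal{W}]$ and in its c.c.c.\ extension, and that the argument of \cite{Donder} derives $(\om,\la)$-regularity of uniform ultrafilters over $\la<\ka$ from this local covering rather than from the global absence of an inner model with a measurable cardinal. The remaining points (the cardinal arithmetic in $V[G]$ and verifying the hypotheses of Theorem \ref{3} and Lemmas \ref{10}, \ref{11}) are routine.
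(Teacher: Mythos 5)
Your setup (ground model $L[\mathcal{W}]$, forcing with $\Add(\om,\mu)$ for $\mu$ strong limit singular of cofinality $\om_1$, reducing everything to $\om_1$-decomposability of uniform ultrafilters over $\la<\ka$ via Lemma \ref{4.1}/Corollary \ref{4.2}) is exactly the right construction, and the computations of $\fraku(\ka)$ and $\fraku(\om_1)$ are fine. The gap is the step you yourself flag: you want to apply Donder's Theorem \ref{43} \emph{inside $V[G]$}, restricted to cardinals below $\ka$. But the hypothesis of that theorem is the global statement ``there is no inner model with a measurable cardinal,'' and in $V[G]=L[\mathcal{W}][G]$ this is simply false --- $L[\mathcal{W}]$ is such an inner model. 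The proposed repair via ``covering for $K$ below $\ka$'' is not something you can cite; the Dodd--Jensen covering lemma in the form Donder uses is exactly what fails once an inner model with a measurable exists, and reworking his regularity argument from a local covering hypothesis is a substantial piece of inner model theory that your proof leaves entirely open. So as written the key inequality $\fraku(\la)\ge\mu^+$ for $\om_1<\la<\ka$ is not established.

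The paper closes this gap without any localization of Donder's theorem, by splitting the work between the ground model and the extension. First, Donder is applied inside $V_\ka$, which \emph{is} a genuine ZFC model of ``no inner model with a measurable cardinal''; since $\ka$ is inaccessible, every uniform ultrafilter over $\la<\ka$ lives in $V_\ka$, so in $V$ no uncountable $\la<\ka$ carries an $\om_1$-indecomposable uniform ultrafilter. Second, indecomposability is pulled back through the c.c.c.\ forcing exactly as in the proof of Theorem \ref{7.5}: if $\dot\calU$ names an $\om_1$-indecomposable uniform ultrafilter over $\la$ in $V[G]$, then $\calF=\{X\subseteq\la\mid\,\Vdash\check X\in\dot\calU\}$ is a uniform filter in $V$, the c.c.c.\ lets one bound the relevant decompositions by a single countable ordinal so that $\calF$ is $\om_1$-indecomposable in $V$, and Lemma \ref{2.3} then yields an $\om_1$-indecomposable uniform ultrafilter over $\la$ in $V$ --- contradiction. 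With that, Corollary \ref{4.2} gives $\fraku(\om_1)\le\fraku(\la)$ in $V[G]$, and $\fraku(\ka)\le\mu<\fraku(\om_1)$ finishes the proof. I recommend replacing your Donder-in-$V[G]$ step by this two-stage argument; the rest of your write-up can stand.
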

\begin{proof}
Suppose there is  a measurable cardinal $\ka$ and a normal measure $U$ over $\ka$
such that $V=L[U]$, and  $V_\ka$ is a model with
$\ZFC+$``there is no inner model with a measurable cardinal''.
By Theorem \ref{43} applied in $V_\ka$, for every uncountable cardinal $\la<\ka$, $\la$ has no $\om_1$-indecomposable uniform ultrafilter.

Fix a large strong limit singular cardinal $\mu>\ka$ with cofinality $\om_1$,
and take a generic extension $V[G]$ via poset $\Add(\om, \mu)$.
In $V[G]$, we have $\mathfrak{u}(\ka) \le \mu <\mathfrak u(\om_1)$.
It is sufficient to show that $\frak{u}(\om_1) \le \frak{u}(\la)$.
Suppose to the contrary that
$\frak{u}(\om_1)> \frak{u}(\la)$.
Then  $\la$ has an $\om_1$-indecomposable uniform ultrafilter by Corollary \ref{4.2}.
Since $V[G]$ is a c.c.c. forcing extension of $V$,
by the argument used in the proof of Theorem \ref{7.5},
we can take an $\om_1$-indecomposable uniform ultrafilter over $\la$ in $V$.
This contradicts to the choice of $V$.
\end{proof}

Next we turn to the consistency strength of the statement
$\mathfrak{u}(\la)<\mathfrak{u}(\ka^+)$ for some $\la \le  \ka$.
For this sake, we will consider the square principles.
\begin{define}
Let $\ka$ be a cardinal.
A sequence $\seq{c_\alpha \mid \alpha<\ka^+}$
is said to be a \emph{$\square_\ka$-sequence}
if the following hold:
\begin{enumerate}
\item $c_\alpha$ is a club in $\alpha$ with order type $\le \ka$.
\item For every $\alpha<\beta<\ka^+$,
if $\alpha$ is a limit point of $c_\beta$ then $c_\alpha=c_\beta \cap \alpha$.
\end{enumerate}
Let $\square_\ka$ be the assertion that 
there is a $\square_\ka$-sequence.
\end{define}

\begin{define}
Let $\ka$ be a  cardinal.
A sequence $\seq{c_\alpha \mid \alpha<\ka}$
is said to be a \emph{$\square(\ka)$-sequence}
if the following hold:
\begin{enumerate}
\item $c_\alpha$ is a club in $\alpha$.
\item There is no club $C$ in $\ka$ such that
for every $\alpha<\ka$, if $\alpha$ is a limit point of $C$
then $C \cap \alpha=c_\alpha$.
\end{enumerate}
Let $\square(\ka)$ be the assertion that 
there is a $\square(\ka)$-sequence.
\end{define}
Clearly $\square_\ka \Rightarrow \square(\ka^+)$.
It is known that the failure of the square principle at a singular cardinal,
or at successive cardinals have very strong large cardinal strength,
but the exact strength is unknown.
For example, the following theorem gives such a lower bound.
\begin{thm}[Schimmerling \cite{Schi}]\label{fail square}
Suppose that:
\begin{enumerate}
\item $\square_\ka$ fails for some singular $\ka$, or
\item Both $\square(\ka)$ and $\square_\ka$ fail for some $\ka \ge \om_2$.
\end{enumerate}
Then there is an inner model with a proper class of strong cardinals.
\end{thm}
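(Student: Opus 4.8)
The plan is to prove the contrapositive by core model theory. I would assume there is no inner model with a proper class of strong cardinals and, at this level, invoke the Jensen--Steel core model $K$, a fine-structural extender model with two features I intend to exploit. First, weak covering: for every singular cardinal $\nu$ of $V$ one has $(\nu^+)^K=(\nu^+)^V$. Second, fine-structural square (Schimmerling--Zeman): $\square_\nu$ holds in $K$ for every cardinal $\nu$, while $\square(\nu)$ holds in $K$ for every regular $\nu$ that is not subcompact in $K$. Since a subcompact cardinal in $K$ would, through its extender sequence, yield an inner model with a proper class of strong cardinals, no cardinal is subcompact in $K$; hence $\square(\nu)$ holds in $K$ for every regular $\nu$ as well. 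The two cases of the theorem would then be dispatched by transferring $\square_\ka$ (when $\ka$ is singular) or $\square(\ka)$ (when $\ka$ is regular) from $K$ to $V$.

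First I would treat a singular $\ka$ with $\neg\square_\ka$ in $V$; this is case (1) and also the singular subcase of (2). I would fix a $\square_\ka$-sequence $\seq{c_\alpha \mid \alpha<(\ka^+)^K}$ in $K$. By weak covering $(\ka^+)^K=(\ka^+)^V$, so the sequence is indexed by all limit $\alpha<(\ka^+)^V$. Every defining clause is absolute between $K$ and $V$: the order type of $c_\alpha$ being $\le\ka$, closedness of $c_\alpha$ in $\alpha$, and the coherence requirement are all absolute, while unboundedness of $c_\alpha$ in $\alpha$ is upward absolute. Hence $\seq{c_\alpha}$ witnesses $\square_\ka$ in $V$, contradicting $\neg\square_\ka$, and case (1) is settled.

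Next I would handle the remaining part of case (2): $\ka\ge\om_2$ regular with $\neg\square(\ka)$ in $V$. In $K$ I would fix a coherent, threadless sequence $\seq{c_\alpha \mid \alpha<\ka}$ witnessing $\square(\ka)^K$, available since $\ka$ is not subcompact in $K$. Each $c_\alpha$ remains club in $\alpha$ in $V$ and the coherence clause is absolute, so it would suffice to show that no club $C\in V$ threads the sequence. This is the crux. If such a $C$ existed, then by pulling $C$ back through the fine-structural hierarchy of $K$ and invoking condensation one extracts a coherent system of embeddings witnessing subcompactness of $\ka$ in $K$, against the absence of subcompact cardinals at this level. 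Therefore $\seq{c_\alpha}$ is threadless in $V$ too, so $\square(\ka)$ holds in $V$, contradicting $\neg\square(\ka)$ and exhausting case (2).

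The substantial content lies entirely upstream of these transfer arguments. The genuinely hard inputs are the construction of $K$ below a proper class of strong cardinals together with its weak covering property (Jensen--Steel core model theory), and the fine-structural verification that an extender model without subcompact cardinals satisfies $\square_\nu$ and $\square(\nu)$ everywhere (Schimmerling--Zeman); the argument I propose is in essence their assembly. Within that assembly the one delicate step is the no-new-thread argument of the regular case: controlling an arbitrary $V$-club threading the $K$-sequence is precisely where the anti-subcompactness conclusion of the core model analysis is indispensable, and it is the heart of the theorem.
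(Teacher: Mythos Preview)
The paper does not prove this theorem; it is quoted as a black box from Schimmerling's \emph{Coherent sequences and threads}, so there is no in-paper argument to compare against. Your singular case is the standard one and is fine: weak covering gives $(\ka^+)^K=(\ka^+)^V$, and a $\square_\ka$-sequence in $K$ is then literally a $\square_\ka$-sequence in $V$.

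The regular subcase of (2), however, has a genuine gap. You use only $\neg\square(\ka)$ there and claim that a $V$-thread through the $K$-sequence would, via condensation, produce a subcompact cardinal in $K$. This cannot be right, because $\neg\square(\ka)$ for a regular $\ka$ by itself does \emph{not} yield an inner model with a proper class of strong cardinals. Concretely: if $\ka$ is weakly compact in $V$ (a hypothesis far below one strong cardinal, let alone a proper class), then $\square(\ka)$ fails in $V$; yet the core model $K$ exists, carries a coherent $\square(\ka)$-sequence, and that sequence \emph{is} threaded in $V$ precisely by weak compactness. So your ``no-new-thread'' step is simply false in general, and the appeal to condensation/subcompactness is hand-waving rather than an argument.

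This is exactly why the theorem assumes the simultaneous failure of $\square(\ka)$ and $\square_\ka$ in case (2). The failure of $\square_\ka$ is not decorative: one first uses it, via the transfer argument you already gave, to force $(\ka^+)^K<(\ka^+)^V$, i.e.\ a genuine failure of covering at $\ka$. Only with that extra leverage can the $\square(\ka)$ analysis in $K$ be pushed through (Schimmerling's actual argument runs through a careful case split on how $\ka$ sits inside $K$ and uses the covering failure, not a direct ``thread $\Rightarrow$ subcompact'' implication). Your write-up needs to incorporate $\neg\square_\ka$ into the regular case and replace the vague condensation claim with the real argument from \cite{Schi}.
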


The following proposition was already proved by
Lambie-Hanson and Rinot (\cite{LHR}), and Inamdar and Rinot (\cite{IR}),
in fact they obtained more general and strong results.
However we give a proof of it  for the completeness.

\begin{prop}\label{48}
Let $\ka$ be a regular uncountable cardinal,
and suppose there is a regular $\la <\ka$ such that 
$\ka$ has a $\la$-indecomposable uniform ultrafilter $\calU$.
Then $\square(\ka)$ fails.
\end{prop}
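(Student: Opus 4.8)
The plan is to contradict Theorem~\ref{PS theorem}. Suppose towards a contradiction that $\square(\ka)$ holds and fix a $\square(\ka)$-sequence $\seq{C_\alpha \mid \alpha<\ka}$. Since $\la<\ka$ are both regular, Theorem~\ref{PS theorem} applied to $\calU$ says that every stationary subset of $E:=\{\alpha<\ka \mid \cf(\alpha)=\la\}$ reflects at some $\beta<\ka$ of uncountable cofinality; moreover such a $\beta$ must have $\cf(\beta)>\la$, since if $\om<\cf(\beta)\le\la$ then the limit points of any club in $\beta$ of order type $\cf(\beta)$ form a club in $\beta$ disjoint from $E$. Thus it suffices to exhibit a stationary $S\subseteq E$ that reflects nowhere, i.e.\ with $S\cap\beta$ non-stationary in $\beta$ for every $\beta<\ka$ of cofinality $>\la$.

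I would take $S:=\{\alpha\in E \mid \ot(C_\alpha)=\la\}$. That $S$ reflects nowhere is immediate from coherence: if $\cf(\beta)>\la$ and $S\cap\beta$ were stationary in $\beta$, then, as $\ot(C_\beta)\ge\cf(\beta)>\la$, the set $\mathrm{acc}(C_\beta)$ of limit points of $C_\beta$ is a club in $\beta$, so $S\cap\mathrm{acc}(C_\beta)$ is unbounded in $\beta$; picking $\alpha$ in it above the $\la$-th element of $C_\beta$, coherence gives $C_\alpha=C_\beta\cap\alpha$, whence $\ot(C_\alpha)=\ot(C_\beta\cap\alpha)>\la$, contradicting $\alpha\in S$.

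The remaining point, that $S$ is stationary, is the heart of the matter and where the threadlessness of the $\square(\ka)$-sequence enters. This is genuinely delicate: a bare $\square(\ka)$-sequence may have the first $\la$ elements of $C_\alpha$ bounded by a fixed ordinal for all large $\alpha\in E$, leaving $S$ bounded, so one must first normalize $\seq{C_\alpha}$ to a $\square(\ka)$-sequence whose clubs have order types driven down to the minimum possible (such a normalization preserves both coherence and threadlessness). On the normalized sequence one argues stationarity by pressing down: if $S$ were non-stationary, fix a club $D$ disjoint from $S$; the map on $D\cap E$ sending $\alpha$ to the $\la$-th element of $C_\alpha$ is regressive, hence constant on a stationary set, and iterating this while respecting coherence produces a thread through $\ka$, contradicting $\square(\ka)$. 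In practice I would simply invoke the known fact — this is exactly the sort of analysis of Lambie-Hanson and Rinot \cite{LHR} and of Inamdar and Rinot \cite{IR} — that $\square(\ka)$ yields a non-reflecting stationary subset of $E$. Once $S$ is produced, $S$ is a stationary subset of $E$ that reflects nowhere, contradicting Theorem~\ref{PS theorem}; hence $\square(\ka)$ fails. The main obstacle is precisely this stationarity step: keeping the order types $\ot(C_\alpha)$ under control along a candidate club while preserving coherence and threadlessness.
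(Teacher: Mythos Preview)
Your strategy has a genuine gap precisely where you flag it. The assertion that $\square(\ka)$ produces a non-reflecting stationary subset of $E=\{\alpha<\ka\mid\cf(\alpha)=\la\}$ is \emph{not} a theorem of $\ZFC$: Hayut and Lambie-Hanson (\emph{Simultaneous stationary reflection and square sequences}, J.\ Math.\ Log.\ 17 (2017)) show it is consistent, relative to large cardinals, that $\square(\ka)$ holds while every stationary subset of $E$ reflects. So no normalization of the sequence can make your $S$ stationary in general, and your pressing-down sketch breaks for the reason one should expect: from $C_\alpha(\la)=C_{\alpha'}(\la)=\gamma$ one cannot deduce $C_\alpha\cap\gamma=C_{\alpha'}\cap\gamma$ without already knowing $\alpha\in\lim(C_{\alpha'})$, and there is no way to arrange this on a stationary set without circularity. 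The papers \cite{LHR} and \cite{IR} do prove the proposition, but not via this route; the reflection consequence of Theorem~\ref{PS theorem} is genuinely weaker than what is needed to kill $\square(\ka)$.

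The paper's argument uses $\calU$ more substantially. Since $\calU$ is $\la$-indecomposable it is not $(\om,\la)$-regular (Lemma~\ref{reg-indecomp}), so by Kanamori (Theorem~\ref{47}) it has a least function and one may pass to a weakly normal $\calU$. Weak normality together with $\la$-indecomposability then yields: for every $\beta<\ka$ there is $\gamma>\beta$ with $\{\alpha<\ka\mid\gamma\in\lim(C_\alpha)\}\in\calU$. Any two such $\gamma<\gamma'$ lie simultaneously in $\lim(C_\alpha)$ for $\calU$-many $\alpha$, whence $C_{\gamma'}\cap\gamma=C_\gamma$ by coherence; the union of the $C_\gamma$'s over this unbounded set is therefore a thread, contradicting $\square(\ka)$. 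The point you are missing is that the ultrafilter itself manufactures the thread---mere stationary reflection in $E$ does not suffice.
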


For this sake, we need the next definition and yet another result of Kanamori.

\begin{define}
Let $\calU$ be an ultrafilter over $\ka$.
\begin{enumerate}
\item A function $f:\ka \to \ka$
is a \emph{least function} of $\calU$
if $\{\alpha<\ka \mid \beta<f(\alpha)\} \in \calU$ for every $\beta<\ka$,
and for every $g:\ka\to \ka$,
if $\{\alpha \mid g(\alpha)<f(\alpha)\} \in \calU$,
then there is $\beta<\ka$ such that
$\{\alpha \mid g(\alpha) \le \beta\} \in \calU$.
\item If the identity map is a least function,
then $\calU$ is called a \emph{weakly normal ultrafilter}.
\end{enumerate}
\end{define}
Note that if $\calU$ is an ultrafilter over $\ka$ and $f:\ka \to \ka$,
then the ultrafilter $f_*(\calU)$ over $\ka$ 
is weakly normal if and only if $f$ is a least function of $\calU$.

\begin{thm}[Kanamori, Corollary 2.6 in \cite{Kanamori2}]\label{47}
Let $\ka$ be a regular cardinal,
and $\calU$ a uniform ultrafilter over $\ka$.
If there is $\la<\ka$ such that
$\calU$ is not $(\om, \la)$-regular,
then $\calU$ has a least function.
\end{thm}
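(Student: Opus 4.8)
The plan is to work in the reduced power $({}^\ka\ka)/\calU$ ordered by $<_\calU$, where $f<_\calU g$ means $\{\alpha<\ka \mid f(\alpha)<g(\alpha)\}\in\calU$. Since $\calU$ is an ultrafilter this is a linear order, the constant functions $c_\beta\equiv\beta$ embed $\ka$ as an initial segment, and uniformity of $\calU$ puts the identity $<_\calU$-above every $c_\beta$. Thus the \emph{unbounded} functions (those $f$ with $\{\alpha \mid \beta<f(\alpha)\}\in\calU$ for all $\beta<\ka$) form a nonempty final segment $N$, and unwinding the definition a least function is exactly a $<_\calU$-least element of $N$. So the goal becomes: $N$ has a $<_\calU$-minimum.

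I would try to produce this minimum by the descent engine already used in Lemma \ref{5.1}. Assume toward a contradiction that $N$ has no least element. The engine is that a sequence $\seq{f_\xi \mid \xi<\la}$ which is pointwise non-increasing and strictly $<_\calU$-decreasing forces $(\om,\la)$-regularity: for each fixed $\alpha$ the ordinals $f_\xi(\alpha)$ can strictly drop only finitely often, so each $\alpha$ lies in only finitely many $X_\xi=\{\alpha \mid f_{\xi+1}(\alpha)<f_\xi(\alpha)\}\in\calU$, and the $\la$-many sets $X_\xi$ then witness $(\om,\la)$-regularity, contradicting the hypothesis. To feed the engine I would restrict to the family $\calF$ of non-decreasing $f:\ka\to\ka$ with $\sup(f``\ka)=\ka$; every such $f$ is unbounded (its tails lie in $\calU$), and because $\ka$ is regular and I only ever take minima of fewer than $\la<\ka$ members, $\calF$ is closed under these pointwise minima (a supremum of $<\la$ ordinals below $\ka$ stays below $\ka$, so the minimum still has range cofinal in $\ka$). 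At successor stages I would use the no-least-element assumption to find an unbounded $g<_\calU f_\xi$ and pass to $\min(f_\xi,g)\in\calF$, which stays pointwise $\le f_\xi$ and strictly below it; at limit stages I would take the pointwise minimum, which remains in $\calF$ by regularity of $\ka$.

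The main obstacle is precisely where this plan is delicate, and it is twofold. First, the engine reaches a contradiction only if the descent survives to length $\la$; but a limit-stage pointwise minimum of the previously chosen functions can fail to be unbounded, in which case one only extracts that $\calU$ is $\cf(\xi)$-decomposable for some $\xi<\la$ rather than the needed $(\om,\la)$-regularity, so the naive descent can stall. Second, and more seriously, a $<_\calU$-minimal element of $\calF$ is only minimal among \emph{monotone} unbounded functions: a ``wild'' unbounded function (one that dips low along a cofinal non-$\calU$ set) can sit strictly $<_\calU$-below it, and neither the upward nor the downward monotonization of such a function lands a monotone competitor below it. Ruling out such wild descents — equivalently, showing that the coinitiality of $N$ is either $1$ or at least $\la$ — is the real content, and it is here that the full strength of non-$(\om,\la)$-regularity must be invoked (note Lemma \ref{reg-indecomp} already shows how regularity and indecomposability interact). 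I expect to handle it by taking $\la$ least with the non-regularity property and analysing the coinitiality of $N$ through Kanamori's least-function/weakly-normal machinery, upgrading ``minimal monotone function'' to a genuine least function as in Corollary 2.6 of \cite{Kanamori2}; this is the step I anticipate being hardest.
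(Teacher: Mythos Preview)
The paper does not prove Theorem~\ref{47}; it is quoted from Kanamori~\cite{Kanamori2} as a black box, so there is no in-paper argument to compare against, and the question is whether your proposal stands on its own.

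It does not, and the gap is exactly your ``second obstacle''. The descent within $\mathcal{F}$ is sound: since $\kappa$ is regular and $\lambda<\kappa$, the pointwise minimum of fewer than $\lambda$ monotone functions with cofinal range again has cofinal range, so limit stages are harmless and you do obtain a $<_{\calU}$-minimal $f\in\mathcal{F}$. (Your ``first obstacle'' therefore dissolves once you commit to $\mathcal{F}$, as you yourself note.) The problem is the upgrade from ``minimal in $\mathcal{F}$'' to ``least function'', and this is essentially the whole content of the theorem. From minimality in $\mathcal{F}$ one can extract, exactly as in the proof of Lemma~\ref{5.1}, that $f_*(\calU)$ contains every club in $\kappa$; but an ultrafilter extending the club filter need \emph{not} be weakly normal, so an unbounded $g<_{\calU} f$ is not ruled out. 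Neither monotonization helps: the upper one $\alpha\mapsto\sup_{\beta\le\alpha}\min(f,g)(\beta)$ is monotone, cofinal, and $\le f$ pointwise, but may agree with $f$ on a $\calU$-set; the lower one $\alpha\mapsto\inf_{\beta\ge\alpha}g(\beta)$ need not have cofinal range, since ``unbounded for $\calU$'' does not mean the small-value sets $\{g\le\gamma\}$ are bounded in $\kappa$. So minimality in $\mathcal{F}$ gives no purchase on such $g$. Your proposed fix --- handling this ``as in Corollary~2.6 of~\cite{Kanamori2}'' --- is circular, since that corollary \emph{is} Theorem~\ref{47}. Kanamori's actual argument does not pass through the monotone family $\mathcal{F}$; ruling out a short coinitial descent in $N$ requires an idea beyond the engine of Lemma~\ref{5.1}, and that idea is what your proposal is missing.
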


By Lemma \ref{reg-indecomp},
if $\calU$ is a uniform ultrafilter over $\ka$
and there is a $\la<\ka$ such that
$\calU$ is $\la$-indecomposable,
then $\calU$ is not $(\om, \la)$-regular.
Hence $\calU$ has a least function.

Now we start the proof of Proposition \ref{48}.
\begin{proof}[Proof of Proposition \ref{48}]
To show that $\square(\ka)$ fails,
suppose to the contrary that there is a $\square(\ka)$-sequence $\seq{c_\alpha \mid \alpha<\ka}$.
By Theorem \ref{47}, $\calU$ has a least function.
Hence we may assume that $\calU$ is weakly normal.

\begin{claim}
For every $\beta<\ka$,
there is $\gamma<\ka$ such that $\beta<\gamma$
and $\{\alpha<\ka \mid \gamma \in \lim(c_\alpha)\} \in \calU$.
\end{claim}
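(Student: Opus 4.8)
The plan is to produce $\gamma$ as $\delta_\la:=\sup_{i<\la}\delta_i$ for a strictly increasing continuous sequence $\seq{\delta_i\mid i\le\la}$ of ordinals below $\ka$ built by iterating weak normality, and then to use $\la$-indecomposability to pin down $\delta_\la$ inside $\lim(c_\alpha)$ for $\calU$-almost all $\alpha$. Recall that we may assume $\calU$ is weakly normal (Theorem \ref{47}, Lemma \ref{reg-indecomp}). Fixing $\beta<\ka$, I would define the sequence by $\delta_0=\beta+1$, $\delta_i=\sup_{j<i}\delta_j$ at limit $i\le\la$ (which remains below $\ka$ by regularity of $\ka$, since $i\le\la<\ka$), and at successor steps: note that $g_i(\alpha):=\min(c_\alpha\setminus\delta_i)$ is regressive on the $\calU$-large set $\{\alpha<\ka\mid\alpha>\delta_i\}$ (as $c_\alpha$ is unbounded in $\alpha$), so weak normality yields $\delta_{i+1}\in(\delta_i,\ka)$ with $\{\alpha>\delta_i\mid g_i(\alpha)<\delta_{i+1}\}\in\calU$; setting $A_i:=\{\alpha<\ka\mid c_\alpha\cap[\delta_i,\delta_{i+1})\neq\emptyset\}$ we get $A_i\in\calU$. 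As $\seq{\delta_i\mid i\le\la}$ is strictly increasing and continuous, $\cf(\delta_\la)=\la$.

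Now put $\gamma=\delta_\la$ and, for $\alpha<\ka$, let $F(\alpha):=\sup\{i<\la\mid\alpha\in A_i\}\le\la$. The key step is to show $\{\alpha<\ka\mid F(\alpha)=\la\}\in\calU$. If not, then $\{\alpha<\ka\mid F(\alpha)<\la\}\in\calU$, and since $F$ is a function into the regular cardinal $\la$ on this set, $\la$-indecomposability of $\calU$ gives $j<\la$ with $\{\alpha<\ka\mid F(\alpha)\le j\}\in\calU$; but $F(\alpha)\le j$ forces $\alpha\notin A_{j+1}$, contradicting $A_{j+1}\in\calU$. So $\{\alpha<\ka\mid F(\alpha)=\la\}\in\calU$, and for $\alpha$ in this set with $\alpha>\delta_\la$ (still $\calU$-large) the club $c_\alpha$ meets $[\delta_i,\delta_{i+1})$ for cofinally many $i<\la$, hence $c_\alpha\cap\delta_\la$ is cofinal in $\delta_\la$; since $c_\alpha$ is closed in $\alpha$ and $\delta_\la<\alpha$, this gives $\delta_\la\in\lim(c_\alpha)$. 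Thus $\{\alpha<\ka\mid\gamma\in\lim(c_\alpha)\}\in\calU$ with $\gamma=\delta_\la>\beta$, as wanted.

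The step I expect to be the genuine obstacle is recognizing that the construction has to run to length $\la$, not $\om$: an $\om$-chain would deliver $A_n\in\calU$ for $n<\om$, but inferring that $\calU$-many $\alpha$ belong to infinitely many $A_n$ is exactly $\om$-indecomposability, which need not hold here --- $\calU$ may well be $\om$-decomposable. Lengthening the chain to $\la$ and invoking $\la$-indecomposability is precisely what makes the ``cofinally many $A_i$'' conclusion legitimate. The other ingredients --- regressiveness and pressing down at each successor stage, closedness of $c_\alpha$, and keeping all ordinals below $\ka$ --- are routine once the weak-normality reduction is granted.
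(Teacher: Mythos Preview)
Your proposal is correct and follows essentially the same route as the paper: both build a length-$\la$ increasing continuous sequence above $\beta$ by repeatedly pressing down via weak normality to obtain sets $A_i=\{\alpha\mid c_\alpha\cap[\delta_i,\delta_{i+1})\neq\emptyset\}\in\calU$, and then use $\la$-indecomposability to conclude that $\calU$-many $\alpha$ lie in cofinally many $A_i$, whence $\gamma=\sup_i\delta_i\in\lim(c_\alpha)$. Your explicit framing of the regressive function $g_i$ and of $F(\alpha)=\sup\{i\mid\alpha\in A_i\}$ just spells out what the paper leaves implicit.
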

\begin{proof}
We shall take a strictly increasing sequence $\seq{\beta_i \mid i <\la}$ by induction on
$\la$ such that for every $i<\la$,
$\{\alpha<\ka \mid [\beta_i, \beta_{i+1}) \cap c_\alpha \neq\emptyset\} \in \calU$.
First let $\beta_0=\beta$.
Suppose $\beta_j$ is defined for all $j<i$.
If $i$ is limit, then $\beta_i=\sup_{j<i} \beta_j$.
Suppose $i$ is successor, say $i=k+1$.
By the weak normality of $\calU$,
we can find $\beta_{k+1}$
such that
$\{\alpha \mid [\beta_k, \beta_{k+1}) \cap c_\alpha \neq \emptyset\} \in \calU$.

Let $\gamma=\sup_i \beta_i$. Clearly $\gamma>\beta$ and $\cf(\gamma)=\la$.
We show that 
$\{\alpha<\ka \mid \gamma \in \lim(c_\alpha)\} \in \calU$.
For $i<\la$, let $X_i=\{\alpha \mid [\beta_i, \beta_{i+1}) \cap c_\alpha \neq \emptyset\} \in \calU$.
Since $\calU$ is $\la$-indecomposable,
the set $Y=\{\alpha \mid \{i<\la \mid \alpha \in X_i\}$ is cofinal in $\la\}$ is in $\calU$.
Take $\alpha \in Y$.
By the choice of the $X_i$'s, we have that $c_\alpha \cap \gamma$ is unbounded in $\gamma$,
hence $\gamma \in \lim(c_\alpha)$.
\end{proof}

Let $A$ be the set of all $\gamma<\ka$ such that
$\{\alpha<\ka \mid \gamma \in \lim(c_\alpha)\} \in \calU$.
$A$ is unbounded in $\ka$ by the previous claim.
For $\beta_0, \beta_1 \in A$ with $\beta_0<\beta_1$,
there is a large $\alpha$ such that $\beta_0, \beta_1 \in \lim(c_\alpha)$.
Hence $c_{\beta_1} \cap \beta_0=c_{\beta_0}$.
This argument shows that
$C=\bigcup\{c_\beta \mid \beta \in A\}$ is a club in $\ka$ and
$C \cap \beta=c_\beta$ for every $\beta \in \lim(C)$.
This is a contradiction.
\end{proof}

\begin{cor}\label{8.13}
\begin{enumerate}
\item Let $\ka$ be a regular cardinal.
If $\mathfrak{u}(\ka)<\mathfrak{u}(\la)$ holds for some $\la<\ka$,
then $\square(\ka)$ fails.
\item If $\mathfrak{u}(\ka^+)<\mathfrak{u}(\la)$ holds for some $\la \le \ka$,
then $\square_\ka$ fails.
\end{enumerate}
\end{cor}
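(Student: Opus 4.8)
The plan is to reduce both items to Proposition \ref{48} via Corollary \ref{4.2}, after upgrading the indecomposability degree produced by Corollary \ref{4.2} to a regular one. The key auxiliary step is the following observation: if $\calU$ is a uniform ultrafilter over a cardinal $\rho$ and $\calU$ is $\la$-indecomposable for some cardinal $\la<\rho$, then $\calU$ is $\mu$-indecomposable for some \emph{regular} cardinal $\mu<\rho$. If $\la$ is already regular, take $\mu=\la$. Otherwise $\la$ is singular, so by Theorem \ref{27} in contrapositive form (a $\la$-indecomposable ultrafilter, $\la$ singular, is $\la^+$-indecomposable or $\cf(\la)$-indecomposable) the ultrafilter $\calU$ is $\cf(\la)$-indecomposable or $\la^+$-indecomposable. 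In the first case $\mu=\cf(\la)$ works, since $\cf(\la)<\la<\rho$ is regular. In the second case $\la^+\le\rho$; moreover $\la^+\ne\rho$, because every uniform ultrafilter over $\rho$ is $\rho$-decomposable (witnessed by the identity function), so $\la^+<\rho$ and $\mu=\la^+$ works.

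For item (1), suppose $\mathfrak{u}(\ka)<\mathfrak{u}(\la)$ for some $\la<\ka$ with $\ka$ regular. Then $\ka$ is uncountable: if $\la\le\om$ then $\mathfrak{u}(\om)\le\mathfrak{u}(\ka)$ by Proposition \ref{5.3+}, contradicting the hypothesis; hence $\la\ge\om_1$ and $\ka>\la$ is uncountable. Fix a uniform ultrafilter $\calU$ over $\ka$ with $\chi(\calU)=\mathfrak{u}(\ka)$; by Corollary \ref{4.2}, $\calU$ is $\la$-indecomposable, so by the observation above $\calU$ is $\mu$-indecomposable for some regular $\mu<\ka$. Proposition \ref{48} then yields that $\square(\ka)$ fails.

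For item (2), suppose $\mathfrak{u}(\ka^+)<\mathfrak{u}(\la)$ for some $\la\le\ka$. As before $\la\ge\om_1$, so $\ka\ge\om_1$ and $\ka^+$ is a regular uncountable cardinal. Fix a uniform ultrafilter $\calU$ over $\ka^+$ with $\chi(\calU)=\mathfrak{u}(\ka^+)$; by Corollary \ref{4.2}, $\calU$ is $\la$-indecomposable, and $\la\le\ka<\ka^+$, so by the observation $\calU$ is $\mu$-indecomposable for some regular $\mu<\ka^+$. Proposition \ref{48} gives that $\square(\ka^+)$ fails, and since $\square_\ka$ implies $\square(\ka^+)$, $\square_\ka$ fails as well.

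The only point requiring care is the auxiliary observation, and within it the branch $\la^+=\rho$, which is excluded by noting that no uniform ultrafilter over $\rho$ is $\rho$-indecomposable; everything else is a direct application of results already established (Corollary \ref{4.2}, Theorem \ref{27}, Proposition \ref{48}, and $\square_\ka\Rightarrow\square(\ka^+)$). Accordingly I do not expect a genuine obstacle: Corollary \ref{8.13} is essentially the step that reformulates Proposition \ref{48} in the language of the ultrafilter number function.
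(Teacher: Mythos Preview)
Your proposal is correct and follows essentially the same route as the paper: obtain a $\la$-indecomposable uniform ultrafilter from Corollary~\ref{4.2}, upgrade the indecomposability degree to a regular one via Theorem~\ref{27}, and then invoke Proposition~\ref{48}; for (2) one then uses $\square_\ka \Rightarrow \square(\ka^+)$. The only difference is that you are slightly more explicit than the paper about the edge case $\la^+=\ka$ (which is indeed excluded since a uniform ultrafilter over $\ka$ is always $\ka$-decomposable) and about the uncountability of $\ka$, points the paper leaves implicit.
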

\begin{proof}
(2) is immediate from (1).

(1). By the assumption, we can find  a $\la$-indecomposable uniform ultrafilter $\calU$.
If $\la$ is regular, then $\square(\ka)$ fails by Proposition \ref{48}.
If $\la$ is singular,
then $\calU$ is either $\cf(\la)$-indecomposable, or $\la^+$-indecomposable by Theorem \ref{27}.
In either case, we have that $\square(\ka)$ fails by Proposition \ref{48} again.
\end{proof}

\begin{prop}
Let $\ka$ be a regular cardinal,
and suppose there is a cardinal $\la \le \ka$ such that 
$\mathfrak{u}(\ka^+)<\mathfrak{u}(\la)$ holds.
Then both $\square(\ka)$ and $\square(\ka^+)$ fail.
\end{prop}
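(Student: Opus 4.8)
The plan is to deduce the statement from Corollary~\ref{8.13}(1), applying it once at $\ka$ and once at $\ka^+$, after first pinning down that $\la$ is \emph{strictly} below $\ka$. Note that this is a genuine strengthening of what is already available: since $\square_\ka \Rightarrow \square(\ka^+)$, Corollary~\ref{8.13}(2) only yields $\neg\square_\ka$, which is weaker than $\neg\square(\ka^+)$, so one really does have to push Proposition~\ref{48}/Corollary~\ref{8.13}(1) up to $\ka^+$ rather than settle for the square principle at the singular-style level.

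The first step is to observe that, because $\ka$ is regular, Proposition~\ref{21}(1) gives $\mathfrak{u}(\ka)\le\mathfrak{u}(\ka^+)$; chaining this with the hypothesis $\mathfrak{u}(\ka^+)<\mathfrak{u}(\la)$ yields $\mathfrak{u}(\ka)<\mathfrak{u}(\la)$. In particular $\la\neq\ka$ (otherwise $\mathfrak{u}(\ka)<\mathfrak{u}(\ka)$), so $\la<\ka$, and consequently $\la^+\le\ka<\ka^+$ and $\cf(\la)<\la<\ka$. With $\la<\ka$ in hand, $\square(\ka)$ fails immediately by Corollary~\ref{8.13}(1), since $\ka$ is regular and $\mathfrak{u}(\ka)<\mathfrak{u}(\la)$ with $\la<\ka$.

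For $\square(\ka^+)$, I would apply Corollary~\ref{8.13}(1) with $\ka^+$ in the role of the regular cardinal: $\ka^+$ is regular (uncountable), and by hypothesis $\mathfrak{u}(\ka^+)<\mathfrak{u}(\la)$ with $\la\le\ka<\ka^+$, so $\square(\ka^+)$ fails as well. If one prefers to avoid quoting Proposition~\ref{21}(1) for the $\square(\ka)$ half, an alternative is to take via Corollary~\ref{4.2} a $\la$-indecomposable uniform ultrafilter $\calU$ over $\ka^+$ with $\chi(\calU)=\mathfrak{u}(\ka^+)$, note that $\calU$ is $\ka$-decomposable because $\ka$ is regular (Lemma~\ref{2.4}), push it forward by Lemma~\ref{2.8} along a suitable $g\colon\ka^+\to\ka$ to obtain a $\la$-indecomposable uniform ultrafilter over $\ka$, and then invoke Proposition~\ref{48} directly. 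Either way there is essentially no obstacle: the substance lies in Proposition~\ref{48} and Corollary~\ref{8.13}, and the only point needing a moment's care is the singular-$\la$ bookkeeping inside Corollary~\ref{8.13}(1)—namely that the regular cardinal it extracts from Theorem~\ref{27} (one of $\cf(\la)$, $\la^+$) is genuinely below the relevant cardinal; this is guaranteed by $\la<\ka$, using in the borderline case $\ka=\la^+$ the fact that a uniform ultrafilter over $\ka$ cannot be $\ka$-indecomposable, which forces the $\cf(\la)$-alternative.
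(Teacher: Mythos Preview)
Your proposal is correct and follows essentially the same route as the paper: establish $\la<\ka$ via Proposition~\ref{21}(1), then invoke the machinery of Proposition~\ref{48} at both $\ka$ and $\ka^+$. The only packaging difference is that you quote Corollary~\ref{8.13}(1) twice as a black box (once after deducing $\mathfrak{u}(\ka)<\mathfrak{u}(\la)$, once directly at $\ka^+$), whereas the paper fixes a single $\la$-indecomposable uniform ultrafilter $\calU$ over $\ka^+$, extracts one regular $\mu<\ka$ with $\calU$ $\mu$-indecomposable (handling the singular-$\la$ case via Theorem~\ref{27} and ruling out $\mu=\ka$ via Lemma~\ref{2.4}), applies Proposition~\ref{48} at $\ka^+$, and then pushes $\calU$ down to $\ka$ via Lemma~\ref{2.8} to apply Proposition~\ref{48} there---exactly your ``alternative'' route.
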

\begin{proof}
First note that in fact $\la \neq \ka$ by Proposition \ref{21}.
By the assumption, $\ka^+$ has a $\la$-indecomposable uniform ultrafilter $\calU$
by Corollary \ref{4.2}.
Now we show that there is a regular cardinal $\mu < \ka$
such that $\calU$ is $\mu$-indecomposable.
If $\la$ is regular, just let $\mu=\la$.
Suppose $\la$ is singular.
By Theorem \ref{27}, $\calU$ is either $\cf(\la)$-indecomposable,
or else $\la^+$-indecomposable.
If it is $\cf(\la)$-indecomposable, then let $\mu=\cf(\la)$.
If it is $\la^+$-indecomposable, we have $\la^+<\ka$
by Lemma \ref{2.4}. 
Put $\mu=\la^+$ in this case.

We have that $\square(\ka^+)$ fails by Proposition \ref{48}.
$\calU$ is $\ka$-decomposable by 
Lemma \ref{2.4}, so we can find a $\mu$-indecomposable uniform ultrafilter over $\ka$
by Lemma \ref{2.8}.
By applying Proposition \ref{48} again,
$\square(\ka)$ fails.
\end{proof}

\begin{prop}
Let $\ka$ be a singular cardinal.
If  $\mathfrak{u}(\ka^+)<\mathfrak{u}(\ka)$,
then the set $\{\la<\ka \mid \la$ is regular and $\square(\la)$ holds $\}$ is bounded in $\ka$.
\end{prop}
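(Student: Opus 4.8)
The plan is to deduce from $\mathfrak{u}(\ka^+)<\mathfrak{u}(\ka)$ the existence of a $\ka$-indecomposable uniform ultrafilter on $\ka^+$, upgrade it to something that is simultaneously $\cf(\ka)$-indecomposable and almost $\mathop{<}\ka$-decomposable, and then push its $\mu$-decomposable quotients down to all sufficiently large regular $\mu<\ka$, where Proposition \ref{48} will kill $\square(\mu)$.

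First I would apply Corollary \ref{4.2} with $\ka^+$ in the role of ``$\ka$'' and $\ka$ in the role of ``$\la$'': since $\mathfrak{u}(\ka^+)<\mathfrak{u}(\ka)$, every uniform ultrafilter $\calU$ over $\ka^+$ with $\chi(\calU)=\mathfrak{u}(\ka^+)$ is $\ka$-indecomposable, and I fix such a $\calU$. Being uniform over $\ka^+$, this $\calU$ is $\ka^+$-decomposable, so the contrapositive of Theorem \ref{27} (this is where singularity of $\ka$ is used) yields that $\calU$ is $\cf(\ka)$-indecomposable. Moreover, being $\ka^+$-decomposable and $\ka$-indecomposable, $\calU$ is almost $\mathop{<}\ka$-decomposable by Proposition \ref{37}; I fix $\la_0<\ka$ witnessing this, so that $\calU$ is $\mu$-decomposable for every regular $\mu$ with $\la_0<\mu<\ka$.

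Now set $\la_1=\max\{\la_0,\cf(\ka)\}<\ka$. For each regular $\mu$ with $\la_1<\mu<\ka$, the ultrafilter $\calU$ is $\mu$-decomposable, so by Lemma \ref{2.8}(2) there is $g:\ka^+\to\mu$ with $g_*(\calU)$ a uniform ultrafilter over $\mu$, and by Lemma \ref{2.8}(1) this $g_*(\calU)$ is $\cf(\ka)$-indecomposable because $\calU$ is. Since $\cf(\ka)$ is a regular cardinal with $\cf(\ka)<\mu$, and $\mu$ is regular and uncountable (as $\mu>\cf(\ka)\ge\om$), Proposition \ref{48} applies and gives that $\square(\mu)$ fails. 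Consequently $\{\la<\ka\mid\la\text{ is regular and }\square(\la)\text{ holds}\}\subseteq\la_1+1$, which is bounded in $\ka$, as required.

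The argument is essentially a bookkeeping combination of Corollary \ref{4.2}, Theorem \ref{27}, Proposition \ref{37}, Lemma \ref{2.8} and Proposition \ref{48}, so I do not anticipate a genuine obstacle; the only points needing care are verifying the hypotheses of Proposition \ref{48} at each $\mu$ — namely that $\cf(\ka)$ is regular and strictly below $\mu$ and that $\mu$ is uncountable — all of which become automatic once $\mu$ is taken above $\max\{\la_0,\cf(\ka)\}$.
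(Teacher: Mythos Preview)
Your proof is correct and follows essentially the same approach as the paper: obtain a $\ka$-indecomposable (hence, by Theorem~\ref{27}, $\cf(\ka)$-indecomposable) uniform ultrafilter on $\ka^+$, use Proposition~\ref{37} to get almost $\mathop{<}\ka$-decomposability, push down via Lemma~\ref{2.8} to each sufficiently large regular $\mu<\ka$, and apply Proposition~\ref{48}. The paper phrases this as a proof by contradiction whereas you argue directly and give an explicit bound $\la_1+1$, but the logical content is identical.
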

\begin{proof}
By the assumption, we can take a $\ka$-indecomposable uniform ultrafilter $\calU$ over $\ka^+$,
which is in fact $\cf(\ka)$-indecomposable by Theorem \ref{27}.
Now suppose to contrary  that the set $\{\la<\ka \mid \la$ is regular and 
$\square(\la)$ holds $\}$ is unbounded in $\ka$.
By Proposition \ref{37},
we can take a regular $\la<\ka$ such that $\la>\cf(\ka)$,
$\calU$ is $\la$-decomposable, and $\square(\la)$ holds.
Then we can take a $\cf(\ka)$-indecomposable uniform ultrafilter $\calV$ over $\la$
by Lemma \ref{2.8}.
Hence $\square(\la)$ fails by Proposition \ref{48}, this is a contradiction.
\end{proof}

By Theorem \ref{fail square} and these propositions,
we obtain the following lower bound of the consistency strength.
\begin{thm}\label{8.16}
Let $\ka$ be a cardinal.
If $\mathfrak{u}(\ka^+)<\mathfrak{u}(\la)$ holds for some cardinal $\la \le \ka$,
then there is an inner model with  a proper class of strong cardinals.
\end{thm}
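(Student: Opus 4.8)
The plan is to reduce the statement to Schimmerling's theorem (Theorem~\ref{fail square}) by extracting a failure of square principles from the hypothesis $\fraku(\ka^+)<\fraku(\la)$, and then to split into cases according to whether $\ka$ is singular or regular, since the two clauses of Theorem~\ref{fail square} are tailored to these two situations respectively.

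First I would treat the case where $\ka$ is singular. Here Corollary~\ref{8.13}(2) applies verbatim to the hypothesis $\fraku(\ka^+)<\fraku(\la)$ with $\la\le\ka$, giving that $\square_\ka$ fails. Since $\ka$ is a singular cardinal, clause (1) of Theorem~\ref{fail square} then yields an inner model with a proper class of strong cardinals, and this case is done.

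Next I would treat the case where $\ka$ is regular. The preliminary point is to verify the side condition $\ka\ge\om_2$ needed for clause (2) of Theorem~\ref{fail square}. We have $\la\neq\om$: otherwise $\fraku(\la)=\fraku(\om)\le\fraku(\ka^+)$ by Proposition~\ref{5.3+}, contradicting the hypothesis; hence $\la\ge\om_1$. Also $\la\neq\ka$, since $\ka$ is regular and $\fraku(\ka)\le\fraku(\ka^+)$ by Proposition~\ref{21}(1), so $\fraku(\ka^+)<\fraku(\ka)$ is impossible. Thus $\om_1\le\la<\ka$, whence $\ka\ge\om_2$. Now Corollary~\ref{8.13}(2) again gives that $\square_\ka$ fails, while the proposition just above this theorem (the one asserting that $\square(\ka)$ and $\square(\ka^+)$ both fail whenever $\ka$ is regular and $\fraku(\ka^+)<\fraku(\la)$ for some $\la\le\ka$) gives that $\square(\ka)$ fails. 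So both $\square(\ka)$ and $\square_\ka$ fail at the regular cardinal $\ka\ge\om_2$, and clause (2) of Theorem~\ref{fail square} again produces an inner model with a proper class of strong cardinals.

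Essentially all of the genuine content has already been isolated: the square-failure propositions and Corollary~\ref{8.13} rest in turn on Proposition~\ref{48} (an indecomposable uniform ultrafilter on a regular $\ka$ refutes $\square(\ka)$) together with Theorem~\ref{27}. Given those, the remaining argument is only a matter of assembly, and the single point requiring care is the bookkeeping in the regular case—correctly ruling out $\la=\om$ and $\la=\ka$ to secure $\ka\ge\om_2$, and matching each case to the correct clause of Schimmerling's theorem. I do not expect any substantive obstacle beyond this.
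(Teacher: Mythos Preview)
Your proposal is correct and follows essentially the same approach as the paper, which simply cites Theorem~\ref{fail square} together with the preceding propositions and Corollary~\ref{8.13}. You have supplied the case split and the bookkeeping (in particular the verification that $\ka\ge\om_2$ in the regular case) that the paper leaves implicit.
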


To conclude this paper, we would like to ask some questions.
\begin{question}
\begin{enumerate}
\item What is the exact consistency strength of
the statement ``$\mathfrak{u}(\ka^+)<\mathfrak{u}(\la)$ for some $\la \le \ka$''?
\item Is it consistent that there are cardinals $\la<\ka$ such that
$2^{<\la}=\la$ and $\mathfrak{u}(\ka)<\mathfrak{u}(\la)$?
\item Is it consistent that monotonicity of the ultrafilter number function fails at three cardinals?
Namely, is it consistent that 
there are three cardinals $\ka_0<\ka_1<\ka_2$ with $\mathfrak{u}(\ka_2)<\mathfrak{u}
(\ka_1)<\mathfrak{u}(\ka_0)$? How about
$\mathfrak{u}(\ka_1)<\mathfrak{u}(\ka_2)<\mathfrak{u}(\ka_0)$ and
$\mathfrak{u}(\ka_2)<\mathfrak{u}
(\ka_0)<\mathfrak{u}(\ka_1)$?
\item Is there anther cardinal invariant function $\mathfrak{k}(\ka)$ on the cardinals such that
the failure of monotonicity of $\mathfrak{k}$ has large cardinal strength?
\end{enumerate}
\end{question}

\subsection*{Acknowledgments}
The author  deeply grateful to the referee for many insightful suggestions and comments.
The author also would like to thank Assaf Rinot for many useful comments.

\printindex

\end{document}